\documentclass[11pt, reqno]{amsart}
\usepackage[margin=1in]{geometry}
\usepackage{hyperref}
\usepackage{comment}
\usepackage{amsmath}
\usepackage[T2A,T1]{fontenc}
\usepackage[utf8]{inputenc}
\usepackage[russian,english]{babel}
\usepackage{color}
\usepackage{mathtools}
\usepackage{amssymb}
\usepackage{amsthm}
\usepackage{amsfonts}
\usepackage{mathrsfs}
\usepackage{caption}
\usepackage{comment}
\usepackage[super]{nth}

\usepackage{graphicx}
\usepackage{subcaption}
\graphicspath{{pictures/}}

\hypersetup{
	colorlinks=true,
	linkcolor=blue,
	citecolor=blue,
	urlcolor=blue
}

\allowdisplaybreaks

\counterwithin{figure}{section}

\newtheorem{thm}{Theorem}
\newtheorem{prp}[thm]{Proposition}
\newtheorem{lem}[thm]{Lemma}
\newtheorem{cor}[thm]{Corollary}
\newtheorem{con}[thm]{Conjecture}

\theoremstyle{definition}

\newtheorem{rem}[thm]{Remark}

\numberwithin{thm}{section}

\newcommand{\N}{\mathbb{N}}

\newcommand{\uSdw}{\rotatebox[origin=c]{180}{$\Delta$}}
\newcommand{\uSdwB}{\rotatebox[origin=c]{180}{$\Delta$}_{\text{Bor}}}

\newcommand\floor[1]{\left\lfloor #1\right\rfloor}
\newcommand\ceil[1]{\left\lceil #1\right\rceil}

\def\im{\operatorname{in}}
\def\squash{\operatorname{FS}}
\def\gin{\operatorname{gin}}
\def\bl{\operatorname{bl}}
\def\fbl{\operatorname{ubl}}
\def\LPPT{\operatorname{LPPT}}
\def\comp{\mathcal{C}}
\def\Line{\mathcal{L}}
\def\Span{\operatorname{Span}}
\def\MB{\operatorname{MB}}
\def\Bez{{\normalfont \textrm{\foreignlanguage{russian}{\CYRB}}}}

\title{A Structural Property of Generic Initial Ideals}
\author{Nikola Kuzmanovski}

\begin{document}
\begin{abstract}
We prove an asymptotic structural property of generic initial ideals.
This single phenomenon yields results on Hilbert functions, persistence, hyperplane restriction, graded Betti numbers, combinatorial shadow minimization, and Lefschetz properties.
\end{abstract}

\maketitle
\tableofcontents\label{table}


\section{Introduction}
Our field $K$ will always have characteristic $0$ unless otherwise stated. 
Let $R=K[x_1,\dots, x_n]$ and suppose $n \geq 3$.
The corresponding results have already been proved or are folklore when $n=2$.
All ideals considered are homogeneous.
Our definitions and notation are standard in the literature \cite{EisenbudBook, Green1998, PeevaBook},
but for a reader unfamiliar with these sources,
see Section \ref{DefinitionsAndNotation} for a summary.

Let $I\subseteq R$ be an ideal.
For a given term order $<$ on $R$
we can construct $\im(I)$, the initial ideal of $I$.
If $I$ is in general coordinates then $\im(I)$ is called the generic initial ideal of $I$,
and it is denoted by $\gin(I)$.
Many authors have studied how information can be transferred between $I$ and $\im(I)$.
 
One direction that has been considered is to see how information about a property of $I$ can be transferred to $\im(I)$.
It has been known for over a century that $I$ and $\im(I)$ have the same Hilbert function.
Other properties like graded Betti numbers and regularity can only increase when constructing $\im(I)$.
However,
Bayer and Stillman \cite{BayerStillman1987b} proved that the regularity of $I$ and $\gin(I)$ are equal when one uses the revlex order.
Bayer, Charalambous, and Popescu \cite{BCP} generalized this equality to extremal Betti numbers.
More recently, 
Conca and Varbaro \cite{ConcaVarbaro} showed that the extremal Betti numbers of $I$ and $\im(I)$ agree when $\im(I)$ is square free.

In another direction,
authors have used the structure of $\im(I)$ to deduce information about $I$.
Green \cite{Green1998} proved classical results of Macaulay and Gotzmann by reducing the problems to $\gin(I)$.
The structure of $\im(I)$ encodes a lot of information,
as seen in the works by Fløystad \cite{Floystad}, Fløystad and Green \cite{FloystadGreen}, and Fløystad and Stillman \cite{FloystadStillman}.

In this paper we take up both directions in the context of regular sequences.
For the remainder of the paper we always use the revlex order when taking an initial ideal.
First, we transfer information about lengths and degrees of regular sequences in $I$ to structural properties of $\gin(I)$ (Theorems \ref{IntroadvancedLefschetzBase} and \ref{IntroMonster2}).
Afterwards, we use this structure of $\gin(I)$ to prove several properties of $I$,
see Theorems \ref{IntroEGHFirst}, \ref{IntroEGHMain}, \ref{IntroEGHGotzmann}, \ref{BFGG}, \ref{IntroEGHGreenHyper}, \ref{IntroBettiEGH}, \ref{IntroWLPCITheorem}, \ref{IntroWLPCIPersistence}.
Additionally, in Theorem \ref{IntroCIGinStructure} we establish uniqueness of $\gin(I)$ in a specified range of degrees when $I$ is a complete intersection.
We develop several tools in the process of proving our main results.
One of these tools is of independent interest.
It is a generalization of Green's Crystallization Principle \cite{Green1998},
see Theorem \ref{IntroGeneralGreenNumeric}.
Section \ref{CombinatoricsMini} and Theorem \ref{BFGG} can be read completely independently from the rest of the paper.
Theorem \ref{BFGG} is a purely combinatorial result proved using several algebraic techniques.

\subsection{Structural Properties of Generic Initial Ideals}\label{MainResultsSection}
In this section we establish that the degrees and lengths of regular sequences in $I$ are encoded in the structure of $\gin(I)$.
Although $I_d$ may contain a regular sequence of length $\ell$,
this regular sequence need not survive in $\gin(I)_d$.
Thus we need to define functions that allow us to transfer this information from $I$ to $\gin(I)$.

A monomial $x_1^{d-k}x_2^k\in \gin(I)_d$ with $k\geq 1$ is {\it uncovered} with respect to $I$ if $x_1^{d-k}x_2^{k-1}x_3 \not\in \gin(I)_d$.
The {\it uncovered bottom length} of $I$ in degree $d$ is $\fbl_I(d)$,
the number of uncovered monomials of the form $x_1^{d-k}x_2^k\in \gin(I)_d$ with $k\in \{1,2,\dots, d\}$. 
Figure \ref{fig:ubl-examples} shows strongly stable sets with different uncovered bottom lengths.
All figures will be for the case $n=3$.
In any figure,
the leftmost cube denotes $x_1^{d}$,
the rightmost cube denotes $x_2^d$,
and the topmost cube denotes $x_3^d$.

\begin{figure}[htbp]
	\centering
	
	\begin{subfigure}[b]{0.32\textwidth}
		\centering
		\includegraphics[width=\textwidth]{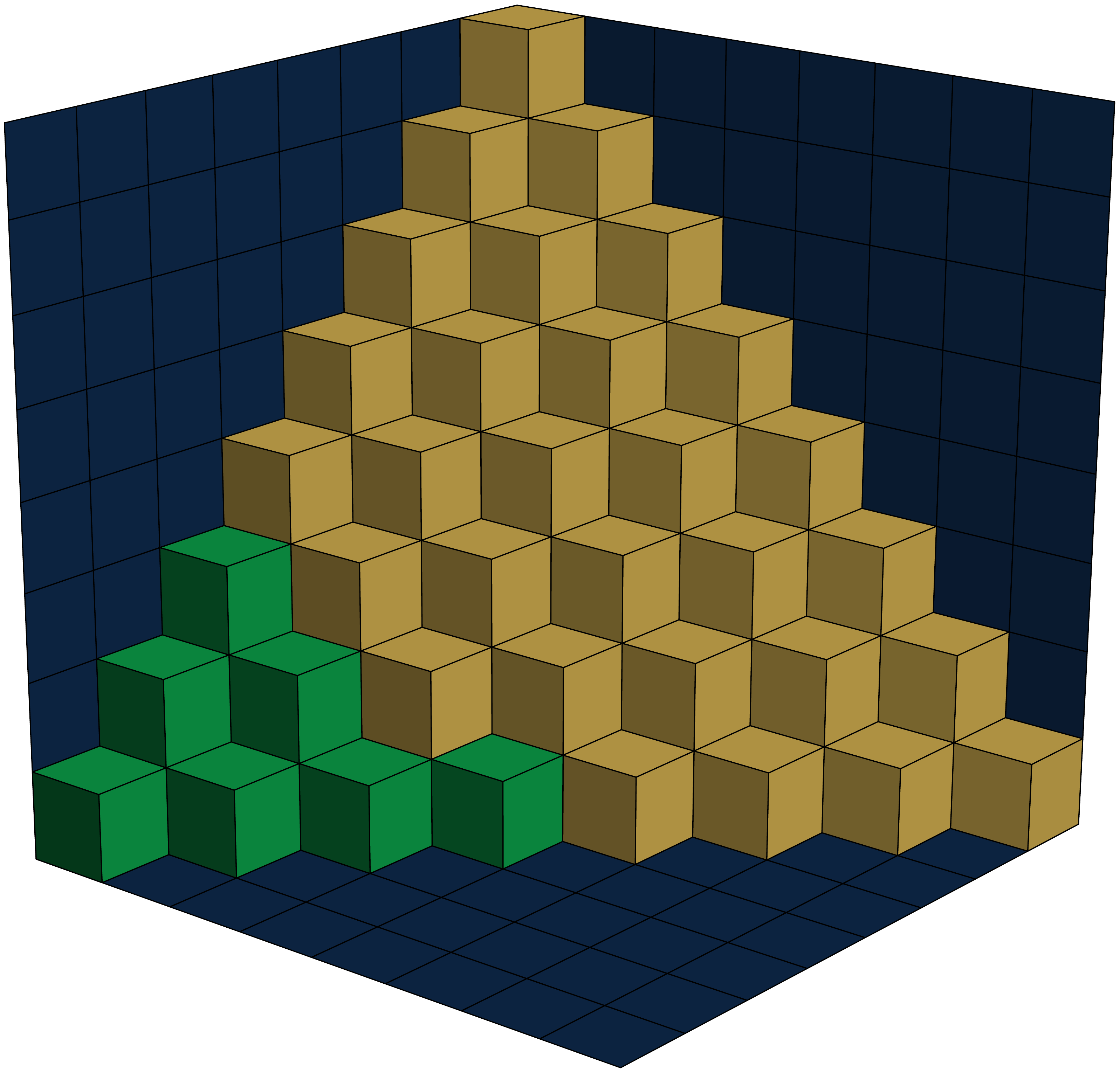}
		\caption{$\fbl_I(7) = 1$.}
		\label{fig:ubl1}
	\end{subfigure}
	\hfill
	\begin{subfigure}[b]{0.32\textwidth}
		\centering
		\includegraphics[width=\textwidth]{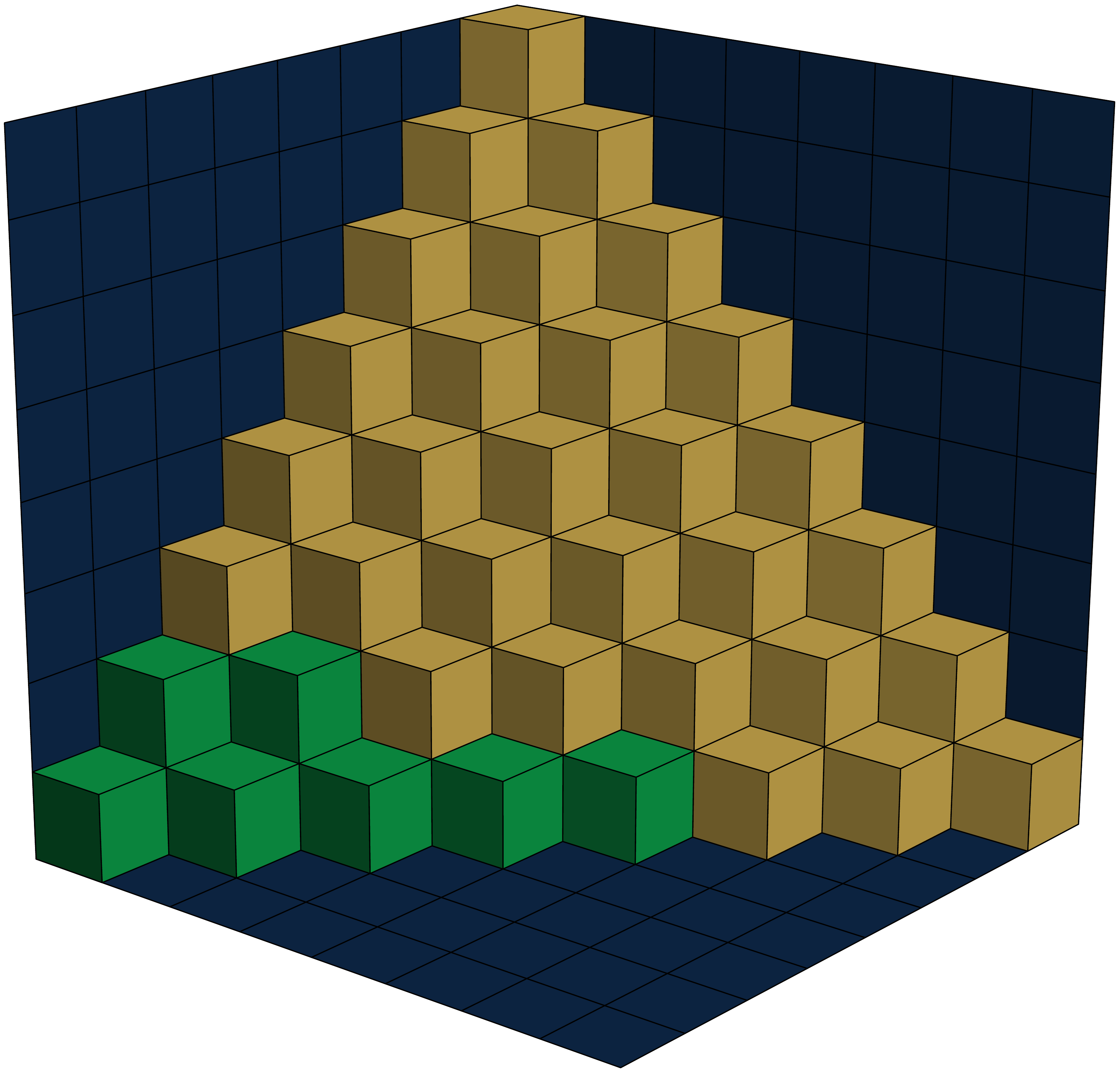}
		\caption{$\fbl_I(7) = 2$.}
		\label{fig:ubl2}
	\end{subfigure}
	\hfill
	\begin{subfigure}[b]{0.32\textwidth}
		\centering
		\includegraphics[width=\textwidth]{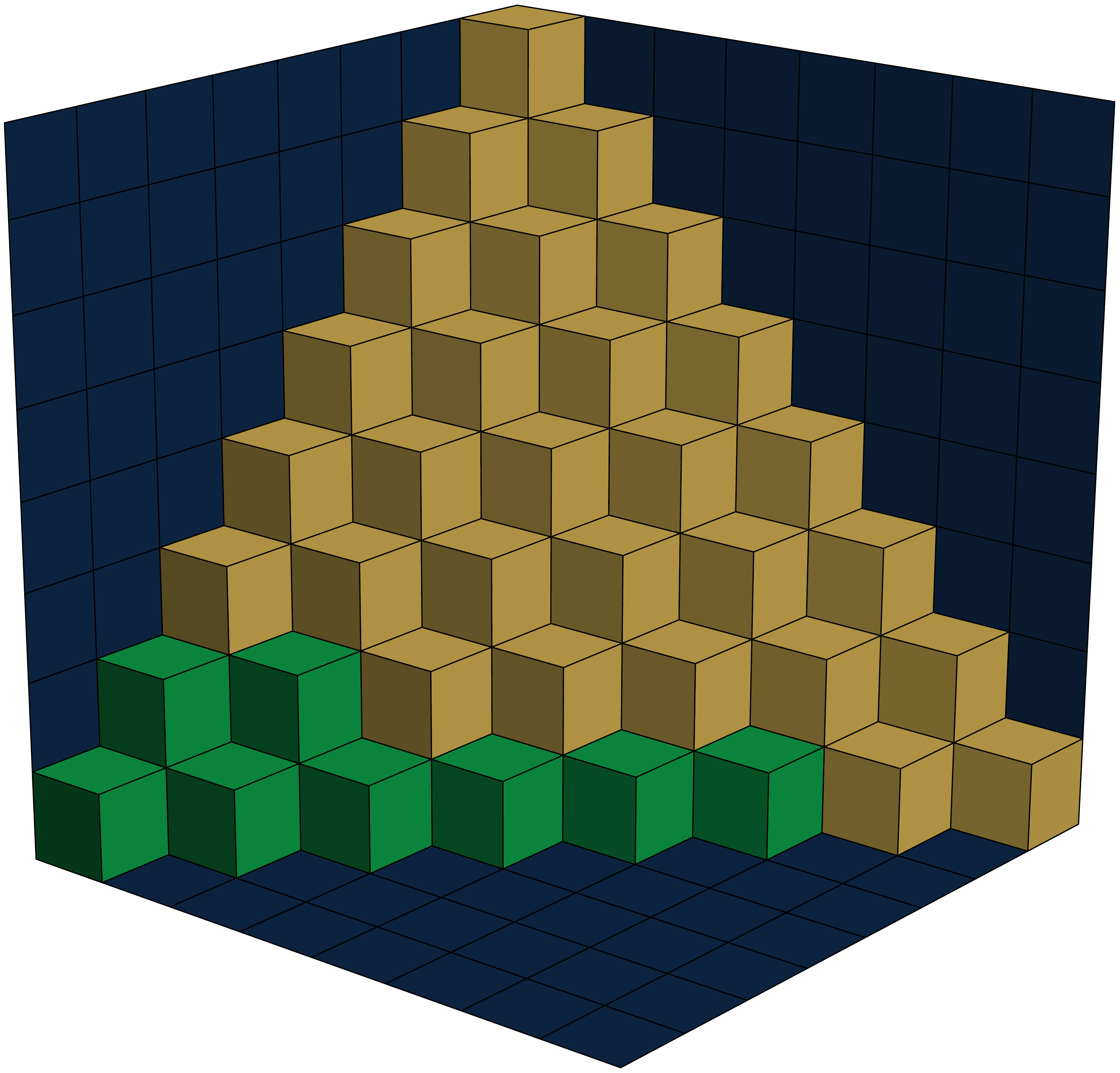}
		\caption{$\fbl_I(7) = 3$.}
		\label{fig:ubl3}
	\end{subfigure}
	
	\caption{Examples of uncovered bottom length.}
	\label{fig:ubl-examples}
\end{figure}

\begin{thm}\label{IntroadvancedLefschetzBase}
	Let $\delta, \ell \in \N$.
	There exists $D_{\delta,n}\in \N$ such that if $d\geq D_{\delta,n}$ and $I$ is equigenerated in degree $d$ with $\dim I_d=\delta$,
	and $I_d$ contains a regular sequence of length $\ell$,
	then $\fbl_I(d) \geq \ell-1$.
\end{thm}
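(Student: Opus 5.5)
The plan is to go through the ideal generated by the single degree, $J=(I_d)$, and split the argument into three steps: an algebraic transfer of height from $J$ to a monomial ideal, a reduction to a finite combinatorial statement about strongly stable sets, and a combinatorial lemma that turns height into uncovered bottom monomials.

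\emph{Step 1: height of $J$.} Since $I_d$ contains a regular sequence of length $\ell$, we have $\operatorname{ht}(J)\ge \ell$. Equivalently, the Hilbert polynomial of $R/J$ has degree at most $n-1-\ell$.

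\emph{Step 2: transfer the height to $(B)$.} Let $B=\gin(I)_d$. This is a strongly stable set of $\delta$ monomials of degree $d$, and it equals $\gin(J)_d$. In general $(B)\subseteq \gin(J)$ and the two can differ. I want to show that for $d\ge D_{\delta,n}$ they agree, or at least that $\operatorname{ht}(B)=\operatorname{ht}(J)$. For this I would invoke the generalized crystallization principle, Theorem~\ref{IntroGeneralGreenNumeric}, in the following form. With $\dim J_d=\delta$ fixed and $d$ large, the growth $\dim R_1J_d$ is forced to be the minimal growth allowed for a strongly stable set of size $\delta$. That minimal growth is the Eliahou--Kervaire count $|R_1B|=\sum_{m\in B}(n-\max(m)+1)$. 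Once the growth is minimal, $\gin(J)$ has no generators in degree $d+1$, and hence none in any degree $\ge d+1$. So $\gin(J)=(B)$ in degrees $\ge d$, and in particular the two ideals have the same height.

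\emph{Step 3: combinatorics of $B$.} For a strongly stable monomial ideal, the height equals $\max_{m\in B}\max(m)$, where $\max(m)$ is the largest index of a variable dividing $m$. So Step 2 gives a monomial $m\in B$ with $\max(m)\ge \ell$. It remains to show that a strongly stable set of $\delta$ monomials of degree $d\gg\delta$ containing such an $m$ has at least $\ell-1$ uncovered bottom monomials. I plan to do this as follows.

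\begin{enumerate}
\item Because $|B|=\delta$ and $B$ is closed under Borel moves, every $m\in B$ has the form $x_1^{d-s}u$ with $s\le\delta$. Otherwise the Borel moves $x_i\to x_1$ applied to $m$ would already produce more than $\delta$ distinct monomials.
\item Write $m=x_1^{d-s}u$ with $x_\ell\mid u$. Applying Borel moves to $u$ produces the bottom monomials $x_1^{d-k}x_2^{k}\in B$ for $1\le k\le s$.
\item Compare these with the monomials $x_1^{d-k}x_2^{k-1}x_3$. The coordinate $\ell$ in $u$ forces a "staircase" of length at least $\ell-1$ of $x_2$-levels $k$ at which $B$ thins out. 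At each such level $x_1^{d-k}x_2^k\in B$ but $x_1^{d-k}x_2^{k-1}x_3\notin B$, i.e.\ the bottom monomial is uncovered.
\end{enumerate}

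To make the last point precise, I would count $|B|$ against the sizes of the Borel closures of suitable monomials and use $|B|=\delta$ together with $d\ge D_{\delta,n}$. This is a finite check, since after stripping the factor $x_1^{d-s}$ only finitely many shapes of $B$ are possible.

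I expect Step 2 to be the main obstacle, namely showing that for $d\gg\delta$ no new generators of $\gin(J)$ appear above degree $d$, so that $(B)$ and $\gin(J)$ have equal height. The first thing I would try is to bound the possible generator degrees of $\gin(J)$ by Gotzmann-type persistence. Once $d$ exceeds the Gotzmann number of the relevant Hilbert polynomial, which is bounded in terms of $\delta$ and $n$ only, the growth from degree $d$ to $d+1$ is forced to be minimal. Green's crystallization principle, in the generalized form of Theorem~\ref{IntroGeneralGreenNumeric}, then finishes the argument.
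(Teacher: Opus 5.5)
Your Step 2 is false, and the rest of the strategy depends on it. An ideal generated by a regular sequence has \emph{maximal}, not minimal, growth, so $\gin(J)$ keeps acquiring generators above degree $d$ however large $d$ is. For example, take $n=3$ and $J=(f,g)$ a regular sequence of degree $d\ge 2$. Then $\dim J_{d+1}=6$, while $B=\{x_1^d,x_1^{d-1}x_2\}$ gives $|R_1B|=3+2=5$, so $\gin(J)$ already has a new generator in degree $d+1$.

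The Gotzmann-number argument does not rescue this. The Hilbert polynomial of $R/J$ depends on $d$ (here it is the constant $d^2$, whose Gotzmann number is $d^2$), so it is not bounded in terms of $\delta$ and $n$. Worse, the conclusion of Step 2 contradicts your own Step 3(1). Every monomial of $B$ is divisible by $x_1^{d-\delta}$, so for $d>\delta$ the ideal $(B)$ lies in $(x_1)$ and has height $1$. Hence $\operatorname{ht}(B)=\operatorname{ht}(J)\ge \ell$ is impossible once $\ell\ge 2$.

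Step 3 also fails on its own terms.
\begin{itemize}
\item The height of a strongly stable ideal is the largest $h$ with a power of $x_h$ in it, not $\max_{m}\max(m)$. The latter is $\operatorname{pd}(R/(B))$ by Eliahou--Kervaire.
\item The premise that some $m\in B$ has $\max(m)\ge \ell$ fails for complete intersections. For three forms of degree $d\ge 3$ in $K[x_1,x_2,x_3]$, surjectivity from degree $d-1$ to $d$ is impossible for dimension reasons. So Theorem~\ref{JuanWLPHeight3} gives injectivity of a general linear form, and Corollary~\ref{Wiebe} then forbids any element of $B$ from being divisible by $x_3$. Thus $B=\{x_1^d,x_1^{d-1}x_2,x_1^{d-2}x_2^2\}$.
\item The implication in item (3) is false as a combinatorial statement. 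The Borel closure $\{x_1^{d-1}x_j : 1\le j\le \ell\}$ of $x_1^{d-1}x_\ell$ has $\fbl=0$ when $\ell\ge 3$.
\end{itemize}

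The missing idea is that the regular sequence cannot be seen through a height computation on $(B)$. It is detected only through the growth of $\gin(I)$ over a window of degrees $d,\dots,d+C$, with $C$ depending on $\delta$ and $n$, and that is where $D_{\delta,n}$ comes from. The paper's proof (Theorem~\ref{advancedLefschetzBase}) argues by contradiction as follows.
\begin{itemize}
\item The WLP for complete intersections of large generating degree gives the lower bound $\dim(\gin(I)|_{x_n})_{d+k}\ge \ell\binom{n-2+k}{n-2}$ (Lemma~\ref{ginBound}).
\item Suppose $\fbl_I(d)\le \ell-2$. Then $\squash_{n-3}(I_d)$ is an ideal of $K[x_1,x_2]$ with strongly stable initial ideal. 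By Corollary~\ref{GeneralGreen2Vars}, the bottom line gains at most $\fbl_I(d)$ generators in each later degree.
\item If no generator near the bottom involves $x_3,\dots,x_n$, the Bezrukov weight formula gives $\dim(\gin(I)|_{x_n})_{d+k}\le \binom{n-2+k+r-1}{n-2}+(\ell-2)\binom{n-2+k}{n-2}$, with $r\le\delta$. This contradicts the lower bound for large $k$ by Lemma~\ref{binomInequality}.
\item Otherwise, Lemma~\ref{MonsterLemma} produces a generator divisible by $x_3$ that forces $\fbl$ to drop in some degree $d+s$.
\item Iterating, $\fbl$ eventually reaches $0$, which would rule out even a regular sequence of length $2$.
\end{itemize}
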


One might be able to improve the techniques in this paper and obtain a sharp lower bound for $D_{\delta, n}$ in Theorem \ref{IntroadvancedLefschetzBase},
but the condition $d\geq D_{\delta,n}$ can't be completely removed.
Let $\alpha(I)$ denote the smallest degree of a minimal generator in $I$.
If $\alpha(I)\geq \ell$ and $\delta= \dim I_{\alpha(I)} \geq \binom{n-1+\alpha(I)-(\ell-1)}{n-1}+\ell+1$, 
then we always have $\fbl_I(\alpha(I))<\ell -1$ because $\gin(I)$ is strongly stable.
Hence the number of minimal generators and the length of the regular sequence matter.
So in order to extend Theorem \ref{IntroadvancedLefschetzBase} to a more general class of ideals,
we need to track the number of minimal generators and the length of a regular sequence.

We say that $I$ is of {\it type} $v_{\ell,t}=(a_1,\dots, a_{\ell},b_0,\dots, b_t)\in \N^{\ell+t+1}$ if there is a regular sequence $f_1,\dots, f_\ell\in I$ such that $a_i+\alpha(I)=\deg(f_i)$ and $\deg(f_i) \leq \deg(f_{i+1})$,
and for all $j\in \{0,\dots,t\}$, $I$ has $b_j$ minimal generators in degree $\alpha(I)+j$.
The ideal $I$ in Theorem \ref{IntroadvancedLefschetzBase} is of type $v_{\ell,0} = (0,\dots, 0, \delta)$ and $v_{\ell, t}=(0,\dots, 0, \delta, 0,\dots, 0)$.
Every nonzero ideal is of type $v_{1,t}$.
Also, every nonzero ideal is of some type $v_{\ell, t}$,
and if we make $t$ large enough,
then the tail of $v_{\ell, t}$ will be all zeros because every ideal is finitely generated.

The inequality $\fbl_I(d) \geq \ell-1$ also needs to be adjusted.
We want notation for the length of a regular sequence in a specific degree.
For a sequence of integers $2\leq e_1\leq e_2\leq \cdots \leq e_\ell$ and $p\in \N$,
when $e_1\leq p$ we define $\gamma (p,e_1,\dots, e_\ell)$ to be the largest integer $i\in \{1,\dots, \ell\}$ such that $e_i\leq p$,
and if $p<e_1$ then we define $\gamma (p,e_1,\dots, e_\ell) = 0$.
For $d\in \N$ and an ideal $I$ of type $v=v_{\ell,t}=(a_1,\dots, a_{\ell},b_0,\dots, b_t)\in \N^{\ell+t+1}$ we define $\gamma_{v,I}(d) = \gamma (d,\alpha(I)+a_1,\dots, \alpha(I)+a_\ell)$.
We will often abuse notation and write $\gamma_I(d)$ instead of $\gamma_{v,I}(d)$ because $v$ will be clear from context.

Finally,
we define a much more general version of uncovered bottom length.
Consider $r\geq 3$ with $r\leq n$ and $e = (e_3,e_4,\dots, e_r)\in \N^{r-2}$ such that $E=e_3+\cdots +e_r$ and $d-E\geq 1$. 
A monomial of the form $x_1^{d-j-E}x_2^jx_3^{e_3}x_4^{e_4}\cdots x_r^{e_r}\in \gin(I)_{d}$ with $j\geq 1$ is {\it $(r,e)$-uncovered} with respect to $I$,
if $x_1^{d-j-E}x_2^{j-1}x_3^{e_3}x_4^{e_4}\cdots x_r^{e_r+1} \not\in \gin(I)_d$.
The {\it $(r,e)$-uncovered bottom length} of $I$ in degree $d$ is $\fbl_I^{(r,e)}(d)$,
the number of uncovered monomials of the form $x_1^{d-j-E}x_2^jx_3^{e_3}x_4^{e_4}\cdots x_r^{e_r}\in \gin(I)_{d}$ with $j\in \{1,2,\dots, d-E\}$.
Note that $\fbl_I(d) = \fbl_I^{(3,0)}(d)$.
Figure \ref{fig:generalized-ubl} gives examples of $\fbl_I^{(r,e)}(d)$.

\begin{figure}[htbp]
	\centering
	
	\begin{subfigure}[b]{0.32\textwidth}
		\centering
		\includegraphics[width=\textwidth]{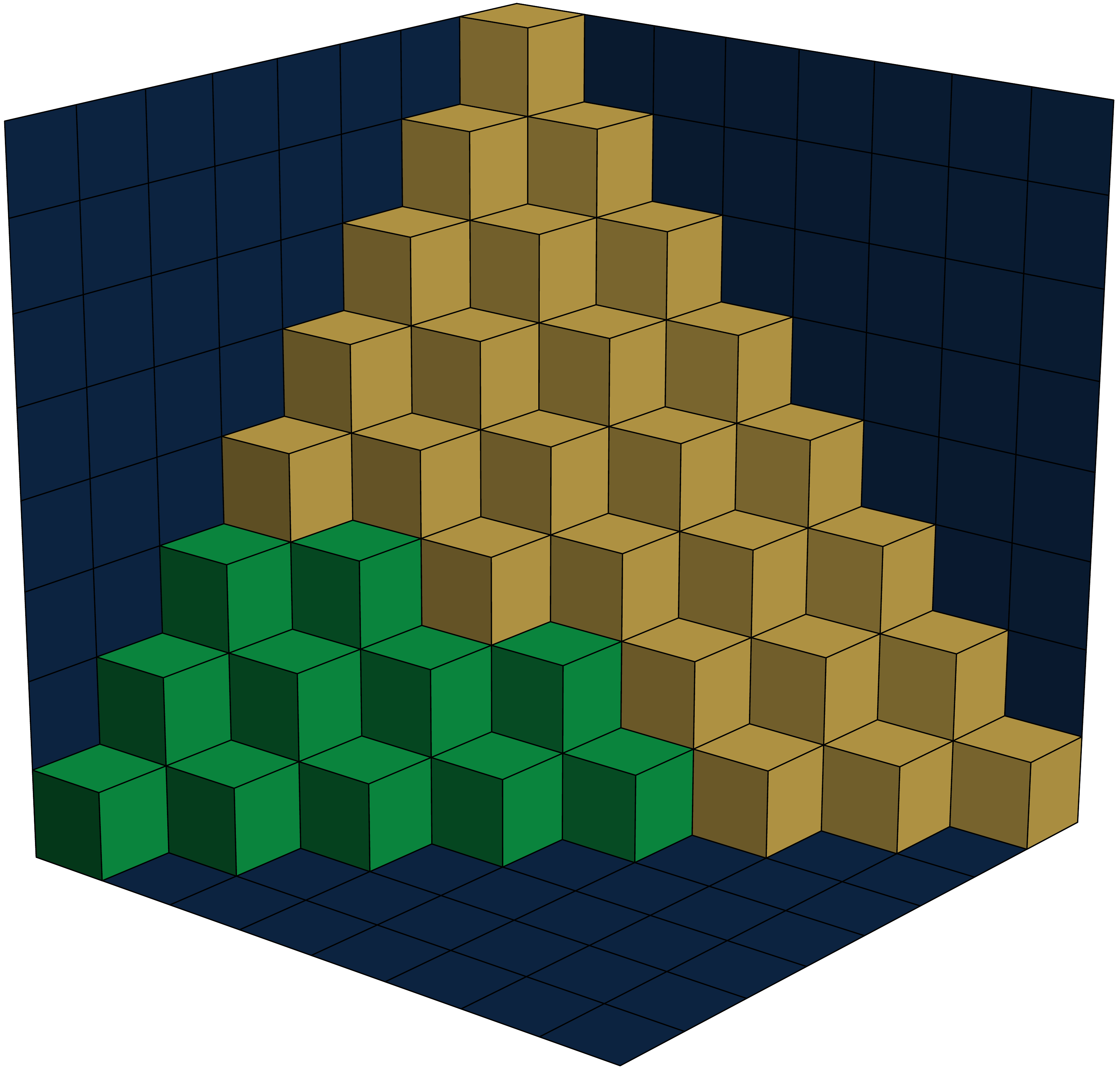}
		\caption{$\operatorname{ubl}_I^{(3,0)}(7)=0$, $\operatorname{ubl}_I^{(3,1)}(7)=1$, and $\operatorname{ubl}_I^{(3,2)}(7)=1$.}
		\label{fig:gubl1}
	\end{subfigure}
	\hfill
	\begin{subfigure}[b]{0.32\textwidth}
		\centering
		\includegraphics[width=\textwidth]{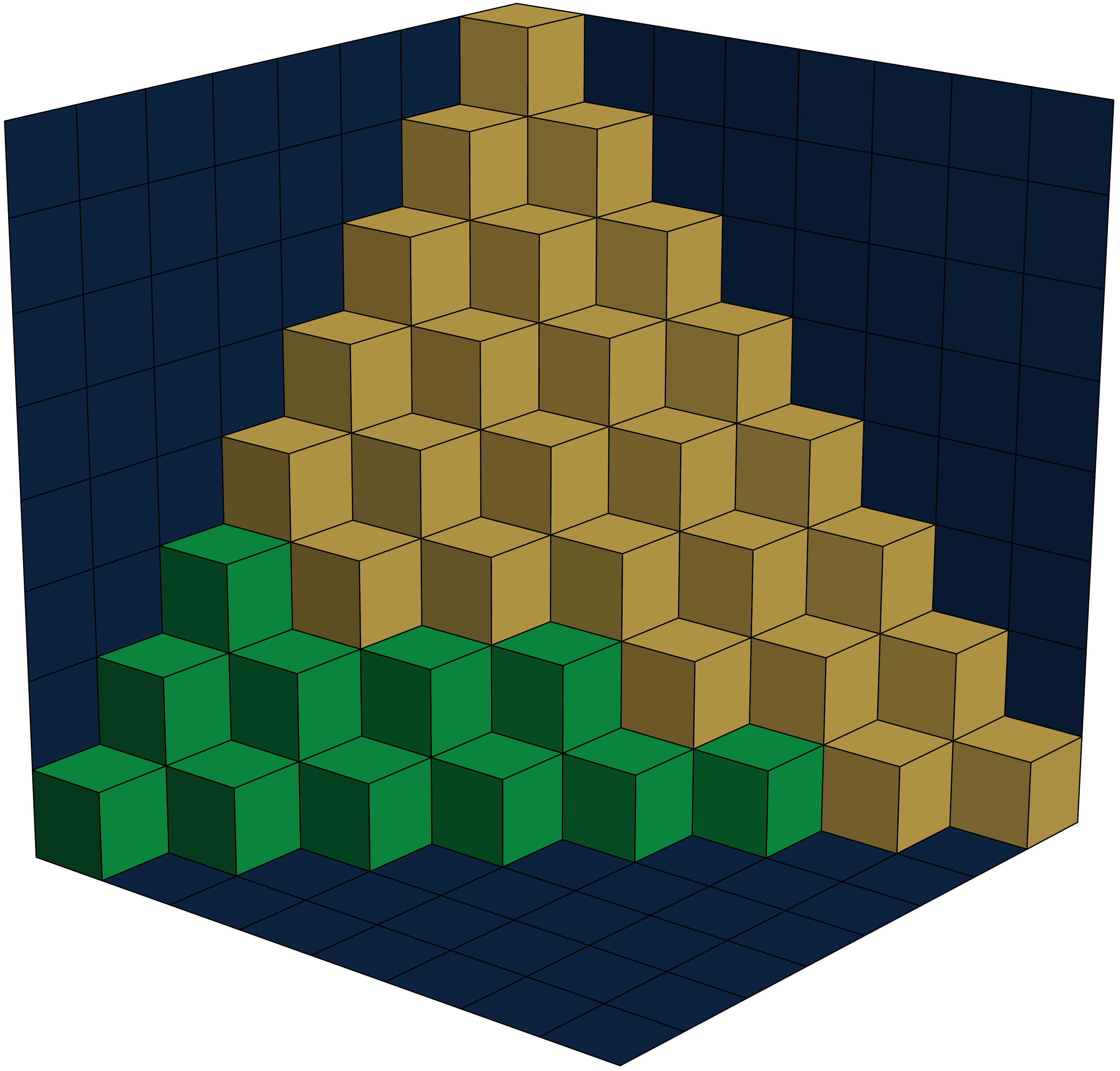}
		\caption{$\operatorname{ubl}_I^{(3,0)}(7)=1$, $\operatorname{ubl}_I^{(3,1)}(7)=2$, and $\operatorname{ubl}_I^{(3,2)}(7)=0$.}
		\label{fig:gubl2}
	\end{subfigure}
	\hfill
	\begin{subfigure}[b]{0.32\textwidth}
		\centering
		\includegraphics[width=\textwidth]{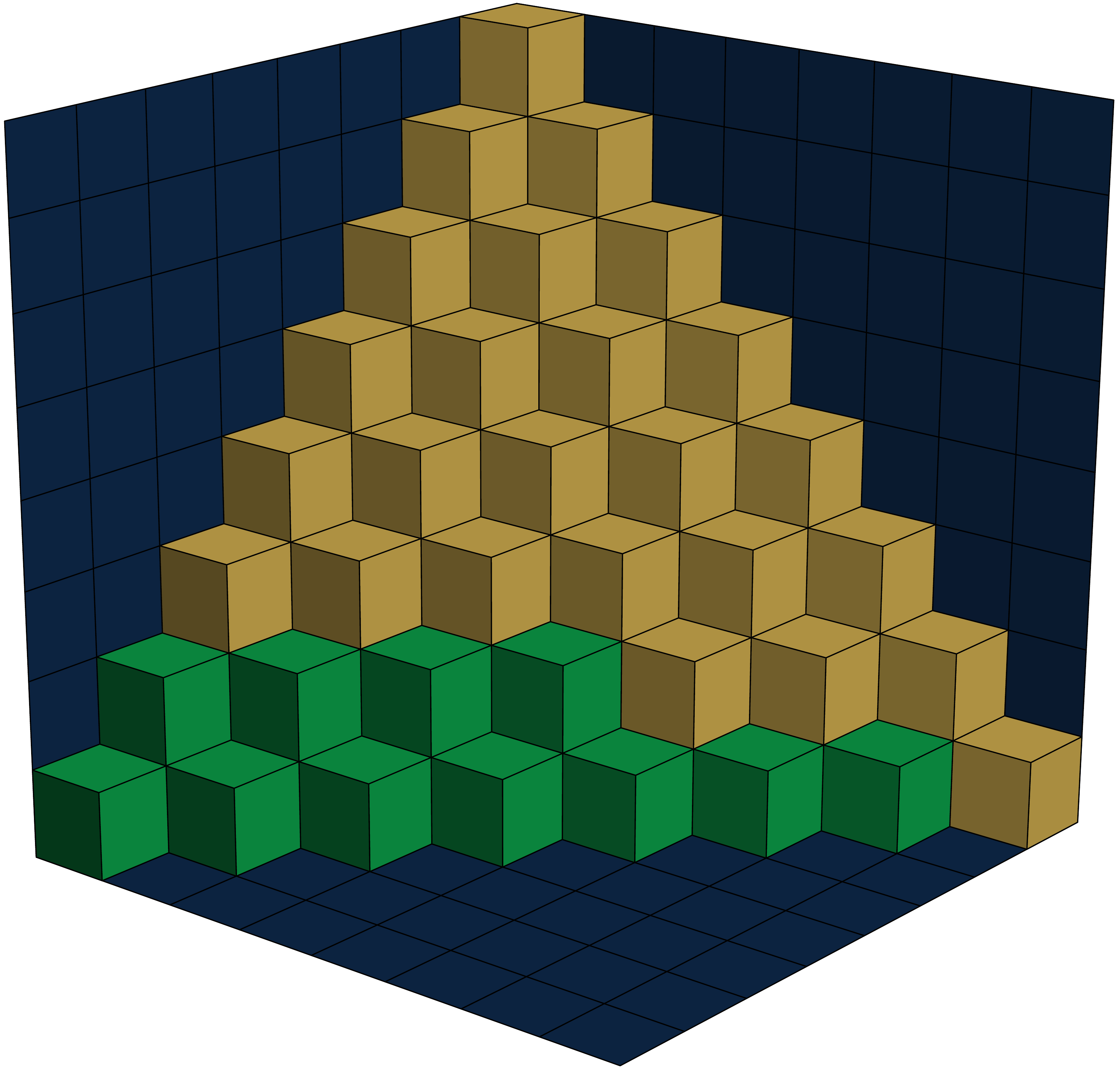}
		\caption{$\operatorname{ubl}_I^{(3,0)}(7)=2$, $\operatorname{ubl}_I^{(3,1)}(7)=3$, and $\operatorname{ubl}_I^{(3,2)}(7)=0$.}
		\label{fig:gubl3}
	\end{subfigure}
	
	\caption{Examples of generalized uncovered bottom length.}
	\label{fig:generalized-ubl}
\end{figure}

\begin{thm}\label{IntroMonster2}
	Let $v\in \N^{\ell+t+1}$.
	There exists $D_{v,n}\in \N$ such that for any ideal $I$ of type $v$ with $\alpha(I) \geq D_{v,n}$,
	for all $k\in \{0,1,\dots, t\}$, $r\in \{3,4,\dots, n\}$, and $e = (e_3,e_4,\dots, e_r)\in \N^{r-2}$ with $E=e_3+\cdots +e_r \leq k$,
	we have $\fbl_I^{(r,e)}(\alpha(I)+k) \geq \gamma_I (\alpha(I)+k-E)-1$.
\end{thm}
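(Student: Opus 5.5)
Our strategy is to reduce Theorem \ref{IntroMonster2} to Theorem \ref{IntroadvancedLefschetzBase}-type statements by restricting to a suitable subring and using the standard compatibility of revlex generic initial ideals with hyperplane sections and colon ideals. Fix $k$, $r$, $e$ with $E\le k$, and put $d=\alpha(I)+k$ and $g=\gamma_I(d-E)$. If $g\le 1$ there is nothing to prove, so assume $g\ge 2$. Then $f_1,\dots,f_g$ all have degree at most $d-E$. Since $\gin(I)$ is strongly stable, the monomials $x_1^{d-j-E}x_2^jx_3^{e_3}\cdots x_r^{e_r}$ are governed by the "layer" ideal
\[
J=\bigl(\gin(I):x_3^{e_3}\cdots x_r^{e_r}\bigr)\cap K[x_1,x_2,x_3]
\]
after setting $x_4=\dots=x_n=0$ (suitably: we track the variable $x_r$ as playing the role of $x_3$). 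Concretely, the $(r,e)$-uncovered condition in degree $d$ becomes the ordinary uncovered condition for a strongly stable ideal in three variables in degree $d-E$, where the role of $x_3$ is taken by $x_r$. By strong stability, the covering test with $x_r$ is at least as strong as the one with $x_3$. So it suffices to bound the ordinary $\fbl$ in degree $d-E$ of an ideal whose degree $d-E$ component contains the monomial part of $\gin(I)$ in that layer.

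The key step is to show that, in degree $d-E$, this layer contains $\gin(I')_{d-E}$, where $I'=(I_{\le d-E})$ (or $I$ truncated/restricted to general linear forms). For revlex we know $\gin(I)$ restricted modulo general linear forms equals $\gin$ of the restriction, and that multiplying by $x_3^{e_3}\cdots x_r^{e_r}$ preserves membership. Thus we reduce to an ideal $I''$ in $K[x_1,x_2,x_3]$ (the restriction of $I$ to a general $\mathbb{P}^2$, passing through the appropriate colon in degree $d-E$). This ideal still contains a regular sequence of length $\min(g,3)$, and its generators in degrees $\le d-E$ are controlled by $b_0,\dots,b_t$. Since $\fbl\ge g-1$ in three variables only needs at least $g-1$ distinct uncovered positions, the real content is the $n=3$ case with the full regular sequence. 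Here we pass instead to a general restriction with $n'=g$ variables.

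We then prove the core estimate: for an ideal of type $v$ with $\alpha$ large, the number of uncovered bottom monomials in degree $\alpha+k'$ (where $k'=k-E$) is at least $g-1$. The plan is an induction on $g$ using a general hyperplane $h$. Modulo $h$, the images of $f_1,\dots,f_{g-1}$ remain a regular sequence. By induction, $\gin(I+(h))/(h)$, which by Bayer--Stillman equals $\gin(I)$ with $x_n$ set to $0$, has $g-2$ uncovered positions on the bottom edge. One extra uncovered position must then come from $f_g$. Here we use a Green crystallization / Gotzmann persistence argument (the generalized Crystallization Principle, Theorem \ref{IntroGeneralGreenNumeric}). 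When $\alpha$ is huge relative to the bounded data $v$, the Hilbert function of $R/I$ in degree $\alpha+k'$ is pinned down to polynomial growth of lower degree once a regular sequence of length $g$ is present. If too few uncovered positions existed, strong stability would force the bottom edge of $\gin(I)$ to be a lex-like segment. Its Hilbert function then grows with $\dim R/I\ge n-g+1$, contradicting the codimension $g$ bound in the degree $\alpha+k'$ window.

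The main obstacle is this last step: turning the codimension information into a count of uncovered bottom monomials uniformly in $\alpha$, with $D_{v,n}$ depending only on $v$ and $n$. I would first try to compare the Hilbert function of $R/I$ in degrees $\alpha,\dots,\alpha+t$ with that of $R/(f_1,\dots,f_g)$ plus at most $\sum b_j$ extra forms. Each covered bottom monomial forces an entire line of monomials in $\gin(I)$ of length comparable to $\alpha$ by strong stability. A count then exceeds the available $\sum_j b_j\binom{n-1+j}{n-1}$ when $\alpha\ge D_{v,n}$.
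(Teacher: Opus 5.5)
There is a genuine gap, and it sits in both halves of your plan.

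\emph{The reduction to a base case does not go through.} Rewriting the $(r,e)$-uncovered count as an ordinary uncovered count for $\gin(I):x_3^{e_3}\cdots x_r^{e_r}$ is a correct reformulation. But this monomial ideal is not the generic initial ideal of anything you can build from $I$. For revlex, only setting $x_{r+1}=\cdots=x_n=0$ and taking the colon by the \emph{last} remaining variable commute with $\gin$. So ``passing through the appropriate colon'' is unavailable as soon as some $e_i$ with $i<r$ is nonzero. Your fallback, that the layer contains $\gin(I')_{d-E}$, gives no bound. The uncovered count is a difference (bottom length minus the length of the covering row minus one), so it is not monotone under inclusion. The paper handles exactly this with the Fl{\o}ystad--Stillman theorem, which turns each layer into a genuine ideal $J(k,r,e)\subseteq K[x_1,x_2]$ with strongly stable initial ideal. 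It then compares growth rates over a long window:
\begin{enumerate}
\item If $\fbl^{(r,e)}_I(\alpha(I)+k)<\gamma_I(\alpha(I)+k-E)-1$, Corollary \ref{GeneralGreen2Vars} caps the growth of $J(k,r,e)$ at $\gamma_I(\alpha(I)+k-E)-1$ per degree.
\item The $E=0$ statement for the equigenerated ideal $(I_{\alpha(I)+k-E})$ is applied in all degrees up to about $\alpha(I)+\MB(t)+t$ (Corollary \ref{advancedLefschetzBaseMultipleDegreesForEquigen}). This forces the smaller layer $J(k-E,r,0)$ to grow by at least $\gamma_I(\alpha(I)+k-E)$ per degree.
\item The bounded initial discrepancy is then overtaken.
\end{enumerate}

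\emph{The core estimate has no mechanism for more than one uncovered position.} Cutting by a general hyperplane replaces $\gin(I)$ by $\gin(I)|_{x_n}$. For $n\geq 4$ this has the same bottom edge and the same $x_3$-row, so $\fbl_I$ is unchanged. Induction on the number of variables disposes of $g<n$, but for $g=n$ the ``extra position coming from $f_g$'' is the whole theorem. Your proposed mechanisms do not supply it:
\begin{enumerate}
\item The type only bounds the Hilbert function in the window; it does not determine it.
\item Uncovered bottom monomials carry no direct information on whether powers of $x_3,\dots,x_n$ eventually lie in $\gin(I)$. A Krull-dimension argument therefore gives at best $\fbl\geq 1$. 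This is how the paper closes its iteration once $\fbl$ reaches $0$, using that the ideal is equigenerated.
\item Your final count is false. Since $\dim I_{\alpha(I)+k}\le\MB(k)$ is independent of $\alpha(I)$, all of $\gin(I)_{\alpha(I)+k}$ lies within bounded distance of $x_1^{\alpha(I)+k}$. No covered monomial forces a line of length comparable to $\alpha(I)$.
\end{enumerate}

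The missing ingredient is the weak Lefschetz property for complete intersections in degrees $\le d+C$ once $d\geq nC+1$ (Corollary \ref{WLPAsymptotic}). Via Corollary \ref{Wiebe} and Lemma \ref{WLPNewGenerators}, this gives $\dim(\gin(I)|_{x_n})_{d+k}\geq \ell\binom{n-2+k}{n-2}$ (Lemma \ref{ginBound}), and this is where large $\alpha(I)$ is really used. The paper plays this lower bound against the upper bound $\binom{n-2+k+r_1-1}{n-2}+(\ell-2)\binom{n-2+k}{n-2}$, which Fl{\o}ystad--Stillman and two-variable crystallization give when $\fbl_I(d)\le\ell-2$. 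It chooses the window length with Lemma \ref{binomInequality}. Minimal generators divisible by $x_3,\dots,x_n$ appearing in the window are handled by Lemma \ref{MonsterLemma} together with an iteration on strictly decreasing $\fbl$.
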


\subsection{Results on the Eisenbud--Green--Harris Conjecture}
In 1927, Macaulay \cite{Macaulay} proved that for every ideal in $R$,
there exists a lex ideal with the same Hilbert function.
In 1969, Clements and Lindström \cite{ClementsLindstrom} generalized Macaulay's Theorem.
They proved that for every ideal $I$ which contains an ideal $P$ that is generated by $x_1^{e_1},\dots, x_\ell^{e_\ell}$ with $e_1\leq \cdots \leq e_\ell$ and $\ell\leq n$,
there exists a lex ideal $L$ such that $L+P$ has the same Hilbert function as $I$.
We call $L+P$ a {\it lex plus powers (LPP)} ideal.
In 1993, Eisenbud, Green, and Harris \cite{EGHMain} conjectured that the same should hold if the monomial complete intersection is replaced with any complete intersection.

\begin{con}[Eisenbud--Green--Harris \cite{EGHMain}]\label{EGHConjecture}
	Let $I\subseteq R$ be an ideal that contains a regular sequence $f_1,\dots, f_\ell$ with degrees $e_1\leq \dots \leq e_\ell$.
	Then there exists a lex ideal $L$ such that $L+P$ has the same Hilbert function as $I$,
	where $P$ is the ideal generated by $x_1^{e_1},\dots, x_\ell^{e_\ell}$.
\end{con}

We will refer to Conjecture \ref{EGHConjecture} as the {\it EGH Conjecture}.
We will often state that there exists an LPP ideal without specifying the degrees for the powers when they are clear from context.
Following Güntürkün and Hochster \cite{HoschsterSema},
a {\it defect} $\delta$ ideal is an ideal that is minimally generated by $n+\delta$ forms and contains a regular sequence of length $n$.
Francisco \cite{Francisco} proved that the EGH Conjecture holds in degrees $\leq d+1$ for defect $1$ ideals,
where $d$ is the degree of the minimal generator not belonging to the complete intersection.
Güntürkün and Hochster \cite{HoschsterSema} proved that the EGH Conjecture holds in degrees $\leq 3$,
for defect $2$ ideals generated by quadrics.
They also proved the same statement for defect $3$ ideals,
but only when $n=5$.
We prove that the EGH Conjecture holds in degrees $\leq d+1$ for defect $\delta$ ideals generated in degree $d$,
except for finitely many $d$.

\begin{thm}\label{IntroEGHFirst}
	Let $\delta\in \N$.
	There exists $D_{\delta,n}\in \N$ such that for any equigenerated defect $\delta$ ideal $I$ with $\alpha(I)\geq D_{\delta,n}$,
	there exists an LPP ideal with the same Hilbert function in degrees $\leq \alpha(I)+1$.
\end{thm}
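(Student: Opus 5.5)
We would reduce Theorem \ref{IntroEGHFirst} to Theorem \ref{IntroadvancedLefschetzBase} applied with $\ell=n$. Let $I$ be equigenerated in degree $d=\alpha(I)$ and of defect $\delta$. Then $\dim I_d=n+\delta$, and since all minimal generators have degree $d$, the regular sequence of length $n$ lies in $I_d$. So $P=(x_1^d,\dots,x_n^d)$. In degrees $<d$ both $I$ and the LPP ideal are zero. Degree $d$ is also easy: we need a lex segment $L_d$ with $\dim (L+P)_d=n+\delta$. Since $\dim R_d-n\geq n+\delta$ for large $d$, take $L_d$ to be the first $\delta$ non-power monomials in lex order, choosing $d\gg\delta$ so that these monomials are $x_1^{d-1}x_2,\dots$ and no lex segment swallows a power $x_i^d$ with $i\ge2$. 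The real content is degree $d+1$: we must show $\dim I_{d+1}\geq \dim (L+P)_{d+1}$, where $L$ is generated by that lex segment in degree $d$. (The reverse inequality is not required, because we may enlarge $L$ by adding lex generators in degree $d+1$.) Since $I$ and $\gin(I)$ have the same Hilbert function, we will work with the strongly stable ideal $G=\gin(I)$, which has $\dim G_d=n+\delta$.

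The lower bound will come from counting $\dim R_1 G_d$ for the strongly stable set $G_d$. By Theorem \ref{IntroadvancedLefschetzBase} with $\ell=n$ and $d\ge D_{\delta,n}$, we get $\fbl_I(d)\geq n-1$, that is, $G_d$ contains at least $n-1$ uncovered monomials $x_1^{d-k}x_2^k$. Strong stability then forces the following structure on $G_d$:
\begin{itemize}
\item it contains an initial string $x_1^d, x_1^{d-1}x_2,\dots,x_1^{d-m}x_2^m$ of length $m+1\geq n$ in the variables $x_1,x_2$;
\item its remaining $n+\delta-m-1$ monomials involve $x_3,\dots,x_n$, and because the $x_2$-string monomials are uncovered, these remaining monomials have limited ability to overlap with the string's shadows.
\end{itemize}
For the comparison ideal, the shadow of $L_d+P_d$ in degree $d+1$ is explicitly computable: the powers $x_i^d$ contribute $n$ each minus overlaps, and the lex segment contributes a Macaulay-type shadow. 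We would show that $|R_1 G_d|\geq |R_1(L_d+P_d)|$ by a direct count: the string of $x_1x_2$-monomials has shadow of size roughly $(m+1)(n-1)+1$, and the other monomials contribute further disjoint new monomials by a Bayer–Stillman/Eliahou–Kervaire style count, where the shadow of a strongly stable set equals $\sum_{u}\max(u)$. For a stable set, $\dim R_1 G_d=\sum_{u\in G_d}\max(u)$, with $\max(u)$ the largest index of a variable in $u$. The problem therefore becomes the combinatorial inequality
\[
\sum_{u\in G_d}\max(u)\geq \dim (L+P)_{d+1},
\]
where the right-hand side is a closed expression in $n,\delta$ for large $d$.

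The main obstacle is this combinatorial minimization: over strongly stable sets of size $n+\delta$ in degree $d$ with $\fbl\ge n-1$, show that $\sum\max(u)$ is at least the LPP value. Our plan is to argue that the minimizer concentrates as many monomials as possible with small $\max$. Monomials with $\max(u)=2$ form the $x_1x_2$-string, but each uncovered string element excludes one $x_3$-monomial, so at most a bounded number can be exploited cheaply. We would then compare the result to the LPP count, in which each power $x_i^d$ contributes $i$ and each lex monomial $x_1^{d-1}x_j$ contributes $j$, so that both sides are explicit. The subtle point is to verify that the uncovered condition, and not merely strong stability, is what rules out the cheap configurations, namely long $x_1x_2$-strings. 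These configurations would otherwise beat the LPP value, as the remark after Theorem \ref{IntroadvancedLefschetzBase} shows.
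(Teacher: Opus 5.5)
\textbf{The gap: the degree-$(d+1)$ inequality you aim for is false.} You bound $\dim I_{d+1}$ below by $\dim R_1\gin(I)_d$ and then try to prove $\dim R_1\gin(I)_d\ge\dim(L+P)_{d+1}$. No minimization over strongly stable $G_d$ of size $n+\delta$ with $\fbl_I(d)\ge n-1$ can give this, because the inequality fails.

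Your count is also reversed. By Lemma \ref{WeightsLemma} with $t=1$, $|R_1G_d|=\sum_{u\in G_d}(n+1-\max(u))$, not $\sum_u\max(u)$. Likewise each power $x_i^d$ contributes $n$ monomials to $P_{d+1}$, not $i$. So shadows are small when $\max$ is large, and long $x_1x_2$-strings are the expensive configurations, not the cheap ones.

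With the correct count, every $u\neq x_1^d$ contributes at most $n-1$, so $|R_1G_d|\le n+(n-1)(n+\delta-1)$. Two examples show the failure:
\begin{itemize}
\item Take $\delta=0$, a complete intersection of $n$ forms of degree $d$. You need $\dim(L+P)_{d+1}=\dim P_{d+1}=n^2=\dim I_{d+1}$, but every admissible $G_d$ has $|R_1G_d|\le n^2-n+1$. Concretely, $\gin(I)_d=\{x_1^d,x_1^{d-1}x_2,\dots,x_1^{d-n+1}x_2^{n-1}\}$, and the missing $n-1$ monomials are new minimal generators of $\gin(I)$ in degree $d+1$ (Theorem \ref{CIGinStructure}).
\item Take a general defect-$1$ ideal. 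Then $\gin(I)_d$ is the $x_1x_2$-string of length $n+1$, whose shadow has $n^2$ elements, while $\dim(L+P)_{d+1}=(2n-1)+n(n-1)=n^2+n-1$.
\end{itemize}

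\textbf{What is missing.} $\gin(I)$ is forced to acquire new generators in degree $d+1$, and these are what the powers account for. You also need $\fbl_I(d+1)\ge n-1$. You get it by applying Theorem \ref{IntroadvancedLefschetzBase} to the ideal generated by $I_{d+1}$, which is equigenerated with at most $n(n+\delta)$ generators and still contains the regular sequence; this is Corollary \ref{advancedLefschetzBaseMultipleDegreesForEquigen}. The uncovered monomials in degree $d+1$ lie beyond $R_1$ of the degree-$d$ string, hence outside $R_1\gin(I)_d$.

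The paper treats the statement as the case $t=1$ of Theorem \ref{EGHTheorem}:
\begin{itemize}
\item From $\gin(I)_d$ it removes the $n-1$ lex-last string monomials $\Gamma_I(d)$.
\item From $\gin(I)_{d+1}$ it removes $\Gamma_I(d+1)$ together with the Borel shadow of $\Gamma_I(d)$. This set has $(n-1)+(n-1)^2=n(n-1)$ elements, exactly the number of monomials of $(x_2^d,\dots,x_n^d)$ in degree $d+1$ (Lemma \ref{PowersSegment}).
\item Only the remainders are compared with the lex part. One has $|\mathfrak{L}_{I,1}|\ge\sum_{m\in\mathfrak{L}_{I,0}}(n+1-\max(m))$, and Macaulay's Theorem bounds this below by the shadow of a lex segment of size $\delta+1$; in general this comparison goes through the compressions of Theorems \ref{Monster2} and \ref{LexSegment}.
\end{itemize}
For $t=1$ the compression step can be skipped. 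Deleting uncovered string monomials keeps a strongly stable set strongly stable, so Macaulay applies to $\mathfrak{L}_{I,0}$ directly.

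In short, the powers must be matched against $\Gamma_I(d)$, its shadow, and the new generators $\Gamma_I(d+1)$. Comparing all of $R_1\gin(I)_d$ with $(L+P)_{d+1}$, as your plan does, cannot succeed.
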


Note that equigenerated defect $\delta$ ideals are of type $v_{n,0} = (0,\dots, 0, n+\delta)$ and $v_{n, t}=(0,\dots, 0, n+\delta, 0,\dots, 0)$.
One might be interested in a bound for $D_{\delta, n}$.
This can be obtained by following the proofs of results used to prove Theorem \ref{EGHTheorem}.
However, the arguments can be specialized to obtain better bounds for $D_{\delta,n}$ than what the current proofs produce.
For example, when $n=3$ we can get
\begin{align*}
	\delta \leq \binom{\ceil{\frac{D_{\delta,n}+3}{2}}-2}{2} -1.
\end{align*}
So, if $\delta = 5$ we have $D_{\delta,n}\geq 8$.
Computing a bound for $D_{\delta,n}$ gets significantly more complicated when $n\geq 4$.
All the results in the paper are obtained by a unified approach.
Optimizing individual results would make the paper much longer.

Another direction one can take is to prove that EGH holds in a larger range of degrees beyond $d$.
Caviglia and De Stefani \cite{CavigliaDeStefani2} proved that the EGH Conjecture holds for defect $1$ ideals in $3$ variables.
Our next result extends Theorem \ref{IntroEGHFirst} by relaxing the equigeneration and range requirements.

\begin{thm}\label{IntroEGHMain}
	Let $v\in \N^{\ell+t+1}$.
	There exists $D_{v,n}\in \N$ such that for any ideal $I$ of type $v$ with $\alpha(I) \geq D_{v,n}$,
	there exists an LPP ideal with the same Hilbert function as $I$ in degrees $\leq \alpha(I)+t$.
\end{thm}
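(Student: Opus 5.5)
We prove Theorem \ref{IntroEGHMain} by reducing to $\gin(I)$ and then using the structure from Theorem \ref{IntroMonster2} to build the required LPP ideal degree by degree. Let $J=\gin(I)$ in revlex. Then $J$ is strongly stable and has the same Hilbert function as $I$, so it suffices to produce an LPP ideal $L+P$ with $P=(x_1^{e_1},\dots,x_\ell^{e_\ell})$, where $e_i=\alpha(I)+a_i$, such that $\dim (L+P)_d=\dim J_d$ for all $d\leq \alpha(I)+t$. Take $D_{v,n}$ at least as large as the constant of Theorem \ref{IntroMonster2}. Then for every $k\leq t$, every $r$, and every $e$ with $E\leq k$ we have
\[
\fbl_J^{(r,e)}(\alpha(I)+k)\geq \gamma_I(\alpha(I)+k-E)-1 .
\]
This is the only place the regular sequence enters.

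The key step is to show that in each degree $d=\alpha(I)+k$, the set $J_d$ is at least as large as a suitable ``plus powers'' configuration. We organize monomials of degree $d$ by their tail $x_3^{e_3}\cdots x_r^{e_r}$, so each tail gives a line $x_1^{d-j-E}x_2^jx_3^{e_3}\cdots$ in the variables $x_1,x_2$. On each line, strong stability makes $J_d$ an initial segment in $x_1$-powers. The uncovered monomials measure how far this segment sticks out beyond the next line, the one with $e_r$ raised by one. The inequality above says that along line $(r,e)$ the segment exceeds its successor by at least $\gamma_I(d-E)-1$ steps. This is the same profile that $x_1^{e_1},\dots,x_{\gamma}^{e_\gamma}$ force in an LPP ideal: each power $x_i^{e_i}$ with $e_i\leq d-E$ contributes one extra monomial per line.

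We then use a compression argument in the style of Clements--Lindström, or Green's proof of Macaulay via hyperplane restriction. Replace $J_d$ line by line, moving from lex-largest to lex-smallest tails, by the lex segment plus the monomials in $P_d$. The step is to show, by induction on $n$ (restricting to $x_1,\dots,x_{n-1}$ by setting $x_n=0$), that the resulting set is an LPP set of the same cardinality whose shadow stays inside the next degree's set. The uncovered-length bounds supply exactly the inequalities needed so that the compressed pieces nest: the shadow of the degree $d$ LPP set has size at most $\dim J_{d+1}$. This comparison is by Clements--Lindström, since LPP sets minimize shadows among sets containing $P$, and we apply it to the LPP-ified version of $J_d$.

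The main obstacle is the middle step. We must show that the local ``extra width'' supplied by the uncovered counts actually lets $J_d$ be LPP-compressed in every degree up to $\alpha(I)+t$, including degrees where new generators appear (the $b_j$). At those degrees we must check that $\dim J_d$ still lies in the range realized by some LPP set, namely at least $\dim P_d$ plus the shadow of the previous LPP set. My first attempt would be induction on $k$, using the bound only for $E\leq k$, together with the fact that for large $\alpha(I)$ the $b_j$ generators are negligible against line lengths of order $\alpha(I)$. This is why the hypothesis $\alpha(I)\geq D_{v,n}$ appears.
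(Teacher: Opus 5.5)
There is a genuine gap. Your reduction to $\gin(I)$, your use of Theorem \ref{IntroMonster2}, and your line picture all match the paper. The problem is the step that carries all the content: the claim that ``LPP-ifying'' $J_d=\gin(I)_d$ gives an LPP set of the same size whose shadow has at most $\dim J_{d+1}$ elements. You only assert this, and the tool you cite does not apply. Clements--Lindstr\"om compares sets that contain $P_d$ (equivalently, subsets of the grid $G_e$), but $\gin(I)$ does not contain $P$.

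Take a complete intersection of $n$ forms of degree $d\gg0$. Theorem \ref{IntroCIGinStructure} gives $J_d=\{x_1^d,x_1^{d-1}x_2,\dots,x_1^{d-n+1}x_2^{n-1}\}$, while the LPP set is $\{x_1^d,\dots,x_n^d\}$. The two sets have the same size but share only $x_1^d$. On the line with trivial tail they have $n$ versus $2$ monomials, and on the lines with tail $x_i^d$, $i\ge3$, they have $0$ versus $1$. So no line-by-line replacement preserves cardinalities. Applying Clements--Lindstr\"om to ``the LPP-ified version of $J_d$'' just compares that LPP set with itself and yields no inequality involving $\dim J_{d+1}$.

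Induction on $n$ via $x_n=0$ does not close the gap either. To lift back you would need an upper bound on $\dim(R/(I,h))_d$ in terms of $\dim I_d$, for a general linear form $h$. That is a hyperplane-restriction theorem for LPP ideals, which the paper obtains only as a consequence of this theorem (Theorem \ref{IntroEGHGreenHyper}).

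The missing idea is to locate inside $\gin(I)$ a numerical stand-in for $P$ and split it off.

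\begin{enumerate}
\item The paper lets $\Gamma_I(\alpha(I)+k)$ be the $\gamma_I(\alpha(I)+k)-1$ lex-smallest monomials of $\gin(I)_{\alpha(I)+k}$; under $\LPPT(v)$ these all have the form $x_1^ax_2^b$ with $b\ge1$.
\item It sets $\mathfrak{P}_{I,k}=\bigcup_i\uSdwB^i(\Gamma_I(\alpha(I)+k-i))$ and $\mathfrak{L}_{I,k}=\gin(I)_{\alpha(I)+k}\setminus\mathfrak{P}_{I,k}$. Lemma \ref{PowersSegment} shows $|\mathfrak{P}_{I,k}|=|P_{\alpha(I)+k}|$, with matching growth from degree to degree.
\item Theorem \ref{LexSegment} then compresses all the $\mathfrak{L}_{I,k}$ simultaneously, first by $\comp_{1,2}$ and then by further $\comp_{i,j}$. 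The result is strongly stable sets $\mathfrak{A}_k$ with $\max$-preserving bijections and $\uSdw(\mathfrak{A}_k)\subseteq\mathfrak{A}_{k+1}$.
\end{enumerate}

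This compression is exactly where the uncovered bounds are used. Take adjacent lines with tails $\tau=x_3^{e_3}\cdots x_q^{e_q}$ and $\tau x_q$, and let $E=\deg\tau$. On the first line, $\gin(I)$ has at least $1+\fbl_I^{(q,e)}(\alpha(I)+k)\ge\gamma_I(\alpha(I)+k-E)$ more monomials. Meanwhile $\mathfrak{P}_{I,k}$ has exactly $\gamma_I(\alpha(I)+k-E)-1$ more there. Hence $\mathfrak{L}_{I,k}$ has at least one more, which is precisely what keeps the $(1,2)$-compressed set closed under $\mathfrak{b}_{p,q}$ for $p\le2<q=\max(m)$.

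Finally, Macaulay's theorem (not Clements--Lindstr\"om) applied to the $\mathfrak{A}_k$ shows that the lex segments of sizes $|\mathfrak{L}_{I,k}|$ nest under shadows. Then $L+P$ has the Hilbert function of $I$ in degrees $\le\alpha(I)+t$. New generators in higher degrees need no separate ``negligibility'' estimate: they are simply part of $\mathfrak{L}_{I,k}$. Large $\alpha(I)$ enters through Theorem \ref{IntroMonster2}.
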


Theorem \ref{IntroEGHMain} establishes infinitely many cases of the EGH Conjecture, allowing arbitrary natural numbers for the fixed number of generators, the range over which the numbers of generators are prescribed, the range over which the LPP ideal has the same Hilbert function, and the number of variables.
As noted above, previous results assuming that the regular-sequence degrees are equal restrict at least three of these four parameters to values at most $5$.
An all-degree version of Theorem \ref{IntroEGHMain}, together with the removal of the large-initial-degree hypothesis, would
settle the EGH Conjecture. 

\subsection{Macaulay's Bound and the Eisenbud--Green--Harris/Clements--Lindström  Bound}
Macaulay's Theorem is extremely useful as a purely computational tool.
It gives a bound on the Hilbert function in the next degree,
if one knows the Hilbert function in the current degree.
We make this more precise.
Let $I$ be an ideal and $L$ be the corresponding lex ideal given by Macaulay's Theorem.
Then for all $d\in \N$ we have $\dim I_{d+1} = \dim L_{d+1} \geq \dim R_1L_{d}$.
We refer to this inequality as {\it Macaulay's Bound in degree $d$}.
If one knows the value of $\dim L_d$ and uses that to get a nice expression for $\dim R_1L_d$,
then we get a computational lower bound for $\dim I_{d+1}$.
Such bounds are standard in the literature and are given as a sum of binomial coefficients.
Some authors consider an upper bound for $\dim (R/I)_{d+1}$,
which is an equivalent view.
We say that {\it Macaulay's Bound is sharp in degree $d$} if $\dim I_{d+1} = \dim R_1L_{d}$.

One can consider the same ideas for the EGH Conjecture.
Let $I$ be an ideal of type $v=v_{\ell,t}=(a_1,\dots, a_{\ell},b_0,\dots, b_t)\in \N^{\ell+t+1}$ and consider the LPP ideal $J$ given by Theorem \ref{IntroEGHMain} in degrees $< \alpha(I)+t$.
Then we have $\dim I_{d+1} = \dim J_{d+1} \geq \dim R_1J_{d}$.
However, this lower bound might not be optimal if we have a minimal generator in degree $d+1$ that belongs to the complete intersection given by $v$.
Another problem is that there might be another type that produces an even better bound,
due to longer regular sequences in smaller degrees.
Hence we should modify our definition to account for long regular sequences.

For example, let us take $I\subseteq K[x,y,z]$ to be generated by $\{x^d,x^{d-1}y,x^{d-1}z,x^{d-2}y^2,y^d, z^{d+1}\}$ with $d\geq 5$.
Then $\dim I_d = 5$, $\dim I_{d+1} = 12$, and $\dim I_{d+2}=22$.
Also, $I$ is of types $w=w_{3,2}=(0,0,1,5,1,0)$ and $u = u_{2,2} = (0,1,1,5,1,0)$.
Then if $d$ is sufficiently large, 
Theorem \ref{IntroEGHMain} produces LPP ideals $J^w$ and $J^u$ corresponding to the types $w$ and $u$.

Let us examine $w$ first which is the ``optimal'' type to use.
Well, $\dim J^w_d = 5$ and hence we are guaranteed $\dim R_1J^w_d \geq 11$.
However, $\dim I_{d+1} = 12$.
We can adjust for this because we know that there will be a longer regular sequence in $I_{d+1}$,
and the type $w$ records this.
So, we have $\dim I_{d+1} = 12 = 3-2 + 11 = \gamma_{w,I}(d+1) - \gamma_{w,I}(d) + \dim R_1J^w_{d}$.
Similarly, we get $\dim I_{d+2} = 22 = 3-3 +22 = \gamma_{w,I}(d+2) - \gamma_{w,I}(d+1) + \dim R_1J^w_{d+1}$.

The type $u$ carries worse information.
Since $u$ does not know that $I_d$ contains a regular sequence of length $2$,
we can only conclude that $\dim R_1 J^u_d \geq 9$,
which is what Macaulay's Bound provides. 
The type $u$ tells us that $I_{d+1}$ has a regular sequence of length $3$.
So, we could attempt a similar correction analogous to the one used for the type $w$,
more specifically consider $\gamma_{u,I}(d+1) - \gamma_{u,I}(d) + \dim R_1J^u_{d}$.
However, this gives $\gamma_{u,I}(d+1) - \gamma_{u,I}(d) + \dim R_1J^u_{d} = 3-1+9 = 11 < 12 = \dim I_{d+1}$.

One might now be tempted,
for an ideal $I$ of type $v$ with LPP ideal $J$ produced from Theorem \ref{IntroEGHMain},
to consider the inequality $\dim I_{d+1}\geq \gamma_I(d+1)-\gamma_I(d)+\dim R_1J_d$.
However, this inequality might not hold for some types.
For example,
consider $I\subseteq K[x,y,z]$ to be the ideal generated by $x^d$.
Well,
$I$ is of type $h = h_{1,0} = (1,1)$,
and let $J$ be the LPP ideal given by Theorem \ref{IntroEGHMain}.
Then $\gamma_I(d+1)-\gamma_I(d)+\dim R_1J_d = 1-0+3 = 4 > 3 = \dim I_{d+1}$.

We define minimal types to fix all of the issues presented above.
Let $I\subseteq R$ be an ideal of type $v = v_{\ell, t}\in \N^{\ell+t+1}$. 
For $i\in \{1,\dots, \ell\}$ define $\rho_I(i)$ to be the smallest degree $d$ such that $I$ contains a regular sequence of length $i$ in degree $d$.
For $d\in \N$ define $\lambda_I(d)$ to be the longest length of a regular sequence inside $I$ in degree $d$.
Let $f_1,\dots, f_\ell$ be a regular sequence given by the type $v$ with $\deg(f_1)\leq \cdots \leq \deg(f_\ell)$.
We say that {\it $v$ is minimal for $I$} if $f_1,\dots, f_\ell$ are minimal generators, 
for every $i\in \{1,\dots, \ell\}$ we have $\rho_I(i) = \deg(f_i)$,
and for every $d\in \N$ we have $\lambda_I(d) = \gamma_I(d)$. 

When $v$ is minimal, we have $\dim I_{d+1}= \dim J_{d+1} \geq \gamma_I(d+1)-\gamma_I(d)+\dim R_1J_d$.
We refer to this inequality as the {\it EGH or CL Bound in degree $d$}.
The EGH Bound provides an improvement of Macaulay's Bound which is more complicated to express computationally.
In Theorem \ref{IntroEGHFirst}, 
the best and ``most common'' improvement is $(n-1)^2$ more than Macaulay's Bound.
We say that the {\it EGH Bound is sharp in degree $d$} if $\dim I_{d+1}=\gamma_I(d+1)-\gamma_I(d)+\dim R_1J_d$,
whenever $v$ is minimal.

When $\gamma_I(d+1)=\gamma_I(d)$,
the EGH Bound being sharp means $\dim I_{d+1} = \dim R_1J_d$,
which is exactly the statement that Macaulay's Bound is sharp when we replace the LPP ideal with the lex ideal.
When $\ell=1$, sharpness of the EGH Bound is equivalent to sharpness of Macaulay’s Bound in every degree except $\alpha(I)-1$.

\subsection{A Generalization of Gotzmann's Persistence Theorem}
Gotzmann's Persistence Theorem \cite{Gotzmann} states that if Macaulay's Bound is sharp in degree $d$ and there are no minimal generators in degrees $> d$,
then Macaulay's Bound is sharp in degrees $\geq d$.
Generalizations of Gotzmann's Theorem have been proved for ideals containing monomial complete intersections \cite{AramovaHerzogHibi, BezrukovPersistence, FurediGriggs, Gasharov, GasharovMuraiPeeva}. 
The results \cite{BezrukovPersistence, FurediGriggs} were discovered in a purely combinatorial setting,
and the papers analyzed what happens when the bound from the Kruskal--Katona Theorem \cite{Katona, Kruskal} is sharp.
We generalize Gotzmann's Persistence Theorem for ideals containing arbitrary complete intersections.
If $I$ is of type $v$ and $f_1,f_2,\dots, f_\ell$ is a regular sequence in $I$ given by $v$,
then we say that $I$ is generated in degrees at most $q$ with respect to $v$,
if all minimal generators of $I$ in degrees $q+1,q+2,\dots, \alpha(I)+t$  belong to the ideal generated by $f_1,\dots, f_\ell$.

\begin{thm}\label{IntroEGHGotzmann}
	Let $v\in \N^{\ell+t+1}$.
	There exists $D_{v,n}\in \N$ such that for any ideal $I$ of minimal type $v$ with $\alpha(I) \geq D_{v,n}$,
	there exists an LPP ideal with the same Hilbert function as $I$ in degrees $\leq \alpha(I)+t$.
	Furthermore, if $I$ is generated in degrees $\leq q$ with respect to $v$ and the EGH Bound is sharp in degree $q$,
	then the EGH Bound is sharp in degrees $q+1,q+2,\dots, \alpha(I)+t-1$.
\end{thm}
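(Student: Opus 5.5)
The first assertion is the special case of Theorem \ref{IntroEGHMain} in which $v$ is minimal, so I take $D_{v,n}$ at least as large as the constant from that theorem. The real work is the persistence statement. My plan is to pass to $G=\gin(I)$, which has the same Hilbert function as $I$, and to compare $G$ degree by degree with the LPP ideal $J=L+P$. Here $P=(x_1^{e_1},\dots,x_\ell^{e_\ell})$ and $e_i=\alpha(I)+a_i$. Since $v$ is minimal, $\gamma_I(d)=\lambda_I(d)$ for all $d$. The EGH Bound in degree $d$ then reads
\begin{align*}
\dim G_{d+1}\geq \gamma_I(d+1)-\gamma_I(d)+\dim R_1J_d .
\end{align*}
I would first prove this inequality directly on $G$, in degrees $d\le \alpha(I)+t-1$, using the structure supplied by Theorem \ref{IntroMonster2}. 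For every $r$ and $e$, the $(r,e)$-uncovered bottom lengths of $G$ are at least $\gamma_I(\alpha(I)+k-E)-1$. These uncovered monomials are what play the role of the pure powers $x_2^{e_2},\dots,x_\ell^{e_\ell}$ in $P$.

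The counting goes through the standard decomposition of a strongly stable ideal by the exponent pattern of $x_3,\dots,x_n$. For each fixed tail $x_3^{e_3}\cdots x_r^{e_r}$ we get a ``line'' of monomials $x_1^{\ast}x_2^{j}x_3^{e_3}\cdots x_r^{e_r}$. Multiplication by $R_1$ grows the line by one monomial, and the uncovered monomials give extra generators of the shadow in the next variable. A line-by-line count then shows
\begin{align*}
\dim R_1G_d \geq \dim R_1J_d+\bigl(\gamma_I(d+1)-\gamma_I(d)\bigr)-\bigl(\text{contribution of new minimal generators of } P \text{ in degree } d+1\bigr),
\end{align*}
with equality exactly when $G_d$ has a restricted shape.

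The heart of the argument is an analogue of Green's proof of Gotzmann persistence via the crystallization principle. That proof shows that sharpness of Macaulay's Bound in degree $q$ forces $\gin$ to have a rigid form, namely a lex segment in the relevant variables, and that this form propagates. I would use the generalization of the Crystallization Principle announced in the introduction (Theorem \ref{IntroGeneralGreenNumeric}) in the same role. Assume $I$ is generated in degrees $\le q$ with respect to $v$ and the EGH Bound is sharp in degree $q$. Then every inequality in the line-by-line count must be an equality. This should force $G_{q}$, modulo the contributions of the regular sequence, to be an LPP segment in the appropriate variables, and in particular force $G_{q+1}=R_1G_q+(\text{new }f_i\text{ contributions})$. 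Since the only minimal generators of $I$ in degrees $q+1,\dots,\alpha(I)+t$ lie in $(f_1,\dots,f_\ell)$, and minimality of $v$ identifies their number in degree $d+1$ with $\gamma_I(d+1)-\gamma_I(d)$, the same shape reappears in degree $q+1$. Equality in degree $q+1$ then follows, and induction on $d$ up to $\alpha(I)+t-1$ finishes the proof.

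I expect the main obstacle to be the rigidity step: showing that equality in the EGH Bound really pins down the shape of $G_q$ (or of $J_q$) well enough to iterate. The difficulty is that a new regular-sequence generator appearing in degree $q+1$ could a priori land in $G$ somewhere other than the spot where $x_{\gamma+1}^{e}$ sits in $J$. The uncovered bottom length bounds of Theorem \ref{IntroMonster2} are what must rule this out, and I would invoke them with $E$ ranging over all tails to control every line. If this is too weak, I would fall back on comparing $J_{d}$ and $J_{d+1}$ directly, using Clements--Lindström persistence for LPP ideals \cite{GasharovMuraiPeeva}. Since $J$ and $I$ share Hilbert functions, persistence for $J$ transfers immediately, provided one checks that $J$ is generated in degrees $\le q$ relative to $P$. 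That check is where minimality of $v$ would be used.
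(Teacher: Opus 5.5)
Your architecture is the paper's: a lower bound from the uncovered bottom lengths (the $\LPPT$ property of Theorem \ref{Monster2}), an upper bound from the generalized crystallization principle, and an induction in which minimality of $v$ counts the new $f_j$. But the step that closes the induction is missing, and the bookkeeping you sketch for it is wrong.

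\textbf{The miscount.} The quantity to track is the number of new minimal generators of $G=\gin(I)$. In every degree $d+1$ with $\alpha(I)<d+1\le\alpha(I)+t$ this is at least $\gamma_I(d+1)-1$, coming from the forced monomials $\Gamma_I(d+1)\subseteq K[x_1,x_2]$. It is not $\gamma_I(d+1)-\gamma_I(d)$; that number counts new minimal generators of $I$. Take two general forms of degree $d$ in $K[x_1,x_2,x_3]$. Then $G_d=\{x_1^d,x_1^{d-1}x_2\}$, so $\dim R_1G_d=5<6=\dim R_1J_d$ for $J=(x_1^d,x_2^d)$. Moreover $G$ acquires a new generator ($x_1^{d-2}x_2^3$) in degree $d+1$, although $I$ has none there.

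This example breaks two parts of your plan. Your line-by-line inequality, with the subtracted term equal to the number of new pure powers, reads $\dim R_1G_d\ge\dim R_1J_d$, which is false. Your propagation claim $G_{q+1}=R_1G_q+(\text{new }f_j\text{ contributions})$ is also false. An ``LPP shape'' for $G_q$ is neither what sharpness gives nor what is needed.

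\textbf{What the induction actually needs.} Compare with $R_1G_d$, not $R_1J_d$. The $\LPPT$ property gives $\dim G_{d+1}\ge\dim R_1G_d+\gamma_I(d+1)-1$. Using the counts from Theorem \ref{LexSegment} and Lemma \ref{PowersSegment}, sharpness in degree $q$ forces $G_{q+1}=R_1G_q\sqcup\Gamma_I(q+1)$. So $G$ has exactly $\gamma_I(q+1)-1$ new generators in degree $q+1$, none divisible by $x_3,\dots,x_n$; this is the statement to induct on.

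For the inductive step, let $W=(I_{q+i-1})$, take a reduced basis $B$, and take a Borel shadow plus expanders basis $E\cup\uSdwB(B)$ of $W_{q+i}$. Then $|E|+s=\gamma_I(q+i)-1$, where $s$ is the number of $f_j$ of degree $q+i$. Theorem \ref{GeneralGreen} gives that $\uSdw(E)\cup\uSdwB^2(B)$ spans $W_{q+i+1}$. Each new generator lies in $K[x_1,x_2]$, so it has Borel weight $n-1$. Hence each expander or new $f_j$ costs at most one monomial beyond $\dim R_1G_{q+i}$. Adding the $\gamma_I(q+i+1)-\gamma_I(q+i)$ new $f_j$ of degree $q+i+1$ gives
$\dim I_{q+i+1}\le\dim R_1G_{q+i}+\gamma_I(q+i+1)-1$.
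This meets the $\LPPT$ lower bound, reproduces the inductive statement, and yields sharpness. The clause ``not divisible by $x_3,\dots,x_n$'' is indispensable here, and it is absent from your plan.

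\textbf{The fallback is circular.} For $J=L+P$, sharpness of the EGH Bound in degree $d$ just says that $L$ has no minimal generator in degree $d+1$. So ``$J$ is generated in degrees $\le q$ relative to $P$'' restates the conclusion, and $J$ is built from the very Hilbert function you are trying to control. Persistence is an upper bound on $\dim I_d$ for $d\ge q+2$, and it can only come from the generators of $I$ itself, which is exactly what the crystallization step supplies. Moreover, persistence fails in general for ideals containing monomial complete intersections (Gasharov's example, mentioned after the statement), so there is no persistence result for $J$ to import in any case.
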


The minimal type requirement in Theorem \ref{IntroEGHGotzmann} can't be dropped.
Gasharov gives an example in \cite{GasharovGotz} for an ideal containing a monomial complete intersection.
The example is for $n=3$ and an ideal $I$ with $\alpha(I) = 2$ and of type $v_{3,1} = (1,1,1,3, 1)$.
Theorem \ref{IntroEGHGotzmann} is new even for the case of monomial complete intersections.

\subsection{A Generalization of the Bezrukov--Füredi--Griggs Theorem}\label{CombinatoricsMini}
The main result in this section is Theorem \ref{BFGG} and it is a corollary of Theorem \ref{IntroEGHGotzmann}.
It's a purely combinatorial statement.
We go over some definitions that are standard in the combinatorics literature \cite{BezrukovLeck, Engel}.

For $\ell\leq n$ and $2\leq e_1\leq \cdots \leq e_\ell$ define $G_e = \{0,1,\dots ,e_1-1\}\times \cdots \times \{0,1,\dots, e_\ell -1\} \times \N^{n-\ell}$,
where $e= (e_1,\dots, e_\ell)$.
One can think of $\N^n$ as an infinite grid,
and hence think of $G_e$ as a grid whose coordinates are bounded in some directions.
If $\ell=n$ then $G_e$ is a finite grid.
The case $\ell =n$ and $2=e_1=\cdots =e_n$ makes $G_e$ into the $n$-dimensional hypercube,
and we denote it by $H_n$.
When $\ell = n$ and $d=e_1=\cdots e_n$ we write $G_{n,d}$ for $G_e$.

There is a partial order on $G_e$ induced from the one on $\N^n$,
where for two tuples $u,v\in \N^n$ with $u=(u_1,\dots, u_n)$ and $v=(v_1,\dots, v_n)$ we define $u\leq v$ if for all $i\in \{1,\dots, n\}$ we have $u_i\leq v_i$.
The {\it rank} of an element $u\in \N^n$ is defined to be the sum of its coordinates.
The {\it $d$-th level} of $G_e$ is defined to be all elements of rank $d$ in $G_e$.
A set $S\subseteq G_e$ is called a {\it $d$-set} if all the elements in $S$ have rank $d$.
For tuples $u,v\in G_e$ of the same rank,
we define $v>u$ in the {\it lexicographic order} if the first nonzero entry of $v-u$ is positive.
The last $k$ elements in the lexicographic order of rank $d$ are called the {\it lex segment} of size $k$ in level $d$.

The shadow of $u\in G_e$ is all the elements in $G_e$ obtained by adding $1$ to any coordinate of $u$.
The shadow of a set $S\subseteq G_e$ is the union of all the shadows of the elements in $S$.
The Clements--Lindström Theorem states that a lex segment of size $k$ in level $d$ of $G_e$ has the smallest shadow among all $d$-sets of size $k$,
and that the shadow of a lex segment is a lex segment.
The Kruskal--Katona Theorem is the special case of the Clements--Lindström Theorem when $G_e=H_n$.

A $d$-set $S\subseteq G_e$ in level $d$ is said to be {\it optimal} if its shadow has the same size as the shadow of the lex segment of size $|S|$ in level $d$.
Füredi and Griggs \cite{FurediGriggs},
and independently Bezrukov \cite{BezrukovPersistence},
proved that if a $d$-set $S\subseteq H_n$ is optimal,
then its shadow is optimal.
Due to Gasharov's example mentioned in the previous section,
this result can't be generalized by just replacing $H_n$ with $G_e$.
It turns out that we can make this replacement if the grid and level are large enough.

\begin{thm}\label{BFGG}
	Let $k,\delta \in \N$.
	There exists $D = D(k,\delta,n)\in \N$ such that for all $d\geq D$,
	if $S$ is an optimal $(d+k)$-set in the grid $G_{n,d}$ with $|S|=\delta$,
	then the shadow of $S$ is optimal.
\end{thm}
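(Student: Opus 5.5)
We translate Theorem \ref{BFGG} into a statement about monomial ideals containing the power ideal $P=(x_1^d,\dots,x_n^d)$, and then apply Theorem \ref{IntroEGHGotzmann}.

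\emph{Dictionary.} Identify $u\in G_{n,d}$ with the monomial $x^u$. Elements of rank $m$ correspond to the monomials of degree $m$ that lie outside $P$. The shadow of $S$ corresponds to $R_1\cdot S$ modulo $P$, and lex segments in $G_{n,d}$ correspond to the lex part of an LPP ideal $L+P$. By Clements--Lindström, $|\text{shadow}(\text{lex segment of size } \delta)|$ is exactly $\dim (R_1(L+P))_{d+k+1}-\dim P_{d+k+1}$, where $L$ is lex with $\dim L_{d+k}$ chosen so that $(L+P)_{d+k}$ has the same dimension as $(S)+P$ in degree $d+k$.

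\emph{The ideal.} Let $J$ be the ideal generated by $P_{d+k}$ together with the monomials of $S$, and set $I=J+P$. Then $\alpha(I)=d$, and the only minimal generators of $I$ in degrees $\le d+k$ are $x_1^d,\dots,x_n^d$ (degree $d$) and the $\delta$ elements of $S$ (degree $d+k$). So $I$ has type
$$v=(0,\dots,0,\,n,0,\dots,0,\delta)\in\N^{n+k+1},$$
and we want to check that $v$ is minimal.
\begin{itemize}
\item The $x_i^d$ are minimal generators.
\item Since $I_{<d}=0$, we have $\rho_I(i)=d$ for all $i$.
\item In every degree $m\ge d$ the ideal already contains a regular sequence of length $n$, so $\lambda_I(m)=n=\gamma_I(m)$. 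For $m<d$, both sides are $0$.
\end{itemize}
The entries $a_i$ of $v$ are all zero and $t=k$, so $v$ depends only on $(n,k,\delta)$. Take $D(k,\delta,n)=D_{v,n}+1$.

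\emph{Gotzmann step.} $I$ is generated in degrees $\le d+k$ with respect to $v$, since every minimal generator in degree $\le d+k$ is accounted for. To reach degree $d+k+1$, enlarge the range to $t=k+1$, setting $b_{k+1}=0$; this requires $D_{v',n}$ for $v'=(0,\dots,0,n,0,\dots,0,\delta,0)$. Because $\gamma_I(d+k+1)=\gamma_I(d+k)=n$, the EGH Bound in degree $q=d+k$ reads
$$\dim I_{d+k+1}\ \ge\ \dim R_1(L+P)_{d+k}.$$
Subtracting $\dim P_{d+k+1}$ from both sides, this inequality is exactly the Clements--Lindström inequality for $|\text{shadow}(S)|$. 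Optimality of $S$ therefore says precisely that the EGH Bound is sharp in degree $q=d+k$. Theorem \ref{IntroEGHGotzmann} then gives sharpness in degree $d+k+1$, which requires $\alpha(I)+t-1\ge d+k+1$. So we enlarge $t$ once more, to $k+2$, using $v''$.

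\emph{Conclusion.} Sharpness in degree $d+k+1$ says that the shadow of the shadow of $S$ has size equal to the shadow of the lex segment of size $|\text{shadow}(S)|$. Here we use that $I_{d+k+1}$ modulo $P$ is exactly the shadow of $S$, and that the LPP ideal agreeing with $I$ in degree $d+k+1$ has its lex part equal to the corresponding lex segment. This equality is exactly the optimality of the shadow of $S$.

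\emph{Main obstacle.} The hardest step is the bookkeeping in the dictionary. One must confirm that the sharpness of the EGH Bound as defined, namely $\dim I_{m+1}=\gamma_I(m+1)-\gamma_I(m)+\dim R_1J_m$ with $J$ the LPP ideal from the theorem, matches the shadow counts in $G_{n,d}$. The points to check are:
\begin{itemize}
\item the powers $x_i^d$ in $R_1 J_m$ contribute exactly $P_{m+1}$;
\item the LPP ideal's lex part in degree $m$ corresponds to the lex segment in $G_{n,d}$, using that lex order on exponent vectors matches lex order on monomials.
\end{itemize}
One also has to handle the degenerate case where $S$ lies beyond the top level of $G_{n,d}$. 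Since $\delta\ge1$ forces $d+k\le n(d-1)$, this case cannot arise for large $d$.
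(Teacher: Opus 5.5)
Your proposal is correct and follows the paper's route. You encode $S$ as $I=(S)+(x_1^d,\dots,x_n^d)$, check that its type is minimal, identify optimality of $S$ with sharpness of the EGH Bound in degree $d+k$, and apply Theorem~\ref{IntroEGHGotzmann} with $t$ enlarged to $k+2$. That is exactly the deduction of Section~\ref{TranslationSection}, and you supply more of the bookkeeping than the paper does.

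Two small slips need fixing. For $k=0$ the type is $(0,\dots,0,n+\delta,0,0)$, not $n$ and $\delta$ in separate slots. The constant should be $D(k,\delta,n)=D_{v'',n}$ for the type with $t=k+2$, rather than $D_{v,n}+1$.
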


Theorem \ref{BFGG} is proved by using several algebraic tools.
It would be nice to have a purely combinatorial proof.
In fact, it would be nice to be able to completely solve the problem of when an optimal set in a grid has an optimal shadow.
As Theorem \ref{IntroWLPCIPersistence} shows, 
sharpness of the EGH Bound, or equivalently optimality of sets, forces the weak Lefschetz property.

\subsection{A Generalization of Green's Hyperplane Restriction Theorem}
Another direction of research is to consider the Hilbert function of a graded ideal after modding out by a general linear form.
A classical paper of Green \cite{GreenHyper} establishes that lex ideals provide the optimal bound yet again.
Green's Hyperplane Restriction Theorem states that if $I$ is a homogeneous ideal,
$L$ is the corresponding lex ideal,
and $f$ is a general linear form,
then $\dim (R/(I,f))_d\leq \dim (R/(L,f))_d$.
The value $\dim (R/(L,f))_d$ can be expressed as a sum of binomial coefficients, see \cite{BrunsHerzog}.
This again gives a completely numerical bound just like the case for Macaulay's Theorem.
We improve the  bound in Green's Hyperplane Restriction Theorem by replacing the lex ideal with the lex plus powers ideal.

\begin{thm}\label{IntroEGHGreenHyper}
	Let $v\in \N^{\ell+t+1}$.
	There exists $D_{v,n}\in \N$ such that for any ideal $I$ of type $v$ with $\alpha(I) \geq D_{v,n}$,
	there exists an LPP ideal $J$ with the same Hilbert function as $I$ in degrees $\leq \alpha(I)+t$.
	Furthermore, for $q\in \{n,n-1,\dots, 3\}$ there exist linear forms $f_n,f_{n-1},\dots, f_q$ such that for all $k\in \{0,1,\dots, t-1\}$ we have $\dim \left(R/(J,f_n,\dots, f_q)\right)_{\alpha(I)+k} \geq \dim \left(R/(I,f_n,\dots, f_q)\right)_{\alpha(I)+k}$.
\end{thm}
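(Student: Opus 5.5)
The plan is to work entirely with $\gin(I)$ in revlex order and to choose $f_n,\dots,f_q$ general, so that (after a general change of coordinates) they may be taken to be $x_n,\dots,x_q$. The first part of the statement, existence of the LPP ideal $J$ agreeing with $I$ in degrees $\leq \alpha(I)+t$, is exactly Theorem \ref{IntroEGHMain}. We therefore take $D_{v,n}$ at least as large as the constants of Theorems \ref{IntroEGHMain} and \ref{IntroMonster2}, and focus on the inequality.

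\textbf{Reduction to monomial ideals.} For revlex, Bayer--Stillman gives $\gin(I,x_n,\dots,x_q)=(\gin(I),x_n,\dots,x_q)$ in general coordinates. Hence
\begin{align*}
\dim (R/(I,f_n,\dots,f_q))_{d}=\#\{\text{monomials of degree } d \text{ in } x_1,\dots,x_{q-1} \text{ not in } \gin(I)\}.
\end{align*}
Since $J$ is a monomial ideal of the form lex plus powers, we can take the same $f$'s for $J$ (or check directly that $x_n,\dots,x_q$ realize the generic value for $J$, because $J$ is strongly stable modulo the powers). So the claim becomes a comparison of the sizes of $\gin(I)_d\cap K[x_1,\dots,x_{q-1}]$ and $J_d\cap K[x_1,\dots,x_{q-1}]$ for $d=\alpha(I)+k$. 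We must show that $\gin(I)$ restricted to the first $q-1$ variables is at least as large as $J$ restricted there, in each degree $d<\alpha(I)+t$.

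\textbf{Key comparison.} The total sizes $\dim \gin(I)_d=\dim J_d$ agree. Thus it is equivalent to show that the part of $\gin(I)_d$ involving some variable among $x_q,\dots,x_n$ is no larger than the corresponding part of $J_d$. I would argue by induction on $d$ from $\alpha(I)$ up to $\alpha(I)+t-1$.

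\emph{Base case $d=\alpha(I)$.} For $\dim I_d=\delta$ small relative to $d$, a lex-plus-powers segment of size $\delta$ is concentrated near $x_1^{d}$, using the first $\gamma_I(d)$ pure powers. One must show $\gin(I)_d$ has at least as many monomials supported on $x_1,\dots,x_{q-1}$. Here Theorem \ref{IntroMonster2} is the tool: the $(r,e)$-uncovered bottom lengths force $\gin(I)$ to contain, in each ``slice'' $x_3^{e_3}\cdots x_r^{e_r}$, at least $\gamma_I(d-E)-1$ uncovered monomials on the $x_1,x_2$ edge. This gives enough monomials with small support to dominate the lex-plus-powers count, which in large degree is essentially a union of such bottom strips, one per power $x_i^{e_i}$.

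\emph{Inductive step.} Both $\gin(I)$ and $J$ are built from the previous degree by multiplying by $R_1$ and adding the new generators recorded by $v$. I would pass the inequality forward using Green's hyperplane restriction for the lex part, combined with the strongly stable structure: multiplication by $x_1,\dots,x_{q-1}$ preserves support in the first $q-1$ variables. The new generators of $I$ in degree $d$ contribute equally in number to both sides.

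\textbf{Main obstacle.} The hard step is turning the uncovered-bottom-length lower bounds of Theorem \ref{IntroMonster2} into an exact count comparison against the explicit LPP restriction, uniformly in $k\le t-1$. The last degree $\alpha(I)+t$ is excluded since the structural bound only controls degree $\alpha(I)+k-E$ regular-sequence lengths through $k\le t$, and the restriction needs one degree of slack. I would first try the case $q=n$, $n=3$, explicitly with pictures, to identify the counting, then iterate $q$ downward by reapplying the argument to $(I,x_n)$ in $n-1$ variables. This iteration must be justified by checking that the type and the large-$\alpha$ hypothesis descend.
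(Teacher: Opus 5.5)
There is a genuine gap, and it begins in your reduction. The forms $f_n,\dots,f_q$ must be the same for $I$ and $J$. For the $I$-side to be computed by $\gin(I)$ they must be general, but then the $J$-side is computed by $\gin(J)$, not by $J$.

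Your claim that $x_n,\dots,x_q$ already realize the generic value for $J$ is false. Take $n=q=3$ and $I$ a complete intersection of three forms of degree $d$. The LPP ideal agrees with $(x_1^d,x_2^d,x_3^d)$ in degree $d$, so $J_d\cap K[x_1,x_2]$ contains only $x_1^d,x_2^d$. On the other hand, $\gin(J)_d=\{x_1^d,x_1^{d-1}x_2,x_1^{d-2}x_2^2\}$, because three $d$-th powers of distinct binary linear forms are independent. Thus $\dim(R/(J,x_3))_d=d-1$, but $\dim(R/(J,f_3))_d=d-2$ for general $f_3$.

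By semicontinuity, the number of monomials of $J_d$ in $K[x_1,\dots,x_{q-1}]$ never exceeds that of $\gin(J)_d$. So the comparison you set up, $\gin(I)$ against $J$, is strictly weaker than the theorem. The paper handles $\gin(J)$ by observing that $J$ is itself of some type $w$ with the same regular-sequence entries as $v$ and generator counts bounded by $\MB_v$. There are only finitely many such $w$, so Theorem \ref{Monster2} gives a uniform $D$ making $J$ $\LPPT(w)$. Then $\gin(J)_{\alpha(I)+k}=\mathfrak{P}_{J,k}\sqcup\mathfrak{L}_{J,k}$, exactly as for $I$.

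Even with the correct target, the central comparison is missing, and the mechanisms you sketch would not supply it. In your inductive step it is not enough that new generators are equal in number on both sides. The restricted count depends on how monomials are distributed according to $\max(m)$, not just on how many there are. Iterating over $q$ through $(I,x_n)$ in $n-1$ variables is also unjustified: the type need not descend (the regular sequence shortens when $\ell=n$), and the LPP ideal of the restriction is not the restriction of $J$. So the inequalities do not chain the way they do for lex ideals; the paper itself remarks that several linear forms are not a simple induction.

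The idea you are missing is this: the number of monomials of a set lying in $K[x_1,\dots,x_{q-1}]$ is the number with $\max(m)\le q-1$. That count is invariant under $\max$-preserving bijections, which handles all $q$ at once. The paper's argument runs as follows:
\begin{itemize}
\item It matches $\mathfrak{P}_{I,k}$ with $\mathfrak{P}_{J,k}$ by such a bijection, since the types agree on the regular sequence.
\item It compresses $\mathfrak{L}_{I,k}$ and $\mathfrak{L}_{J,k}$ to strongly stable sets $\mathfrak{A}_{I,k},\mathfrak{A}_{J,k}$ via Theorem \ref{LexSegment}.
\item For $k\le t-1$ it shows that $\mathfrak{A}_{J,k}$ has the same size and the same shadow size as the lex segment $L_{\alpha(I)+k}$. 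This is where $k\le t-1$ is needed, since the shadows land in degree at most $\alpha(I)+t$.
\item Lemma \ref{OptimalSameMax} then gives $\mathfrak{A}_{J,k}$ the $\max$-distribution of the lex segment.
\item Finally, Green's Hyperplane Restriction Theorem \ref{GreenHyper} is applied to the strongly stable set $\mathfrak{A}_{I,k}$ against the lex segment of the same size.
\end{itemize}
Theorem \ref{IntroMonster2} enters only through the $\LPPT$ property that makes this decomposition possible, not as a direct count of small-support monomials.
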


Note that we stated Green's Hyperplane Restriction Theorem by just using a single general linear form.
A simple inductive argument can generalize the statement to more linear forms,
since if given a lex ideal $L$,
evaluating $x_n=0$ gives a lex ideal in $K[x_1,\dots, x_{n-1}]$.
The case for more than one linear form in Theorem \ref{IntroEGHGreenHyper} is not as simple to handle.

\subsection{A Generalization of the Bigatti--Hulett--Pardue Theorem}
Bigatti \cite{Bigatti1993} and Hulett \cite{Hulett1993} in characteristic $0$,
and Pardue \cite{Pardue1996} in characteristic $p$,
proved that lex ideals have the largest graded Betti numbers among all ideals with the same Hilbert function.
Another natural direction is to replace a lex ideal with a lex plus powers ideal,
and prove that the graded-Betti-numbers inequality still holds.
This would provide a numerical improvement to the Bigatti--Hulett--Pardue Theorem for ideals containing regular sequences.
The conjecture first appeared is in a paper of Evans and Richert \cite{EvansRichert},
and later in a paper of Francisco and Richert \cite{FranciscoRichert}.
It is attributed to Evans.
This conjecture is today known as the Lex Plus Powers (LPP) Conjecture.

In order to make progress on the LPP Conjecture,
one first needs to establish the existence of an LPP ideal,
that is,
prove the EGH Conjecture or some case of it.
This means that there has been less progress on the LPP Conjecture than to the EGH Conjecture.
As the Clements--Lindström Theorem is the case of the EGH Conjecture for powers of the variables,
a natural step is to start with this case.
In 2008, Mermin, Peeva, and Stillman \cite{MerminPeevaStillman} proved the LPP Conjecture for ideals containing the squares of the variables.
In 2011, Mermin and Murai \cite{MerminMurai} settled the LPP Conjecture for ideals containing powers of the variables.
In 2008, Caviglia and Maclagan \cite{CavigliaMaclagan} proved a case of the EGH Conjecture for ideals containing regular sequences that have degrees that are far apart.
In 2018, Caviglia and Sammartano \cite{CavigliaSammartano} settled the LPP Conjecture for the case of the EGH Conjecture that Caviglia and Maclagan proved.
We generalize the Bigatti--Hulett--Pardue Theorem result by replacing a lex ideal with the generic initial ideal (with respect to revlex) of an LPP ideal.

\begin{thm}\label{IntroBettiEGH}
	Let $v\in \N^{\ell+t+1}$.
	There exists $D_{v,n}\in \N$ such that for any ideal $I$ of type $v$ with $\alpha(I) \geq D_{v,n}$,
	there exists an LPP ideal with the same Hilbert function as $I$ in degrees $\leq \alpha(I)+t$.
	Furthermore, we have the inequality of Betti numbers $b_{p,p+q}(I) \leq b_{p,p+q}(\gin(J))$ for $q\leq \alpha(I)+t-1$.
\end{thm}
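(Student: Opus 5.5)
The first claim, the existence of an LPP ideal $J$ with the same Hilbert function as $I$ in degrees $\leq \alpha(I)+t$, is exactly Theorem \ref{IntroEGHMain}. We take $D_{v,n}$ at least as large as the constant given there, and possibly larger so that Theorem \ref{IntroMonster2} also applies. The content of the statement is therefore the Betti number inequality. The plan is to pass to generic initial ideals and compare $\gin(I)$ with $\gin(J)$ in the relevant range of degrees.

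\textbf{Step 1: reduce to strongly stable ideals.} Upper semicontinuity under Gröbner degeneration gives $b_{p,p+q}(I)\leq b_{p,p+q}(\gin(I))$, so it suffices to show $b_{p,p+q}(\gin(I))\leq b_{p,p+q}(\gin(J))$ for $q\leq \alpha(I)+t-1$. Both ideals are strongly stable, so the Eliahou--Kervaire formula applies:
\[
b_{p,p+q}(M)=\sum_{u\in G(M)_{q}}\binom{\max(u)-1}{p},
\]
where $G(M)_q$ is the set of minimal generators of degree $q$. Since $b_{p,p+q}$ only involves generators of degree $q$, and the truncations in degrees $\leq \alpha(I)+t-1$ agree, it is harmless to replace both ideals by their truncations generated in degrees $\leq \alpha(I)+t$.

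\textbf{Step 2: rewrite in terms of Hilbert functions.} For a strongly stable $M$, let $m_i(M_d)$ be the number of monomials $u$ of degree $d$ in $M$ with $\max(u)\leq i$. These counts determine the Betti numbers: the number of minimal generators of degree $d$ with $\max(u)=i$ equals
\[
m_i(M_d)-m_i(R_1M_{d-1})+\text{lower-index corrections},
\]
and the analogous counts of $x_i$-multiples in $R_1M_{d-1}$ are determined by the $m_j(M_{d-1})$ with $j\leq i$. Thus, as in Bigatti--Hulett and in the Mermin--Murai argument, the inequality reduces to inequalities between the functions $i\mapsto m_i$ for $\gin(I)$ and $\gin(J)$. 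Since $m_i(\gin(M)_d)$ equals the Hilbert function of $M$ modulo $n-i$ general linear forms, these become comparisons of Hilbert functions of $I$ and $J$ after restricting to general linear subspaces. Concretely we need:
\begin{enumerate}
\item $\dim (R/(I,f_n,\dots,f_q))_{d}\leq \dim (R/(J,f_n,\dots,f_q))_d$ for all $q$;
\item the same restricted inequalities after one multiplication by $R_1$, in degrees $d\leq \alpha(I)+t-1$.
\end{enumerate}
Item (1) is precisely Theorem \ref{IntroEGHGreenHyper}, which holds for general forms and hence for those used in computing $\gin$. Item (2) should follow from equality of the Hilbert functions in degree $d+1$, combined with the sharpness-type control coming from Theorem \ref{IntroMonster2}.

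\textbf{Step 3 (main obstacle): combine the inequalities.} This is where the difficulty lies. The inequality for the number of generators with a given $\max$ is not linear in the $m_i$ with a uniform sign, so Step 2 alone is not enough. I would first try the standard telescoping trick: write $\binom{i-1}{p}$ as a sum of nonnegative increments and group terms so that $b_{p,p+q}$ is a nonnegative combination of the quantities
\[
\dim M_{q}-m_i(M_q)-\bigl(\dim R_1M_{q-1}-m_i(R_1M_{q-1})\bigr),
\]
which count generators in degree $q$ with $\max>i$. I would then prove these quantities are larger for $\gin(J)$. To do so, I would use the explicit structure of $\gin(J)$: the lex part is "lex-like" in each restriction, and the generic powers contribute only uncovered bottom monomials, which matches the structure that Theorem \ref{IntroMonster2} forces on $\gin(I)$. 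Hence the needed inequalities reduce to counting uncovered monomials at each level, and Theorem \ref{IntroMonster2} provides exactly $\fbl^{(r,e)}\geq\gamma-1$, the bound required for the comparison in degrees $\leq \alpha(I)+t-1$.
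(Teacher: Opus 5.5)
\textbf{Gap: Step 3 is unproved, and it is also unnecessary.} Step 3 is where your argument has a hole. The key inequality, that $\gin(J)$ has at least as many degree-$q$ minimal generators with $\max>i$ as $\gin(I)$, is only asserted. The appeal to ``counting uncovered monomials'' and Theorem~\ref{IntroMonster2} is not an argument, and item (2) is justified only by ``should follow''.

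The obstacle that sent you to Step 3 is not real. Let $M$ be strongly stable and let $w\in R_1M_{q-1}$ with $\max(w)=j$. Write $w=x_kw'$ with $w'\in M$ and $k\le j$. If $k<j$, then $x_j\mid w'$, and strong stability gives $w/x_j=x_kw'/x_j\in M$. So the monomials of $R_1M_{q-1}$ with $\max=j$ are exactly the $x_jw$ with $w\in M_{q-1}$ and $\max(w)\le j$, and there are $m_j(M_{q-1})$ of them. Item (2) is therefore automatic.

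Your telescoped quantity thus equals $\dim M_q-m_i(M_q)-\sum_{j>i}m_j(M_{q-1})$. Summing against $\binom{i-1}{p-1}$ gives
\[
b_{p,p+q}(M)=\binom{n-1}{p}\dim M_q-\sum_{i=1}^{n-1}\binom{i-1}{p-1}m_i(M_q)-\sum_{i=1}^{n}\binom{i-1}{p}m_i(M_{q-1}),
\]
with $\binom{a}{-1}=0$. Once $\dim M_q$ is fixed, this expression is antitone in every $m_i$.

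Your item (1) then finishes the proof. Theorem~\ref{EGHGreenHyper} (general forms), read through Lemma~\ref{Conca}, gives $m_i(\gin(I)_d)\ge m_i(\gin(J)_d)$ for $2\le i\le n-1$ and $\alpha(I)\le d\le\alpha(I)+t-1$. The remaining cases are trivial: for $i=1$ both ideals contain $x_1^d$, for $i=n$ the Hilbert functions agree, and for $d<\alpha(I)$ everything vanishes. Combined with equality of Hilbert functions, this yields $b_{p,p+q}(\gin(I))\le b_{p,p+q}(\gin(J))$ whenever $q-1,q\le\alpha(I)+t-1$, which is exactly the stated range. Take $J$ to be the LPP ideal supplied by that theorem, then use $b_{p,p+q}(I)\le b_{p,p+q}(\gin(I))$.

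\textbf{Comparison with the paper's proof.} Once repaired, your route is the classical Bigatti--Hulett derivation of the Betti bound from hyperplane restriction, with Theorem~\ref{EGHGreenHyper} as a black box. The paper argues directly on generators:
\begin{enumerate}
\item It splits $\gin(I)_{\alpha(I)+k}=\mathfrak{P}_{I,k}\sqcup\mathfrak{L}_{I,k}$, and likewise for $J$. Because the types agree in their first $\ell$ entries, the $\mathfrak{P}$-parts contribute equally to the Eliahou--Kervaire sums.
\item It uses Theorem~\ref{LexSegment} to move the $\max$-distribution of new monomials in $\mathfrak{L}_{I,k}$ onto a strongly stable ideal $A$.
\item Lemmas~\ref{OptimalSameMax} and~\ref{BettiLemma} show that the $\mathfrak{L}_J$-part contributes exactly as the lex ideal $L$ does.
\item It concludes by applying Theorem~\ref{BettiLex} to $A$ versus $L$.
\end{enumerate}
Your repaired argument is shorter and never tracks generators of $\gin(J)$ piece by piece. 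However, Theorem~\ref{EGHGreenHyper} is itself proved with the same compression machinery (Theorem~\ref{LexSegment}, Lemma~\ref{OptimalSameMax}, and Green's theorem), so both proofs rest on the same structural input.
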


Note that for a lex ideal $L$ we have $\gin(L)=L$,
hence when $\ell=1$ Theorem \ref{IntroBettiEGH} reduces to the Bigatti--Hulett--Pardue Theorem.

\subsection{Results on the Weak Lefschetz Property}
The weak and strong Lefschetz properties are abstractions of the Hard Lefschetz Theorem in complex geometry.
It is known that the WLP fails for complete intersections in positive characteristic.
A foundational result in the area is due to Stanley \cite{Stanley},
which proves that monomial complete intersections satisfy the weak and strong Lefschetz properties.
It has been conjectured that this should hold for any complete intersection.
Harima, Migliore, Nagel, and Watanabe \cite{JuanWLPHeight3} proved that the WLP holds for any complete intersection when $n=3$.
Boij, Migliore, Miró-Roig, and Nagel \cite{JuanWLPHeight4} proved that the WLP holds for any equigenerated complete intersection,
but only in a specified range of degrees,
when $n=4$. 
They also give a smaller range if one drops the equigenerated assumption.
For $n\geq 4$,
Beorchia and Miró-Roig \cite{WLPHeigthN} proved a similar statement,
but with a more restricted range.
We generalize these results by quotienting by more than one linear form.

\begin{thm}\label{IntroWLPCITheorem}
	Let $v\in \N^{\ell+t+1}$ and $i\in \{n,n-1,\dots, 3\}$.
	There exists $D_{v,n}\in \N$ such that for any complete intersection $I$ of minimal type $v$ with $\alpha(I) \geq D_{v,n}$,
	there exist linear forms $f_n,f_{n-1},\dots, f_{i+1}$ for which the WLP holds for $I+(f_n,\dots, f_{i+1})$ in degrees $\leq \alpha(I)+t-1$.
\end{thm}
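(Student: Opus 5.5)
We reduce the weak Lefschetz property to a statement about generic initial ideals. Set $A=R/(I+(f_n,\dots,f_{i+1}))$ with $f_n,\dots,f_{i+1}$ general linear forms. After a general change of coordinates we may take $f_j=x_j$, so $A\cong \bar R/\bar I$ with $\bar R=K[x_1,\dots,x_i]$ and $\bar I$ the image of $I$. Since revlex generic initial ideals commute with quotienting by the last variables, $\gin(\bar I)=\gin(I)|_{x_{i+1}=\cdots=x_n=0}$. The WLP in degree $d$ for $A$ says that multiplication by a general linear form $\times x_i:A_d\to A_{d+1}$ has maximal rank. By the revlex properties of $\gin$ (Bayer--Stillman/Green), this map has the same rank as on $\bar R/\gin(\bar I)$. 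There, the kernel in degree $d$ is spanned by the standard monomials $m$ with $x_im\in\gin(\bar I)$, and the cokernel in degree $d+1$ is $(\bar R/(\gin(\bar I),x_i))_{d+1}$.

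So we must show that, for each $d\le \alpha(I)+t-1$, either no standard monomial of degree $d$ is pushed into $\gin(\bar I)$ by $x_i$ (injectivity), or every degree $d+1$ monomial not divisible by $x_i$ lies in $\gin(\bar I)$ (surjectivity).

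The first step handles surjectivity. Because $I$ is a complete intersection of minimal type $v$, the sequence $f_1,\dots,f_\ell$ is part of a minimal generating set and $\ell=n$. In a range of degrees the Hilbert function of $A$ is that of a complete intersection in $i$ variables, which is symmetric and unimodal. Past the midpoint, surjectivity is equivalent to $\dim A_{d+1}=\dim (R/(I,f_n,\dots,f_i))_{d+1}$ failing to be positive in a controlled way. For this we invoke Theorem~\ref{IntroEGHGreenHyper} with $q=i$ and $q=i+1$, comparing against the LPP ideal $J$ with the same Hilbert function in degrees $\le\alpha(I)+t$. For an LPP ideal, restriction to hyperplanes is computable explicitly: modulo $x_n,\dots,x_{q}$ it remains lex-plus-powers in fewer variables. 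This gives upper bounds for the cokernel dimension, and these vanish exactly where the monomial complete intersection (which has WLP by Stanley) is surjective.

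The second step handles injectivity, in the range of degrees below the peak. Here we use Theorem~\ref{IntroMonster2} with $r$ ranging up to $i$ and all $e$ with $E\le k$. Each $(r,e)$-uncovered monomial $x_1^{d-j-E}x_2^jx^e\in\gin(I)$ gives, via strong stability, a chain of monomials in $\gin(I)$ along the $x_2\to x_r$ direction. The bound $\fbl^{(r,e)}_I(\alpha(I)+k)\ge \gamma_I(\alpha(I)+k-E)-1$ says that $\gin(I)$ contains enough ``bottom'' monomials that $\gin(\bar I)$ looks, near the $x_1,x_2$ edge, like the gin of a monomial complete intersection in degrees $\le \alpha(I)+t$. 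We then argue that the kernel of $\times x_i$ on $\bar R/\gin(\bar I)$ is zero. If $m\notin\gin(\bar I)$ but $x_im\in\gin(\bar I)$, then $x_im$ is a minimal generator divisible by $x_i$. By the structure just described, together with equality of Hilbert functions with $J$ (Theorem~\ref{IntroEGHMain}), counting minimal generators of $\gin(\bar I)$ divisible by $x_i$ against the Betti-number bound of Theorem~\ref{IntroBettiEGH} shows that such generators appear only in degrees where surjectivity already holds.

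Finally we take $D_{v,n}$ to be the maximum of the constants from Theorems~\ref{IntroMonster2}, \ref{IntroEGHGreenHyper}, \ref{IntroEGHGotzmann} and \ref{IntroBettiEGH}.

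The main obstacle is the injectivity step: showing that the uncovered-bottom-length bounds force $\gin(\bar I)$ to have no minimal generators divisible by $x_i$ below the peak degree. The first approach I would try is to prove that $\gin(\bar I)_{\le\alpha(I)+t}$ agrees in the last variable with $\gin$ of the monomial complete intersection of the same degrees. I would get this by using Theorem~\ref{IntroEGHGotzmann}'s sharpness of the EGH Bound to pin down the Hilbert function of $(\bar I, x_i)$ exactly, and then subtracting.
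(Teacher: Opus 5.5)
Your reduction to generic initial ideals is fine. So is your remark that, because $\gin(\bar I)$ is strongly stable with $x_i$ as its last variable, a kernel element $m$ of $\times x_i$ makes $x_im$ a minimal generator divisible by $x_i$; this is exactly the mechanism of Lemma~\ref{SWLPNewGenerators}.

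The gap is the step you flag as the main obstacle. You never show that $\gin(I)$ has no minimal generator divisible by any of $x_3,\dots,x_n$ in degrees $\alpha(I),\dots,\alpha(I)+t$, and none of the tools you name gives it:
\begin{enumerate}
\item Theorem~\ref{IntroMonster2} only provides \emph{lower} bounds (forced monomials), so on its own it excludes nothing.
\item Theorem~\ref{IntroBettiEGH} bounds Betti numbers from above by those of $\gin(J)$. You do not control the generators of $\gin(J)$ either, so this cannot force the number of $x_i$-divisible generators to be $0$.
\item Theorem~\ref{IntroEGHGotzmann}, as stated, concerns only the Hilbert function of $I$. For a complete intersection that is already known, so it says nothing about $(\bar I,x_i)$. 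Its internal form, Theorem~\ref{EGHGotzmann}, does record that the new generators of $\gin(I)$ avoid $x_3,\dots,x_n$, but you do not use that.
\end{enumerate}

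Your surjectivity step should be discarded. It rests on false claims: being a complete intersection of minimal type does not force $\ell=n$, and $R/(I,f_n,\dots,f_{i+1})$ is not a complete intersection in $i$ variables when $\ell>i$, so no symmetry is available. It is also unnecessary. The real target is that $x_i$-divisible generators do not occur at all in this range, which gives injectivity in every degree $\le\alpha(I)+t-1$ (trivially below $\alpha(I)-1$).

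The missing idea is an exact count, namely Theorem~\ref{IntroCIGinStructure}. The paper's proof is just that theorem combined with Corollary~\ref{SWLPequiv} and Lemma~\ref{SWLPNewGenerators}. The count goes as follows.
\begin{enumerate}
\item The case $r=3$, $e=0$ of Theorem~\ref{IntroMonster2} gives $\fbl_I(s)\ge\gamma_I(s)-1$ for $\alpha(I)\le s\le\alpha(I)+t$. Each uncovered monomial is automatically a minimal generator of the form $x_1^ax_2^b$ with $b\ge1$.
\item By Corollary~\ref{WeightsLemmaAdvancedIdeal}, such a generator of degree $s$ contributes $\binom{n-2+D-s}{n-2}$ to $\dim\gin(I)_D$, and $x_1^{\alpha(I)}$ contributes $\binom{n-1+D-\alpha(I)}{n-1}$.
\item Since $\gamma_I(s)-1$ is the number of $j\ge2$ with $e_j\le s$, the hockey-stick identity shows these forced generators already account for $\sum_j\binom{n-1+D-e_j}{n-1}$.
\item This sum equals $\dim I_D$ for all $D\le\alpha(I)+t$ once $\alpha(I)>t$, because no Koszul relations occur below degree $e_1+e_2\ge2\alpha(I)$.
\end{enumerate}
Any further minimal generator would add positive weight, so there are none. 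Hence all minimal generators of $\gin(I)$ in this range, and therefore of $\gin(\bar I)$, lie in $K[x_1,x_2]$, and your injectivity argument then concludes for every $i\ge3$.
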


When $i=n$ by $(f_n,\dots, f_{n+1})$ we mean the $0$ ideal.
This case just says that $I$ satisfies the WLP.
The next theorem generalizes Theorem \ref{IntroWLPCITheorem}.
First, it extends the current WLP results from complete intersections to a much larger class of ideals.
Second, it also establishes the stronger case of WLP when we use more general linear forms.
Finally, it links the EGH Conjecture to the WLP.

\begin{thm}\label{IntroWLPCIPersistence}
	Let $v\in \N^{\ell+t+1}$ and $i\in \{n,n-1,\dots, 3\}$.
	There exists $D_{v,n}\in \N$ such that for any ideal $I$ of minimal type $v$ with $\alpha(I) \geq D_{v,n}$,
	there exists an LPP ideal with the same Hilbert function as $I$ in degrees $\leq \alpha(I)+t$.
	Furthermore, if $I$ is generated in degrees $\leq q$ with respect to $v$ and the EGH Bound is sharp in degree $q$,
	then there exist linear forms $f_n,f_{n-1},\dots, f_{i+1}$ such that the WLP holds for $I+(f_n,\dots, f_{i+1})$ in degrees $q,q+1,\dots, \alpha(I)+t-1$.
\end{thm}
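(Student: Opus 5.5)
The plan is to reduce Theorem \ref{IntroWLPCIPersistence} to two earlier results: the persistence statement of Theorem \ref{IntroEGHGotzmann} and the hyperplane-restriction comparison of Theorem \ref{IntroEGHGreenHyper}. Both are taken with the same type $v$, and $D_{v,n}$ is the maximum of the constants they supply. The existence of the LPP ideal $J$ agreeing with $I$ in degrees $\leq \alpha(I)+t$ is then the first part of Theorem \ref{IntroEGHGotzmann}. Since $I$ is generated in degrees $\leq q$ with respect to $v$ and the EGH Bound is sharp in degree $q$, the same theorem gives sharpness of the EGH Bound in every degree $d\in\{q,\dots,\alpha(I)+t-1\}$:
\begin{align*}
\dim I_{d+1}=\gamma_I(d+1)-\gamma_I(d)+\dim R_1J_d .
\end{align*}

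I would prove WLP for $A=R/(I,f_n,\dots,f_{i+1})$ by passing to the generic initial ideal. Put $G=\gin(I)$ (revlex) and take $f_n,\dots,f_{i+1},f_i$ to be general linear forms. Revlex commutes with quotienting by the last variables, so multiplication by $f_i$ on $A$ in degree $d\to d+1$ has maximal rank exactly when multiplication by $x_i$ on $R/(G,x_n,\dots,x_{i+1})$ does. Because $G$ is strongly stable, the kernel of that map is spanned by the monomials $m\notin G$ of degree $d$ in $x_1,\dots,x_i$ with $x_im\in G$. The cokernel is spanned by the degree-$(d+1)$ minimal generators of $G$ in $x_1,\dots,x_i$ that are divisible by $x_i$. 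So WLP in degree $d$ means that at least one of these two sets is empty, and the task is to show this combinatorially.

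The key step is to turn sharpness into a counting statement on $G$. Sharpness gives $\dim G_{d+1}-\dim R_1G_d=\gamma_I(d+1)-\gamma_I(d)+\dim R_1J_d-\dim R_1G_d$. I would bound $\dim R_1G_d$ from below by decomposing $G_d$ along its bottom strata $x_1^{d-j-E}x_2^jx_3^{e_3}\cdots x_r^{e_r}$. Theorem \ref{IntroMonster2} guarantees at least $\gamma_I(d-E)-1$ uncovered monomials in each stratum, and each uncovered monomial contributes an extra shadow element compared with the lex segment. Summing over $(r,e)$ should reproduce exactly the excess $\dim R_1J_d-\dim R_1L_d$ of the LPP bound over Macaulay's bound. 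Sharpness then forces equality in every stratum. In particular $G$ has no new generators in degree $d+1$ beyond the $\gamma_I(d+1)-\gamma_I(d)$ regular-sequence generators, and these sit at pure powers $x_1^{a}x_2^{b}$, which are not divisible by $x_i$ for $i\geq3$. Hence the cokernel side is empty, giving surjectivity, or, when $\dim A_{d+1}>\dim A_d$, the rigid staircase shape gives injectivity.

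The main obstacle is this equality-case analysis, which must hold uniformly for each restriction $i$. The most direct route is to apply Theorem \ref{IntroEGHGreenHyper} with $q=i+1$. It gives $\dim(R/(J,f_n,\dots,f_{i+1}))_d\geq\dim A_d$, and sharpness should force equality in the relevant degrees. Then $A$ has the Hilbert function of the restricted LPP ideal, which satisfies WLP by Stanley's theorem for monomial complete intersections modulo lex. An ideal that attains these extremal values cannot have a multiplication map of non-maximal rank. I expect the delicate point to be the mixed case where $\gamma_I$ jumps between $d$ and $d+1$, since there the correction term must be matched against the pure-power generators.
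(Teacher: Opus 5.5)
Your architecture matches the paper's: reduce via Corollary \ref{SWLPequiv} to $G=\gin(I)$, show that no minimal generator of $G$ in degrees $q+1,\dots,\alpha(I)+t$ is divisible by $x_3,\dots,x_n$, and conclude with Lemma \ref{SWLPNewGenerators}. The paper reads this structural fact off the strengthened form of Theorem \ref{EGHGotzmann} proved in the body. However, two of your steps are wrong.

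First, you have the cokernel wrong. For $\times x_i$ on $K[x_1,\dots,x_i]/(G\cap K[x_1,\dots,x_i])$ in degree $d$, the cokernel is spanned by the degree-$(d+1)$ monomials in $x_1,\dots,x_{i-1}$ that are not in $G$. The degree-$(d+1)$ minimal generators divisible by $x_i$ are exactly the $x_im$ with $m$ a kernel monomial. So the structural fact yields injectivity, not surjectivity, and surjectivity really does fail in typical cases: in the example below the cokernel has dimension $a-4$.

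Second, and more seriously, your counting step would prove something false. For $d\ge\alpha(I)$, sharpness does not give $\dim R_1G_d\ge\dim R_1J_d$, nor only $\gamma_I(d+1)-\gamma_I(d)$ new generators. Take $I$ generated by three general forms of degree $a\gg0$ in $K[x_1,x_2,x_3]$, with $q=a$ and $t=1$. Then $G_a=\{x_1^a,x_1^{a-1}x_2,x_1^{a-2}x_2^2\}$ and $\dim R_1G_a=7$. Meanwhile $J_a$ is spanned by $x_1^a,x_2^a,x_3^a$, and $\dim R_1J_a=9=\dim I_{a+1}$. So $G$ acquires two new generators ($x_1^{a-3}x_2^4$ and $x_1^{a-4}x_2^5$), although $\gamma_I(a+1)-\gamma_I(a)=0$.

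The correct count is $\gamma_I(d+1)-1$. The uncovered monomials $\Gamma_I(d+1)$ from Theorem \ref{IntroMonster2} are new generators, and the real content is that there are no others, i.e.\ $\dim I_{d+1}\le\dim R_1G_d+\gamma_I(d+1)-1$. There are two ways to get this bound:
\begin{itemize}
\item The paper proves it inductively, starting from sharpness in degree $q$. It uses the spanning set $\uSdw(E)\cup\uSdwB^2(B)$ of Theorem \ref{GeneralGreen}, with the expanders $E$ bounded by the edge generators of the previous degree.
\item Since you already have sharpness in every degree from Theorem \ref{IntroEGHGotzmann}, you can instead use the per-degree inequality $\dim R_1G_d\ge\dim R_1J_d-\gamma_I(d)+1$. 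The deficit comes from the $\gamma_I(d)-1$ monomials $x_1^{d+1-e_j}x_j^{e_j}$ of $R_1J_d$, which have no counterpart. For $\alpha(I)\gg0$ this follows from $G_d=\mathfrak{L}_{I,k}\sqcup\mathfrak{P}_{I,k}$, Theorem \ref{LexSegment} together with Macaulay's Theorem, and Lemma \ref{PowersSegment}.
\end{itemize}

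Your fallback through Theorem \ref{IntroEGHGreenHyper} also fails. Stanley's theorem concerns monomial complete intersections, not lex-plus-powers ideals; the lex ideal $(x_1^2,x_1x_2,x_1x_3)$ already fails WLP in degree $1$. Moreover, nothing you state forces equality in the restriction inequality.
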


Theorem \ref{IntroWLPCITheorem} is a corollary of Theorem \ref{IntroWLPCIPersistence},
since the EGH Bound is sharp in all degrees for complete intersections.

\subsection{Additional Results on Initial Ideals}
If one specializes Theorem \ref{IntroMonster2} to the case of complete intersections,
then we can list the monomials in the generic initial ideal for the specified range of degrees.
The generic initial ideal given by Theorem \ref{IntroCIGinStructure} has been called an almost revlex ideal in the literature \cite{ParkHyun, Moreno, PardueGeneric}.
A monomial ideal is called an {\it almost revlex ideal},
if whenever a minimal monomial generator is in the ideal, 
then all monomials before that minimal generator are in the ideal.
See Figure \ref{fig:ci7-theorem112} for an example of Theorem \ref{IntroCIGinStructure}.

\begin{figure}[htbp]
	\centering
	
	\begin{subfigure}[b]{0.32\textwidth}
		\centering
		\includegraphics[width=\textwidth]{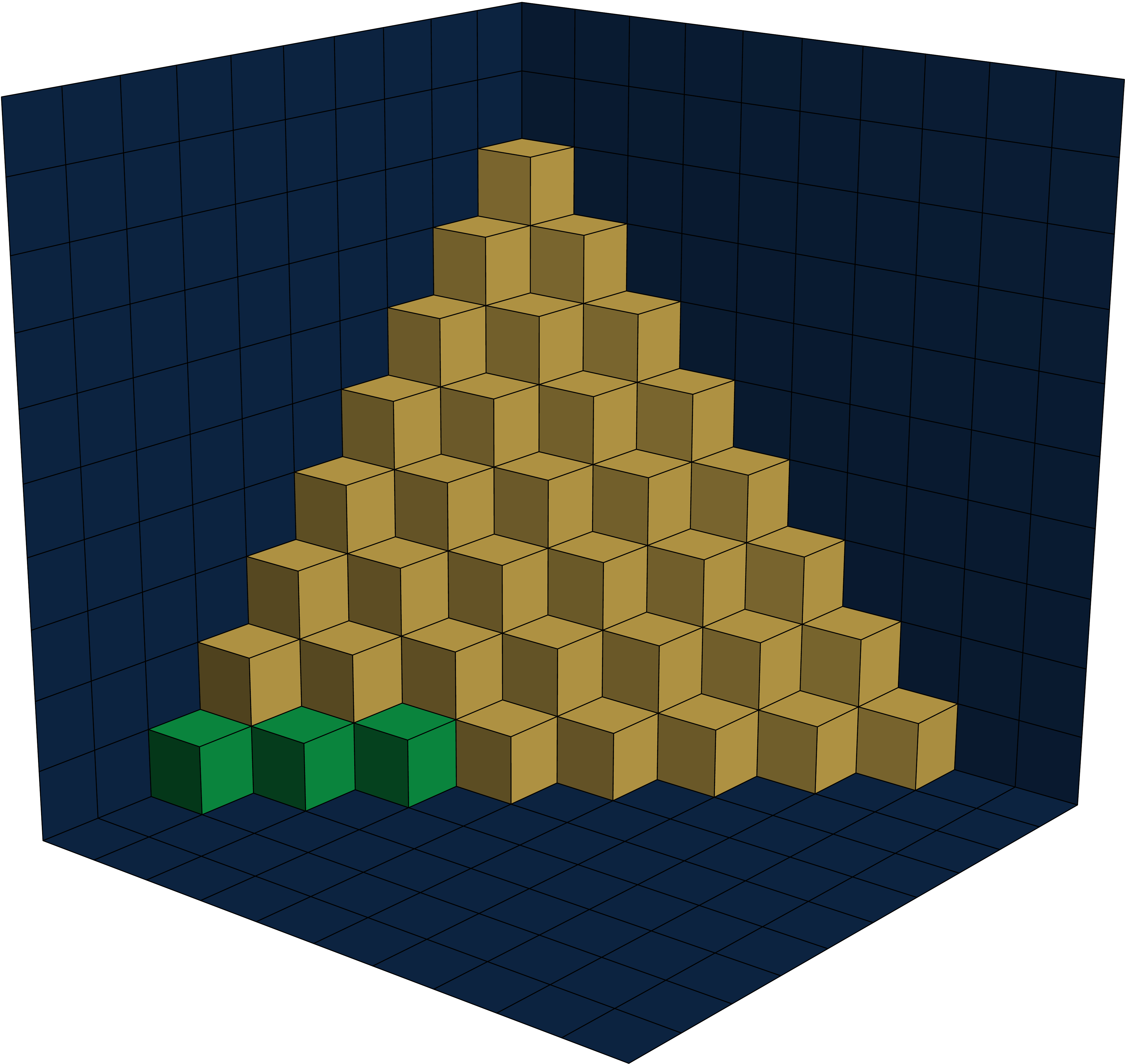}
		\caption{$\gin(I)_7$.}
		\label{fig:ci7-deg7}
	\end{subfigure}
	\hfill
	\begin{subfigure}[b]{0.32\textwidth}
		\centering
		\includegraphics[width=\textwidth]{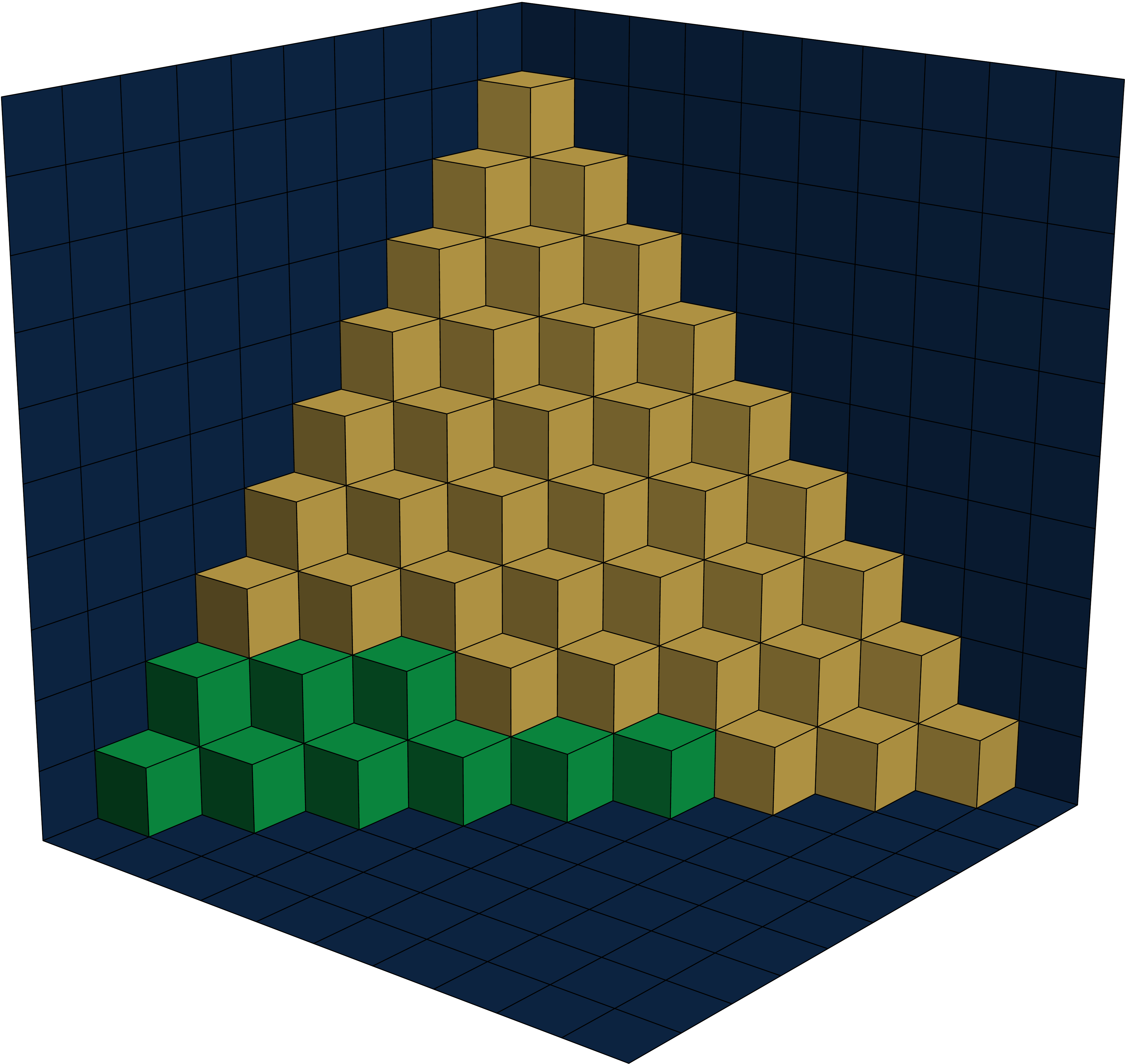}
		\caption{$\gin(I)_8$.}
		\label{fig:ci7-deg8}
	\end{subfigure}
	\hfill
	\begin{subfigure}[b]{0.32\textwidth}
		\centering
		\includegraphics[width=\textwidth]{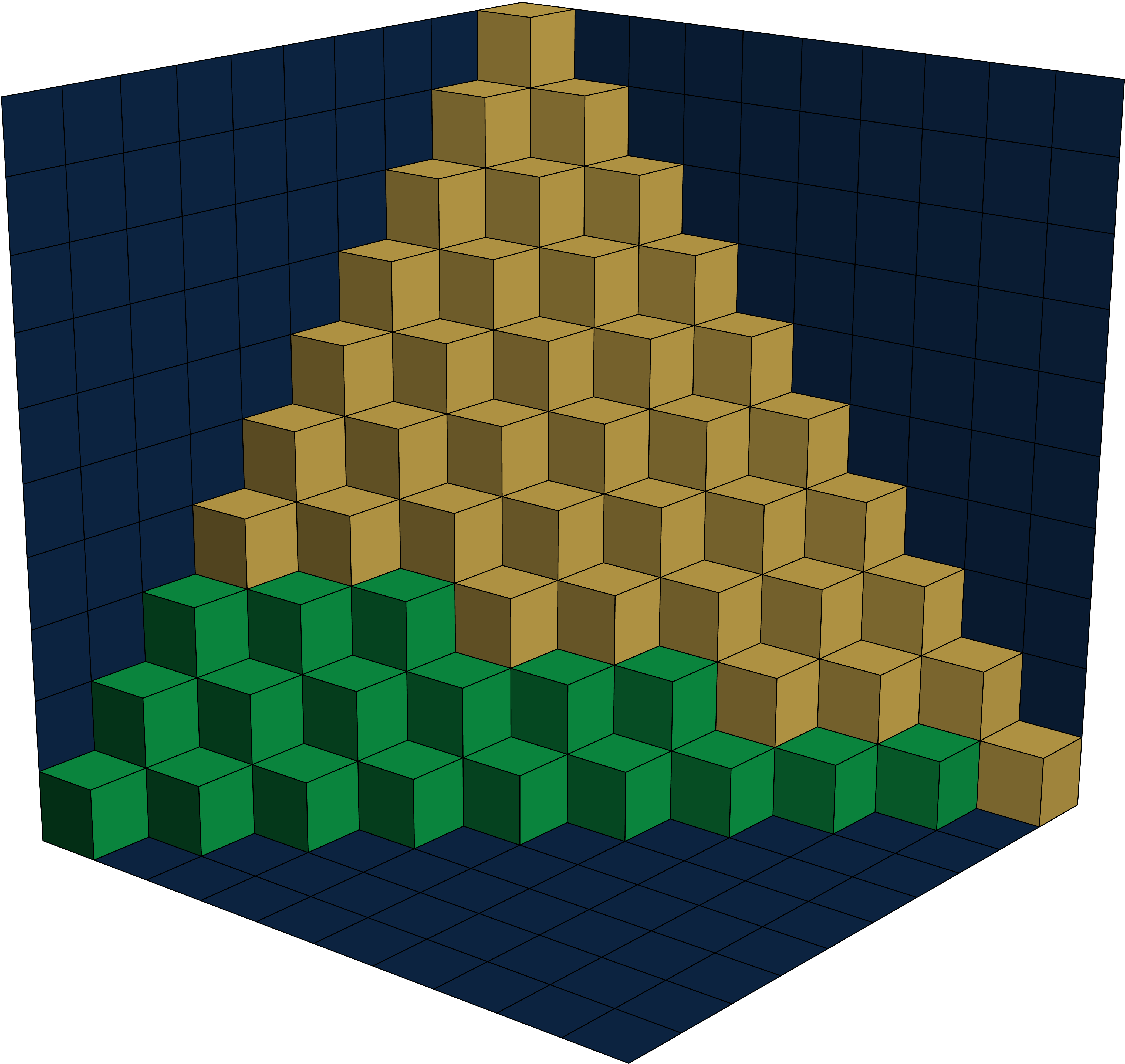}
		\caption{$\gin(I)_9$.}
		\label{fig:ci7-deg9}
	\end{subfigure}
	
	\caption{An example of Theorem 1.12 for a complete intersection $I$ equigenerated in degree $7$.}
	\label{fig:ci7-theorem112}
\end{figure}

\begin{thm}\label{IntroCIGinStructure}
	Let $I\subseteq R$ be a complete intersection of minimal type $v\in \N^{\ell+t+1}$.
	There exists an integer $D_{v,n} \in \N$ such that if $\alpha(I)\geq D_{v,n}$ then:
	\begin{enumerate}
		\item The set of minimal monomial generators of $\gin(I)$ in degree $\alpha(I)$ is
		\begin{align*}
			\left\lbrace x_1^{\alpha(I)}, x_1^{\alpha(I)-1}x_2 ,\dots,  x_1^{\alpha(I)-\gamma_I (\alpha(I))+1}x_2^{\gamma_I (\alpha(I))-1} \right\rbrace ,
		\end{align*}
		\item For $k\in \{1,2,\dots, t\}$,  the set of minimal monomial generators of $\gin(I)$ in degree $\alpha(I)+k$ is
		\begin{align*}
			\left\lbrace x_1^{\alpha(I)+k-q_{k-1}-1}x_2^{q_{k-1}+1}, x_1^{\alpha(I)+k-q_{k-1}-2}x_2^{q_{k-1}+2} ,\dots,  x_1^{\alpha(I)+k-q_k+1}x_2^{q_k-1} \right\rbrace ,
		\end{align*}
		where for all $j\in \{0,1,\dots , t\}$ we define $q_j =\sum_{i=0}^j \gamma_I(\alpha(I)+i)$.
	\end{enumerate}
\end{thm}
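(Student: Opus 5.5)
The plan is to combine the lower bound from Theorem \ref{IntroMonster2} with an exact count coming from the Hilbert function of a complete intersection. Since $\gin(I)$ has the same Hilbert function as $I$, it suffices to show two things. First, the monomials forced into $\gin(I)$ by Theorem \ref{IntroMonster2} and strong stability already account for $\dim I_{\alpha(I)+k}$. Second, the minimal generators of this forced set are exactly the listed $x_1^ax_2^b$.

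\textbf{Step 1: counting $\dim I_d$.} For $d=\alpha(I)+k$ with $0\le k\le t$ and $\alpha(I)$ large, the minimal generators of $I$ of degree $\le d$ are exactly the $f_i$ with $\deg f_i\le d$, and there are $\gamma_I(d)$ of them. The degree-$d$ piece of the Koszul complex involves products $f_if_j$ of degree $\ge 2\alpha(I)>d$, so it contributes nothing. Hence
\[
\dim I_d=\sum_{j=0}^{k}\gamma_I(\alpha(I)+j)\binom{n-1+k-j}{n-1}.
\]
Minimality of the type is what identifies $\gamma_I(\alpha(I)+j)$ with the number of generators of degree $\le \alpha(I)+j$.

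\textbf{Step 2: a lower bound for $\gin(I)_d$.} Apply Theorem \ref{IntroMonster2} with $r=3$ and all $e$. It gives, in every ``slice'' $x_3^{e_3}\cdots x_r^{e_r}$ with $E\le k$, at least $\gamma_I(d-E)-1$ uncovered monomials $x_1^{d-j-E}x_2^jx^e$ with $j\ge1$. Together with strong stability, this slice also contains $x_1^{d-E}x^e$. An uncovered monomial means the $x_2$-run in that slice must step up to the next $x_3$- (or $x_r$-) layer. Chaining these runs inductively over $e$, I will show that $\gin(I)_d$ contains the strongly stable set $S_d$ generated by the listed monomials. Concretely, the $x_1x_2$-part of $S_d$ goes up to $x_2$-exponent $q_k-1$, and each layer $x_3^{c}$ part reaches exponent $q_{k-c}-1$, and so on.

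\textbf{Step 3: comparing counts.} Counting $S_d$ slice by slice gives
\[
|S_d|=\sum_j\gamma_I(\alpha(I)+j)\binom{n-1+k-j}{n-1}.
\]
This is the identity that the $q_j$ encode: summing $q_{k-E}$ over $e$ with $|e|=E$ reorganizes into the binomial sum. So $|S_d|=\dim I_d$, hence $\gin(I)_d=S_d$ for all $d$ in the range. Reading off the minimal generators gives (1) and (2): $S_{d}\setminus R_1S_{d-1}$ is exactly the new $x_1x_2$-monomials with $x_2$-exponent from $q_{k-1}$ (respectively $0$ when $k=0$) to $q_k-1$.

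The main obstacle is Step 2. The issue is converting the uncovered counts of Theorem \ref{IntroMonster2} into the precise containment $S_d\subseteq\gin(I)_d$, because uncovered monomials need not be consecutive in $j$. I would first try induction on $k$ and on $E$. The inductive input is that $R_1\gin(I)_{d-1}\subseteq\gin(I)_d$ supplies the old part. The uncovered monomials in each slice must then be new elements beyond the old part: if they sat inside the old part, they would be covered by strong stability and the inductive description.
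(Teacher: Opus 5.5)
Your strategy matches the paper's. You induct on $k$, use the uncovered-bottom-length bound to force the new $x_1x_2$-generators, and then use the exact Hilbert function of the complete intersection to show that nothing else lies in $\gin(I)_{\alpha(I)+k}$. The difference is only in the counting device. The paper sums Bezrukov weights (Corollary \ref{WeightsLemmaAdvancedIdeal}) over the forced minimal generators, so any extra generator would add positive weight. You instead prove $S_d\subseteq\gin(I)_d$ and count $S_d$ slice by slice. The two are equivalent, and yours is a little more elementary.

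\textbf{The displayed counts are wrong.} Because $\gamma_I$ is cumulative, your formula for $\dim I_d$, and the identical one for $|S_d|$, overcounts. For $n=3$ with three generators of degree $\alpha(I)$ and $k=1$, it gives $12$ instead of $9$. With $\gamma_I(\alpha(I)-1)=0$, the correct value is
\[
\dim I_d=\sum_{j=0}^{k}\bigl(\gamma_I(\alpha(I)+j)-\gamma_I(\alpha(I)+j-1)\bigr)\binom{n-1+k-j}{n-1}=\sum_{j=0}^{k}\gamma_I(\alpha(I)+j)\binom{n-2+k-j}{n-2}.
\]
Your slice count actually gives $|S_d|=\sum_{E=0}^{k}\binom{n-3+E}{n-3}q_{k-E}$. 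By the hockey-stick identity this equals the corrected expression, so the key equality $|S_d|=\dim I_d$ does hold.

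\textbf{Off-by-one in the new generators.} For $k\ge 1$ the new generators in degree $d=\alpha(I)+k$ have $x_2$-exponent $q_{k-1}+1,\dots,q_k-1$; the range does not start at $q_{k-1}$. Indeed $x_1^{d-q_{k-1}}x_2^{q_{k-1}}=x_2\cdot x_1^{d-q_{k-1}}x_2^{q_{k-1}-1}\in R_1S_{d-1}$. This leaves $\gamma_I(d)-1$ new generators, as in the statement.

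\textbf{Step 2 is not a real obstacle.} Your worry about non-consecutive uncovered monomials is unfounded. In a fixed slice $x^e$, strong stability makes $\{j : x_1^{d-E-j}x_2^jx^e\in\gin(I)\}$ an initial segment $\{0,\dots,b_e-1\}$. The covered indices form $\{1,\dots,b_{e'}\}$ with $e'=(e_3,\dots,e_{r-1},e_r+1)$. So the $(r,e)$-uncovered monomials are consecutive, and $\fbl_I^{(r,e)}(d)=b_e-1-b_{e'}$.

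For $r=3$, write $b_c$ for the size of the $x_3^c$ slice. The inductive hypothesis $\gin(I)_{d-1}=S_{d-1}$ gives $b_1\ge q_{k-1}$. Then $\fbl_I(d)\ge\gamma_I(d)-1$ yields $b_0\ge q_{k-1}+\gamma_I(d)=q_k$. Every other slice of $S_d$ already lies in $R_1S_{d-1}$. So you only need the $(3,0)$ case, i.e.\ Corollary \ref{advancedLefschetzBaseMultipleDegrees}, exactly as the paper uses it.

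If you prefer to avoid the induction, take $r=n$ and iterate $b_e\ge\gamma_I(d-E)+b_{e'}$ to get $b_e\ge q_{k-E}$ in every slice. Using $r=3$ alone, as you wrote, would not reach slices involving $x_4,\dots,x_n$.
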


The following result is a generalization of Green's Crystallization Principle \cite{Green1998}.

\begin{thm}\label{IntroGeneralGreenNumeric}
	Suppose $\operatorname{char}(K)=0$.
	Let $I\subseteq R$ be an ideal generated in degrees $\leq d$ such that $\im(I)$ is strongly stable and has $q$ minimal generators in degree $d+1$.
	Then for all $k\geq 1$ we have
	\begin{align*}
		\dim I_{d+k} \leq q\binom{n-1+k-1}{n-1} + \dim R_k\im(I)_{d}.
	\end{align*}
\end{thm}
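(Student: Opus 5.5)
The plan is to pass from $I$ to $J=\im(I)$ and argue by induction on $n$, restricting to the hyperplane $x_n=0$. Since $\dim I_{d+k}=\dim J_{d+k}$, it suffices to bound $\dim J_{d+k}$.

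\emph{Reduction and a warning.} The natural guess would be the monomial containment
\begin{align*}
	J_{d+k}\subseteq R_kJ_d+R_{k-1}G,
\end{align*}
where $G$ is the set of the $q$ minimal generators of $J$ in degree $d+1$. Counting monomials, this would give the bound at once. However, it fails. For three general quadrics in $K[x,y,z]$, $\gin(I)$ acquires $z^4$ as a new minimal generator in degree $4=d+2$. The inequality still holds there ($15\le 2\cdot 3+12$), so the argument has to be numerical rather than a containment. The slack $q\binom{n-1+k-1}{n-1}$ has to absorb generators appearing in degrees $\ge d+2$.

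\emph{Induction on $n$ via $x_n$.} Split the monomials of $J_m$ according to whether $x_n$ divides them. This gives
\begin{align*}
	\dim J_m=\dim \bar J_m+\dim (J:x_n)_{m-1},
\end{align*}
where $\bar J$ is the image of $J$ in $\bar R=K[x_1,\dots,x_{n-1}]$. For revlex we have $\im(I+(x_n))=J+(x_n)$ and $(J:x_n)=\im(I:x_n)$. Hence $\bar J=\im(\bar I)$ with $\bar I$ the image of $I$. This $\bar I$ is generated in degrees $\le d$, $\bar J$ is strongly stable, and $\bar J$ has $\bar q\le q$ minimal generators in degree $d+1$, namely those generators in $G$ not divisible by $x_n$. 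By the induction hypothesis in $n-1$ variables,
\begin{align*}
	\dim \bar J_{d+k}\le q\binom{n-2+k-1}{n-2}+\dim \bar R_k\bar J_d .
\end{align*}
The base case $n=1$ is trivial. Using the Pascal identity $\binom{n+k-2}{n-1}=\binom{n+k-3}{n-2}+\binom{n+k-3}{n-1}$, together with the analogous splitting of $R_kJ_d$ into its $x_n$-free part $\bar R_k\bar J_d$ and its $x_n$-divisible part, the theorem reduces to the inequality
\begin{align*}
	\dim (J:x_n)_{d+k-1}\le q\binom{n+k-3}{n-1}+\dim\bigl(R_kJ_d : x_n\bigr)_{d+k-1}.
\end{align*}

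\emph{The main obstacle: the colon term.} This last inequality is where I expect the difficulty, because $I:x_n$ is not generated in degrees $\le d$. I would prove it by induction on $k$. The case $k=1$ follows from $J_{d+1}=R_1J_d\oplus\Span(G)$. For the inductive step, I would use strong stability of $J$: if $x_nu\in J$ and $x_iu'$ arises by moving exponent toward smaller variables, then the moved monomial stays in $J$. From this I would show that $(J:x_n)_{m}$ exceeds $(R_{m-d+1}J_d:x_n)_{m}$ by at most the $x_n$-free multiples of $G$ in degree $m$, plus what is carried up from degree $m-1$ by multiplication by $R_1$. This is the Green crystallization mechanism, run with the defect $q$ tracked instead of set to $0$. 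Summing over degrees then produces exactly $q\binom{n+k-3}{n-1}$. If the direct stability argument is too lossy, I would instead apply the outer induction on $n$ to the colon as well, viewing $(J:x_n)$ via the revlex identity as $\im(I:x_n)$ and comparing it with $\im(I)$ in degree $d$.
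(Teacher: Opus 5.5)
\textbf{There is a genuine gap: the colon inequality you reduce to is false, and the reduction cannot be repaired.}

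Your restriction step is sound: $\bar J=\im(\bar I)$, $\bar I$ is generated in degrees $\le d$, $\bar J$ is strongly stable, and $\bar q\le q$. Your warning that the monomial containment $J_{d+k}\subseteq R_kJ_d+R_{k-1}G$ fails is also correct. The trouble is that the whole content of the theorem is pushed into the colon inequality, and as you state it that inequality is false.

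Already your base case fails. From $J_{d+1}=R_1J_d\sqcup G$ one gets
\[
(J:x_n)_d=(R_1J_d:x_n)_d\ \sqcup\ \{g/x_n : g\in G,\ x_n\mid g\},
\]
so the $k=1$ inequality holds only if no element of $G$ involves $x_n$. Take your own example of three general quadrics. There $G=\{xz^2,yz^2\}$, and $(J:z)_2=\{x^2,xy,y^2,xz,yz\}$ has $5$ elements, while $(R_1J_2:z)_2$ has only $3$. The immediate cause is that you replaced $\bar q$ by $q$ in the inductive bound for $\bar J$. This throws away exactly the $x_n$-divisible generators in $G$, which then have nowhere to be counted.

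Keeping $\bar q$ does not rescue the approach. Since $\dim(J:x_n)_{d+k-1}=\dim I_{d+k}-\dim\bar I_{d+k}$, the colon inequality is not a reduction but a strengthening of the theorem. It asks the bound for $I$ to hold with at least as much slack as the inductive bound for $\bar I$ has, and this can fail. Here is an example with $n=3$:
\begin{enumerate}
\item Let $I$ be generated by the three quintic maximal minors of a general $4\times 3$ Hilbert--Burch matrix whose first three rows are quadrics and whose last row is linear.
\item The fourth minor $G'$ is a sextic with $G'\notin I$ and $\mathfrak m G'\subseteq I$ (from the syzygies along the linear row). Moreover $I+(G')$ is saturated of dimension one and agrees with $I$ in degrees $\le 5$.
\item For general $h$ this gives $(I:h)_5=I_5$ and $(I:h)_6=(I+(G'))_6$. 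Here $d=5$, $J_5=\{x^5,x^4y,x^3y^2\}$ and $\dim I_6=9$, so $q=\bar q=2$, and no degree-$6$ generator of $J$ involves $x_3$.
\item Nevertheless $\dim(J:x_3)_6=10$, while the bound you need at $k=2$ (with either $q$ or $\bar q$) is $2+\dim R_1J_5=9$.
\end{enumerate}
The theorem itself survives here, since $\dim I_7=18=2\cdot 3+12$. It does so only because $\dim\bar I_7=8$ is strictly below your inductive bound $9$.

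Your fallback does not apply either. The ideal $I:x_n$ still lives in $n$ variables and is not generated in degrees $\le d$, so the outer induction on $n$ gives nothing.

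What is missing is a way to use ``$I$ is generated in degrees $\le d$'' at the level of polynomials rather than monomials of $J$. The paper does this without splitting along $x_n$:
\begin{enumerate}
\item Take a reduced basis $B$ of $I_d$. Its Borel shadow $\uSdwB(B)$ is linearly independent, with initial monomials $\uSdw(\im(I)_d)$.
\item Complete $\uSdwB(B)$ to a basis of $I_{d+1}$ by exactly $q$ expanders $E$.
\item A rewriting argument (Theorem \ref{GeneralGreen}) shows that $\uSdw^{k-1}(E)\cup\uSdwB^{k}(B)$ spans $I_{d+k}$. Counting this spanning set gives the bound.
\end{enumerate}
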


The case $q=0$ in Theorem \ref{IntroGeneralGreenNumeric} is Green's Crystallization Principle.

\subsection{Some History}
The EGH Conjecture was first stated for $\ell = n$;
Caviglia and Maclagan in \cite{CavigliaMaclagan} proved that this is equivalent to the $\ell\leq n$ version given above.
The case of the EGH Conjecture for almost complete intersections implies the Generalized Cayley--Bacharach Conjecture \cite{EGHCB},
also conjectured by Eisenbud, Green, and Harris.
Many authors have worked on the EGH Conjecture over the past $3$ decades \cite{Abedelfatah1, Abedelfatah2, Abedelfatah3, CavigliaConstantinescuVarbaro, CavigliaDeStefani2, CavigliaDeStefani, CavigliaMaclagan, Chen, Chong, Cooper, Francisco, Gasharov, GeramitaKreuzer, HoschsterSema, HerzogPopescu, Richert}.
The recent surveys \cite{GDSSurvey, SemaSurvey} go over the currently known cases of the EGH Conjecture,
and some of the  techniques that have been used to approach it.
The case $n=2$ was first settled by Richert \cite{Richert} using a result of Davis \cite{Davis1985}.

Theorems \ref{IntroWLPCITheorem} and \ref{IntroWLPCIPersistence} are new.
We note, however, that such properties have been defined before by Harima and Wachi \cite{HarimaWachi},
and are know as $k$-Lefschetz properties.
Palezzato and Torielli further studied k-Lefschetz properties in \cite{PalezzatoTorielli}.
For more details on the weak and strong Lefschetz properties see the survey by Migliore and Nagel \cite{JuanUweTour}.

The use of initial ideals and Gröbner bases goes back to $1900$ in the work of Gordan \cite{Gordan1900}.
Macaulay \cite{Macaulay} introduced total orderings on the monomials.
This work was extended by Gröbner \cite{Grobner1939, Grobner1950} and Buchberger \cite{Buchberger1965, Buchberger1970, Buchberger1976}.
Hironaka \cite{Hironaka1964} introduced ``standard bases'' which are analogous to Gröbner bases.
In the second half of the \nth{20} century there was a shift toward the use of generic initial ideals.
Hartshorne \cite{Hartshorne} used properties of generic initial ideals to prove connectedness of Hilbert schemes.
Grauert \cite{Grauert} considered generic initial ideals in the case of power series rings.
Galligo \cite{Galligo} developed the theory in characteristic $0$.
Bayer and Stillman extended the theory in arbitrary characteristic \cite{BayerStillmana}.


\subsection{Organization of the Paper}
Section \ref{DefinitionsAndNotation} covers some definitions and notation.
Section \ref{FSSection} covers a theorem of Fløystad and Stillman on generic initial ideals.
This section is crucial for the rest of the paper and it is used in several of the major results from the introduction.
In Section \ref{LefschetsIntro} we cover some known results about Lefschetz properties,
and prove some new ones.
We can prove Corollary \ref{WLPAsymptotic} with the methods of this paper instead of using theorems in the literature.
However, this would make the paper much longer and distract from the general method used.
In Section \ref{BezrukovSection} we adapt ideas of Bezrukov in order to express Hilbert functions of monomial ideals as weighted sums over the minimal generators.
This section is important for the rest of the paper.
We use the results and ideas in almost every major proof in the paper.
In Section \ref{GreenSection} we prove Theorem \ref{IntroGeneralGreenNumeric}.
In fact, we prove an even more general version of Green's Crystallization Theorem, see Theorem \ref{GeneralGreen}.

In Section \ref{ginStructure}, 
we use the results in Sections \ref{FSSection}, \ref{LefschetsIntro}, \ref{BezrukovSection}, and \ref{GreenSection} to prove Theorems \ref{IntroadvancedLefschetzBase}, \ref{IntroMonster2}, and \ref{IntroCIGinStructure}.
Some preliminary lemmas are also proved in order to do this.
After the structure of the generic initial ideal is established,
in Section \ref{TransformSection} we prove several results about how the monomials in the generic initial ideal can be partitioned and arranged. 
This section might be of interest by itself in the context of compression theory.
Compression is a standard technique in commutative algebra and combinatorics, see \cite{Engel, Harper, PeevaBook}.
The typical approach is to start with an ideal,
and slowly transform it to another nicer ideal (like a lex ideal) by a sequence of steps while doing small changes at each step.
In Section \ref{TransformSection},
we remove monomials from the generic initial ideal.
Although the remaining monomials do not form an ideal, 
we transform them by a sequence of compression operations into an ideal with several desirable properties.
See Theorem \ref{LexSegment} for more details.
On the other hand,
the removed monomials are used to form a monomial complete intersection,
see Lemma \ref{PowersSegment}.

In Section \ref{EGHResults} we prove Theorems \ref{IntroEGHFirst}, \ref{IntroEGHMain}, \ref{IntroEGHGotzmann}, \ref{IntroEGHGreenHyper}, \ref{IntroBettiEGH} by using the results in Sections \ref{BezrukovSection}, \ref{ginStructure}, and \ref{TransformSection}.
In Section \ref{TranslationSection} we show how to prove Theorem \ref{BFGG} from Theorem \ref{IntroEGHGotzmann}.
In Section \ref{LefschetzApplicationsSection} we prove Theorems \ref{IntroWLPCITheorem} and \ref{IntroWLPCIPersistence} by using results from Sections \ref{FSSection}, \ref{LefschetsIntro}, \ref{GreenSection}, and \ref{TransformSection}.

In Section \ref{ConjecturesSection} we state a few conjectures.
Finally, Section \ref{Thanks} is dedicated to acknowledgments.


\section{Background definitions and notation}\label{DefinitionsAndNotation}
{\it Monomials} are products of variables.
A polynomial in $R$ is called {\it homogeneous} if all the monomials in it have the same degree,
and the degree of a homogeneous polynomial is the degree of any monomial in it.
A {\it form} is a homogeneous polynomial,
and a {\it linear form} is a form of degree $1$. 
An ideal (vector space closed under multiplication by the variables) $I\subseteq R$ is {\it homogeneous/graded} if it is generated by homogeneous polynomials.
The Hilbert function of a graded ideal $I$ takes a natural numbers $d\in \N$ as input and outputs the dimension of the vector space of homogeneous polynomials of degree $d$ in $I$.
Homogeneous polynomials $f_1,\dots, f_\ell$ with $\ell\leq n$ form a {\it regular sequence/complete intersection},
if the ideal generated by $f_1,\dots, f_\ell$ has the same Hilbert function as the ideal generated by $x_1^{\deg f_1},\dots, x_\ell^{\deg{f_\ell}}$.

For two monomials of the same degree $m_1=x_1^{p_1}\cdots x_n^{p_n}$ and $m_2=x_1^{q_1}\cdots x_n^{q_n}$,
we say that $m_1>m_2$ in the {\it lexicographic (lex) order} if the first nonzero entry of $(p_1-q_1,\dots, p_n-q_n)$ is positive.
Similarly, $m_1>m_2$ in the {\it reverse lexicographic (revlex) order} if the last nonzero entry of $(p_1-q_1,\dots, p_n-q_n)$ is negative.
A monomial ideal (generated by monomials) is called a {\it lex ideal} if whenever a monomial is inside our ideal,
then any monomial larger than it is also inside of the ideal.
A set of monomials of the same degree is called a {\it lex segment}  if whenever a monomial is inside the set,
then any monomial larger than it is also inside of the set.

Every homogeneous polynomial $f\in R$ of degree $d$ is expressed as a unique linear combination of the monomials of degree $d$.
The largest monomial in the revlex order that has a nonzero coefficient in this linear combination is called the {\it initial term/monomial} of $f$,
and is denoted by $\im(f)$.
The initial ideal of an ideal $I$ is the ideal generated by all the initial monomials of elements in $I$,
and is denoted by $\im(I)$.
Due to Galligo \cite{Galligo},
for every ideal $I$ there exists a set of ring automorphisms $U$,
such that for $\sigma,\tau \in U$ we have that $\im(\sigma(I)) = \im(\tau (I))$.
We define the generic initial ideal of $I$ to be $\im(\sigma(I))$ and denote it by $\gin(I)$.

For a homogeneous ideal $I$, 
a linear form $f$, 
and $d\in \N$,
we say that $f$ is a \textit{weak Lefschetz element (WLE) on $A=R/I$ or $I$ in degree $d$},
if the multiplication map by $f$, 
$\times f_d: A_d \rightarrow A_{d+1}$, is injective or surjective.
We say that {\it $f$ is a WLE on $A$ or $I$} if $f$ is a WLE on $A$ in degree $d$ for all $d\in \N$.
We say that $A$ or $I$ satisfies the \textit{weak Lefschetz property (WLP)} if there exists a linear form $f$ that is a WLE on $A$.
The strong Lefschetz property is defined similarly by requiring injectivity or surjectivity of $f^k$.

For $i< j\leq n$ we define the {\it $(i,j)$-Borel move} to be the function that takes as input a monomial $m$ divisible by $x_j$ and outputs $x_im/x_j$.
A set of monomials is called {\it Borel or strongly stable} if it is closed under Borel moves.
A Borel set is sometimes called a {\it $2$-compressed set} in the literature.


\section{Fløystad--Stillman Spaces}\label{FSSection}
In this section we introduce results obtained by Fløystad and Stillman in \cite{FloystadStillman}.
Let $V$ be a vector space of homogeneous polynomials in $K[x_1,\dots, x_n]$. 
Following \cite{FloystadStillman} we define $V|_{x_n,\dots, x_{n-k+1}}$ to be the image of $V$ in $K[x_1,\dots, x_{n-k}]$ under the natural map
\begin{align*}
	K[x_1,\dots, x_n] \rightarrow K[x_1,\dots, x_{n-k}].
\end{align*}

Two standard facts about the revlex order are:
\begin{enumerate}
	\item $\im(V|_{x_n}) = \im(V)|_{x_n}$,
	\item $\im(V:x_n) = \im(V) : x_n$.
\end{enumerate}

We denote by $I_V$ the ideal generated by $V$ in $K[x_1,\dots, x_n]$.
Suppose $V\subseteq K[x_1,\dots, x_n]_d$ for some $d\in \N$.
There exists a Zariski open set $U$ of ring automorphisms,
such that for $f,g\in U$ we have $\im(f(V)) = \im(g(V))$.
Take $f\in U$ and define $W=f(V)$. 
If $k\in \{0,1,\dots, n-2\}$ then we define the \textit{$k$-th Fløystad--Stillman space} of $V$ to be
\begin{align*}
	\squash_k(V) = \left( \bigoplus_{r=1}^{d}((W|_{x_n,\dots, x_{n-k+1}}):x_{n-k}^r)|_{x_{n-k}}\right) \oplus (I_{W|_{x_n,\dots, x_{n-k+1}}})|_{x_{n-k}} \subseteq K[x_1,\dots, x_{n-k-1}].
\end{align*}
In particular,
\begin{align*}
	\squash_0(V) = \left( \bigoplus_{r=1}^{d}(W:x_{n}^r)|_{x_{n}}\right) \oplus (I_{W})|_{x_{n}} \subseteq K[x_1,\dots, x_{n-1}].
\end{align*}

See Figure \ref{fig:fs-example} for an example of $\im(V)$ and $\im(\squash_k(V))$.

\begin{figure}[htbp]
	\centering
	
	\begin{subfigure}[t]{0.48\textwidth}
		\centering
		\includegraphics[
		width=\linewidth
		]{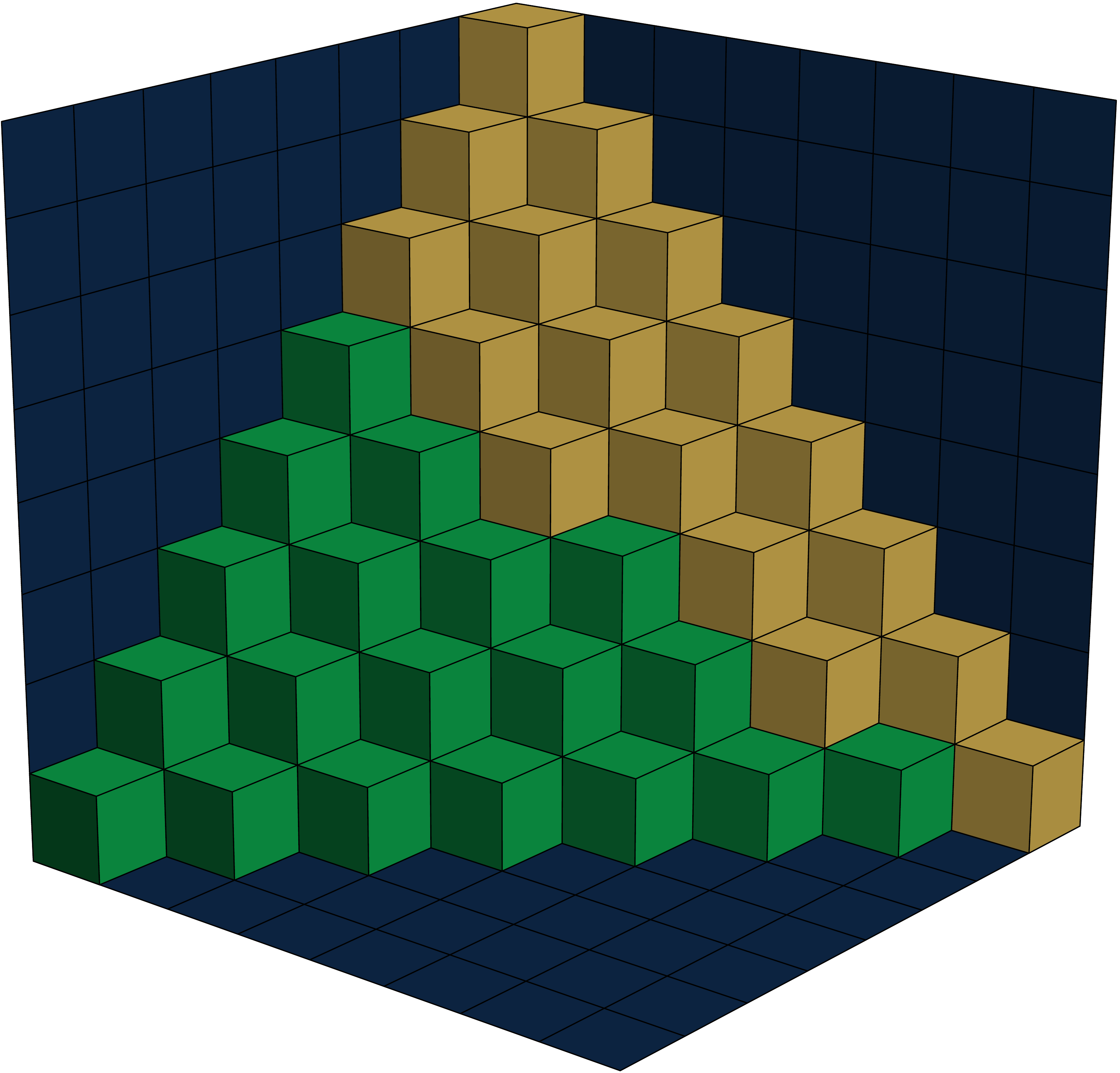}
		\caption{The monomials in $\operatorname{in}(V)$.}
		\label{fig:fs-in-v}
	\end{subfigure}
	\hfill
	\begin{subfigure}[t]{0.48\textwidth}
		\centering
		\includegraphics[
		width=\linewidth
		]{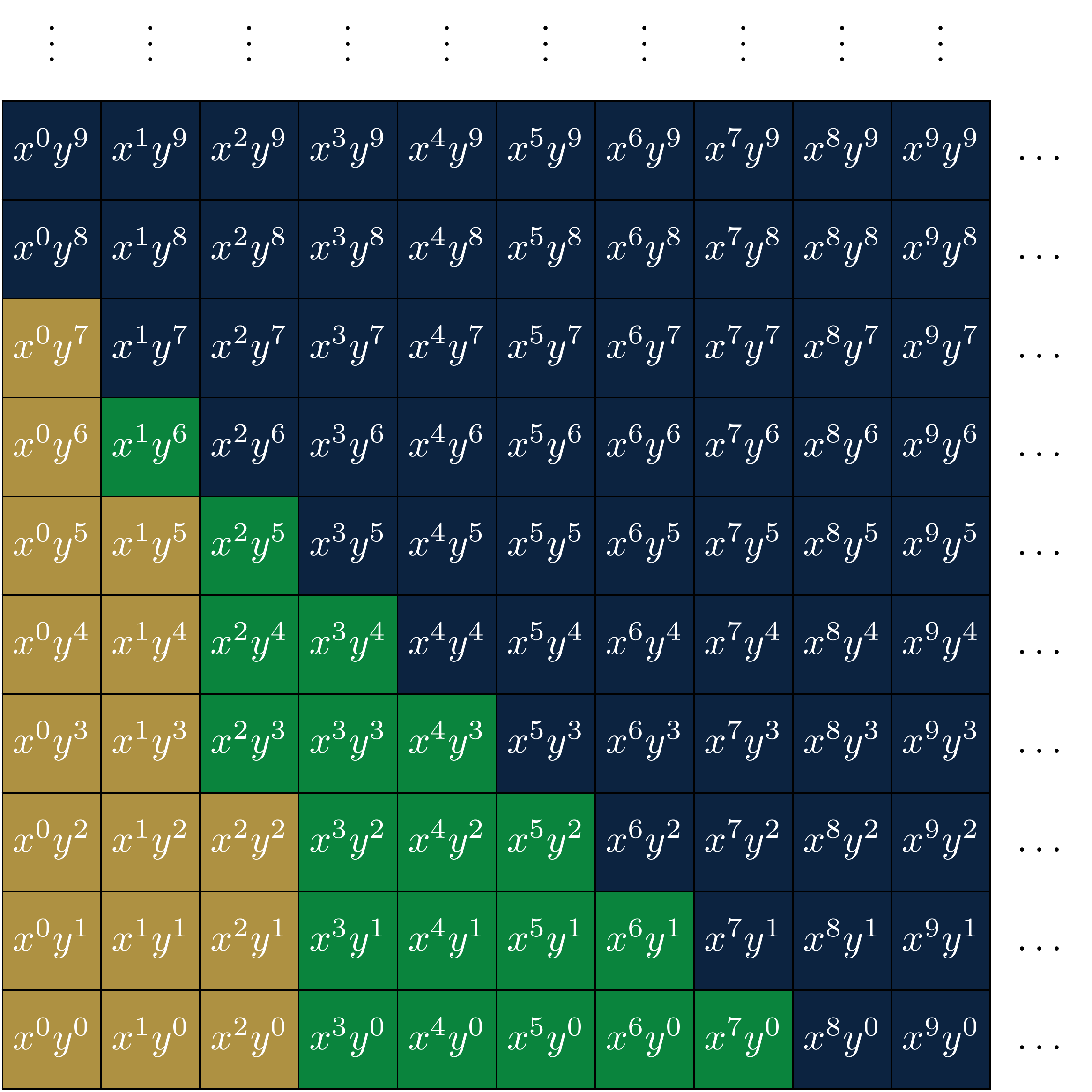}
		\caption{The monomials in $\operatorname{in}(FS_0(V))$ in degrees $\leq 7$.}
		\label{fig:fs-zero-v}
	\end{subfigure}
	
	\caption{Example of $\operatorname{in}(V)\subseteq K[x_1,x_2,x_3]$ and $\operatorname{in}(FS_0(V))\subseteq K[x,y]$.}
	\label{fig:fs-example}
\end{figure}

\begin{thm}[Fløystad--Stillman \cite{FloystadStillman}]\label{Fløystad--Stillman}
	If $V\subseteq R_d$ and $k\in \{0,1,\dots, n-2\}$ then $\squash_k(V)$ is an ideal and $\im(\squash_k(V))$ is strongly stable.
\end{thm}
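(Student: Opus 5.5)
The plan is to prove the two assertions separately. The ideal property comes from a genericity (deformation) argument. Strong stability of $\im(\squash_k(V))$ then reduces, via the two standard revlex facts, to strong stability of generic initial objects.

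\emph{Reductions.} Write $W' = W|_{x_n,\dots,x_{n-k+1}} \subseteq K[x_1,\dots,x_{n-k}]_d$ and $x = x_{n-k}$. Put
\begin{align*}
A_r = (W':x^r)|_{x} \subseteq K[x_1,\dots,x_{n-k-1}]_{d-r} \quad (1\le r\le d), \qquad A_0' = (I_{W'})|_{x}.
\end{align*}
The summands of $\squash_k(V)$ live in pairwise distinct degrees, so the sum is direct and the initial object is computed summand by summand. Since $W$ is in general coordinates, so is $W'$, because restriction to $x_n=\dots=x_{n-k+1}=0$ commutes with generic changes of the remaining variables. By fact (1), $\im(W') = \im(W)|_{x_n,\dots,x_{n-k+1}}$, and similarly for $I_{W'}$. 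Generic initial objects in characteristic $0$ are strongly stable. Setting the last variables of a strongly stable set to zero keeps it strongly stable, because the surviving monomials involve only $x_1,\dots,x_{n-k}$ and Borel moves never introduce later variables.

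\emph{Strong stability.} Apply facts (1) and (2) to get
\begin{align*}
\im(A_r) = (\im(W'):x^r)|_{x}, \qquad \im(A_0') = \im(I_{W'})|_{x}.
\end{align*}
A Borel move among $x_1,\dots,x_{n-k-1}$ commutes with the colon by $x^r$, so each piece is strongly stable within its own degree. Across degrees we need $x_j\,\im(A_r)\subseteq \im(A_{r-1})$, with $A_0 := A_0'$. Take $m\in \im(A_r)$, so that $m x^r\in \im(W')$. The Borel move $x\mapsto x_j$ on one factor of $x^r$ gives $m x_j x^{r-1}\in \im(W')$, hence $x_j m\in \im(A_{r-1})$. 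For $r=1$ we get $x_j m\in \im(W') \subseteq \im(I_{W'})$, and this monomial survives restriction to $x=0$. Thus the monomial part is routine once the ideal property is known, and in fact $\im(\squash_k(V))$ is automatically closed under multiplication by variables.

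\emph{Ideal property (the main obstacle).} It suffices to show $x_j A_r \subseteq A_{r-1}$ for $j\le n-k-1$ and $r\ge 1$; the piece $A_0'$ is clearly an ideal. Containment of initial spaces does not imply containment of the spaces themselves, so a genuine argument is needed. Here I would use Zariski-openness of $U$. Consider the one-parameter unipotent family $g_t\colon x\mapsto x+t x_j$, fixing the other variables. For $W$ generic, $g_t(W)$ is still generic for all but finitely many $t$, so $\dim A_r(g_t W')$ is constant and equals the number of monomials in $(\im(W'):x^r)|_x$ in degree $d-r$. Now take $g\in A_r$, say $x^r h\in W'$ with $h|_x = g$. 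Then $g_t(x^r h) = (x+tx_j)^r h_t \in g_t(W')$. Expanding in $t$, the coefficient of $t$ is $r x_j x^{r-1} h + x^r\,\partial_t h_t|_{t=0}$. The aim is to show that, modulo $x^r$-multiples, this coefficient lies in $W'$ itself. The idea is that the family $A_\bullet(g_t W')$ is flat with constant dimensions, so its tangent at $t=0$ stays inside the same filtration. Restricting to $x=0$ then gives $r x_j g \in A_{r-1}$, and $r\ne 0$ because $\operatorname{char} K = 0$. I expect this tangent/flatness step to be the delicate point; if it fails, I would follow Fløystad--Stillman's original approach more closely and argue via Borel-fixedness of $\gin$ applied to the auxiliary ideals $W':x^r$.
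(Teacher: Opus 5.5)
\textbf{Verdict: the approach is sound, but the key step of the ideal property is only asserted, not proved.} The gap can be filled by a short genericity argument, given in the second paragraph.

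\textbf{What is already correct.} Your treatment of strong stability works. Facts (1) and (2) give $\im(A_r)=(\im(W'):x^r)|_{x}$, where $\im(W')=\gin(V)|_{x_n,\dots,x_{n-k+1}}$ is strongly stable. The Borel move on one factor of $x^r$ then handles the passage between degrees. For the pieces in degrees $\ge d$ you also need $f$ generic for the whole ideal $I_V$, so that $\im(I_W)=\gin(I_V)$; the set $U$ in the paper only controls degree $d$.

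\textbf{The gap.} The paper does not prove this statement; it cites Fløystad--Stillman, who establish it inside the proof of their Main Theorem 2.4. So the ideal property is the real content. Set $S=K[x_1,\dots,x_{n-k}]$, $E=x^rS_{d-r}$ and $D=x_j\,\partial/\partial x$. You correctly reduce the ideal property to the inclusion $D(W'\cap E)\subseteq W'+E$, but you then only assert it. Differentiating the single family $g_t(x^rh)$ cannot give this inclusion: for $t\neq 0$ that family leaves $E$, so its $t$-coefficient says nothing modulo $x^r$. What is missing is a \emph{second} deformation of $w=x^rh$ that stays inside $E$. Producing it is exactly where the general-coordinates hypothesis must be used. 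Without the generic change of coordinates, $V=x_3S_1\subseteq K[x_1,x_2,x_3]_2$ gives $(V:x_3)|_{x_3}=K[x_1,x_2]_1$ but $V|_{x_3}=0$, so the ideal property fails.

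\textbf{How to close it.} Put $a=\dim(W'\cap E)$. Regard $g_t$ as an automorphism of $R$ fixing $x_{n-k+1},\dots,x_n$. It commutes with restriction, and $g_tf\in U$ for all but finitely many $t$, since $U$ is open and contains $f$. By facts (1) and (2), $\dim(g_tW'\cap E)=a$ for all but finitely many $t$. Hence the $K[t]$-module
\[
M=g_t(W'\otimes K[t])\cap(E\otimes K[t])
\]
has rank $a$, its rank being the dimension of the intersection at a general $t$. $M$ is saturated in $S_d\otimes K[t]$, because it is an intersection of direct summands ($g_t$ is a $K[t]$-automorphism with inverse $g_{-t}$). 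So $M/tM\to S_d$ is injective. Its image lies in $W'\cap E$, hence equals it by dimension. Thus $w$ lifts to some $w(t)=w+tw_1+\cdots\in M$.

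Write $w(t)=g_t(u(t))$ with $u(t)=w+tu_1+\cdots\in W'\otimes K[t]$, and use $g_t(p)\equiv p+tD(p)\pmod{t^2}$. Comparing $t$-coefficients gives $w_1=D(w)+u_1$ with $w_1\in E$ and $u_1\in W'$, so $D(w)\in W'+E$.

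Your final step then goes through. Write $rx_jx^{r-1}h=u+x^rq$ with $u\in W'$. This forces $rx_jh-xq\in W':x^{r-1}$. Restricting to $x=0$ gives $x_jg\in A_{r-1}$, since $\operatorname{char}K=0$. With this lemma inserted, your argument is a complete, self-contained proof, and you do not need to fall back on the original Fløystad--Stillman argument.
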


Theorem \ref{Fløystad--Stillman} is proved while proving Main Theorem 2.4 in \cite{FloystadStillman}.
The authors in \cite{FloystadStillman} (during the proof of Main Theorem 2.4) prove the conclusions of Theorem \ref{Fløystad--Stillman} first,
and afterwards combine these results with Green's Crystallization Principle \cite{Green1998} to complete the proof of Main Theorem 2.4.

\section{Lefschetz Properties and Generic Initial Ideals}\label{LefschetsIntro}
The results in this section tell us that we can transfer information about the WLP from an ideal to its generic initial ideal.
This allows us to prove theorems about the WLP later,
but also gives us bounds on the monomials in the generic initial ideal after evaluating $x_n=0$.
These bounds are used in some of the proofs in Section \ref{ginStructure}.

\begin{prp}[Migliore--Nagel \cite{JuanUweTour}]\label{WLPequiv}
	Consider an ideal $I\subseteq R $, 
	a general linear form $\ell$,
	and $d\in \N$.
	Then $\ell$ is a WLE on $R/I$ in degree $d$ iff $\dim (R/(I,\ell))_{d+1} \leq \max \{0, \dim (R/I)_{d+1} - \dim(R/I)_{d}\}$.
\end{prp}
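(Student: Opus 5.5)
The plan is to read off both injectivity and surjectivity of $\times\ell$ from the exact sequence it induces, and then reduce the criterion to one rank count. For a general linear form $\ell$ and each $d$, multiplication by $\ell$ gives an exact sequence of graded $K$-vector spaces
\begin{align*}
	0\rightarrow \ker(\times \ell_d)\rightarrow (R/I)_d \xrightarrow{\times \ell} (R/I)_{d+1}\rightarrow (R/(I,\ell))_{d+1}\rightarrow 0 .
\end{align*}
The cokernel of $\times\ell_d$ is exactly $(R/(I,\ell))_{d+1}$. Counting dimensions gives
\begin{align*}
	\dim (R/(I,\ell))_{d+1} = \dim(R/I)_{d+1}-\dim(R/I)_d+\dim\ker(\times\ell_d).
\end{align*}

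I would then check the two directions separately.

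\emph{Forward direction.} Suppose $\ell$ is a WLE in degree $d$.
If the map is surjective, the cokernel vanishes, so $\dim (R/(I,\ell))_{d+1}=0$, which is at most the max.
If instead it is injective, the kernel vanishes. The formula then gives $\dim(R/(I,\ell))_{d+1}=\dim(R/I)_{d+1}-\dim(R/I)_d$, and this difference is nonnegative because it is a dimension. So it equals the max and the inequality holds.

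\emph{Converse.} Suppose $\dim (R/(I,\ell))_{d+1}\le \max\{0,\dim(R/I)_{d+1}-\dim(R/I)_d\}$, and split into cases according to which term achieves the max.
If the max is $0$, the cokernel is zero, so the map is surjective.
If the max is the positive difference $\dim(R/I)_{d+1}-\dim(R/I)_d$, the dimension formula gives $\dim\ker(\times\ell_d)\le 0$. The kernel is therefore zero and the map is injective.

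The generality of $\ell$ plays no role in this equivalence. It holds for every linear form, and "general" only matters elsewhere, where one wants $\ell$ to achieve the maximal rank. Nothing in the argument is a genuine obstacle. The only care needed is to handle the sign of the difference correctly in both directions, which the case split takes care of.
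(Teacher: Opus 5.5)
Your proof is correct and follows the standard route. The paper gives no proof of its own and cites Migliore--Nagel, whose argument is exactly the exact sequence $0\to (0:_{R/I}\ell)_d\to (R/I)_d\to (R/I)_{d+1}\to (R/(I,\ell))_{d+1}\to 0$ and the dimension count you wrote down. You are also right that generality of $\ell$ is not needed, and the same count gives $\dim (R/(I,\ell))_{d+1}\ge \max\{0,\dim(R/I)_{d+1}-\dim(R/I)_d\}$ always, so the criterion is really an equality.
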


Lemma \ref{Conca} is a special case of a more general result of Conca \cite{Conca} concerning initial ideals.

\begin{lem}[Conca \cite{Conca}]\label{Conca}
	Suppose that we have a graded ideal $I\subseteq R$ and an ideal $J$ generated by $p$ general linear forms,
	where $0\leq p \leq n$.
	Then for all $d\in \N$ we have $\dim (R/(I+J))_d  = \dim (R/(\gin(I) + H))_d$,
	where $H$ is the ideal generated by $x_{n-p+1},x_{n-p+2},\dots, x_n$.
\end{lem}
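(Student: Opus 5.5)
The plan is to reduce, by a general change of coordinates, to the case where the $p$ general linear forms are exactly $x_{n-p+1},\dots,x_n$. Then we compare Hilbert functions using the revlex compatibility with restriction recorded in Section \ref{FSSection}, namely $\im(V|_{x_n}) = \im(V)|_{x_n}$.

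\emph{Coordinates.} Let $U$ be the Zariski open set of automorphisms $g$ with $\im(g(I)) = \gin(I)$ given by Galligo. To be careful, use a single $g$ that works simultaneously in the finitely many degrees relevant to $d$; this is all we need, since $d$ is fixed. A $p$-tuple of linear forms $(\ell_1,\dots,\ell_p)$ is of the form $(g^{-1}(x_{n-p+1}),\dots,g^{-1}(x_n))$ for some $g\in U$ whenever it lies in the image of $U$ under the map $g\mapsto (g^{-1}(x_{n-p+1}),\dots,g^{-1}(x_n))$. This map is dominant, in fact surjective onto linearly independent tuples and equivariant. Hence the image of the nonempty open set $U$ contains a nonempty open subset of $p$-tuples, and a general $J$ may be assumed to be $g^{-1}(H)$ with $g\in U$. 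Since $g$ is a graded automorphism, $\dim(R/(I+J))_d = \dim(R/(g(I)+H))_d$.

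\emph{Restriction.} Put $R' = K[x_1,\dots,x_{n-p}]$. Then $R/(g(I)+H)\cong R'/g(I)|_{x_n,\dots,x_{n-p+1}}$, and likewise $R/(\gin(I)+H) \cong R'/\gin(I)|_{x_n,\dots,x_{n-p+1}}$. The restriction of an ideal is an ideal whose degree-$d$ component is the restriction of the degree-$d$ component. So it suffices to show
\begin{align*}
\dim \left(g(I)_d|_{x_n,\dots,x_{n-p+1}}\right) = \dim \left(\gin(I)_d|_{x_n,\dots,x_{n-p+1}}\right).
\end{align*}
Apply the fact $\im(V|_{x_n}) = \im(V)|_{x_n}$ $p$ times, each time in the polynomial ring with one fewer variable. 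Revlex on $K[x_1,\dots,x_{m}]$ restricts to revlex on $K[x_1,\dots,x_{m-1}]$, so the fact applies at each stage. This gives
\begin{align*}
\im\left(g(I)_d|_{x_n,\dots,x_{n-p+1}}\right) = \im(g(I)_d)|_{x_n,\dots,x_{n-p+1}} = \gin(I)_d|_{x_n,\dots,x_{n-p+1}}.
\end{align*}
A vector space of forms and its initial space have the same dimension, which finishes the argument. The case $p=0$ is just the equality of Hilbert functions of $I$ and $\gin(I)$.

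\emph{Main obstacle.} The only delicate point is the genericity bookkeeping in the first step. We must check that ``$J$ generated by general linear forms'' really allows us to choose the coordinate change inside Galligo's open set $U$. The dominance argument above handles this. The restriction identity is standard: for revlex, a form's initial term is divisible by $x_n$ iff the form is divisible by $x_n$, so setting $x_n=0$ commutes with taking initial terms on a basis in echelon form.
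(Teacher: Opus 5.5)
Your proof is correct. The paper gives no proof of this lemma; it quotes it as a special case of a more general result of Conca, namely the $H_0$ case of his comparison of Koszul homology with respect to general linear forms for $I$ and for $\gin(I)$.

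Your argument is the classical direct one. You pick a single coordinate change $g$ in Galligo's open set that also carries $J$ to $H$, and then iterate the revlex identity $\im(V|_{x_n})=\im(V)|_{x_n}$ $p$ times. That identity is already recorded in Section~\ref{FSSection}, so your route makes the lemma self-contained within the paper and shows exactly which genericity is used. Citing Conca is shorter, and it places the equality inside a framework that also controls the higher Koszul homology, which is what the Lefschetz-type applications of Conca's work rely on.

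Two points to tidy:

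\begin{enumerate}
\item \emph{Independence of $d$.} Galligo's theorem gives one nonempty open set $U$ with $\im(g(I))=\gin(I)$ in every degree at once. Hence your open set of good $p$-tuples is independent of $d$. Your remark about ``degrees relevant to $d$'' is unnecessary. Moreover, a family of $d$-dependent open sets would only give the conclusion ``for all $d$'' on their intersection, which need not contain a nonempty open set.
\item \emph{The open-image claim.} ``Dominant'' alone does not show that the image of $U$ contains a nonempty open set unless you invoke Chevalley over an algebraically closed field. Here it is immediate anyway. The map $g\mapsto (g^{-1}(x_{n-p+1}),\dots,g^{-1}(x_n))$ is inversion, an automorphism of $GL_n$, followed by a coordinate projection $K^{n^2}\to K^{np}$. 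Over an infinite field, the projection of a nonempty principal open set $\{F\neq 0\}$ is $\{y : F(\cdot,y)\not\equiv 0\}$, which is open and nonempty.
\end{enumerate}
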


Combining Proposition \ref{WLPequiv} and Lemma \ref{Conca} we obtain Corollary \ref{Wiebe}.
Corollary \ref{Wiebe} was discovered by Wiebe \cite{Wiebe}.
In fact, Wiebe uses more results from Conca \cite{Conca} to prove similar claims about the weak and strong Lefschetz properties and initial ideals under arbitrary monomial orders.

\begin{cor}[Wiebe \cite{Wiebe}]\label{Wiebe}
	Consider a graded ideal $I\subseteq R = K[x_1,\dots, x_n]$, 
	a general linear form $\ell$,
	and $d\in \N$.
	Then $\ell$ is a WLE on $R/I$ in degree $d$ iff $x_n$ is a WLE on $R/\gin(I)$ in degree $d$. 
\end{cor}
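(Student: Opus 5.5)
Corollary \ref{Wiebe} follows by applying Proposition \ref{WLPequiv} to both $I$ and $\gin(I)$ and comparing the numerical conditions via Lemma \ref{Conca}. Set $A=R/I$ and $B=R/\gin(I)$. Since $I$ and $\gin(I)$ have the same Hilbert function, $\dim A_j=\dim B_j$ for all $j$. Therefore the right-hand side $\max\{0,\dim A_{d+1}-\dim A_d\}$ in Proposition \ref{WLPequiv} is the same quantity for $A$ and for $B$.

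Next, apply Lemma \ref{Conca} with $p=1$. For a general linear form $\ell$ it gives
\begin{align*}
\dim (R/(I,\ell))_{d+1}=\dim (R/(\gin(I),x_n))_{d+1}.
\end{align*}
Combined with Proposition \ref{WLPequiv}, this shows that a general $\ell$ is a WLE on $A$ in degree $d$ if and only if
\begin{align*}
\dim (R/(\gin(I),x_n))_{d+1}\le \max\{0,\dim B_{d+1}-\dim B_d\}.
\end{align*}

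It remains to check that this last inequality is equivalent to $x_n$ being a WLE on $B$ in degree $d$. The step needing care is that Proposition \ref{WLPequiv} is stated for a \emph{general} linear form, whereas $x_n$ is a specific one. I would avoid this issue by proving the equivalence for $x_n$ directly from the exact sequence
\begin{align*}
B_d\xrightarrow{\times x_n} B_{d+1}\to (B/x_nB)_{d+1}\to 0 .
\end{align*}
From it, $\dim(\text{coker})=\dim (R/(\gin(I),x_n))_{d+1}$ and $\dim(\text{image})=\dim B_{d+1}-\dim(\text{coker})$. Multiplication by $x_n$ is surjective exactly when the cokernel is $0$. It is injective exactly when the image has dimension $\dim B_d$, which amounts to $\dim(\text{coker})=\dim B_{d+1}-\dim B_d$. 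Since the cokernel dimension is always at least $\max\{0,\dim B_{d+1}-\dim B_d\}$, the map is injective or surjective precisely when the cokernel dimension attains this lower bound; this requires a brief two-case check depending on the sign of $\dim B_{d+1}-\dim B_d$. That is exactly the displayed inequality, so the same argument that proves Proposition \ref{WLPequiv} works for the fixed element $x_n$ without any genericity.

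Chaining these equivalences gives the corollary. The one point where I would be careful is this last step: making sure the criterion is applied to $x_n$ through the exact-sequence argument, not by treating $x_n$ as general for $\gin(I)$. Beyond that, the proof is bookkeeping.
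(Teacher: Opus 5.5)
Your proof is correct and follows the paper's approach: the paper derives this corollary by combining Proposition \ref{WLPequiv} with Lemma \ref{Conca} (with $p=1$), using that $I$ and $\gin(I)$ have the same Hilbert function. Your exact-sequence argument, showing that the numerical criterion holds for any linear form and in particular for the specific form $x_n$ on $R/\gin(I)$, fills in a detail the paper's one-line derivation leaves implicit.
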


Lemma \ref{WLPNewGenerators} tells us that if $x_n$ is a weak Lefschetz element on $K[x_1,\dots, x_n]/I$ for a monomial ideal $I$,
then there are restrictions on the minimal generators that $I$ can have.

\begin{lem}\label{WLPNewGenerators}
	Let $I\subseteq R$ be a monomial ideal, and let $x_n$ be a WLE on $R/I$ in degree $d$.
	Suppose that there is some monomial of degree $d+1$ that is not divisible by $x_n$ and is not in $I_{d+1}$.
	Then all minimal generators in degree $d+1$ must not be divisible by $x_n$.
\end{lem}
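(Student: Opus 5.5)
The plan is to use the dichotomy in the definition of a weak Lefschetz element: first rule out surjectivity using the hypothesized monomial, then contradict injectivity with a hypothetical minimal generator divisible by $x_n$.

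Set $A=R/I$. Since $I$ is a monomial ideal, the images of the monomials not in $I$ form a $K$-basis of $A$ in each degree. Multiplication by $x_n$ sends a basis monomial $u\in A_d$ either to $x_nu$ (if $x_nu\notin I$) or to $0$. Hence the image of $\times x_n\colon A_d\to A_{d+1}$ is spanned by the classes of the monomials of degree $d+1$ that are divisible by $x_n$ and lie outside $I$.

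\emph{Step 1: surjectivity fails.} By hypothesis there is a monomial $m$ of degree $d+1$ with $x_n\nmid m$ and $m\notin I_{d+1}$. Its class is a basis element of $A_{d+1}$. By the description of the image above, this class is not in the image of $\times x_n$. So the map is not surjective. Since $x_n$ is a WLE in degree $d$, the map $\times x_n\colon A_d\to A_{d+1}$ must therefore be injective.

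\emph{Step 2: contradiction with a generator divisible by $x_n$.} Suppose some minimal monomial generator $g$ of $I$ in degree $d+1$ is divisible by $x_n$. Put $u=g/x_n$, a monomial of degree $d$. By minimality of $g$, we have $u\notin I$, so $u$ gives a nonzero element of $A_d$. But $x_nu=g\in I$, so $x_n\cdot u=0$ in $A_{d+1}$. This contradicts the injectivity obtained in Step 1. Hence no minimal generator in degree $d+1$ is divisible by $x_n$.

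There is no genuine obstacle here. The only point requiring care is Step 1, namely that the monomial basis of $A$ makes the image of $\times x_n$ transparent. This is exactly where the hypothesis that $I$ is a monomial ideal is used.
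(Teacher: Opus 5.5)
Your proposal is correct and follows the same argument as the paper: a minimal generator divisible by $x_n$ breaks injectivity, and the hypothesized monomial outside $I$ not divisible by $x_n$ breaks surjectivity. You also spell out, via the monomial basis of $R/I$, why that monomial lies outside the image of $\times x_n$, a step the paper only asserts.
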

\begin{proof}
	Assume to the contrary that there is some monomial $m\in I_{d+1}$ that is a minimal generator in degree $d+1$ and is divisible by $x_n$.
	Then multiplication by $x_n$ is not injective because $m/x_n\not\in I_d$ and $m\in I_{d+1}$.
	However, multiplication by $x_n$ is not surjective because there is some monomial not in $I_{d+1}$ that is not divisible by $x_n$.
	Thus, we have a contradiction with the assumption that $x_n$ is a weak Lefschetz element on $R/I$.
\end{proof}

If we know that some ideal satisfies the weak Lefschetz property then we can use Corollary \ref{Wiebe} and Lemma \ref{WLPNewGenerators} to gain information about its generic initial ideal.

\begin{thm}[Harima--Migliore--Nagel--Watanabe \cite{JuanWLPHeight3}]\label{JuanWLPHeight3}
	Every complete intersection in $K[x,y,z]$ has the weak Lefschetz property.
\end{thm}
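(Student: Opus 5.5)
We would prove Theorem \ref{JuanWLPHeight3} via the syzygy bundle of the complete intersection, its semistability, and the Grauert--Mülich theorem. Throughout, $K$ has characteristic $0$, as in the rest of the paper.

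\textbf{Setup and reduction to a restriction problem.} Let $I=(f_1,f_2,f_3)\subseteq R=K[x,y,z]$ with $\deg f_i=a\le b\le c$, and put $A=R/I$. Then $A$ is an Artinian Gorenstein algebra with socle degree $e=a+b+c-3$. Its Hilbert function $h$ is symmetric and unimodal.

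Let $\ell$ be a general linear form. Under the perfect pairing $A_d\times A_{e-d}\to A_e$, multiplication $\times\ell\colon A_d\to A_{d+1}$ is dual to $\times\ell\colon A_{e-d-1}\to A_{e-d}$. So it suffices to prove injectivity for $d\le \lfloor (e-1)/2\rfloor$; surjectivity in the upper half then follows by duality.

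By Proposition \ref{WLPequiv}, this injectivity is equivalent to
\begin{align*}
\dim (R/(I,\ell))_{d+1}=h(d+1)-h(d).
\end{align*}
Now $R/(I,\ell)\cong S/J$, where $S=K[x,y]$ and $J=(g_1,g_2,g_3)$ is the restriction of $I$ to the line $L=V(\ell)\subseteq\mathbb P^2$. So the task is to compute the Hilbert function of three general binary forms obtained by restriction, in degrees up to roughly $e/2$.

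\textbf{Easy case: $c\ge a+b-1$.} In degrees $\le (e+1)/2$ we have $(e+1)/2 \le c-1$, so only $f_1,f_2$ matter. Both $A$ and $R/(I,\ell)$ agree there with the corresponding quotients by $(f_1,f_2)$. For general $\ell$, the forms $g_1,g_2$ form a regular sequence in $S$, so both sides are given by explicit formulas and the equality is a direct binomial check.

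\textbf{Main case: $c\le a+b-2$.} Consider the syzygy bundle
\begin{align*}
0\to E\to \mathcal O(-a)\oplus\mathcal O(-b)\oplus\mathcal O(-c)\to\mathcal O\to 0
\end{align*}
on $\mathbb P^2$. It is exact on the right because $I$ is Artinian. $E$ is a rank $2$ bundle with $c_1(E)=-(a+b+c)$.

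\textbf{Step 1: semistability.} We show $E$ is semistable. A destabilizing subsheaf would be $\mathcal O(-m)\hookrightarrow E$ with $m\le \lfloor (a+b+c)/2\rfloor$. This is a syzygy of $(f_1,f_2,f_3)$ of degree $m$. Since the $f_i$ form a regular sequence, all syzygies are Koszul, so $m\ge a+b$. But $a+b>(a+b+c)/2$ exactly when $c<a+b$, which is our case. This is a contradiction.

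\textbf{Step 2: splitting on a general line.} By Grauert--Mülich (characteristic $0$), $E|_L\cong\mathcal O_L(-p)\oplus\mathcal O_L(-q)$ with $p+q=a+b+c$ and $|p-q|\le1$.

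\textbf{Step 3: the numerical comparison.} Twist the restricted sequence on $L\cong\mathbb P^1$ and take global sections. This gives
\begin{align*}
\dim(S/J)_j=(j+1)-\sum_{i}\max(0,j-\deg f_i+1)+h^0(E|_L(j)),
\end{align*}
where the last term is known exactly from the balanced splitting. We then compare this expression with $h(d+1)-h(d)$, computed from the Koszul resolution of $A$, for $j=d+1\le (e+1)/2$.

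The expected obstacle is this bookkeeping. The balanced splitting makes $h^0(E|_L(j))=0$ for $j<\lfloor (a+b+c)/2\rfloor$. It remains to check that both sides coincide throughout the range $j\le (e+1)/2$, including the boundary degrees near $(a+b+c)/2$ where parity matters. We would carry this out by splitting according to the parity of $a+b+c$ and whether $j$ falls below $a$, $b$, or $c$.
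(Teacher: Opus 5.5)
The paper does not prove this statement. It is quoted from \cite{JuanWLPHeight3} and used only as one input to Corollary \ref{WLPAsymptotic}, which then feeds Lemma \ref{ginBound} and Theorem \ref{advancedLefschetzBase}. So there is no internal proof to compare with, and your argument has to stand on its own. It does: it is correct, and as far as I recall it is essentially the original Harima--Migliore--Nagel--Watanabe route. Koszul syzygies force stability of the syzygy bundle when $c\le a+b-2$, and Grauert--M\"ulich controls its restriction to a general line. Compared with the paper's own Lefschetz results (Theorem \ref{WLPCITheorem}), your route gives an unconditional statement in every degree. Those results are asymptotic in $\alpha(I)$, hold only in a bounded window of degrees, and themselves rest on Corollary \ref{WLPAsymptotic}.

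A few remarks on the details.

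\textbf{The final comparison is immediate.} The step you flag as the main obstacle closes at once, with no parity split. Write $s=a+b+c$ and $(d_1,d_2,d_3)=(a,b,c)$. The degrees you need are $j\le\lfloor (e+1)/2\rfloor=\lfloor s/2\rfloor-1$. The balanced splitting gives $\min(p,q)=\lfloor s/2\rfloor$, so $h^0(E|_L(j))=0$ throughout this range. From the Koszul resolution,
\[
h(j)-h(j-1)=(j+1)-\sum_{i}\max(0,j-d_i+1)+\sum_{i<k}\max(0,j-d_i-d_k+1)-\max(0,j-s+1).
\]
The condition $c\le a+b-2$ gives $\lfloor s/2\rfloor\le a+b-1$, hence $j\le a+b-2$. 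So the last two terms vanish and the two sides agree. Conversely, an unbalanced generic splitting would make $h^0(E|_L(\lfloor s/2\rfloor-1))\neq 0$. So the balanced splitting is exactly what the middle degree requires, and Grauert--M\"ulich is not overkill.

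\textbf{The easy case.} There $(e+1)/2\le c-1$ is equivalent to $c\ge a+b$, not $c\ge a+b-1$. For $c=a+b-1$ you need the floor, $\lfloor (e+1)/2\rfloor=c-1$, which is all the argument uses. The ``binomial check'' is unnecessary. A general $\ell$ misses the finitely many points of $V(f_1,f_2)$, so it is a nonzerodivisor on the one-dimensional Cohen--Macaulay ring $R/(f_1,f_2)$. This gives the required equality directly.

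\textbf{Scope.} The statement also covers complete intersections of height at most $2$, which your setup $I=(f_1,f_2,f_3)$ omits. These are handled by Proposition \ref{JuanWLP}.
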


\begin{thm}[Boij--Migliore--Miró-Roig--Nagel \cite{JuanWLPHeight4}]\label{JuanWLPHeight4}
	Suppose that $I\subseteq R=K[x_1,x_2,x_3,x_4]$ is generated by a regular sequence $f_1,f_2,f_3,f_4$ such that $\deg f_i=d$ for all $i$.
	Then the WLP holds for $R/I$ in degrees $< \floor{\frac{3d-3}{2}}$. 
\end{thm}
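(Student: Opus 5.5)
Since this is a cited theorem, I would follow the strategy suggested by Proposition \ref{WLPequiv}: reduce the WLP to a statement about the restriction of $I$ to a general hyperplane, and then control the possible syzygies. Let $A=R/I$, which is Gorenstein with symmetric Hilbert function $h_A$, socle degree $4d-4$, and unimodal peak at $2d-2$. Since $\floor{\frac{3d-3}{2}}<2d-2$, we have $h_A(j+1)\geq h_A(j)$ for all $j<\floor{\frac{3d-3}{2}}$. So the goal is injectivity of $\times \ell: A_j\to A_{j+1}$ in that range. By Proposition \ref{WLPequiv}, this amounts to
\begin{align*}
\dim (R/(I,\ell))_{j+1} = h_A(j+1)-h_A(j) \quad \text{for } j<\floor{\tfrac{3d-3}{2}},
\end{align*}
since the inequality $\geq$ is automatic.

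Next I would identify $R/(I,\ell)$ with $S/(\bar f_1,\dots,\bar f_4)$, where $S=K[x,y,z]$ and $\bar f_i$ are the restrictions of the $f_i$. After a general change of generators, $\bar f_1,\bar f_2,\bar f_3$ form a regular sequence in $S$, so $B=S/(\bar f_1,\bar f_2,\bar f_3)$ is a complete intersection of type $(d,d,d)$. A direct computation with the Hilbert series of $A$ and $B$ shows that, in the relevant range,
\begin{align*}
h_A(j+1)-h_A(j)=h_B(j+1)-h_B(j+1-d).
\end{align*}
The target equality is therefore equivalent to the injectivity of $\times \bar f_4: B_{j+1-d}\to B_{j+1}$ for $j+1\leq \floor{\frac{3d-3}{2}}$. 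Equivalently, there is no syzygy $\sum_{i=1}^4 c_i\bar f_i=0$ of degree $j+1$ with $c_4\notin(\bar f_1,\bar f_2,\bar f_3)$, i.e. no non-Koszul syzygy of $(\bar f_1,\dots,\bar f_4)$ in degree $\leq \floor{\frac{3d-3}{2}}$.

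To rule out such low-degree syzygies I would pass to the syzygy bundle $\mathcal{E}$ on $\mathbb{P}^3$, the kernel of $\mathcal{O}^4\to\mathcal{O}(d)$ given by $(f_1,\dots,f_4)$. Since the $f_i$ form a regular sequence of equal degrees, $\mathcal{E}$ is a rank $3$ bundle with $c_1=-4d$. I would show $\mathcal{E}$ is semistable (in fact stable); this follows from the Koszul resolution, since a destabilizing subsheaf would produce a syzygy of degree smaller than that of the Koszul ones. Then I would apply a restriction theorem (Grauert--Mülich, or Maruyama/Langer effective restriction) to a general plane $H$. The restriction theorem bounds the gaps in the Harder--Narasimhan filtration of $\mathcal{E}|_H$, so that $\mathcal{E}|_H$ has no section of $\mathcal{E}|_H(m)$ for $m$ below roughly $\mu(\mathcal{E})$ minus a bounded correction, with $\mu(\mathcal{E})=-4d/3$. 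A non-Koszul syzygy of degree $m$ of the $\bar f_i$ gives exactly such a section. The resulting numerical threshold should come out as $m\geq \floor{\frac{3d-3}{2}}+1$, after subtracting the contribution of the three Koszul-type relations that are already accounted for by $B$.

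The main obstacle is this last step: making the restriction theorem effective enough to reach the sharp threshold $\floor{\frac{3d-3}{2}}$ rather than a weaker bound. Concretely, I would take the Grauert--Mülich bound on the splitting gaps $a_i-a_{i+1}\leq 1$ on a general line, lift it to planes via a standard inductive restriction argument, and carry out the Chern class bookkeeping carefully, including the parity case giving the floor. If the bundle approach falls short, a fallback is to work directly with $\gin$ in $S$, using Theorem \ref{JuanWLPHeight3} for $B$ and Lemma \ref{WLPNewGenerators} to constrain where new generators of $\gin((\bar f_1,\dots,\bar f_4))$ can occur.
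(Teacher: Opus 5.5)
The paper gives no proof of this statement. It is quoted from Boij--Migliore--Miró-Roig--Nagel and used as a black box; Corollary \ref{WLPAsymptotic} in fact relies on the Beorchia--Miró-Roig range instead. So your proposal has to stand on its own.

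Your reductions are correct. The identity $(1-t)H_A(t)=(1-t^d)H_B(t)=(1-t^d)^4/(1-t)^3$ holds in every degree. Koszul syzygies of $\bar f_1,\dots,\bar f_4$ live in degree $2d>\floor{\frac{3d-3}{2}}$, so there is nothing to ``subtract'' in this range. The theorem is therefore exactly equivalent to $H^0(\mathcal E|_H(m))=0$ for all $m\le\floor{\frac{3d-3}{2}}$, i.e.\ to injectivity of $\times\bar f_4\colon B_{m-d}\to B_m$ all the way up to the middle degree of $B$.

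The gap is the final step, and it is not a matter of making a restriction theorem more effective. Any argument that uses only the Harder--Narasimhan filtration of $\mathcal E|_H$ gives $H^0(\mathcal E|_H(m))=0$ only for $m<-\mu_{\max}(\mathcal E|_H)$, and $\mu_{\max}(\mathcal E|_H)\ge\mu(\mathcal E)=-4d/3$. So even in the best case, where $\mathcal E|_H$ is semistable, you get vanishing only for $m<4d/3$; Grauert--Mülich, Maruyama or Langer can only shrink this. But $\floor{\frac{3d-3}{2}}\ge 4d/3$ for $d=9$ and for every $d\ge 11$: the target is about $3d/2$, while the slope barrier is $4d/3$. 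Large $d$ is exactly the regime in which the paper invokes the theorem. Your method can at best reproduce a range of order $4d/3$, of the same nature as the bound $d+\lceil d/n\rceil$ in Theorem \ref{WLPHeigthN}, not the stated one. Reaching $\floor{\frac{3d-3}{2}}$ needs an idea that controls sections of $\mathcal E|_H(m)$ \emph{above} the slope, and the proposal does not contain one.

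The fallback does not supply it either. Theorem \ref{JuanWLPHeight3} concerns multiplication on $B$ by a general \emph{linear} form. You instead need maximal rank of multiplication by the specific degree-$d$ form $\bar f_4$ up to the middle degree of $B$, which is a strong-Lefschetz-type statement for a non-general element. Lemma \ref{WLPNewGenerators} runs in the wrong direction: it turns a WLP hypothesis into information about $\gin$.

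Two smaller points.
\begin{itemize}
\item The kernel of $\mathcal O^4\to\mathcal O(d)$ has $c_1=-d$. For $c_1=-4d$ you want $\mathcal O(-d)^4\to\mathcal O$.
\item Your semistability argument only rules out destabilizing rank-one subsheaves, which are indeed syzygies and hence Koszul of degree $\ge 2d$. Rank-two subsheaves need the dual check. For example, $H^0(\mathcal E^\vee(-b))=0$ for $b>d$ follows from $0\to\mathcal O\to\mathcal O(d)^4\to\mathcal E^\vee\to 0$. Alternatively, identify $\mathcal E$ with a twist of the pullback of $\Omega_{\mathbb P^3}(1)$ under the finite map $(f_1:\dots:f_4)$.
\end{itemize}
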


\begin{thm}[Beorchia--Miró-Roig \cite{WLPHeigthN}]\label{WLPHeigthN}
	Suppose that $I\subseteq R$ is generated by a complete intersection $f_1,\dots, f_n$ such that $\deg f_i=d$ for all $i$.
	Then the WLP holds for $R/I$ in degrees $< d + \ceil{\frac{d}{n}}$. 
\end{thm}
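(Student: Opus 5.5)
The plan is to reduce the whole statement to an injectivity statement and prove that through the syzygy bundle of the complete intersection.

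\textbf{Reduction to injectivity.} Write $A=R/I$ and $h_j=\dim A_j$. Because $f_1,\dots,f_n$ is a regular sequence, $A$ has the Hilbert function of $R/(x_1^d,\dots,x_n^d)$. This function is symmetric about $n(d-1)/2$ and strictly increases up to it. For $n\geq 3$ and $d$ not tiny, $d+\ceil{\frac{d}{n}}$ lies at or before the middle degree; for the finitely many small pairs $(n,d)$ where this fails, I would check the statement directly or handle it with a symmetric surjectivity argument. So in the range $j<d+\ceil{\frac{d}{n}}$ it suffices to show that $\times \ell : A_{j-1}\to A_j$ is injective for a general linear form $\ell$. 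In the equivalent form of Proposition \ref{WLPequiv}, this says $\dim (R/(I,\ell))_j=h_j-h_{j-1}$. For $j-1<d$ injectivity is trivial, since $I_{j-1}=0$. So only the degrees $d\leq j-1< d+\ceil{\frac{d}{n}}-1$ remain.

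\textbf{Kernel to syzygies.} Suppose $g\in R_{j-1}$ satisfies $\ell g\in I$. Write
\begin{align*}
	\ell g=\sum_{i=1}^n a_if_i, \qquad \deg a_i=e:=j-d<\ceil{\tfrac{d}{n}}.
\end{align*}
Reduce modulo $\ell$, so that $S=R/(\ell)\cong K[x_1,\dots,x_{n-1}]$. The classes $(\bar a_1,\dots,\bar a_n)$ then form a syzygy of degree $e$ of $\bar f_1,\dots,\bar f_n$ in $S$. Conversely, if this restricted syzygy vanishes, then $\ell \mid a_i$ for every $i$, so $g=\sum (a_i/\ell) f_i\in I$. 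Hence injectivity in degree $j-1$ follows from
\begin{align*}
	H^0\big(\mathcal{E}|_H(e)\big)=0 \quad \text{for } e<\ceil{\tfrac{d}{n}},
\end{align*}
where $\mathcal{E}=\operatorname{Syz}(f_1,\dots,f_n)$ is the syzygy bundle on $\mathbb{P}^{n-1}$ and $H=V(\ell)$ is a general hyperplane. This $\mathcal{E}$ is the kernel of $\mathcal{O}^n\to \mathcal{O}(d)$, which is surjective because $I$ is $\mathfrak m$-primary. It has rank $n-1$, $c_1(\mathcal{E})=-nd$ and slope $\mu=-\frac{nd}{n-1}$. Koszul-type syzygies live in degree $\geq d>e$, so they cause no trouble.

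\textbf{Vanishing via stability and restriction.} The key input is that the syzygy bundle of a complete intersection of forms of equal degree is (semi)stable; this is a theorem of Brenner, and of Coandă in the complete intersection case, valid in characteristic $0$. I would then apply a restriction theorem in characteristic $0$ to pass to a general hyperplane. With a Flenner- or Maruyama-type restriction, $\mathcal{E}|_H$ remains semistable and the vanishing would hold up to $e<\frac{nd}{n-1}$. Without such a restriction theorem, I would fall back on the Grauert--Mülich type control of the splitting of $\mathcal{E}$ on general lines in $H$, where consecutive gaps are at most $1$. That gives a cruder statement: any nonzero section of $\mathcal{E}|_H(e)$ forces a subsheaf of slope $\geq -e$, and comparing with $\mu$ and the rank bound yields the vanishing for $e<\ceil{\frac{d}{n}}$. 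This is the source of the weaker, explicit range $d+\ceil{\frac{d}{n}}$ in the statement.

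\textbf{Expected obstacle.} I expect the last step to be the main difficulty: controlling sections of the restriction of $\mathcal{E}$ to a general hyperplane with an explicit degree bound. Stability on $\mathbb{P}^{n-1}$ alone does not directly give vanishing on $H$. The plan is to use the restriction theorem first and, failing that, the Grauert--Mülich bound, tracking the numerics carefully enough to land exactly on $\ceil{\frac{d}{n}}$.
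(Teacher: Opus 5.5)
The paper does not prove this statement; it quotes it from \cite{WLPHeigthN}. So your proposal has to stand on its own. The reduction is sound and standard. Injectivity of $\times\ell$ in degree $j-1$ follows from $H^0(\mathcal{E}|_H(j-d))=0$, where $\mathcal{E}=\ker(\mathcal{O}^n\to\mathcal{O}(d))$ and $H$ is a general hyperplane. Semistability of $\mathcal{E}$ in characteristic $0$ is also fine: $\mathcal{E}=\phi^*\Omega_{\mathbb{P}^{n-1}}(1)$ for the finite map $\phi=(f_1:\dots:f_n)$.

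\textbf{Gap 1: wrong numerics and no applicable restriction theorem.} The decisive vanishing is never established, and the numerics motivating it are wrong. For your $\mathcal{E}$ one has $c_1(\mathcal{E})=-d$, not $-nd$, so $\mu(\mathcal{E})=-d/(n-1)$. The claim that a semistable restriction gives vanishing up to $e<nd/(n-1)$ cannot hold. The Koszul syzygy $(\bar f_2,-\bar f_1,0,\dots,0)$ is a nonzero section of $\mathcal{E}|_H(d)$, and $d<nd/(n-1)$. Even granting semistability of $\mathcal{E}|_H$, you would only get $e<d/(n-1)$.

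No restriction theorem gives that semistability here. Maruyama's theorem needs $\operatorname{rk}\mathcal{E}<\dim$, but $\operatorname{rk}\mathcal{E}=n-1=\dim\mathbb{P}^{n-1}$. Flenner and Mehta--Ramanathan concern hypersurfaces of large degree, not hyperplanes. The conclusion genuinely fails for bundles of this kind. For $d=1$ one has $\mathcal{E}=\Omega_{\mathbb{P}^{n-1}}(1)$, and $\mathcal{E}|_H\cong\Omega_H(1)\oplus\mathcal{O}_H$ is not semistable.

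\textbf{Gap 2: the Grauert--M\"ulich fallback falls short.} Done with the correct numbers, it does not reach $\ceil{d/n}$. Take a general line $L\subset H$, which is a general line of $\mathbb{P}^{n-1}$. Its splitting type $a_1\ge\dots\ge a_{n-1}$ has $\sum a_i=-d$ and gaps at most $1$, so $a_1\le(\binom{n-1}{2}-d)/(n-1)$. Restricting a nonzero section of $\mathcal{E}|_H(e)$ to $L$ therefore gives vanishing only for $e<\ceil{(d-\binom{n-1}{2})/(n-1)}$.

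This dominates $\ceil{d/n}$ only once $d\ge n(n-1)(n-2)/2$. For $n=4$, $d=5$ it gives only $e=0$, while even your reading of the statement needs $e=1$. For $d\le\binom{n-1}{2}$ it gives nothing at all. Your phrase ``a subsheaf of slope $\ge -e$, compared with $\mu$ and the rank bound'' hides exactly this point. Bounding $\mu_{\max}(\mathcal{E}|_H)$ needs a restriction-type input, and Grauert--M\"ulich-type results only give $\mu_{\max}(\mathcal{E}|_H)\le\mu+\frac{n-2}{2}$. An idea beyond semistability plus Grauert--M\"ulich is needed for the stated range.

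\textbf{Smaller points.}
\begin{enumerate}
\item The case $j=d$ is not trivial. The vanishing $I_{d-1}=0$ does not make $R_{d-1}\to A_d$ injective; you need $I_d\cap\ell R_{d-1}=0$, which is the $e=0$ case.
\item In this paper, ``WLE in degree $j$'' refers to $A_j\to A_{j+1}$, and Corollary~\ref{WLPAsymptotic} reads the statement as covering $j\le d+\ceil{d/n}-1$. That reading requires vanishing for $e\le\ceil{d/n}$, one degree more than you aim for.
\end{enumerate}

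On the positive side, your corrected line argument yields injectivity in a range growing linearly in $d$. That already suffices for the only use the paper makes of this theorem, in Corollary~\ref{WLPAsymptotic}, though with a constant different from $nt+1$.
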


If the regular sequence does not have full length then we know that the WLP holds.

\begin{prp}\label{JuanWLP}
	If $I\subseteq R$ is generated by a complete intersection $f_1,\dots, f_\ell$ with $\ell<n$,
	then $R/I$ satisfies the WLP.
\end{prp}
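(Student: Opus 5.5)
The plan is to reduce to the monomial complete intersection, where Stanley's theorem applies, by a deformation and semicontinuity argument.

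\textbf{Setup.} Since $f_1,\dots,f_\ell$ is a regular sequence with $\ell<n$, we may change coordinates so that $f_1,\dots,f_\ell,x_{\ell+1},\dots,x_n$ is a regular sequence. Set $S=K[x_1,\dots,x_n]$ and $A=R/I$.

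\textbf{Main step: $A$ is not artinian.} Because $\ell<n$, the ring $A$ has Krull dimension $n-\ell\geq 1$, so its depth is positive; indeed $A$ is Cohen--Macaulay of dimension $n-\ell$. Hence a general linear form $L$ is a nonzerodivisor on $A$. This follows by prime avoidance: the associated primes of $A$ are minimal primes of dimension $n-\ell\geq 1$, so none of them is the irrelevant ideal, and $L$ avoids their finite union.

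\textbf{Conclusion.} Since $L$ is a nonzerodivisor, multiplication $\times L\colon A_d\to A_{d+1}$ is injective for every $d$. Thus $L$ is a weak Lefschetz element in every degree, and $R/I$ has the WLP.

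\textbf{Obstacle.} The only point needing care is that complete intersections are unmixed. This is the Cohen--Macaulay property of quotients by regular sequences (Macaulay's unmixedness theorem), so every associated prime has height $\ell<n$ and the irrelevant ideal is not associated. Once that is in place, the argument is immediate, and the proposition holds in any characteristic. No deformation to the monomial case is needed.
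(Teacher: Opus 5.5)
Your argument is correct and is essentially the paper's proof: Cohen--Macaulayness in positive dimension gives $\operatorname{depth} R/I\geq 1$, and prime avoidance (which the paper cites as Proposition~1.5.12 of \cite{BrunsHerzog}) supplies a linear nonzerodivisor, so multiplication by it is injective in every degree. Delete the opening sentence about deforming to the monomial case and the coordinate change in your setup, since neither is used. Also, the prime-avoidance step needs $K$ infinite, so ``any characteristic'' should read ``any infinite field''; over $\mathbb{F}_q$ the principal ideal generated by the product of all linear forms up to scalars is a counterexample.
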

\begin{proof}
	The quotient $R/I$ is Cohen-Macaulay of positive dimension, 
	so the depth is at least $1$. 
	Hence, $R/I$ has a nonzero divisor F. This means that multiplication by $F$ is injective. 
	Then Proposition 1.5.12 in \cite{BrunsHerzog} says that we can take $F$ to have degree $1$.
\end{proof}

The preceding results on the WLP for complete intersections are partial,
but if we assume that the smallest degree of the regular sequence is large then we know that the WLP will hold in a large number of degrees.
This is captured in Corollary \ref{WLPAsymptotic}.

\begin{cor}\label{WLPAsymptotic}
	Suppose that $I\subseteq R$ is equigenerated by a regular sequence $f_1,\dots, f_r$.
	Let $\ell$ be a general linear form.
	For every $t\in \N$ there exists $D_{t,n}\in \N$ such that if $\alpha(I)\geq D_{t,n}$,
	then $\ell$ is a WLE on $R/I$ in degrees $\leq \alpha(I)+t$.
	Furthermore, we can take $D_{t,n} = nt+1$.
\end{cor}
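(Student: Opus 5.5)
Suppose first that $r<n$. Then Proposition \ref{JuanWLP} gives the WLP for $R/I$ in every degree. Moreover, since the set of linear forms that are WLEs in a given degree is Zariski open, a general $\ell$ is a WLE in all degrees. So the real content is the case $r=n$, where $I=(f_1,\dots,f_n)$ with all $\deg f_i=d:=\alpha(I)$. Here $R/I$ is Artinian Gorenstein, with symmetric $h$-vector $h_j=\dim(R/I)_j$ equal to that of $x_1^d,\dots,x_n^d$, and socle degree $n(d-1)$.

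The natural tool is Theorem \ref{WLPHeigthN}. It says that the WLP holds for $R/I$ in degrees $<d+\lceil d/n\rceil$. Since WLEs in a fixed degree form an open set, and we only need finitely many degrees, a general linear form $\ell$ is a WLE in all degrees $<d+\lceil d/n\rceil$ at once.

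It then suffices to check $\alpha(I)+t< d+\lceil d/n\rceil$, i.e.\ $t<\lceil d/n\rceil$. This holds as soon as $d/n>t$, which is implied by $d\geq nt+1$. Hence $D_{t,n}=nt+1$ works.

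If one wants to avoid quoting Beorchia--Miró-Roig in full, one can argue degree by degree. In degrees $j<d-1$ we have $I_{j+1}=0$ or $I_j=0$, and multiplication by any nonzero linear form is injective on $R_j$. At $j=d-1$, $(R/I)_{d-1}=R_{d-1}$, and injectivity holds as long as no element of $I_d$ is divisible by $\ell$. This reduces to the finitely many degrees $d-1\le j\le d+t$. The main obstacle is exactly this middle range, where one needs that $(I:\ell)_j=I_j$ for a general $\ell$ while $h$ is still increasing (valid since $d+t$ stays below the middle degree $n(d-1)/2$ for $d$ large). That is precisely the content of Theorem \ref{WLPHeigthN}, so I would rely on it rather than reprove it.
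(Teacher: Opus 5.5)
Your proof is correct and follows the paper's argument. Proposition \ref{JuanWLP} handles $r<n$, and for $r=n$ Theorem \ref{WLPHeigthN} gives the WLP in degrees $<d+\lceil d/n\rceil$, which covers all degrees $\le d+t$ as soon as $d\ge nt+1$. You also make explicit a step the paper leaves implicit, and it is valid: only finitely many degrees matter, and each imposes a nonempty Zariski-open condition, so a single general $\ell$ works in all of them at once.
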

\begin{proof}
	Theorem \ref{JuanWLPHeight3}, Theorem \ref{JuanWLPHeight4}, Theorem \ref{WLPHeigthN}, and Proposition \ref{JuanWLP} each give a different bound.
	Using Theorem \ref{WLPHeigthN} we can get $D_{t,n} = nt+1$.
\end{proof}

 \begin{lem}\label{ginBound}
 	Let $t\in \N$.
 	There exists $D_{t,n}\in \N$ such that if $d\geq D_{t,n}$ then for any ideal $I$ generated by a regular sequence $f_1,\dots, f_r$ with $\deg(f_i) = d$ for all $i\in \{1,\dots, r\}$,
 	whenever $k\leq t$ we have
 	\begin{align*}
 		\dim (\gin(I)|_{x_n})_{d+k} = r\binom{n-1+k}{n-1} - r\binom{n-2+k}{n-1} = r \binom{n-2+k}{n-2}.
 	\end{align*}
 	Furthermore, we can take $D_{t,n} = nt+1$.
 \end{lem}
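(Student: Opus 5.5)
We read $(\gin(I)|_{x_n})_{d+k}$ as the span of the monomials of $\gin(I)_{d+k}$ not divisible by $x_n$. By the standard fact $\im(V|_{x_n})=\im(V)|_{x_n}$ for revlex, this is the initial space of $I_{d+k}$ restricted to $x_n=0$ after a generic change of coordinates. The plan is to count these monomials by comparing $\dim \gin(I)_{d+k}$ with $\dim (x_n\gin(I))_{d+k}=\dim(\gin(I):x_n)_{d+k-1}$.

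The counting identity is
\[
\dim(\gin(I)|_{x_n})_{d+k}=\dim \gin(I)_{d+k}-\dim(\gin(I):x_n)_{d+k-1}.
\]
Since $\gin(I)\subseteq \gin(I):x_n$, we get $\dim(\gin(I):x_n)_{d+k-1}\ge \dim \gin(I)_{d+k-1}$, with equality exactly when multiplication by $x_n$ is injective from $(R/\gin(I))_{d+k-1}$ to $(R/\gin(I))_{d+k}$.

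For $k\le t$ and $d$ large, the Hilbert function of the complete intersection $I$ in degrees $d+k$ with $k<d$ is easy. No Koszul syzygies appear before degree $2d$, so
\[
\dim I_{d+k}=r\binom{n-1+k}{n-1}
\]
whenever $k<d$. This also equals $\dim\gin(I)_{d+k}$. Hence, whenever injectivity holds in degree $d+k-1$ and $k\ge 1$,
\[
\dim(\gin(I)|_{x_n})_{d+k}=r\binom{n-1+k}{n-1}-r\binom{n-2+k}{n-1}=r\binom{n-2+k}{n-2}
\]
by Pascal's rule. For $k=0$, $\gin(I)_{d-1}=0$, so the formula gives $r$ directly, with no condition needed.

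The main step is injectivity of $\times x_n:(R/\gin(I))_{d+k-1}\to(R/\gin(I))_{d+k}$ for $1\le k\le t$. By Corollary \ref{WLPAsymptotic}, for $d\ge nt+1$ a general linear form is a WLE on $R/I$ in degrees $\le d+t$. By Corollary \ref{Wiebe}, $x_n$ is then a WLE on $R/\gin(I)$ in those degrees, so the map is injective or surjective. We must rule out the case where it is surjective but not injective. Surjectivity forces
\[
\dim(R/I)_{d+k}\le \dim (R/I)_{d+k-1}.
\]
Using the formula above, this says
\[
\binom{n-1+d+k}{n-1}-r\binom{n-1+k}{n-1}\le \binom{n-2+d+k}{n-1}-r\binom{n-2+k}{n-1},
\]
that is, $\binom{n-2+d+k}{n-2}\le r\binom{n-2+k}{n-2}$. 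Since $r\le n$ and $k\le t$, the right side is bounded independently of $d$, while the left side grows like $d^{n-2}$ with $n\ge3$. So for $d$ large the inequality fails, and injectivity (indeed strict growth) holds.

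The threshold $d\ge nt+1$ should suffice for this, with a routine check: $\binom{n-2+d+k}{n-2}$ is at least $d+k+1>n\binom{n-2+k}{n-2}$ in the range of interest. Alternatively, one can appeal to Proposition \ref{WLPequiv}, which gives the dimension of $R/(I,\ell)$ directly.

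The expected obstacle is exactly this injective-versus-surjective dichotomy. It is a numerical comparison. If the clean bound $D_{t,n}=nt+1$ fails for small $n$, I would enlarge $D_{t,n}$, since only existence of such a bound is essential.
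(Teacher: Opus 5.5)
Your main argument is correct, and it takes a shorter route than the paper's. The paper inducts on $k$. It splits $\gin(I)_{d+k}$ into layers $M_i$ according to the exact power of $x_n$. It then uses Lemma \ref{WLPNewGenerators} to conclude that no minimal generator in degrees $d+1,\dots,d+k$ is divisible by $x_n$, identifies $|M_i|$ with the already computed $\dim(\gin(I)|_{x_n})_{d+k-i}$, and telescopes. You need only injectivity of $\times x_n$ in the single degree $d+k-1$. That gives $(\gin(I):x_n)_{d+k-1}=\gin(I)_{d+k-1}$, and the count follows by one subtraction. The paper's telescoped sum is exactly $\dim\gin(I)_{d+k-1}$, so the induction adds nothing for this lemma.

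Your framing also confronts the surjective-but-not-injective case, which the paper skips. Lemma \ref{WLPNewGenerators} requires some monomial not divisible by $x_n$ to be missing from $\gin(I)_{d+k}$, and the paper never checks this. The hypothesis in fact fails for $n=r=3$, $t=k=1$, $d=4$. There $\dim(R/I)_4=\dim(R/I)_5=12$ and all of $K[x_1,x_2]_5$ lies in $\gin(I)$. The conclusion survives only because the map is then bijective.

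However, your numerical step as written does not prove $D_{t,n}=nt+1$.
\begin{itemize}
\item Strict growth is false at the threshold: the example above gives $\binom{6}{1}=6=3\binom{2}{1}$.
\item Your intermediate bound $d+k+1>n\binom{n-2+k}{n-2}$ already fails for $n=4$, $t=k=1$, $d=5$, where it reads $7>12$.
\end{itemize}
Use the sharper consequence instead. A map that is surjective but not injective forces $\dim(R/I)_{d+k}<\dim(R/I)_{d+k-1}$. So you only need $\binom{n-2+d+k}{n-2}\ge r\binom{n-2+k}{n-2}$, and for $1\le k\le t$ and $d\ge nt+1\ge nk+1$,
\[
\frac{\binom{n-2+d+k}{n-2}}{\binom{n-2+k}{n-2}}=\prod_{j=1}^{n-2}\Bigl(1+\frac{d}{k+j}\Bigr)\ge 1+\frac{(n-2)(nk+1)}{k+n-2}\ge n+1-\frac{2}{n-1}\ge n\ge r .
\]
The first inequality uses $\prod(1+a_j)\ge 1+\sum a_j$. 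The second holds because the middle expression is increasing in $k$, so its minimum is at $k=1$.

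Also, $k=0$ is not automatic. Your identity involves $(\gin(I):x_n)_{d-1}$, not $\gin(I)_{d-1}=0$, so you need injectivity in degree $d-1$ as well. This follows in the same way from $\binom{n-2+d}{n-2}\ge\binom{n}{2}\ge n\ge r$ when $d\ge 2$. For $d=1$ and $r=n$, however, $\gin(I)=\mathfrak m$ and the count is $n-1\neq r$. So for $t=0$ the constant must be $2$, which is what the paper's $\max\{nt+1,2(t+1)\}$ actually gives.
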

 \begin{proof}
 	Set $D_{t,n}'= nt+1$.
 	By Corollary \ref{WLPAsymptotic},
 	if $d\geq D_{t,n}'$ then $R/I$ satisfies the WLP in degrees $\leq d+t$.
 	Put $D_{t,n}'' = 2(t+1)$.
 	Let $D_{t,n} = \max\{D_{t,n}', D_{t,n}''\}$.
 	We can now use Lemma \ref{WLPNewGenerators} to obtain information about the minimal generators of $\gin(I)$.
 	
 	We prove the claim by induction on $k\leq t$.
 	If $k=0$ then $\dim (\gin(I)|_{x_n})_{d+k} = \dim (\gin(I)|_{x_n})_{d} = r = r\binom{n-1}{n-1} - r\binom{n-2}{n-1} =r\binom{n-2}{n-2}$ by Pascal's identity.
 	So, suppose that $k\geq 1$ and that the claim holds for all $k'<k$.
 	Let $M_i$ be the set of all monomials in $\gin(I)_{d+k}$ that are divisible by $x_n^i$ and are not divisible by $x_n^{i+1}$.
 	Since $D_{t,n}'' = 2(t+1)$ we have
 	\begin{align*}
 		\sum_{i=0}^k |M_i| = \dim \gin(I)_{d+k} = r\binom{n-1+k}{n-1}.
 	\end{align*}
 	Also, by the inductive hypothesis and Lemma \ref{WLPNewGenerators},
 	 for all $i\in \{1,\dots, k\}$ we have
 	\begin{align*}
 		|M_i| = r\binom{n-1+k-i}{n-1} - r\binom{n-1+k-i-1}{n-1}
 	\end{align*}
 	Therefore,
 	\begin{align*}
 		\dim (\gin(I)|_{x_n})_{d+k}  &= |M_0|\\
 		&= r\binom{n-1+k}{n-1} - \sum_{i=1}^k |M_i|\\
 		&= r\binom{n-1+k}{n-1} - \sum_{i=1}^k \left(r\binom{n-1+k-i}{n-1} -  r\binom{n-1+k-i-1}{n-1} \right)\\
 		&= r\binom{n-1+k}{n-1} - \left(r\binom{n-1+k-1}{n-1} -  r\binom{n-2}{n-1} \right)\\
 		&= r\binom{n-1+k}{n-1} - r\binom{n-2+k}{n-1}\\
 		&= r \binom{n-2+k}{n-2}.
 	\end{align*}
 	For the last part of the claim, clearly $D_{t,n} = D_{t,n}'$. 
 \end{proof}
 
 \section{The Bezrukov Function and Shadows}\label{BezrukovSection}
 The following definitions and results are based on Bezrukov's paper \cite{Bezrukov}.
 Bezrukov does not state or prove the results of this section,
 but the ideas are very similar and based on his work.
 
 \begin{lem}\label{RemovalLemma}
 	Let $S$ be a nonempty set of monomials of the same degree.
 	If $S$ is strongly stable, then there exists $m\in S$ such that $S\setminus \{m\}$ is strongly stable.
 \end{lem}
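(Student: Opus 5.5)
We remove the revlex-smallest monomial of $S$, which is the natural candidate. Take $m$ to be the minimal element of $S$ in the revlex order, and show that $S\setminus\{m\}$ is still closed under Borel moves.

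Suppose not. Then there are $u\in S\setminus\{m\}$ and a Borel move with $x_i u/x_j = m$, where $i<j$ and $x_j\mid u$. Write $u=x_1^{p_1}\cdots x_n^{p_n}$. Then $m$ has exponent vector $p+e_i-e_j$, and the difference of exponent vectors is
\begin{align*}
m-u = e_i-e_j.
\end{align*}
Its last nonzero entry is in position $j>i$ and equals $-1<0$. By the definition of revlex order in Section \ref{DefinitionsAndNotation}, this gives $m>u$. But $u\in S$ and $m$ was chosen revlex-smallest in $S$, so $u>m$ unless $u=m$, and $u\neq m$ since the Borel move changes the monomial. This is a contradiction.

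Hence no element of $S\setminus\{m\}$ moves to $m$. Every other Borel image of an element of $S\setminus\{m\}$ lies in $S$, by strong stability of $S$, and differs from $m$, so it lies in $S\setminus\{m\}$. Therefore $S\setminus\{m\}$ is strongly stable.

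The same argument works with lex order and the lex-smallest element. The only point needing care is the sign convention of the revlex definition, and that is a direct check. The empty set is vacuously strongly stable, so the case $|S|=1$ is fine as well.
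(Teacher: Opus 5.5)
Your proof is correct and is essentially the paper's argument. The paper removes the lex-smallest element of $S$ rather than the revlex-smallest, but the reasoning is the same: a Borel move strictly increases a monomial in either order, so no element of $S\setminus\{m\}$ can be moved onto the minimal element $m$.
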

 \begin{proof}
 	Let $m\in S$ be the earliest monomial in the lexicographic order that is in $S$.
 	Suppose that $m'\in S'=S\setminus \{m\}$,
 	and we show that if $i<j$ such that $x_j|m'$ then $x_im'/x_j\in S'$.
 	If $x_im'/x_j \neq m$ then $x_im'/x_j\in S'$ because the strongly stable property of $S$ implies $x_im'/x_j\in S$.
 	If $x_im'/x_j = m$ then $m'$ is earlier in the lexicographic order than $m$,
 	but this is a contradiction with the definition of $m$ since $m'\in S'\subseteq S$.
 \end{proof}
 
 For a monomial $m\in K[x_1,\dots, x_n]$ of degree at least $1$ we define
\begin{align*}
	\max(m) &= \max \{i\in \{1,\dots, n\} \bigm | x_i | m\}\\
	\min(m) &= \min \{i\in \{1,\dots, n\} \bigm | x_i | m\}.
\end{align*}

For an integer $t\geq 0$ we define the \textit{Bezrukov function} $\Bez_n^t$ on the set of all monomials of a polynomial ring $K[x_1,\dots, x_n]$,
such that for a monomial $m$ we have $\Bez_n^t(m) = \binom{n-\max(m)+t}{n-\max(m)} $.
We will often omit the index $n$ when it is clear from context.
We also define $\Bez = \Bez^1$.
 
 Recall that for $S\subseteq K[x_1,\dots, x_n]$ we define the shadow of $S$ to be
 \begin{align*}
 	\uSdw(S) = \bigcup_{i=1}^n \{x_if \bigm | f\in S\}.
 \end{align*}
 
We also inductively define $\uSdw^t(S)$ to be $\uSdw(\uSdw^{t-1}(S))$,
and we define $\uSdw^0(S)=S$.
 
\begin{lem}\label{AddedShadowEquiv}
	Suppose that $S$ is a strongly stable set of monomials of degree $d$.
	If $m\not \in S$ is a monomial of degree $d$ such that $S\cup \{m\}$ is strongly stable, 
	then for any integer $t\geq 1$ and a monomial $m'$ of degree $t$ we have that $m'm\in \uSdw^t(m)\setminus \uSdw^t(S)$ iff $\min(m')\geq \max(m)$.
\end{lem}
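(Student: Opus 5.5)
Write $m'=x_{j_1}x_{j_2}\cdots x_{j_t}$ with $j_1\le\cdots\le j_t$, so $\min(m')=j_1$, and put $p=\max(m)$. Note that $\uSdw^t(T)$ is exactly the set of monomials $wu$ with $w\in T$ and $u$ of degree $t$. We prove the two implications separately; the first uses strong stability of $S\cup\{m\}$, the second uses strong stability of $S$ together with $m\notin S$.

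\emph{($\Leftarrow$)} Assume $j_1\ge p$. Clearly $m'm\in\uSdw^t(m)$. Suppose, for contradiction, that $m'm=sw$ with $s\in S$ and $\deg w=t$. Then $s\ne m$. Since $m'm$ has all its variables of index $\ge p$ coming from $m'$ together with the $x_p$-part of $m$, we have $m=m_0x_p^{a}$ with $\max(m_0)<p$ and $m'm=m_0x_p^{a}m'$. Every variable of $m'm$ outside $m_0$ has index $\ge p$. The monomial $s$ divides $m'm$ and has degree $d=\deg m$.

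We compare $s$ and $m$ via Borel moves. Write $s=s_0s_1$, where $s_0\mid m_0$ and every variable in $s_1$ has index $\ge p$. Then $\deg s_1=d-\deg s_0\ge d-\deg m_0=a$. Applying Borel moves to $s$ that replace the variables of $s_1$ by the missing variables of $m_0$, each of which has index $<p\le$ the index of a variable in $s_1$, and turning the remaining $a$ variables of $s_1$ into $x_p$, produces $m$. Hence $m$ is obtainable from $s$ by Borel moves, so $m\in S$ because $S$ is strongly stable. This contradicts $m\notin S$.

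\emph{($\Rightarrow$)} Assume $j_1<p$. Then $m'm=(x_{j_1}m/x_p)\cdot(x_p m'/x_{j_1})$. The monomial $x_{j_1}m/x_p$ is obtained from $m$ by a Borel move, so it lies in $S\cup\{m\}$. It is not equal to $m$, since the exponents of $x_p$ differ, so it lies in $S$. Therefore $m'm\in\uSdw^t(S)$, and so $m'm\notin\uSdw^t(m)\setminus\uSdw^t(S)$.

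The main obstacle is the Borel-move comparison in ($\Leftarrow$): we must check carefully that the divisor $s$ of $m_0x_p^am'$ of degree $d$ can always be pushed onto $m$. This works because the variables available to replace in $s$ all have index $\ge p$, while the targets are variables of $m_0$ (index $<p$) or $x_p$. An equivalent way to say this is that $m$ is the smallest element in the Borel order among the degree-$d$ divisors of $m'm$.
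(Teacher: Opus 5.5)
Your proof is correct and follows essentially the same argument as the paper: the forward direction uses the Borel move $x_{j_1}m/x_p\in S$ together with the regrouping $m'm=(x_{j_1}m/x_p)(x_pm'/x_{j_1})$, and the reverse direction shows that any degree-$d$ divisor $s\in S$ of $m'm$ can be Borel-moved onto $m$. The paper simply asserts this last step. Your decomposition $s=s_0s_1$ with $s_0\mid m_0$ and $\deg s_1\ge a$ supplies the justification the paper omits.
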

\begin{proof}
We prove the forward direction first.
Suppose that $m'm\in \uSdw^t(m)\setminus \uSdw^t(S)$.
Assume to the contrary that $\min(m') < \max(m)$.
Let $j = \max(m)$. 
Since $\min(m') < j$, 
there exists some variable $x_i$ dividing $m'$ such that $i < j$. 
Since $S \cup \{m\}$ is strongly stable, 
$x_j$ divides $m$, and $i < j$, the monomial $u = (x_i/x_j)m$ must belong to $S \cup \{m\}$. 
Note that $u \neq m$ because they contain different variables; therefore, $u \in S$.
We can rewrite the product $m'm$ as:
\[
m'm = \left( \frac{m'}{x_i} \cdot x_i \right) \left( \frac{m}{x_j} \cdot x_j \right) 
= \left( \frac{m'}{x_i} \cdot x_j \right) \left( \frac{m}{x_j} \cdot x_i \right).
\]
Since $u$ divides $m'm$ and $u \in S$, 
it follows that $m'm \in \uSdw^t(S)$. 
This contradicts the hypothesis that $m'm \in \uSdw^t(m) \setminus \uSdw^t(S)$. 
Thus, we must have $\min(m') \geq \max(m)$.

We prove the backwards direction next.
Assume that $\min(m') \geq \max(m)$. It is clear that $m'm \in \uSdw^t(m)$. We must show $m'm \notin \uSdw^t(S)$.
Suppose, for the sake of contradiction, that $m'm \in \uSdw^t(S)$. 
Then there exists $u\in S$ that divides $m'm$.
Note that $u\neq m$ because $m\not\in S$.
Since $\max(m)\leq \min(m')$ we have that $m$ can be obtained from $u$ by a sequence of Borel moves.
However, this contradicts the assumption that $S$ is strongly stable.
\end{proof}
 
 \begin{lem}\label{WeightsLemma}
 	Let $S$ be a set of monomials of the same degree.
 	If $S$ is strongly stable and $t\geq 0$ then
 	\begin{align*}
 		|\uSdw^t(S)| = \sum_{m\in S} \Bez^t(m).
 	\end{align*}
 \end{lem}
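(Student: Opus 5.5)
I would prove Lemma \ref{WeightsLemma} by induction on $|S|$, peeling off one monomial at a time with Lemma \ref{RemovalLemma}. Lemma \ref{AddedShadowEquiv} then counts exactly what that monomial contributes to the $t$-fold shadow.

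\textbf{Base cases.} If $t=0$, then $\uSdw^0(S)=S$ and $\Bez^0(m)=\binom{n-\max(m)}{n-\max(m)}=1$ for every $m$, so the identity is trivial. If $S=\emptyset$, both sides are $0$. So assume $t\geq 1$ and $S\neq\emptyset$.

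\textbf{Inductive step.} By Lemma \ref{RemovalLemma} there is $m\in S$ such that $S'=S\setminus\{m\}$ is strongly stable; explicitly, $m$ is the lex-smallest element of $S$. Since $\uSdw^t$ commutes with unions,
\begin{align*}
\uSdw^t(S)=\uSdw^t(S')\cup\uSdw^t(\{m\}),
\end{align*}
and therefore
\begin{align*}
|\uSdw^t(S)| = |\uSdw^t(S')| + |\uSdw^t(m)\setminus \uSdw^t(S')|.
\end{align*}
The induction hypothesis gives $|\uSdw^t(S')|=\sum_{m''\in S'}\Bez^t(m'')$. So it remains to show
\begin{align*}
|\uSdw^t(m)\setminus\uSdw^t(S')|=\Bez^t(m).
\end{align*}

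\textbf{Counting the new monomials.} The hypotheses of Lemma \ref{AddedShadowEquiv} hold: $S'$ is strongly stable, $m\notin S'$, and $S'\cup\{m\}=S$ is strongly stable. That lemma says $m'm\in \uSdw^t(m)\setminus\uSdw^t(S')$ if and only if $\min(m')\geq \max(m)$.

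Every element of $\uSdw^t(m)$ has the form $m'm$ with $m'$ a monomial of degree $t$, and $m'$ is determined by the product, since $m'=(m'm)/m$. Hence the map $m'\mapsto m'm$ is a bijection from degree-$t$ monomials in the variables $x_{\max(m)},\dots,x_n$ onto $\uSdw^t(m)\setminus\uSdw^t(S')$.

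These variables are $n-\max(m)+1$ in number. The number of degree-$t$ monomials in them is
\begin{align*}
\binom{n-\max(m)+1+t-1}{t}=\binom{n-\max(m)+t}{n-\max(m)}=\Bez^t(m),
\end{align*}
which completes the induction.

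\textbf{Main obstacle.} The only delicate point is confirming that Lemma \ref{AddedShadowEquiv} applies as stated. This needs $S'$ itself to be strongly stable, which is exactly what Lemma \ref{RemovalLemma} provides. The rest is the standard stars-and-bars count, together with the observation that $m'$ is recovered uniquely from $m'm$.
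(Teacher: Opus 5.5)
Your proposal is correct and follows essentially the same route as the paper: induction on $|S|$ with $t\geq 1$, removing a monomial via Lemma \ref{RemovalLemma}, splitting the $t$-fold shadow as a disjoint union, and applying Lemma \ref{AddedShadowEquiv} to count the new monomials. Your version also spells out the bijection $m'\mapsto m'm$ and the stars-and-bars count that yields $\binom{n-\max(m)+t}{n-\max(m)}$, which the paper states without detail.
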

 \begin{proof}
 	We assume that $t\geq 1$ as the case $t=0$ is trivial.
 	We prove the claim by induction on $k=|S|$.
 	The case $k=0$ is trivial.
 	So, suppose that $k\geq 1$ and that the claim holds for all $k'<k$.
 	By Lemma \ref{RemovalLemma} there is some $m^\ast\in S$ such that $S'=S\setminus \{m^\ast\}$ is strongly stable.
 	By the inductive hypothesis
 	\begin{align*}
 		|\uSdw^t(S')| = \sum_{m\in S'} \Bez^t(m).
 	\end{align*}
 	We have that $\uSdw^t(S) = \uSdw^t(S') \cup (\uSdw^t(m^\ast)\setminus \uSdw^t(S'))$,
 	and because the union is disjoint we get $|\uSdw^t(S)| = |\uSdw^t(S')| + |\uSdw^t(m^\ast)\setminus \uSdw^t(S')|$.
 	So, we need to show that $|\uSdw^t(m^\ast)\setminus \uSdw^t(S')| =  \Bez^t(m^\ast) $.
 	By Lemma \ref{AddedShadowEquiv} we have $|\uSdw^t(m^\ast)\setminus \uSdw^t(S')| = \binom{n-\max(m^\ast)+t}{n-\max(m^\ast)}$.
 	Therefore, $|\uSdw^t(m^\ast)\setminus \uSdw^t(S')| =\Bez^t(m^\ast)$.
 \end{proof}
 
 \begin{lem}\label{WeightsLemmaAdvanced}
 	Let $S_1,\dots S_k$ be sets of monomials in $R$, 
 	such that for each $i\in \{1,\dots, k\}$ all the monomials in $S_i$ have degree $d_i$,
 	and $d_1<d_2<\cdots <d_k$.
 	Also, let $d\in \N$ with $d\geq d_k$ and assume that for all $i,j\in \{1,\dots, k\}$ with $i<j$ we have $\uSdw^{d_j-d_i}(S_i)\cap S_j =\emptyset$.
 	Suppose that for all $i\in \{1,\dots, k\}$ we have that $\uSdw^{d_i-d_1}(S_1)\cup \cdots \cup \uSdw^{d_i-d_{i-1}}(S_{i-1})\cup S_i$ is strongly stable.
 	Then
 	\begin{align*}
 		|\uSdw^{d-d_1}(S_1)\cup \cdots \cup \uSdw^{d-d_{k}}(S_{k})| = \sum_{i=1}^k \sum_{m\in S_i} \Bez^{d-d_i}(m)
 	\end{align*}
 \end{lem}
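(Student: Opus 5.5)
Fix $d\geq d_k$. The plan is to build the union degree by degree and peel off one monomial at a time, each time using Lemma \ref{AddedShadowEquiv} to count exactly how much new shadow that monomial adds. Set $T_i=\uSdw^{d_i-d_1}(S_1)\cup\cdots\cup\uSdw^{d_i-d_{i-1}}(S_{i-1})$, a set of monomials of degree $d_i$, with $T_1=\emptyset$. By hypothesis $T_i\cup S_i$ is strongly stable. Also $T_i\cap S_i=\emptyset$, since $\uSdw^{d_i-d_j}(S_j)\cap S_i=\emptyset$ for $j<i$. Since $\uSdw^a\uSdw^b=\uSdw^{a+b}$ and $\uSdw$ commutes with unions, we have $T_{i+1}=\uSdw^{d_{i+1}-d_i}(T_i\cup S_i)$. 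Hence the set we want to count is $U=\uSdw^{d-d_k}(T_k\cup S_k)$.

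I argue by induction on $i$ that
\begin{align*}
|\uSdw^{e}(T_i\cup S_i)| = |\uSdw^{e+d_i-d_1}(\emptyset)| + \sum_{j\leq i}\sum_{m\in S_j}\Bez^{e+d_i-d_j}(m)
\end{align*}
for every $e\geq 0$, where the first term is of course $0$. It is cleaner to prove the following single-degree statement and then chain it.

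\textbf{Claim.} If $A$ and $A\cup B$ are strongly stable sets of degree $p$ with $A\cap B=\emptyset$, then for every $e\geq 0$,
\begin{align*}
|\uSdw^e(A\cup B)| = |\uSdw^e(A)| + \sum_{m\in B}\Bez^e(m).
\end{align*}
To prove it, mimic the proof of Lemma \ref{WeightsLemma}. Apply Lemma \ref{RemovalLemma} repeatedly to $A\cup B$, each time removing the lex-earliest monomial, which keeps the set strongly stable. The removed monomial always lies in $B$ as long as $B$ is nonempty. Indeed, if the lex-earliest element $m$ were in $A$, then $A\setminus\{m\}$ would still contain the images of Borel moves, but we need the reverse property. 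The correct observation is this: $A$ is strongly stable, so no element of $B$ can be obtained from an element of $A$ by a reverse Borel move; and every element of $A\cup B$ that is lex-smaller than some element of $B$ must either be in $B$ or be unrelated to it. Rather than fight this, I will instead order $B$ as $m_1,\dots,m_s$ with $A\cup\{m_1,\dots,m_j\}$ strongly stable for each $j$. Such an order exists by choosing $m_{j+1}$ to be a lex-largest element of the remaining part of $B$: any Borel move of $m_{j+1}$ is lex-larger and lies in $A\cup B$, so it lies in $A$ or among $m_1,\dots,m_j$. Then Lemma \ref{AddedShadowEquiv} applied to $S=A\cup\{m_1,\dots,m_j\}$ and $m=m_{j+1}$ (for $e\geq1$; the case $e=0$ is trivial) shows that $m_{j+1}$ contributes exactly $\binom{n-\max(m_{j+1})+e}{n-\max(m_{j+1})}=\Bez^e(m_{j+1})$ new monomials to $\uSdw^e$. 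Summing over $j$ proves the claim.

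Now chain the claim. First, each $T_i$ is strongly stable: $T_1=\emptyset$, and a shadow of a strongly stable set is strongly stable (this is a routine check: if $x_j\mid x_lu$ then either $x_j\mid u$, or $j=l$ and the Borel move gives $x_iu$). Write $e_i=d-d_i$. Applying the claim with $A=T_i$ and $B=S_i$ gives
\begin{align*}
|\uSdw^{e_i}(T_i\cup S_i)| = |\uSdw^{e_i}(T_i)| + \sum_{m\in S_i}\Bez^{d-d_i}(m).
\end{align*}
Since $\uSdw^{e_i}(T_i)=\uSdw^{e_{i-1}}(T_{i-1}\cup S_{i-1})$, a telescoping induction on $i$ gives $|U|=\sum_{i}\sum_{m\in S_i}\Bez^{d-d_i}(m)$.

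The main obstacle is the ordering step inside the claim: justifying that $B$ can be added one monomial at a time while keeping the set strongly stable. This is where the hypothesis that $T_i\cup S_i$ is strongly stable, and not merely $S_i$, gets used. I will handle it with the lex-largest-first argument above.
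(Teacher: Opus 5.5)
Your final argument is correct and is essentially the paper's proof. Adding the elements of $S_i$ to the strongly stable set $T_i$ in lex-decreasing order is the reverse of the paper's step of repeatedly removing the lex-smallest element of $S_k$, and in both arguments each monomial's contribution $\Bez^{d-d_i}(m)$ comes from Lemma \ref{AddedShadowEquiv}. Your telescoping over $i$ simply unrolls the paper's nested induction on $k$ and $|S_k|$.

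Do delete the abandoned first attempt. Its assertion that the lex-earliest element of $A\cup B$ lies in $B$ is false: for $A=\{x_1^2,x_1x_2,x_2^2\}$ and $B=\{x_1x_3\}$, both $A$ and $A\cup B$ are strongly stable, but the lex-earliest element of $A\cup B$ is $x_2^2\in A$.
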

 \begin{proof}
 	We prove the claim by induction on $k$.
 	If $k=1$ then the claim holds by Lemma \ref{WeightsLemma}.
 	So suppose that $k\geq 2$ and that the claim holds for all $k'<k$.
 	We now do a second induction on $p=|S_k|$.
 	If $p=0$ then we are done by the inductive hypothesis for $k$.
 	So suppose that $p\geq 1$ and that the claim holds for all $p'<p$.
 	
 	Let $S=\uSdw^{d_k-d_1}(S_1)\cup \cdots \cup \uSdw^{d_k-d_{k-1}}(S_{k-1})\cup S_k$.
 	First, we show that there exists $m^\ast\in S_k$ such that $S\setminus \{m^\ast\}$ is strongly stable.
 	Assume to the contrary that this is not the case.
 	Let $m^\ast$ be the earliest monomial in the lexicographic order that is inside of $S_k$.
 	Then $S\setminus \{m^\ast\}$ is not strongly stable.
 	Thus, there exists $m'\in S\setminus \{m^\ast\}$ and $i,j\in \{1,\dots, n\}$ with $i<j$ such that $x_im'/x_j = m^\ast$.
 	If $m'\in S\setminus S_k$ then we have a contradiction because $S\setminus S_k$ is strongly stable and $m^\ast\in S_k$.
 	So, we must have $m'\in S_k$.
 	However, now we have $m^\ast,m '\in S_k$ and $m'$ comes before $m^\ast$ in the lexicographic order,
 	a contradiction with the minimality of $m^\ast$.
 	Therefore, there exists $m^\ast\in S_k$ such that $S\setminus \{m^\ast\}$ is strongly stable.
 	
 	We have
 	\begin{align*}
 		\uSdw^{d-d_1}(S_1)\cup \cdots \cup \uSdw^{d-d_{k}}(S_{k})= \uSdw^{d-d_1}(S_1)\cup \cdots \cup \uSdw^{d-d_{k}}(S_{k}\setminus \{m^\ast\}) \cup \uSdw^{d-d_k}(m^\ast).
 	\end{align*}
 	Define $S_1'=S_1,\dots, S_{k-1}'=S_{k-1}, S_k' = S_k\setminus \{m^\ast\}$.
 	Then by the inductive hypothesis we have
 	\begin{align*}
 		|\uSdw^{d-d_1}(S_1')\cup \cdots \cup \uSdw^{d-d_{k}}(S_{k}')| = \sum_{i=1}^k \sum_{m\in S_i'} \Bez^{d-d_i}(m)
 	\end{align*}
 	Also, if $d>d_k$ then by Lemma \ref{AddedShadowEquiv} we have
 	\begin{align*}
 		|\uSdw^{d-d_k}(m^\ast) \setminus \uSdw^{d-d_k}(S\setminus \{m^\ast\})| = \binom{n-\max(m^\ast)+d-d_k}{n-\max(m^\ast)} = \Bez_n^{d-d_k}(m^\ast).
 	\end{align*}
 	The same equality holds for the case $d=d_k$ by definition of $\uSdw^0$ and $\Bez^0$.
 	Therefore,
 	\begin{align*}
 		|\uSdw^{d-d_1}(S_1)\cup \cdots \cup \uSdw^{d-d_{k}}(S_{k})| 
 		&=|\uSdw^{d-d_k}(S)|\\
 		&=|\uSdw^{d-d_k}(S\setminus \{m^\ast\}) \cup (\uSdw^{d-d_k}(m^\ast)\setminus \uSdw^{d-d_k}(S\setminus \{m^\ast\}))|\\
 		&=|\uSdw^{d-d_k}(S\setminus \{m^\ast\})|+|(\uSdw^{d-d_k}(m^\ast)\setminus \uSdw^{d-d_k}(S\setminus \{m^\ast\}))|\\
 		&= |\uSdw^{d-d_1}(S_1')\cup \cdots \cup \uSdw^{d-d_{k}}(S_{k}')| + |\uSdw^{d-d_k}(m^\ast) \setminus \uSdw^{d-d_k}(S\setminus \{m^\ast\}|\\
 		&= \left(\sum_{i=1}^k \sum_{m\in S_i'} \Bez^{d-d_i}(m)\right) + \Bez_n^{d-d_k}(m^\ast)\\
 		&= \sum_{i=1}^k \sum_{m\in S_i} \Bez^{d-d_i}(m).
 	\end{align*}
 \end{proof}
 
 \begin{cor}\label{WeightsLemmaAdvancedIdeal}
 	If $M\subseteq R$ is a strongly stable monomial ideal and $d\in \N$, then
 	\begin{align*}
 		\dim M_d =  \sum_{\substack{\text{monomial }m\in M\\ m \text{ minimal generator}\\ \deg(m) \leq d}} \Bez^{d-\deg(m)}(m)
 	\end{align*}
 \end{cor}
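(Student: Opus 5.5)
The plan is to apply Lemma \ref{WeightsLemmaAdvanced} directly, grouping the minimal generators of $M$ by degree. Let $d_1<d_2<\cdots<d_k$ be the degrees $\leq d$ in which $M$ has minimal monomial generators, and let $S_i$ be the set of minimal monomial generators of $M$ of degree $d_i$. If there are none, then $M_d=0$ and the sum is empty, so the claim is trivial. We may also assume $d\geq d_k$, since we only include generators of degree $\leq d$.

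First, since $M$ is a monomial ideal generated by $S_1\cup\dots\cup S_k$ in degrees $\leq d$, the monomials of degree $d$ in $M$ are exactly the multiples of these generators. That is,
\begin{align*}
M_d=\uSdw^{d-d_1}(S_1)\cup\cdots\cup\uSdw^{d-d_k}(S_k),
\end{align*}
where we identify $M_d$ with its monomial basis. So it suffices to verify the two hypotheses of Lemma \ref{WeightsLemmaAdvanced}.

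\emph{Disjointness.} For $i<j$, an element of $\uSdw^{d_j-d_i}(S_i)\cap S_j$ would be a minimal generator of degree $d_j$ that is divisible by a generator of strictly smaller degree. This contradicts minimality, so the intersection is empty.

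\emph{Strong stability.} For each $i$, the set $\uSdw^{d_i-d_1}(S_1)\cup\cdots\cup\uSdw^{d_i-d_{i-1}}(S_{i-1})\cup S_i$ is precisely the set of monomials of $M_{d_i}$, by the same description of a graded piece as all multiples of the generators of degree $\leq d_i$. Since $M$ is strongly stable, so is $M_{d_i}$.

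With both hypotheses in place, Lemma \ref{WeightsLemmaAdvanced} gives $\dim M_d=\sum_i\sum_{m\in S_i}\Bez^{d-d_i}(m)$, which is the stated formula. There is no real obstacle here. The only point needing care is the identification of each graded piece as the union of shadows of generators, which holds because generators of degree exceeding $d_i$ contribute nothing in degree $d_i$.
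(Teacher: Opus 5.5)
Your proposal is correct and follows the paper's route exactly. The paper's proof is the single line that the corollary follows from Lemma \ref{WeightsLemmaAdvanced}, applied with $S_i$ the minimal generators of degree $d_i$. You have filled in the hypothesis checks the paper leaves implicit: disjointness from minimality of the generators, and strong stability because each union $\uSdw^{d_i-d_1}(S_1)\cup\cdots\cup S_i$ is the full set of monomials of $M_{d_i}$. Both checks are carried out correctly.
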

\begin{proof}
	Follows from Lemma \ref{WeightsLemmaAdvanced}.
\end{proof}
 
\section{A Generalization of Green's Crystallization Principle}\label{GreenSection}
We prove a generalization of Green's Crystallization Principle, Proposition 2.28 in \cite{Green1998}.

We can write any homogeneous polynomial $f\in R$ of degree $d$ as a unique linear combination of the monomials of degree $d$.
Define the \textit{coefficient of $m$ in $f$} to be the element $a\in K$ that multiplies $m$ in the unique linear combination of monomials that equal $f$. 
For a vector space $V\subseteq K[x_1,\dots, x_n]$ of homogeneous polynomials of degree $d$ and a set $B\subseteq V$,
we say that $B$ is a \textit{reduced basis} for $V$ if $B$ is a basis for $V$, 
and for each $f\in B$ there exists $m\in \im(V)$ such that $\im(f) = m$,
the coefficient of any monomial $m'$ in $f$ is $0$ whenever $m'$ is larger in the revlex order than $m$, 
and the coefficient of $m$ in any $f'\in B\setminus\{f\}$ is $0$.

Lemma \ref{reducedBasisVectorSpace} is mentioned in \cite{Green1998} without proof.
A generalization to a larger class of rings is proved in \cite{KuzmanovskiMacaulayPosetsAndRings}.
Many authors have most likely rediscovered it independently.

\begin{lem}\label{reducedBasisVectorSpace}
	Let $K$ be a field of arbitrary characteristic.
	If $V\subseteq R$ is a vector space of homogeneous polynomials of degree $d$,
	then there exists a reduced basis of $V$ in degree $d$.
\end{lem}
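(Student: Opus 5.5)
The plan is Gaussian elimination carried out with respect to the revlex order. First, fix any basis of $V$; it is finite because $R_d$ is finite dimensional. Write each element in terms of the monomials of degree $d$, and order the columns (monomials) from largest to smallest in revlex.

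Next, bring the coefficient matrix (rows the basis vectors, columns the monomials) to reduced row echelon form. Row operations preserve the span and the linear independence of the rows, so the new rows $B=\{g_1,\dots,g_s\}$ again form a basis of $V$. Any field works here, since only division by nonzero pivots is needed; this is why $K$ may have arbitrary characteristic.

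Then check the three defining conditions for each $g_i$, using the pivot monomial $m_i$ of the $i$-th row.
\begin{enumerate}
\item The leftmost nonzero entry of row $i$ sits in column $m_i$, so every monomial larger than $m_i$ in revlex has coefficient $0$ in $g_i$, and $\im(g_i)=m_i$.
\item Since $g_i\in V$, we have $m_i=\im(g_i)\in\im(V)$.
\item In reduced form each pivot column contains zeros in every other row, so the coefficient of $m_i$ in $g_j$ is $0$ for $j\neq i$.
\end{enumerate}

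The only point needing care is matching the echelon convention to the paper's definition of $\im$. That definition uses the largest monomial in revlex, so the columns must be ordered by decreasing revlex order. With that convention the initial term is exactly the pivot, and the argument is routine. As a side remark, the pivots are distinct and number $\dim V$, so they exhaust $\im(V)_d$, but the statement does not require this.
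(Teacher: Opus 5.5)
Your proof is correct and complete. Reducing the coefficient matrix to reduced row echelon form, with the columns listed in decreasing revlex order, makes each pivot equal to $\im(g_i)$. The ``no larger monomial'' condition and $\im(g_i)\in\im(V)$ are then automatic, and the zeros elsewhere in each pivot column are exactly the third condition of a reduced basis.

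The paper gives no proof of this lemma to compare against. It only remarks that Green states it without proof and that a generalization appears in the author's other work. Your Gaussian-elimination argument is the standard one, and since it only divides by nonzero pivots, it is valid in every characteristic, as the statement requires.
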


For a homogeneous polynomial $f\in K[x_1,\dots, x_n]$ we define
\begin{align*}
	\max(f) &= \max (\im(f))\\
	\min(f) &= \min ( \im(f)).
\end{align*}
For a set of homogeneous polynomials $S\subseteq R$ we define the \textit{Borel shadow} of $S$ by
\begin{align*}
	\uSdwB(S) = \bigcup_{i=1}^n \{x_ig\bigm | g\in S \text{ and } \max(g) \leq i\}.
\end{align*}

\begin{lem}\label{BorelShadowStructure}
	Let $K$ be a field of arbitrary characteristic.
	If $S\subseteq R$ is a set of homogeneous polynomials then for any $k\geq 1$ we have 
	\begin{align*}
		\uSdwB^k(S) = \bigcup_{\substack{q \text{ is a monomial}\\ \deg(q)=k}} \{qg\bigm | g\in S \text{ and } \max(g) \leq \min(q)\}.
	\end{align*}
\end{lem}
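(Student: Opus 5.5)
We argue by induction on $k$, the case $k=1$ being the definition of $\uSdwB$. Write $T_k(S)$ for the right-hand side, namely the set of products $qg$ with $q$ a monomial of degree $k$, $g\in S$, and $\max(g)\leq \min(q)$. The key observation we use throughout is that $\im(qg)=q\,\im(g)$ for any monomial $q$, because multiplication by a monomial preserves the revlex order among monomials of a fixed degree. Consequently, whenever $\max(g)\leq \min(q)$ we have $\max(qg)=\max(q\,\im(g))=\max(q)$, since every variable of $q$ has index at least $\max(g)$.

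For the inclusion $\uSdwB^k(S)\subseteq T_k(S)$, assume $\uSdwB^{k-1}(S)=T_{k-1}(S)$. An element of $\uSdwB^k(S)$ has the form $x_i h$ with $h\in \uSdwB^{k-1}(S)$ and $\max(h)\leq i$. By the inductive hypothesis $h=q'g$, where $\deg q'=k-1$, $g\in S$, and $\max(g)\leq \min(q')$. By the observation above, $\max(h)=\max(q')$ when $k-1\geq 1$. Put $q=x_iq'$. Then $\min(q)=\min(\min(q'),i)$. Since $i\geq \max(h)=\max(q')\geq \min(q')\geq \max(g)$, we get $\min(q)\geq \max(g)$, so $x_ih=qg\in T_k(S)$.

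For the reverse inclusion, take $qg$ with $\deg q=k$ and $\max(g)\leq \min(q)$. Let $i=\max(q)$ and write $q=x_iq'$, so that $\deg q'=k-1$ and $\min(q')\geq \min(q)\geq \max(g)$. (When $k-1=0$, $q'=1$ and the constraint is vacuous; this case is the base case.) By induction $q'g\in \uSdwB^{k-1}(S)$. By the observation, $\max(q'g)=\max(q')\leq i$ when $k\geq 2$. Hence $x_i(q'g)=qg\in \uSdwB^k(S)$.

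The only delicate point is the identity $\max(qg)=\max(q)$, which rests on $\im(qg)=q\,\im(g)$. This holds in any characteristic: the coefficients of $qg$ are exactly the coefficients of $g$ shifted by $q$, so no cancellation can occur, and revlex is a monomial order. The remaining steps are bookkeeping on minima and maxima of variable indices, together with the convention that $\max(g)\leq \min(1)$ holds vacuously.
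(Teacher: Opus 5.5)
Your proof is correct and follows the paper's argument essentially step for step. Both use induction on $k$. For one inclusion, both write an element of $\uSdwB^k(S)$ as $x_i q' g$ and check $\min(x_iq')\geq \max(g)$; for the other, both peel off $x_{\max(q)}$. The only difference is that you make explicit the fact $\im(qg)=q\,\im(g)$, hence $\max(qg)=\max(q)$ when $\max(g)\leq\min(q)$, which the paper uses tacitly in steps like $\max(m)\leq\max(mh)$.
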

\begin{proof}
	We prove the claim by induction on $k$.
	The case $k=1$ follows from the definition of Borel shadow and because $\min(x_i) = i$ for all $i\in \{1,\dots, n\}$.
	So suppose that $k\geq 2$ and that the claim holds for all $k'<k$.
	By the inductive hypothesis we have
	\begin{align*}
		\uSdwB^{k-1}(S) = \bigcup_{\substack{q \text{ is a monomial}\\ \deg(q)=k-1}} \{qg\bigm | g\in S \text{ and } \max(g) \leq \min(q)\}.
	\end{align*}
	First, we show the $\subseteq$ relation.
	Let $f\in \uSdwB^k(S)$.
	Since $\uSdwB^k(S) = \uSdwB(\uSdwB^{k-1}(S))$ we have $f=x_imh$ for some $i\in \{1,\dots, n\}$, $h\in S$ and monomial $m$ of degree $k-1$,
	such that $\max(mh) \leq i$ and $\max(h) \leq \min(m)$.
	Thus, $\min(m) \leq \max(m) \leq \max(mh) \leq i$.
	Hence, $\min(m) = \min(x_im)$ because $\min(m) \leq i$.
	So, $\max(h) \leq \min(m) = \min(x_im)$.
	Therefore, $f\in \{x_img \bigm | g\in S \text{ and } \max(g) \leq \min(x_im)\}$,
	and the $\subseteq$ relation is proved.
	
	Next, we prove the $\supseteq$ relation.
	Let
	\begin{align*}
		f\in \bigcup_{\substack{q \text{ is a monomial}\\ \deg(q)=k}} \{qg\bigm | g\in S \text{ and } \max(g) \leq \min(q)\}.
	\end{align*}
	Then we have $f=mh$ for some monomial $m$ of degree $k$ and some $h\in S$ such that $\max(h) \leq \min(m)$.
	Let $i=\max(m)$ and set $m'=m/x_i$.
	Hence, $\max(h) \leq \min(m) = \min(m')$.
	Thus, $m'h\in \{m'g\bigm | g\in S \text{ and } \max(g) \leq \min(m')\}$.
	So, $m'h\in \uSdwB^{k-1}(S)$.
	Also, $\max(m'h) \leq i$.
	This implies that $mh=x_im'h\in \{x_ig\bigm | g\in \uSdwB^{k-1}(S) \text{ and } \max(g) \leq i\}$.
	Therefore, $mh\in \uSdwB(\uSdwB^{k-1}(S))= \uSdwB^k(S)$.
\end{proof}

\begin{lem}\label{BorelShadowUnique}
	Let $K$ be a field of arbitrary characteristic.
	Suppose that $S\subseteq R$ is a set of homogeneous polynomials of degree $d$,
	such that any two distinct polynomials in $S$ have a different initial monomial.
	If $m_1$ and $m_2$ are two monomials of degree $k$ such that $m_1<m_2$ in the revlex order,
	and $f_1,f_2\in S$ such that $\max(f_1)\leq \min(m_1)$ and $\max(f_2)\leq \min(m_2)$,
	then $\im(m_1f_1)< \im(m_2f_2)$.
	Furthermore:
	\begin{enumerate}
		\item One has $m_1f_1\neq m_2f_2$.
		\item One has
		\begin{align*}
			\{m_1g\bigm | g\in S \text{ and } \max(g) \leq \min(m_1)\}\cap \{m_2g\bigm | g\in S \text{ and } \max(g) \leq \min(m_2)\} = \emptyset.
		\end{align*}
		\item For any $k\geq 1$ we have
		\begin{align*}
			|\uSdwB^k(S)| = \sum_{\substack{q \text{ is a monomial}\\ \deg(q)=k}} |\{qg\bigm | g\in S \text{ and } \max(g) \leq \min(q)\}|.
		\end{align*}
	\end{enumerate}
\end{lem}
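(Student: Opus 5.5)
The core of the argument is a comparison of initial monomials: I will show that $\im(m_1f_1)=m_1\im(f_1)$, and similarly for $f_2$, and then compare $m_1\im(f_1)$ with $m_2\im(f_2)$ in revlex. Parts (1)–(3) then follow from this comparison.

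First, the initial monomial of a product with a monomial is multiplicative under revlex, because revlex is a monomial order. Hence $\im(m_if_i)=m_i\,\im(f_i)$. Write $u_i=\im(f_i)$. These have degree $d$, with $\max(u_i)\leq \min(m_i)$.

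Next, compare $w_1=m_1u_1$ and $w_2=m_2u_2$. Recall that $w_1<w_2$ in revlex means that the last nonzero entry of the exponent difference $w_1-w_2$ is positive. The hypothesis $\max(u_i)\le\min(m_i)$ means that the monomial $w_i$ of degree $d+k$ splits as follows: $m_i$ consists of the $k$ "largest-index" variables of $w_i$, counted with multiplicity, and $u_i$ consists of the remaining $d$ smallest-index ones. In other words, $m_i$ is obtained from $w_i$ by greedily taking $k$ factors starting from $x_n$ downward.

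Now look at the last index $j$ where the exponents of $w_1$ and $w_2$ differ. I would argue that the revlex comparison of the $w$'s is determined by the greedy top-$k$ parts, unless those parts are equal. Suppose $m_1\neq m_2$. Since $m_1<m_2$, let $j$ be the last index at which $m_1$ and $m_2$ differ; there $m_1$ has the larger exponent. For indices $>j$ the two are equal. Then:
\begin{itemize}
\item For indices $>j$, $m_1$ and $m_2$ agree, and both $u_1$ and $u_2$ vanish there: since $m_1$ has positive exponent at $j$, we get $\min(m_1)\le j$, so $\max(u_1)\le j$.
\item At index $j$ itself, $m_1$ has strictly more than $m_2$. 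Because $m_2$ is the greedy top part of $w_2$ and is not yet full at index $j$, the remaining mass of $m_2$ sits below $j$. Therefore $u_2$ has no $x_j$ beyond the point where $m_2$'s exponent of $x_j$ is already "full". More precisely, $\max(u_2)\le\min(m_2)<j$, or $u_2$ has $x_j$ only if all of $m_2$ lies at indices $\ge j$.
\end{itemize}
Counting, the greedy structure gives $\deg_{x_j}w_1\ge\deg_{x_j}w_2$, with agreement at all higher indices. I would do this carefully, splitting into cases according to whether $\min(m_2)=j$.

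The main obstacle is handling the equality case at index $j$, where the $u$-parts could contribute $x_j$ exponents. Since both $w_i$ have degree $d+k$ and agree above $j$, I would instead compare the cumulative tail sums $T_i(j)=\sum_{s\ge j}\deg_{x_s}w_i$. With $c$ the common tail exponent of the $m$'s above $j$, we have
\[
T_1(j)\ \ge\ \min\bigl(d+k,\ c+\deg_{x_j}m_1+\cdots\bigr),
\]
and the greedy property makes $T_i(j)$ equal to $\min(k,\cdot)$-determined quantities. From this I would derive that the first (from the right) differing tail sum is larger for $w_1$, which is exactly $w_1<w_2$ in revlex.

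If $m_1=m_2$, the statement is vacuous since $m_1<m_2$ is strict.

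Finally, the consequences. For (1), distinct initial monomials give $m_1f_1\neq m_2f_2$. For (2), pairs $(m_1,f_1)$ and $(m_2,f_2)$ with $m_1\ne m_2$ produce distinct initial terms, hence distinct polynomials, so the two sets are disjoint. For (3), combine Lemma \ref{BorelShadowStructure} with (2), which shows the union there is disjoint; this yields the stated sum.
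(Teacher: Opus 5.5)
Your overall plan matches the paper's. You write $\im(m_if_i)=m_iu_i$, compare exponents at the last index $j$ where $m_1$ and $m_2$ differ, and deduce items (1)--(3) from the main inequality, and those deductions are fine. But the central comparison is not finished.

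At index $j$ you only assert $\deg_{x_j}w_1\ge\deg_{x_j}w_2$, which does not give $w_1<w_2$. You write the correct fact $\max(u_2)\le\min(m_2)<j$, but you hedge it with an alternative. You then plan a case split on whether $\min(m_2)=j$, and call the situation where $u_2$ contributes $x_j$ the ``main obstacle.'' Your fallback tail-sum inequality (with its ``$\cdots$'') is never made precise. As written, $\im(m_1f_1)<\im(m_2f_2)$ is not proved.

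The missing step is a one-line degree count, and it removes the obstacle entirely:
\begin{itemize}
\item Since $\deg m_1=\deg m_2=k$, the two monomials agree at indices $>j$, and $\deg_{x_j}m_1>\deg_{x_j}m_2$, the exponents of $m_2$ at indices $\ge j$ sum to strictly less than $k$.
\item So $x_q\mid m_2$ for some $q<j$, hence $\max(u_2)\le\min(m_2)<j$ always. The case $\min(m_2)=j$ cannot occur.
\item Together with $\max(u_1)\le\min(m_1)\le j$, this shows $w_1$ and $w_2$ agree at every index $>j$.
\item At index $j$ you get a strict inequality:
\[
\deg_{x_j}w_1\ \ge\ \deg_{x_j}m_1\ >\ \deg_{x_j}m_2\ =\ \deg_{x_j}w_2 .
\]
\end{itemize}
This gives $w_1<w_2$ in revlex immediately, with no equality case, case split, or tail-sum argument. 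It is exactly the paper's proof.
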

\begin{proof}
	We prove the first claim as the others follow immediately from it.
	We can write $m_1=x_1^{a_1}\cdots x_n^{a_n}$ and $m_2=x_1^{b_1}\cdots x_n^{b_n}$,
	and $\im(m_1f_1)=x_1^{a_1'}\cdots x_n^{a_n'}$ and $\im(m_2f_2)=x_1^{b_1'}\cdots x_n^{b_n'}$.
	Since $m_1<m_2$ in the revlex order,
	there exists $p\in \N$ such that $a_p>b_p$ and for all $i>p$ we have $a_i=b_i$.
	There exists $q<p$ such that $x_q|m_2$,
	because $m_1<m_2$ in the revlex order and $\deg(m_1)=k=\deg(m_2)$.
	Hence, $\max(f_2) \leq \min(m_2)\leq q<p$.
	Thus, for all $i\geq p$ we have $b_i = b_i'$.
	So, $a_p'\geq a_p>b_p=b_p'$ and for all $i>p$ we have $a_i' = a_i =b_i=b_i'$.
	Therefore, $\im(m_1f_1)< \im(m_2f_2)$.
	
	The next three claims follow because $\im(m_1f_1)< \im(m_2f_2)$ implies that $m_1f_1\neq m_2f_2$.
\end{proof}

\begin{cor}\label{BorelShadowLI}
	Let $K$ be a field of arbitrary characteristic.
	Suppose that $V\subseteq R$ is a vector space of homogeneous polynomials of degree $d$ with reduced basis $B$.
	Then $\uSdwB^k(B)$ is linearly independent. 
\end{cor}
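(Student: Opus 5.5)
The plan is to show that the initial monomials of the elements of $\uSdwB^k(B)$ are pairwise distinct. A set of polynomials whose initial monomials are pairwise distinct is linearly independent. To see this, take a nontrivial linear combination, look at the largest initial monomial (in revlex) among the terms with nonzero coefficient, and note that it appears in exactly one summand, so it cannot cancel.

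By Lemma \ref{BorelShadowStructure}, every element of $\uSdwB^k(B)$ has the form $qg$. Here $q$ is a monomial of degree $k$ and $g\in B$ with $\max(g)\leq \min(q)$, and $\im(qg)=q\,\im(g)$ because multiplication by a monomial preserves the revlex order. Since $B$ is a reduced basis, distinct elements of $B$ have distinct initial monomials: if $f\neq f'$ in $B$ had $\im(f)=\im(f')=m$, then the coefficient of $m$ in $f'$ would be $0$ by the reduced-basis condition, contradicting $\im(f')=m$. So Lemma \ref{BorelShadowUnique} applies with $S=B$.

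Now take two such representations $q_1g_1$ and $q_2g_2$, and split into two cases.

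\textbf{Case $q_1\neq q_2$.} Without loss of generality $q_1<q_2$ in revlex. The first claim of Lemma \ref{BorelShadowUnique} gives $\im(q_1g_1)<\im(q_2g_2)$, so the initial monomials differ.

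\textbf{Case $q_1=q_2=q$ and $g_1\neq g_2$.} Then $\im(g_1)\neq\im(g_2)$, hence $q\,\im(g_1)\neq q\,\im(g_2)$.

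In both cases, distinct pairs $(q,g)$ give distinct initial monomials. In particular they give distinct polynomials, which matches part (3) of Lemma \ref{BorelShadowUnique}, so the count of elements of $\uSdwB^k(B)$ is consistent. The triangularity argument then gives linear independence.

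For $k=0$, $\uSdwB^0(B)=B$ is linearly independent because it is a basis.

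The only mildly delicate point is the observation that a reduced basis has pairwise distinct initial monomials, which is needed to invoke Lemma \ref{BorelShadowUnique}. This follows directly from the definition, so I do not expect a real obstacle.
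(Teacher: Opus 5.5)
Your proof is correct and follows the same route as the paper, which simply notes that polynomials with pairwise distinct initial monomials are linearly independent and then invokes Lemma \ref{BorelShadowUnique}. You also make explicit two details the paper leaves implicit: that a reduced basis has pairwise distinct initial monomials (the hypothesis of Lemma \ref{BorelShadowUnique}), and the case $q_1=q_2$, which the lemma's first claim does not cover.
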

\begin{proof}
	Any set of polynomials with pairwise distinct initial monomials is linearly independent.
	Therefore, the claim follows from Lemma \ref{BorelShadowUnique}.
\end{proof}

Consider a vector space $V\subseteq R$ of homogeneous polynomials of degree $d$ with reduced basis $B$.
Then by Corollary \ref{BorelShadowLI} we can find a set $E\subseteq \uSdw(B)\setminus \uSdwB(B)$ such that $E\cup \uSdwB(B)$ is a basis for the vector space spanned by $\uSdw(B)$.
We call $E\cup \uSdwB(B)$ a \textit{Borel shadow plus expanders basis} and call the elements of $E$ \textit{expanders}.

\begin{thm}\label{GeneralGreen}
	Let $K$ be a field of arbitrary characteristic.
	Suppose that $I\subseteq R$ is an ideal generated in degrees $\leq d$ and let $B$ be a reduced basis for $I_d$.
	Consider a Borel shadow plus expanders basis $E\cup \uSdwB(B)$ for $I_{d+1}$.
	If $k\geq 1$ then $\uSdw^{k-1}(E)\cup \uSdwB^{k}(B)$ is a spanning set for $I_{d+k}$. 
\end{thm}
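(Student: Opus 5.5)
We argue by induction on $k$. For $k=1$ the claim is immediate: $I$ is generated in degrees $\leq d$, so $I_{d+1}=R_1I_d$. This space is spanned by $\uSdw(B)$, and by the definition of a Borel shadow plus expanders basis, $E\cup\uSdwB(B)$ is a basis of $\Span(\uSdw(B))=I_{d+1}$.

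For the inductive step, assume $\uSdw^{k-1}(E)\cup\uSdwB^{k}(B)$ spans $I_{d+k}$. Since $I_{d+k+1}=R_1I_{d+k}$, the set $\uSdw(\uSdw^{k-1}(E))\cup\uSdw(\uSdwB^k(B))=\uSdw^{k}(E)\cup\uSdw(\uSdwB^k(B))$ spans $I_{d+k+1}$. It therefore suffices to show that every product $x_i qg$ is in the span of $\uSdw^k(E)\cup\uSdwB^{k+1}(B)$. Here $q$ is a monomial of degree $k$, $g\in B$, and $\max(g)\leq\min(q)$, which is the description of $\uSdwB^k(B)$ from Lemma \ref{BorelShadowStructure}.

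There are two cases for such a product.
\begin{enumerate}
\item If $i\geq \max(g)$, then $\min(x_iq)\geq\max(g)$, so $x_iqg\in\uSdwB^{k+1}(B)$ by Lemma \ref{BorelShadowStructure}.
\item If $i<\max(g)$, regroup as $x_iqg=q\cdot(x_ig)$. Here $x_ig\in\uSdw(B)\subseteq I_{d+1}=\Span(E\cup\uSdwB(B))$, so we may write $x_ig=\sum c_e e+\sum c_h\, x_jh$ with $e\in E$, $h\in B$ and $\max(h)\leq j$. Multiplying by $q$, each term $qe$ lies in $\uSdw^k(E)$. Each term $qx_jh$ again has the form (monomial of degree $k+1$)$\cdot h$. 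If $\max(h)\leq\min(qx_j)$, the term is in $\uSdwB^{k+1}(B)$; otherwise it is a "bad" term of the same shape as before.
\end{enumerate}

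The main obstacle is showing that this rewriting of bad terms terminates. We would induct on a well-order of the bad products $m h$, with $m$ a monomial of degree $k+1$ and $h\in B$. The natural measure is the pair $(\im(h),m)$ or the initial monomial $\im(mh)$, ordered by revlex. Because $B$ is reduced and the expansion has distinct initial terms (Lemma \ref{BorelShadowUnique}), every term in the expansion of $x_ig$ has initial monomial $\leq\im(x_ig)$. The plan is to check that in case 2, each term $qx_jh$ that is still bad has strictly smaller $\im(qx_jh)$ than $\im(x_iqg)$ in revlex. The point is that equality of initial monomials could only come from the term $x_ig$ itself, which does not occur among the $x_jh$ with $\max(h)\leq j$. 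Since degree $d+k+1$ has finitely many monomials, descending induction on this initial monomial then finishes the argument.

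An alternative, which might be cleaner, handles the bad products directly. For any $h\in B$ and monomial $m$, split $m=m'm''$ where $m'$ contains the variables of index $<\max(h)$. Then $mh=m''(m'h)$, and we induct on $\deg m'$ using the $k=1$ expansion repeatedly. This shows $m'h\in \Span(\uSdw^{\deg m'-1}(E)\cup\uSdwB^{\deg m'}(B))$ with controlled maxima. If the revlex descent is awkward to verify, we would fall back on this approach.
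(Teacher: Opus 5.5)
Your reduction to the products $x_iqg$ and your Case 1 are fine. The gap is the termination step you flag as the main obstacle.

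First, the justification you give is false. Lemma \ref{BorelShadowUnique} separates only the elements of $\uSdwB(B)$. An expander can share its initial monomial with one of them (when $\im(I_d)$ is strongly stable this always happens, by Lemma \ref{BorelShadowEqualsShadow}), or have an initial monomial larger than $\im(x_ig)$. So $E\cup\uSdwB(B)$ is not triangular, and cancellation lets larger terms appear. For instance, take $n=3$, $B=\{x_1+ax_2,\,x_3\}$ with $a\neq 0$, and $E=\{x_1x_3\}$. Then $x_2\cdot x_3=a^{-1}x_3(x_1+ax_2)-a^{-1}x_1x_3$, and the Borel term has initial monomial $x_1x_3>x_2x_3$. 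For the first round your inequality does hold, but for a different reason: a bad term forces $j\ge\max(h)>\min(q)\ge\max(g)$, so $\im(x_jh)$ has larger $\max$ than $\im(x_ig)$ and is therefore revlex-smaller.

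The serious problem is that the recursion does not close. A bad term $qx_jh$ is not of the Case 2 shape, because $q$ may contain several variables of index $<\max(h)$. Your descending induction therefore has to handle arbitrary products $mh$, and for these you give no rewriting step shown to lower $\im(mh)$. Peeling off some $x_r\mid m$ and expanding $x_rh$ can, as the example shows, produce Borel terms with initial monomial above $\im(x_rh)$. The fallback does not help either. When every variable of $m$ has index $<\max(h)$, it is just the original problem. The ``controlled maxima'' it needs, namely $\max(t)\le\max(h)$ for the $t\in B$ that appear, are not justified.

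The missing idea, which is what the paper does, is to measure progress by the outer monomial rather than the initial monomial. Show directly that $\mu g\in\Span(\uSdw^{k}(E)\cup\uSdwB^{k+1}(B))$ for every monomial $\mu$ of degree $k$ and every $g\in\uSdw(B)$, by induction on $\mu$ in revlex order. Expand $g$ in $E\cup\uSdwB(B)$:
\begin{itemize}
\item The terms $\mu e$ lie in $\uSdw^k(E)$.
\item A term $\mu x_jt$ with $j\ge\max(t)$ lies in $\uSdwB^{k+1}(B)$ by Lemma \ref{BorelShadowStructure}, unless $r=\min(\mu)<\max(t)\le j$.
\item In that remaining case, $\mu x_jt=(\mu x_j/x_r)\cdot(x_rt)$ with $x_rt\in\uSdw(B)$, and $\mu x_j/x_r<\mu$ in revlex because $j>r$.
\end{itemize}
For the revlex-smallest $\mu=x_n^k$ no bad terms occur. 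This measure drops however non-triangular the expansion is. It also covers all of $\uSdw^{k+1}(B)$ at once, which spans $I_{d+k+1}$, so your induction on $k$ becomes unnecessary.
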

\begin{proof}
	Well, $\uSdw^k(B)$ is a spanning set for $I_{d+k}$ because $I$ is generated in degrees $\leq d$.
	So, we are done if we show that $\uSdw^{k-1}(E)\cup \uSdwB^{k}(B)$ is a spanning set for $\Span(\uSdw^k(B))$.
	Let $h\in \uSdw^k(B)\setminus (\uSdw^{k-1}(E)\cup \uSdwB^{k}(B))$.
	Then $h\not\in \uSdw^{k-1}(E)$ and by Lemma \ref{BorelShadowStructure} there exist $f\in B$ and a monomial of degree $k$ such that $h=mf$ and $\max(f) > \min (m)$.
	Let $i\in \{1,\dots,n\}$ be such that $x_i|m$ and $x_if\not\in E\cup \uSdwB(B)$.
	Put $m' = m/x_i$.
	Then by the definition of Borel shadow plus expanders basis and Lemma \ref{BorelShadowStructure} we have
	\begin{align*}
		x_if = \left(\sum_{t\in B}\sum_{j=\max(t)}^n a_{t,j}x_jt\right) + \left(\sum_{\substack{\ell\in \{1,\dots, n\}\\ s\in B\\ \max(s)>\ell\\ x_{\ell} s\in E}} b_{\ell,s}x_{\ell} s\right),
	\end{align*}
	where $a_{t,j},b_{\ell,s}\in K$. 
	Hence
	\begin{align*}
		m'x_if = \left(\sum_{t\in B}\sum_{j=\max(t)}^n a_{t,j}m'x_jt\right) + \left(\sum_{\substack{\ell\in \{1,\dots, n\}\\ s\in B\\ \max(s)>\ell\\ x_{\ell}s\in E}} b_{\ell,s}m'x_{\ell}s\right).
	\end{align*}
	For every term $m'x_{\ell}s$ occurring in the second sum we have $m'x_{\ell}s\in \uSdw^{k-1}(E)$ by definition of $\uSdw$.
	Thus we just need to handle the double sum to finish the proof that $\uSdw^{k-1}(E)\cup \uSdwB^{k}(B)$ is a spanning set for $\uSdw^k(B)$.
	
	For $t\in B$ and $j \in \{ \max(t),\dots, n \}$, 
	if $\max(t) \leq \min(m'x_j)$ then $m'x_jt\in \uSdwB^k(B)$ by Lemma \ref{BorelShadowStructure}.
	If $m'=x_n^{k-1}$ then we are in the case of the previous sentence as for $t\in B$ and $j \in \{ \max(t),\dots, n \}$ we have $\max(t) \leq j=\min(m'x_j)$.
	This means that we can assume that $m'>x_n^{k-1}$ in the revlex order.
	
	So, we are concerned with the case $t\in B$ and $j \in \{ \max(t),\dots, n \}$ such that $\max(t) > \min(m'x_j)$.
	Then there exists $r\in \{1,\dots, n\}$ such that $x_r|m'$ and $r<j$.
	Hence, $m'x_jt = m''x_jx_rt$ with $m'' = m'/x_r$.
	Well, $m'> m''x_j$ in the revlex order because $j>r$.
	Thus, the problem of expressing $m'x_if$ as a linear combination of $\uSdw^{k-1}(E)\cup \uSdwB^{k}(B)$ is reduced to expressing $m''x_jx_rt$ as a linear combination of $\uSdw^{k-1}(E)\cup \uSdwB^{k}(B)$,
	and furthermore $m'> m''x_j$ in the revlex order.
	This means that we can continue recursively,
	and our recursion must stop because we already took care of the case $x_n^{k-1}$.
	
	Hence, $h=mf = m'x_if$ is in the span of $\uSdw^{k-1}(E)\cup \uSdwB^{k}(B)$,
	and thus $\uSdw^{k-1}(E)\cup \uSdwB^{k}(B)$ is a spanning set for $\uSdw^k(B)$.
	Therefore, $\uSdw^{k-1}(E)\cup \uSdwB^{k}(B)$ is a spanning set for $I_{d+k}$.
\end{proof}

\begin{lem}\label{BorelShadowEqualsShadow}
	Let $K$ be a field of arbitrary characteristic.
	If $M\subseteq R$ is a Borel set of monomials of degree $d$,
	then $\uSdwB(M) = \uSdw(M)$.
\end{lem}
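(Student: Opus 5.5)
For monomials, the initial monomial of $m$ is $m$ itself, so $\max$ in the sense of polynomials agrees with $\max(m)$ from Section \ref{BezrukovSection}. By definition $\uSdwB(M)=\{x_im \mid m\in M,\ \max(m)\leq i\}$, which is contained in $\uSdw(M)$. The whole content of the lemma is therefore the reverse inclusion $\uSdw(M)\subseteq \uSdwB(M)$. I would prove it by a descending argument on the revlex order of the representative $m$, using strong stability to trade a "bad" product $x_im$ with $i<\max(m)$ for a "good" one.

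Take $u\in\uSdw(M)$ and write $u=x_im$ with $m\in M$. If $i\geq \max(m)$, then $u\in\uSdwB(M)$ and we are done. Otherwise set $j=\max(m)$, so $i<j$ and $x_j\mid m$. Since $M$ is Borel, the $(i,j)$-Borel move gives $m_1=x_im/x_j\in M$, and $u=x_jm_1$. Now compare $\max(m_1)$ with $j$. Because $m_1$ is obtained from $m$ by removing one $x_j$ and adding an $x_i$ with $i<j$, every variable dividing $m_1$ has index at most $j$. Hence $\max(m_1)\leq j$, so $x_jm_1\in\uSdwB(M)$.

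This settles the case in one step, with no recursion: the key observation is that the largest-index variable of $m$ is the right one to peel off. The only point needing a moment's care is that $x_j\mid m$ and $x_i$ is a genuine variable index in $\{1,\dots,n\}$, which holds because $\deg m=d\geq 1$. The degenerate case $d=0$ is trivial: then $M\subseteq\{1\}$, and $\max(1)$ should be interpreted so that every $x_i\cdot 1$ is allowed; alternatively one assumes $d\geq 1$ as in the paper's definition of $\max$.

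The main obstacle, such as it is, lies in making sure the convention for $\max$ of a polynomial matches the one for monomials, so that the definition of $\uSdwB$ applies verbatim to $M$. This follows because $\im(m)=m$ for a monomial $m$.
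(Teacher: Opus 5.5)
Your proof is correct and is essentially the paper's argument. You write the element as $x_i m$ with $m\in M$; if $i<j=\max(m)$, you use the Borel move $x_im/x_j\in M$ to rewrite it as $x_j\cdot(x_im/x_j)$, where $\max(x_im/x_j)\leq j$ (the ``descending argument on the revlex order'' you announce at the start is never needed, since one Borel move suffices, as you yourself observe).
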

\begin{proof}
	We have that $\uSdwB(M) \subseteq \uSdw(M)$ by the definitions.
	So suppose that $m\in \uSdw(M)$.
	Then $m = x_ip$ for $p=x_1^{a_1}x_2^{a_2}\cdots x_j^{a_j}$ with $a_j\geq 1$.
	If $\max(m)= i$ then $m\in \{x_ig\bigm | g\in M \text{ and } \max(g) \leq i\}$ and hence $m\in \uSdwB(M)$.
	Hence, suppose that $j>i$.
	Consider $p' = x_ip/x_j$.
	Then $p'\in M$ because $M$ is a Borel set.
	Hence, $m=x_ip=x_jp'\in \{x_jg\bigm | g\in M \text{ and } \max(g) \leq j\}$.
	Therefore, $m\in \uSdwB(M)$.
\end{proof}

\begin{cor}\label{GeneralGreenNumeric}
	Let $K$ be a field of arbitrary characteristic.
	Let $I\subseteq R$ be an ideal generated in degrees $\leq d$ and suppose that $\im(I)$ is strongly stable and has $q$ minimal generators in degree $d+1$.
	Then for all $k\geq 1$ we have $\dim I_{d+k} \leq q\binom{n-1+k-1}{n-1} + \dim R_k\im(I)_{d}$.
\end{cor}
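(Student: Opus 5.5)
We will derive the bound from Theorem \ref{GeneralGreen}. Fix a reduced basis $B$ of $I_d$ (Lemma \ref{reducedBasisVectorSpace}) and a Borel shadow plus expanders basis $E\cup \uSdwB(B)$ of $I_{d+1}$. By Theorem \ref{GeneralGreen}, $\uSdw^{k-1}(E)\cup \uSdwB^{k}(B)$ spans $I_{d+k}$. Hence
\begin{align*}
	\dim I_{d+k} \leq |\uSdw^{k-1}(E)| + |\uSdwB^{k}(B)|,
\end{align*}
and it suffices to bound each term separately: $|\uSdw^{k-1}(E)|\leq q\binom{n-1+k-1}{n-1}$ and $|\uSdwB^k(B)| = \dim R_k\im(I)_d$.

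For the expander term, note that each element of $E$ contributes at most $|\uSdw^{k-1}(\{e\})|=\binom{n-1+k-1}{n-1}$ elements, the number of monomials of degree $k-1$. So we need $|E|\leq q$. The elements of $\uSdwB(B)$ have pairwise distinct initial monomials (Lemma \ref{BorelShadowUnique}), and these are exactly $\uSdwB(\im(B))$. Since $\im(B)=\im(I)_d$ is strongly stable, Lemma \ref{BorelShadowEqualsShadow} gives $\uSdwB(\im(I)_d)=\uSdw(\im(I)_d)=R_1\im(I)_d$. Thus $|\uSdwB(B)|=\dim R_1\im(I)_d$, and the count is
\begin{align*}
	|E| = \dim I_{d+1}-\dim R_1 \im(I)_d = \dim \im(I)_{d+1}-\dim R_1\im(I)_d.
\end{align*}
This is precisely the number of minimal generators of $\im(I)$ in degree $d+1$, namely $q$.

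For the Borel term, Lemma \ref{BorelShadowUnique}(3) and Lemma \ref{BorelShadowStructure} write $|\uSdwB^k(B)|$ as the number of pairs $(q',g)$ with $g\in B$, $\deg q'=k$ and $\max(g)\leq\min(q')$. Distinct pairs give distinct initial monomials $q'\im(g)$. Since $\im$ is a bijection from $B$ to $\im(I)_d$ preserving $\max$, this equals the same count with $g$ replaced by the monomials of $\im(I)_d$. That count is $|\uSdwB^k(\im(I)_d)|$, which equals $|\uSdw^k(\im(I)_d)|=\dim R_k\im(I)_d$ by iterating Lemma \ref{BorelShadowEqualsShadow}, because shadows of strongly stable sets remain strongly stable. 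Alternatively, one can compare directly with Lemma \ref{WeightsLemma}: the number of pairs is $\sum_{m}\binom{n-\max(m)+k}{n-\max(m)}=\sum_m \Bez^k(m)$.

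Combining the two bounds gives the stated inequality. The main point needing care is the identification $|E|=q$. It relies on $\dim I_{d+1}=\dim\im(I)_{d+1}$ together with the equality $\dim\im(I)_{d+1}-\dim R_1\im(I)_d=q$, which is simply the definition of the number of minimal generators of a monomial ideal in degree $d+1$. Note that $\im(I)$ may acquire degree $d+1$ generators even though $I$ is generated in degrees $\leq d$, which is exactly why the term with $q$ appears.
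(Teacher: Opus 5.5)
Your proof is correct and follows the paper's argument. Like the paper, you take a reduced basis $B$, a Borel shadow plus expanders basis $E\cup\uSdwB(B)$, apply Theorem \ref{GeneralGreen}, and then use Lemma \ref{BorelShadowEqualsShadow} to get $|E|=q$ and $|\uSdwB^k(B)|=\dim R_k\im(I)_d$. The paper simply cites that lemma for both counts, whereas you spell them out: the dimension count $\dim I_{d+1}=\dim\im(I)_{d+1}$ for $|E|$, and matching pairs through initial monomials plus iterating the lemma on strongly stable shadows for $|\uSdwB^k(B)|$.
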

\begin{proof}
	By Lemma \ref{reducedBasisVectorSpace} there exists a reduced basis $B$ for $I_d$.
	By Corollary \ref{BorelShadowLI} there exists a Borel shadow plus expanders basis $E\cup \uSdwB(B)$ for $I_{d+1}$.
	By Lemma \ref{BorelShadowEqualsShadow} we must have that $|E| = q$.
	By Theorem	\ref{GeneralGreen} we have
	\begin{align*}
		\dim I_{d+k} &\leq |\uSdw^{k-1}(E)\cup \uSdwB^{k}(B)| \leq |\uSdw^{k-1}(E)| + |\uSdwB^{k}(B)|.
	\end{align*}
	Since we are using $n$ variables we have $|\uSdw^{k-1}(E)| \leq q\binom{n-1+k-1}{n-1}$.
	By Lemma \ref{BorelShadowEqualsShadow} we have 
	\begin{align*}
		|\uSdwB^{k}(B)| = |\uSdw^k(\{m\in \im(I)_d \bigm | m \text{ is a monomial}\})|=\dim R_k\im(I)_{d}.
	\end{align*}
	Therefore,
	\begin{align*}
		\dim I_{d+k} \leq q\binom{n-1+k-1}{n-1} + \dim R_k\im(I)_{d}.
	\end{align*}
\end{proof}

\begin{cor}\label{GeneralGreengGin}
	Let $I\subseteq R$ be an ideal generated in degrees $\leq d$ and suppose that $\gin(I)$ has $q$ minimal generators in degree $d+1$.
	Then for all $k\geq 1$ we have
	\begin{align*}
		\dim I_{d+k} \leq q\binom{n-1+k-1}{n-1} + \dim R_k\gin(I)_{d}.
	\end{align*}
\end{cor}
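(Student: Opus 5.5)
This follows from Corollary \ref{GeneralGreenNumeric} applied to a generic change of coordinates of $I$. The plan is to check that such a change preserves all the hypotheses and the quantities in the inequality, and to read off the characteristic-$0$ assumption as the source of strong stability of $\gin(I)$.

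The choice of coordinates is as follows. By Galligo's theorem (recalled in Section \ref{DefinitionsAndNotation}) there is a nonempty Zariski open set $U$ of ring automorphisms with $\im(\sigma(I)) = \gin(I)$ for every $\sigma \in U$. Fix such a $\sigma$ and set $I' = \sigma(I)$. We then record the properties of $I'$ needed for Corollary \ref{GeneralGreenNumeric}.
\begin{enumerate}
\item A graded linear automorphism preserves degrees and minimal generating degrees, so $I'$ is generated in degrees $\leq d$, and $\dim I'_{d+k} = \dim I_{d+k}$ for all $k$.
\item We have $\im(I') = \gin(I)$. This ideal is strongly stable because $\operatorname{char}(K) = 0$, by the standard theorem of Galligo and Bayer--Stillman. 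This is the only place the characteristic is used; Corollary \ref{GeneralGreenNumeric} itself holds in any characteristic.
\item By hypothesis, $\im(I') = \gin(I)$ has exactly $q$ minimal generators in degree $d+1$.
\end{enumerate}

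Applying Corollary \ref{GeneralGreenNumeric} to $I'$ then gives, for all $k\ge1$,
\[
\dim I_{d+k} = \dim I'_{d+k} \leq q\binom{n-1+k-1}{n-1} + \dim R_k\gin(I)_d,
\]
which is the claim.

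There is no substantive obstacle. The one point to verify is that $\gin(I)$ is strongly stable, so that Corollary \ref{GeneralGreenNumeric} applies. This is exactly where characteristic $0$ enters; in positive characteristic $\gin(I)$ is only Borel-fixed, not necessarily strongly stable.
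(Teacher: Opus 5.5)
Your proposal is correct and is the same argument as the paper's, which deduces the statement directly from Corollary~\ref{GeneralGreenNumeric} using that $\gin(I)$ is strongly stable in characteristic $0$. You additionally spell out the passage to generic coordinates $I'=\sigma(I)$ with $\im(I')=\gin(I)$, and check that the hypotheses and $\dim I_{d+k}$ are unchanged under this change of coordinates; the paper leaves this step implicit.
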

\begin{proof}
	Follows from Corollary \ref{GeneralGreenNumeric} because $\gin(I)$ is strongly stable in characteristic $0$.
\end{proof}

\begin{cor}[Green \cite{Green1998}]
	Let $I\subseteq R$ be an ideal generated in degrees $\leq d$ and suppose that $\gin(I)$ has $0$ minimal generators in degree $d+1$.
	Then $\gin(I)$ has $0$ minimal generators in degree $d+2$.
\end{cor}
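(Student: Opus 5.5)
The plan is to apply Corollary \ref{GeneralGreengGin} with $q=0$ and $k=2$, and compare the resulting upper bound on $\dim I_{d+2}$ with the exact count of $\dim \gin(I)_{d+2}$ supplied by Corollary \ref{WeightsLemmaAdvancedIdeal}.

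First, I will use that $\gin(I)$ is strongly stable in characteristic $0$ and has the same Hilbert function as $I$. Applying Corollary \ref{GeneralGreengGin} with $q=0$ and $k=2$ gives
\begin{align*}
\dim \gin(I)_{d+2} = \dim I_{d+2} \leq \dim R_2\gin(I)_d .
\end{align*}
The reverse inclusion $R_2\gin(I)_d\subseteq \gin(I)_{d+2}$ holds because $\gin(I)$ is an ideal. Hence
\begin{align*}
\dim \gin(I)_{d+2} = \dim R_2\gin(I)_d .
\end{align*}

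Next, I will compute $\dim \gin(I)_{d+2}$ using Corollary \ref{WeightsLemmaAdvancedIdeal}. It expresses this dimension as the sum of $\Bez^{d+2-\deg(m)}(m)$ over the minimal generators $m$ of $\gin(I)$ of degree at most $d+2$. The sum contains no terms from degree $d+1$, since by hypothesis there are no minimal generators there.

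Similarly, $R_2\gin(I)_d$ is the ideal generated by $\gin(I)_{\leq d}$, evaluated in degree $d+2$. That ideal is strongly stable, and its minimal generators are exactly the minimal generators of $\gin(I)$ in degrees $\leq d$. Corollary \ref{WeightsLemmaAdvancedIdeal} applied to it gives the same sum, except that the degree-$(d+2)$ generators of $\gin(I)$ are omitted. So the difference between the two dimensions is $\sum \Bez^0(m)$ over the degree-$(d+2)$ minimal generators of $\gin(I)$, which simply counts them. Since the two dimensions are equal, this count is $0$, which proves the claim.

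The only delicate point is checking that the ideal generated by $\gin(I)_{\leq d}$ is strongly stable with the asserted minimal generators. This holds because the Borel moves preserve each graded piece of $\gin(I)$. There is also a simpler route: any monomial in $\gin(I)_{d+2}\setminus R_2\gin(I)_d$ directly contributes to the dimension gap, so the count can be read off without invoking Corollary \ref{WeightsLemmaAdvancedIdeal} at all. I therefore expect no real obstacle.
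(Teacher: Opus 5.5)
Your proposal is correct and follows the paper's route: the paper's proof is the one-line remark that the statement follows from Corollary \ref{GeneralGreengGin} with $q=0$, and you have spelled out the bookkeeping, namely that $\dim \gin(I)_{d+2}\leq \dim R_2\gin(I)_d$ forces $\gin(I)_{d+2}=R_2\gin(I)_d$, which leaves no room for minimal generators in degree $d+2$. Your ``simpler route'' is the cleanest finish and makes the appeal to Corollary \ref{WeightsLemmaAdvancedIdeal} unnecessary, since every minimal generator of degree $d+2$ lies outside $R_1\gin(I)_{d+1}\supseteq R_2\gin(I)_d$.
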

\begin{proof}
	Follows from Corollary \ref{GeneralGreengGin} with $q=0$.
\end{proof}

\begin{cor}\label{GeneralGreen2Vars}
	Let $I\subseteq K[x,y]$ be an ideal generated in degrees $\leq d+1$ with $I_d\neq 0$.
	If $\im(I_d)$ is strongly stable and $\im(I)$ has $q$ minimal generators in degree $d+1$ then for all $k\geq 1$ we have 
	\begin{align*}
		\dim I_{d+k} \leq kq + \dim I_d + k.
	\end{align*}
	In particular, if $\im(I)$ is strongly stable and has $q$ minimal monomial generators in degree $d+1$ then $\im(I)$ has $\leq q$ minimal monomial generators in degree $d+k$ for any $k\geq 1$.
\end{cor}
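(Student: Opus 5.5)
The plan is to mimic the proof of Corollary \ref{GeneralGreenNumeric} in two variables. Here the weights are explicit: by Lemma \ref{WeightsLemma} with $n=2$, a nonempty strongly stable set $M$ of monomials of degree $d$ satisfies
\begin{align*}
|\uSdw^k(M)| = \sum_{m\in M}\Bez_2^k(m) = (k+1) + (|M|-1) = |M|+k,
\end{align*}
since only $x^d$ has $\max=1$, while every other monomial has $\max=2$ and weight $1$. The one new feature is that $I$ may have minimal generators in degree $d+1$, and these will be absorbed into the expanders.

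First I take a reduced basis $B$ of $I_d$ (Lemma \ref{reducedBasisVectorSpace}). Since $\im(I_d)=\im(I)_d$ is strongly stable, Lemma \ref{BorelShadowEqualsShadow} and Lemma \ref{BorelShadowUnique} give that $\uSdwB(B)$ is linearly independent with initial monomials exactly $\uSdw(\im(I)_d)$. I then choose a set $E\subseteq I_{d+1}$ such that $E\cup \uSdwB(B)$ is a basis of $I_{d+1}$. This $E$ contains both the expanders of $\Span(\uSdw(B))$ and a complement of $R_1I_d$ in $I_{d+1}$, so it plays the role of the expanders together with the new generators. Counting dimensions gives
\begin{align*}
|E| = \dim \im(I)_{d+1} - |\uSdw(\im(I)_d)| = q,
\end{align*}
because the minimal generators of $\im(I)$ in degree $d+1$ are exactly the monomials of $\im(I)_{d+1}$ outside $\uSdw(\im(I)_d)$.

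Next I claim that $\uSdw^{k-1}(E)\cup\uSdwB^k(B)$ spans $I_{d+k}$. Since $I$ is generated in degrees $\leq d+1$, we have $I_{d+k}=R_{k-1}I_{d+1}$, and every element of $\uSdw(B)$ lies in $\Span(E\cup\uSdwB(B))$. From here the recursive revlex-descent argument in the proof of Theorem \ref{GeneralGreen} carries over verbatim. That argument only uses that each $x_if$ with $f\in B$ expands in the basis $E\cup\uSdwB(B)$; it never uses that the elements of $E$ lie in $R_1I_d$. This transfer is the step I expect to need the most care. The counting is then immediate: each element of $E$ contributes at most $\binom{k-1+1}{1}=k$ elements to $\uSdw^{k-1}(E)$, and $|\uSdwB^k(B)|=|\uSdw^k(\im(I)_d)|=\dim I_d+k$ by the formula above, since $I_d\neq 0$. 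This gives $\dim I_{d+k}\leq kq+\dim I_d+k$.

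For the ``in particular'' I argue by induction on $k$, letting $q_j$ denote the number of minimal generators of the strongly stable ideal $\im(I)$ in degree $d+j$, so $q_1=q$. In two variables, the first displayed formula gives $q_{j+1}=\dim I_{d+j+1}-\dim I_{d+j}-1$. I apply the first part with base degree $d+j-1$ and $k=2$. This is allowed because $I$ is generated in degrees $\leq d+1\leq d+j$, $\im(I)_{d+j-1}$ is strongly stable, and $I_{d+j-1}\neq 0$. It yields $\dim I_{d+j+1}\leq 2q_j+\dim I_{d+j-1}+2$. Substituting $\dim I_{d+j}=\dim I_{d+j-1}+1+q_j$ gives $q_{j+1}\leq q_j$, and hence $q_k\leq q$ for all $k\geq 1$.
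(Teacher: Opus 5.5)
Your proposal is correct and follows essentially the paper's argument. The paper keeps the expanders $E$ of $J=(I_d)$ separate from a complement $C$ of $J_{d+1}$ in $I_{d+1}$, so that Theorem \ref{GeneralGreen} can be applied to $J$ as a black box; you instead merge $E\cup C$ into a single complement and rerun the revlex descent, which indeed never uses the form of the expanders. Your induction for the final claim (the first part applied at base degree $d+j-1$ with $k=2$) is exactly the paper's ``continue inductively'' step made explicit.
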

\begin{proof}
	Let $J$ be the ideal generated by $I_d$ and let $B$ be a reduced basis for $J_d$.
	Consider a Borel shadow plus expanders basis $E\cup \uSdwB(B)$ for $J_{d+1}$.
	Let $D$ be a basis for $I_{d+1}$ that contains $E\cup \uSdwB(B)$ and set $C = D\setminus (E\cup \uSdwB(B))$.
	Let $L$ be the ideal generated by $C$.
	Then $I_t=(J+L)_t$ for any $t\geq d$.
	
	Since $\im(I_d)$ is strongly stable we have that $\im(J_d)$ is strongly stable.
	Hence, $\im(\uSdwB(B))$ is strongly stable.
	Thus, for each $f\in E$ there exists a minimal generator in degree $d+1$ of $\im(I)$.
	This gives $|E|+|C| = q$.
	By Theorem \ref{GeneralGreen},
	we have that $\uSdw^{k-1}(E)\cup \uSdwB^{k}(B)$ is a spanning set for $J_{d+k}$.
	So, we have that $\uSdw^{k-1}(C) \cup \uSdw^{k-1}(E)\cup \uSdwB^{k}(B)$ is a spanning set for $I_{d+k}$.
	Hence,
	\begin{align*}
		\dim I_{d+k} &\leq |\uSdw^{k-1}(C) \cup \uSdw^{k-1}(E)\cup \uSdwB^{k}(B)|\\ 
		&\leq |\uSdw^{k-1}(C)| + |\uSdw^{k-1}(E)| + |\uSdwB^{k}(B)|\\
		&\leq k|C| + k|E| + |B|+k\\
		&= k(|C|+|E|) + |B|+k\\
		&= kq + \dim I_d + k.
	\end{align*}
	Next, we prove the claim under the assumption that $\im(I)$ is strongly stable.
	Since $\im(I)$ is strongly stable we have $ |B|+2 + |C|+|E|=|\uSdw(\im(I_{d+1}))| = |\uSdwB^{2}(B)| + q$.
	So, there can be at most $|C|+|E| = q$ minimal monomial generators of $\im(I)$ in degree $d+2$.
	We can inductively continue like this for any $k\geq 2$.
	Therefore, $\im(I)$ has $\leq q$ minimal monomial generators in degree $d+k$ for any $k\geq 1$.
\end{proof}

\section{Generic Initial Ideals and Regular Sequences}\label{ginStructure}
In this section we prove that the lengths of regular sequences force a lower bound on the uncovered bottom length $\fbl$.
The key result is Theorem \ref{advancedLefschetzBase},
which is then applied repeatedly to deduce stronger statements.
The proof of Theorem \ref{advancedLefschetzBase} combines the results from previous sections.
An overview of it goes as follows.

We assume to the contrary that $\fbl_I(d)$ is small for our ideal $I$.
Then we use the Fløystad--Stillman Theorem \ref{Fløystad--Stillman} together with the generalized Green's Crystallization Principle to force an upper bound on $\dim \gin(I_{d+k})|_{x_n}$ for $k\geq 0$.
When the minimal monomial generators of $\gin(I)_{d+k}$ behave nicely,
we get a contradiction with the lower bound for $\dim \gin(I_{d+k})|_{x_n}$ that is forced by the WLP for complete intersections.

It could be that the minimal monomial generators of $\gin(I)_{d+k}$ do not behave as we want.
Then we use Lemma \ref{MonsterLemma} to obtain a minimal monomial generator divisible by $x_3$.
We then use this minimal monomial generator to prove that the uncovered bottom length in degree $d+k$ (for some $k$) must be even smaller than the uncovered bottom length in degree $d$.

Now we can repeat the process again on an even smaller uncovered bottom length.
It could be that this process stops at some nonzero uncovered bottom length.
It could be that we get to an uncovered bottom length of $0$.
However, this forces a regular sequence of length at most one,
which contradicts assumptions about longer regular sequences inside our ideal.

Informally, 
Lemma \ref{MonsterLemma} states that under certain assumptions,
if a strongly stable ideal has a minimal monomial generator that is divisible by $x_i$ for some $i\in \{4,5,\dots, n\}$,
then it has a minimal monomial generator divisible by $x_3$.

For $N\subseteq \{1,\dots, n\}$ and $S\subseteq R$ we define the shadow of $S$ with respect to $N$ to be
\begin{align*}
	\uSdw_N(S) = \bigcup_{i\in N} \{x_if \bigm | f\in S\}.
\end{align*}

\begin{lem}\label{MonsterLemma}
	Suppose $n\geq 4$. 
	Let $M\subseteq R$ be a strongly stable monomial ideal, 
	and let $d,r,t\in \N$.
	For $k\in \N$ and $u\in \{0,1,\dots, n-3\}$ define $M_{d+k, u,<r}$ to be the set of monomials of degree $d+k$ in $M$ that are at most $x_1^{d-r}x_2^{r+k}$ in the lexicographic order and are not divisible by $x_{n},x_{n-1},\dots, x_{n-u+1}$.
	Assume that all monomial minimal generators of $M$ that are at most $x_1^{d-r}x_2^{r+k}$ in the lexicographic order in degree $d+k$ for any $k\in \{1,2,\dots, t\}$ are not divisible by $x_3,x_4,\dots, x_n$,
	and that all monomials in $M$ of degree $d$ that are at most $x_1^{d-r}x_2^{r}$ in the lexicographic order are not divisible by $x_3,x_4,\dots, x_n$.
	If $u\in \{0,1,\dots, n-3\}$ and $M_{d+t+1,u,<r}$ has a minimal monomial generator of $M$ that is divisible by $x_{n-u}$,
	then $M_{d+t+1, n-3,<r}$ has a minimal monomial generator that is divisible by $x_3$.
\end{lem}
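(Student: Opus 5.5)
Starting from the given minimal generator $m$ of $M$ in $M_{d+t+1,u,<r}$ divisible by $x_{n-u}$, I would build a chain of minimal generators in degree $d+t+1$, each bounded by $x_1^{d-r}x_2^{r+t+1}$ in lex and each with a strictly smaller maximal variable index, until a minimal generator divisible by $x_3$ (and by no $x_j$ with $j\ge 4$) appears. Such a generator automatically lies in $M_{d+t+1,n-3,<r}$, which proves the statement. The induction is on $\max(m)$ (equivalently on $u$); if $\max(m)=3$ there is nothing to do. Taking $m$ to be any minimal generator in the set, I may assume $\max(m)=j\ge 4$. Note that all such generators have some exponent $\ge 1$ outside $x_1,x_2$, since they are lex-smaller than $x_1^{d-r}x_2^{r+t+1}$ but lie in degree $d+t+1$ with $x_1$-exponent at most $d-r$.

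The key step is to apply the Borel move $x_{j-1}m/x_j$, or more generally $x_i m/x_j$ with $3\le i<j$. Strong stability gives $m':=x_{j-1}m/x_j\in M$; it has the same $x_1$-exponent and so remains lex-below the bound. If $m'$ is a minimal generator, I continue inductively with $m'$. Otherwise $m'=x_s m''$ with $m''\in M$ of degree $d+t$. I would then analyze $m''$. If $x_s=x_{j-1}$, then $m''=m/x_j\in M$, contradicting the minimality of $m$. Hence $m''$ is obtained by removing some other variable, and $x_{j-1}m''/x_j$-type reasoning gives $m/x_s\cdot(\text{something})\in M$.

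The cleanest route is to descend degree by degree. Among all elements of $M$ of degrees $d,\dots,d+t+1$ dividing $m'$ and lex-below the corresponding bound $x_1^{d-r}x_2^{r+k}$, I would take a minimal generator $g$ of $M$ dividing $m'$. Its degree is $d+k$ with $k\le t+1$. If $k\le t$ and $k\ge1$, the hypothesis says $g$ is a power product of $x_1,x_2$ only. If $k=0$, the hypothesis on degree-$d$ monomials again says $g$ involves only $x_1,x_2$ provided $g$ is lex-below $x_1^{d-r}x_2^r$. The lex comparison should follow because the $x_1$-exponent of $g$ is at most that of $m'$, which is at most $d-r$; the case of equal $x_1$-exponent needs the $x_2$-exponent check. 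Then $g$ divides $m'/(x_3\cdots)$-part, i.e. $g\mid x_1^{a}x_2^{b}$ where $a,b$ are the exponents in $m'$, which are the same as in $m$. This gives $g\mid m$ with $\deg g<\deg m$, contradicting the minimality of $m$. So $g$ must have degree $d+t+1$, meaning $m'$ is itself a minimal generator.

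Iterating lowers $\max$ until it reaches $3$, giving a minimal generator of $M$ divisible by $x_3$ and by no variable beyond $x_3$, hence in $M_{d+t+1,n-3,<r}$. The main obstacle I expect is the lex bookkeeping in the divisor step: verifying that a proper divisor $g$ of $m'$ of degree $d+k$ is lex-below $x_1^{d-r}x_2^{r+k}$, so that the hypotheses apply, and handling divisors lying lex-above that bound. For the latter I would first try the observation that such a $g$ would have $x_1$-exponent $>d-r$, or $x_1$-exponent equal to $d-r$ with $x_2$-exponent $>r+k$, and compare with the exponents of $m$ to rule this out.
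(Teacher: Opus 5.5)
Your proof is correct, and the way you certify that the Borel-moved monomial is again a minimal generator differs from the paper's and is much lighter. The paper slices $M_{d+k,u,<r}$ by the exponent $a$ of $x_{n-u}$. It proves by induction on $k$ a structural claim: each slice in degree $d+k+1$ is $x_{n-u}$ times the corresponding slice in degree $d+k$, and the part not coming from shadows in $x_2,\dots,x_{n-u-1}$ avoids the middle variables. It then uses this, with a case split $a=1$, $1<a<t+2$, $a>t+1$, to show that $x_{n-u-1}m/x_{n-u}$ is a minimal generator, and finishes with a maximal choice of $u$ and a descent on $a$.

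You instead take $g\in M$ minimal under divisibility among elements of $M$ of degree at least $d$ dividing $m'$. Then either $\deg g=d$, or $g$ is a minimal generator of degree $d+k$ with $1\le k\le t$. In both cases the hypotheses force $g\in K[x_1,x_2]$. Since your moves never touch $x_1,x_2$, this gives $g\mid x_1^ax_2^b\mid m$ with $\deg g<\deg m$, a contradiction unless $g=m'$. This avoids all the shadow bookkeeping and proves more. You can jump in one step to $x_1^ax_2^bx_3^{\,d+t+1-a-b}$, and you only use that $m$ is divisible by some $x_i$ with $i\ge 3$, not the specific index $n-u$.

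A few small repairs are needed.

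\begin{itemize}
\item The lex obstacle you anticipate is empty. A degree-$(d+k)$ monomial is at most $x_1^{d-r}x_2^{r+k}$ in lex exactly when its $x_1$-exponent is at most $d-r$, and every divisor of $m'$ inherits this.
\item State the choice of $g$ as above. A literal minimal generator of $M$ dividing $m'$ could have degree below $d$. Choosing $g$ as you do, or passing to a degree-$d$ multiple dividing $m'$, disposes of that case via the degree-$d$ hypothesis.
\item A single move $x_{j-1}m/x_j$ lowers $\max$ only when $x_j^2\nmid m$. So induct on $\sum_{i\ge 3} i\,c_i$, where $c_i$ is the exponent of $x_i$, rather than on $\max(m)$, or make the one-step jump.
\item Your remark that every generator below the lex bound must involve a variable beyond $x_2$ is false; for instance $x_1^{d-r}x_2^{r+t+1}$ could be one. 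It is not needed, since $x_{n-u}\mid m$ with $n-u\ge 3$.
\end{itemize}
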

\begin{proof}
	For $k\in \N$, $u\in \{0,1,\dots, n-3\}$, and $a\in \N$, let $P_{k,u,a}$ be all the monomials in $M_{d+k,u,<r}$ that are divisible by $x_{n-u}^a$ and not divisible by $x_{n-u}^{a+1}$.
	We first prove the following two claims.
	\begin{enumerate}
		\item If $k\in \{0,1,\dots, t-1\}$, $u\in \{0,1,\dots, n-3\}$, and $a\in \{1,2,\dots, k+1\}$ then we have that $\uSdw_{\{2,3,\dots, n-u-1\}}(P_{k,u,a})\subseteq \uSdw_{\{n-u\}}(P_{k,u,a-1}) = P_{k+1,u,a}$ and $P_{k+1,u,a}\setminus \uSdw_{\{2,3,\dots, n-u-1\}}(P_{k,u,a})$ does not contain any monomials divisible by $x_3,x_4,\dots ,x_{n-u-1},x_{n-u+1},\dots , x_n$.
		\item If $u\in \{0,1,\dots, n-4\}$ and there exists $a\geq 1$ such that $P_{t+1,u,a}$ contains a minimal monomial generator of $M$, 
		then $P_{t+1,u,a-1}$ contains a minimal monomial generator of $M$ that is divisible by $x_{n-u-1}$.
	\end{enumerate}
		
	We prove the first claim by induction on $k$.
	Suppose that $k=0$.
	Then $a=1$ and $P_{k,u,a} = \emptyset$, 
	since all monomials in $M$ of degree $d$ that are at most $x_1^{d-r}x_2^{r}$ in the lexicographic order are not divisible by $x_3,x_4,\dots, x_n$.
	Hence, $\uSdw_{\{2,3,\dots, n-u-1\}}(P_{k,u,a}) = \emptyset \subseteq  \uSdw_{\{n-u\}}(P_{k,u,a-1})$. 
	We must have $\uSdw_{\{n-u\}}(P_{k,u,a-1}) = P_{k+1,u,a}$,
	since all monomial minimal generators of $M$ that are at most $x_1^{d-r}x_2^{r+k+1}$ in the lexicographic order in degree $d+k+1$ are not divisible by $x_3,x_4,\dots, x_n$.
	Finally, $P_{k+1,u,a}\setminus \uSdw_{\{2,3,\dots, n-u-1\}}(P_{k,u,a})$ does not contain any monomials that are divisible by $x_3,x_4,\dots ,x_{n-u-1},x_{n-u+1},\dots , x_n$,
	because all monomials in $M$ of degree $d$ that are at most $x_1^{d-r}x_2^{r}$ in the lexicographic order are not divisible by $x_3,x_4,\dots, x_n$.
	
	So, suppose that $k\geq 1$ and that the claim holds for all $k'<k$.
	The claim clearly holds for $a=1$, so suppose that $a\geq 2$.
	Then
	\begin{align*}
		\uSdw_{\{2,3,\dots, n-u-1\}}(P_{k,u,a}) 
		&= \uSdw_{\{2,3,\dots, n-u-1\}}(\uSdw_{\{n-u\}}(P_{k-1,u,a-1}))\\
		&= \uSdw_{\{n-u\}}(\uSdw_{\{2,3,\dots, n-u-1\}}(P_{k-1,u,a-1}))\\
		&\subseteq \uSdw_{\{n-u\}}(P_{k,u,a-1}).
	\end{align*}
	We must have that $\uSdw_{\{n-u\}}(P_{k,u,a-1}) = P_{k+1,u,a}$ because all monomial minimal generators of $M$ that are at most $x_1^{d-r}x_2^{r+k+1}$ in the lexicographic order in degree $d+k+1$ (note that $k\leq t-1$) are not divisible by $x_3,x_4,\dots, x_n$.
	Next, we have
	\begin{align*}
		P_{k+1,u,a}\setminus \uSdw_{\{2,3,\dots, n-u-1\}}(P_{k,u,a})
		&=\uSdw_{\{n-u\}}(P_{k,u,a-1})\setminus \uSdw_{\{2,3,\dots, n-u-1\}}(P_{k,u,a})\\
		&= \uSdw_{\{n-u\}}(P_{k,u,a-1})\setminus \uSdw_{\{2,3,\dots, n-u-1\}}(\uSdw_{\{n-u\}}(P_{k-1,u,a-1}))\\
		&= \uSdw_{\{n-u\}}(P_{k,u,a-1})\setminus \uSdw_{\{n-u\}}(\uSdw_{\{2,3,\dots, n-u-1\}}(P_{k-1,u,a-1}))\\
		&= \uSdw_{\{n-u\}}(P_{k,u,a-1}\setminus \uSdw_{\{2,3,\dots, n-u-1\}}(P_{k-1,u,a-1})).
	\end{align*}
	By the inductive hypothesis $P_{k,u,a-1}\setminus \uSdw_{\{2,3,\dots, n-u-1\}}(P_{k-1,u,a-1})$ does not contain any monomials divisible by $x_3,x_4,\dots ,x_{n-u-1},x_{n-u+1},\dots , x_n$.
	Hence, $\uSdw_{\{n-u\}}(P_{k,u,a-1}\setminus \uSdw_{\{2,3,\dots, n-u-1\}}(P_{k-1,u,a-1}))$ does not contain any monomials divisible by $x_3,x_4,\dots ,x_{n-u-1},x_{n-u+1},\dots , x_n$,
	since we just multiply by $x_{n-u}$.
	Thus, the claim holds from the previous string of equalities.
	Therefore, the induction is complete and the first claim is proved.
	
	We now prove the second claim.
	Suppose that $a\geq 1$ and that $m\in M$ is a minimal monomial generator in $P_{t+1,u,a}$.
	Let $m'=x_{n-u-1}m/x_{n-u}$.
	Then $m'\in M$ because $M$ is strongly stable.
	Also, by definition of $m'$ and because $u\leq n-4$ we have $m'\in P_{t+1,u,a-1}$.
	We prove that $m'$ is a minimal generator of $M$.
	We have three cases based on $a$.
	
	{\it Case 1:} Suppose that $a=1$. 
	We need to show that $m'\not\in \uSdw_{\{2,3,\dots, n-u-1\}}(P_{t,u,a-1})$.
	Assume to the contrary that $m'\in \uSdw_{\{2,3,\dots, n-u-1\}}(P_{t,u,a-1})$.
	Then $m'=x_{n-u-1}m''$ for some $m''\in P_{t,u,a-1}$,
	since $M$ is strongly stable.
	Thus, $x_{n-u}m''\in \uSdw_{\{n-u\}}(P_{t,u,a-1}) \subseteq P_{t+1,u,a}$.
	However, $x_{n-u}m''=m$.
	Therefore, $m$ is not a minimal generator of $M$, a contradiction.
	
	{\it Case 2:} Suppose that $t+2>a>1$.
	We need to show $m'\not\in \uSdw_{\{2,3,\dots, n-u-1\}}(P_{t,u,a-1})\cup \uSdw_{\{n-u\}}(P_{t,u,a-2})$.
	Assume to the contrary that $m'\in \uSdw_{\{2,3,\dots, n-u-1\}}(P_{t,u,a-1})\cup \uSdw_{\{n-u\}}(P_{t,u,a-2})$.
	If we have $m'\in \uSdw_{\{2,3,\dots, n-u-1\}}(P_{t,u,a-1})$,
	then by a similar argument to case 1 we get that $m$ is not a minimal generator of $M$, a contradiction.
	So, we must have $m'\in \uSdw_{\{n-u\}}(P_{t,u,a-2})$ and $m'\not\in \uSdw_{\{2,3,\dots, n-u-1\}}(P_{t,u,a-1})$.
	However,
	\begin{align*}
		\uSdw_{\{n-u\}}(P_{t,u,a-2})\setminus \uSdw_{\{2,3,\dots, n-u-1\}}(P_{t,u,a-1}) 
		&= \uSdw_{\{n-u\}}(P_{t,u,a-2})\setminus \uSdw_{\{2,3,\dots, n-u-1\}}(\uSdw_{\{n-u\}}(P_{t-1,u,a-2}))\\
		&= \uSdw_{\{n-u\}}(P_{t,u,a-2})\setminus \uSdw_{\{n-u\}}(\uSdw_{\{2,3,\dots, n-u-1\}}(P_{t-1,u,a-2}))\\
		&=\uSdw_{\{n-u\}}(P_{t,u,a-2}\setminus \uSdw_{\{2,3,\dots, n-u-1\}}(P_{t-1,u,a-2})).
	\end{align*}
	We know that $P_{t,u,a-2}\setminus \uSdw_{\{2,3,\dots, n-u-1\}}(P_{t-1,u,a-2})$ does not contain any monomials divisible by $x_3,x_4,\dots ,x_{n-u-1},x_{n-u+1},\dots , x_n$.
	Hence, $\uSdw_{\{n-u\}}(P_{t,u,a-2}\setminus \uSdw_{\{2,3,\dots, n-u-1\}}(P_{t-1,u,a-2}))$ does not contain any monomials divisible by $x_3,x_4,\dots ,x_{n-u-1},x_{n-u+1},\dots , x_n$,
	since we just multiply by $x_{n-u}$.
	We have that $3\leq n-u-1 \leq n-1$ because $u\in \{0,1,\dots, n-4\}$,
	and hence $m'$ is divisible by one of $x_3,x_4,\dots ,x_{n-u-1},x_{n-u+1},\dots , x_n$,
	since $m'$ is divisible by $x_{n-u-1}$.
	So, we have a contradiction, 
	since $m'\in \uSdw_{\{n-u\}}(P_{t,u,a-2})\setminus \uSdw_{\{2,3,\dots, n-u-1\}}(P_{t,u,a-1})$,
	and $\uSdw_{\{n-u\}}(P_{t,u,a-2})\setminus \uSdw_{\{2,3,\dots, n-u-1\}}(P_{t,u,a-1})$ does not contain any monomials divisible by $x_3,x_4,\dots ,x_{n-u-1},x_{n-u+1},\dots , x_n$.
	
	{\it Case 3:} Suppose that $a>t+1$.
	Then $m'$ is a minimal generator because all monomial minimal generators of $M$ that are at most $x_1^{d-r}x_2^{r+k}$ in the lexicographic order in degree $d+k$ for any $k\in \{1,2,\dots, t\}$ are not divisible by $x_3,x_4,\dots, x_n$,
	and all monomials in $M$ of degree $d$ that are at most $x_1^{d-r}x_2^{r}$ in the lexicographic order are not divisible by $x_3,x_4,\dots, x_n$.
	
	Therefore, the second claim holds.
	We prove the main claim now.
	Suppose that $u\in \{0,1,\dots, n-3\}$ and that $M_{d+t+1,u,<r}$ has a minimal monomial generator of $M$ that is divisible by $x_{n-u}$.
	Assume to the contrary that $M_{d+t+1, n-3,<r}$ does not have a minimal monomial generator that is divisible by $x_3$.
	Then $u<n-3$.
	Without loss of generality we can take $u$ to be maximal possible.
	Then there exists $a\geq 0$ such that $P_{t+1,u,a}$ contains a minimal monomial generator of $M$.
	We must have $a\geq 1$,
	since otherwise $P_{t+1,u,a} = P_{t+1,u,0} = M_{d+t+1, u+1,<r}$,
	contradicting the maximality of $u$.
	However, we can now use the second claim to inductively deduce that $P_{t+1,u,0}$ must have a minimal monomial generator of $M$,
	contradicting the maximality of $u$ again.
	
	Therefore, the main claim holds and we are done.
\end{proof}


As mentioned earlier,
in the proof of Theorem \ref{advancedLefschetzBase},
we produce an upper bound and a lower bound for $\dim \gin(I_{d+k})|_{x_n}$ which contradict each other.
The only purpose of Lemma \ref{binomInequality} is to use it in this situation.

\begin{lem}\label{binomInequality}
	Let $C,x,p,v\in \N$ with $C,x\geq 1$ and $p\geq 2$.
	If $v > \frac{C}{p^{1/x} - 1} + x - 1$ then $p \binom{v}{x} > \binom{v+C}{x}$.
\end{lem}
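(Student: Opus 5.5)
The plan is to compare the two binomial coefficients through the ratio of their factorial expansions. For $v\geq x$ we have
\begin{align*}
	\frac{\binom{v+C}{x}}{\binom{v}{x}} = \prod_{i=0}^{x-1}\frac{v+C-i}{v-i} = \prod_{i=0}^{x-1}\left(1+\frac{C}{v-i}\right).
\end{align*}
The hypothesis gives $v>x-1$, since $\frac{C}{p^{1/x}-1}>0$ (because $p\geq 2$ and $C\geq 1$). As $v$ is an integer, this means $v\geq x$, so $\binom{v}{x}>0$ and every denominator $v-i$ with $0\leq i\leq x-1$ is a positive integer. The desired inequality $p\binom{v}{x}>\binom{v+C}{x}$ is therefore equivalent to showing that the product above is strictly less than $p$.

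Next I bound each factor by the worst one, $i=x-1$. This gives
\begin{align*}
	\prod_{i=0}^{x-1}\left(1+\frac{C}{v-i}\right)\leq \left(1+\frac{C}{v-x+1}\right)^x .
\end{align*}
It then suffices to show $1+\frac{C}{v-x+1}<p^{1/x}$, which is the same as $\frac{C}{v-x+1}<p^{1/x}-1$. Since $p^{1/x}-1>0$ and $v-x+1>0$, this rearranges to $v-x+1>\frac{C}{p^{1/x}-1}$, which is exactly the hypothesis $v>\frac{C}{p^{1/x}-1}+x-1$. Raising the strict inequality to the $x$-th power (both sides are positive) yields the product $<p$, and multiplying through by $\binom{v}{x}>0$ finishes the proof.

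There is no genuine obstacle here. The only points needing care are confirming positivity, namely that $v\geq x$ so that $\binom{v}{x}\neq 0$ and the ratio manipulation is legitimate, and that the inequality stays strict. Strictness comes directly from the strict hypothesis, since the bound $\le$ on the product is then combined with a strict $<$.
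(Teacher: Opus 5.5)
Your proof is correct and follows the paper's argument essentially verbatim. The paper likewise rearranges the hypothesis into $\left(1+\frac{C}{v-x+1}\right)^x<p$ and bounds the ratio $\binom{v+C}{x}/\binom{v}{x}=\prod_{i=0}^{x-1}\left(1+\frac{C}{v-i}\right)$ by that $x$-th power; your explicit check that $v\geq x$ (so $\binom{v}{x}>0$ and all denominators are positive) is a point of care the paper leaves implicit.
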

\begin{proof}
	One has,
	\begin{align*}
		v &> \frac{C}{p^{1/x} - 1} + x - 1\\
		v - x + 1 &> \frac{C}{p^{1/x} - 1} \\
		\frac{1}{v - x + 1} &< \frac{p^{1/x} - 1}{C}\\
		\frac{C}{v - x + 1} &< p^{1/x} - 1 \\
		1 + \frac{C}{v - x + 1} &< p^{1/x}\\
		\left(1 + \frac{C}{v - x + 1}\right)^x &< p.
	\end{align*}
	Hence,
	\begin{align*}
		\frac{\binom{v+C}{x}}{\binom{v}{x}} &= \frac{(v+C)(v+C-1)\dots(v+C-x+1)}{v(v-1)\dots(v-x+1)}\\
		&= \left(1 + \frac{C}{v}\right)\left(1 + \frac{C}{v-1}\right)\dots\left(1 + \frac{C}{v-x+1}\right)\\
		&\leq  \left(1 + \frac{C}{v-x+1}\right)^x\\
		&<p.
	\end{align*}
	Therefore, $\binom{v+C}{x} < p \binom{v}{x}$.
\end{proof}

Take an ideal $I$ and a degree $d\geq 1$.
We define the {\it bottom length} of $I$ in degree $d$ to be $\bl_I(d)$,
the number of monomials in $\gin(I)$ of degree $d$ that are not divisible by $x_3,x_4,\dots, x_n$.
Hence, $\bl_I(d)$ counts the monomials of the form $x_1^{d-k}x_2^k$ in $\gin(I)_d$ for $k\in \{0,1,\dots, d\}$. 
Before reading the proof of Theorem \ref{advancedLefschetzBase},
the reader might find it helpful to review the definition of uncovered bottom length from Section \ref{MainResultsSection},
and the definition of Fløystad--Stillman spaces in Section \ref{FSSection}.
Figure \ref{fig:fs-example} might be useful to visually interpret some parts of the proof of Theorem \ref{advancedLefschetzBase}.
In the proof we also use results from Sections \ref{LefschetsIntro}, \ref{BezrukovSection}, and \ref{GreenSection}.

\begin{thm}\label{advancedLefschetzBase}
	Let $I\subseteq R$ be an equigenerated ideal with at most $w$ minimal generators, 
	and suppose that $I$ contains a regular sequence of length $\ell$ in degree $\alpha(I)$.
	Then there exists an integer $D=D(w,\ell, n)\in \N$ such that if $\alpha(I)\geq D$ then the following equivalent statements hold:
	\begin{enumerate}
		\item $\fbl_I(\alpha(I)) \geq \ell-1$.
		\item One has,
		\begin{align*}
			\dim (K[x_1,x_2]/\squash_{n-3}(I_{\alpha(I)}))_{\alpha(I)-1} - \dim (K[x_1,x_2]/\squash_{n-3}(I_{\alpha(I)}))_{\alpha(I)} \geq \ell - 1.
		\end{align*}	
	\end{enumerate}
	In particular, the following equivalent statements hold:
	\begin{enumerate}
		\item If $H$ is an ideal generated by general linear forms $g_1,g_2,\dots,g_{n-2}$ then
		\begin{align*}
			\dim (R/(I,H))_{\alpha(I)-1} - \dim (R/(I,H))_{\alpha(I)}\geq \ell -1.
		\end{align*}
		\item If $H$ is the ideal generated by $x_n,x_{n-1},\dots,x_{3}$ then
		\begin{align*}
			\dim (R/(\gin(I),H))_{\alpha(I)-1} - \dim (R/(\gin(I),H))_{\alpha(I)} \geq \ell-1.
		\end{align*}		
	\end{enumerate}
\end{thm}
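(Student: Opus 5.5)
We write $d=\alpha(I)$ throughout and treat the equivalences first, since they are bookkeeping. By Theorem \ref{Fløystad--Stillman}, $\squash_{n-3}(I_d)\subseteq K[x_1,x_2]$ is an ideal whose initial ideal is strongly stable. Using the two standard revlex facts $\im(V|_{x_n})=\im(V)|_{x_n}$ and $\im(V:x_n)=\im(V):x_n$, we identify its graded pieces with pieces of $\gin(I)$. In degree $d$, $\im(\squash_{n-3}(I_d))$ consists of the monomials $x_1^{d-k}x_2^k\in\gin(I)_d$, so its dimension is $\bl_I(d)$. In degree $d-1$, the summand with $r=1$ consists of the monomials $x_1^{a}x_2^{b}$ with $x_1^ax_2^bx_3\in\gin(I)_d$. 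Strong stability then makes the difference in (2) count exactly the monomials $x_1^{d-k}x_2^k$ that are not covered, so $(1)\Leftrightarrow(2)$. The two ``in particular'' statements follow as well. Lemma \ref{Conca} with $p=n-2$ identifies $R/(I,H)$ with $R/(\gin(I),x_3,\dots,x_n)$. The latter, in degrees $d-1$ and $d$, is $K[x_1,x_2]$ modulo $\gin(I)|_{x_n,\dots,x_3}$. That ideal vanishes in degree $d-1$ and has dimension $\bl_I(d)$ in degree $d$, so its difference is $d-\bl_I(d)$. This is at least $\fbl_I(d)$, and we only need the resulting inequality.

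For the main inequality I argue by contradiction: suppose $\fbl_I(d)=s\le \ell-2$. I bound $\dim(\gin(I)|_{x_n})_{d+k}$ from both sides for $k$ up to a suitable $t=t(w,\ell,n)$.

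\emph{Lower bound.} Let $f_1,\dots,f_\ell\in I_d$ be the regular sequence and $J=(f_1,\dots,f_\ell)$. For a common generic change of coordinates we have $\gin(J)\subseteq\gin(I)$. Lemma \ref{ginBound}, which rests on Corollary \ref{WLPAsymptotic}, then gives
\[
\dim(\gin(I)|_{x_n})_{d+k}\ge \ell\binom{n-2+k}{n-2}
\]
once $d\ge nt+1$.

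\emph{Upper bound.} I slice the monomials not divisible by $x_n$ according to their exponents of $x_3,\dots,x_{n-1}$. Each slice is controlled by a Fløystad--Stillman space $\squash_{n-3}$, which lives in $K[x_1,x_2]$. On each slice I apply Corollary \ref{GeneralGreen2Vars} in the form: the number of new minimal generators in degree $d+1$ bounds the number in every later degree. The assumption $\fbl_I(d)=s$ says that, along the $x_1,x_2$ edge, the ``new'' part of each slice grows by at most about $s+1\le\ell-1$ per step. Summing over slices with the Bezrukov weights of Lemma \ref{WeightsLemma} and Corollary \ref{WeightsLemmaAdvancedIdeal} should give
\[
\dim(\gin(I)|_{x_n})_{d+k}\le(\ell-1)\binom{n-2+k+C}{n-2}
\]
for a constant $C=C(w,n)$, since $w$ bounds the number of covered and boundary monomials. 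Lemma \ref{binomInequality}, with $x=n-2$ and $v=n-2+k$, then gives $\ell\binom{v}{x}>(\ell-1)\binom{v+C}{x}$ for $k$ large in terms of $C$ and $\ell$; I will either rescale to make $p\ge2$ or apply the lemma to the ratio directly. For that $k$ the two bounds contradict each other. Choosing $t$ this large and then $D\ge nt+1$ defines $D(w,\ell,n)$.

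The main obstacle is the upper bound. It is valid only if, in degrees $d+1,\dots,d+t$, every minimal generator of $\gin(I)$ that lies below the bottom region in lex order avoids $x_3,\dots,x_n$; otherwise the slices acquire extra generators. If this fails at some degree, Lemma \ref{MonsterLemma} produces a minimal generator of $\gin(I)$ in that degree $d+k$ that is divisible by $x_3$ and lies in the region $M_{d+k,n-3,<r}$. I would use that generator to show $\fbl_I(d+k)<\fbl_I(d)$: the new $x_3$-generator covers one of the previously uncovered bottom monomials, and Corollary \ref{GeneralGreen2Vars} prevents the creation of new uncovered ones. I would then restart the argument at degree $d+k$. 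The regular sequence persists there, since its multiples by a common monomial of degree $k$ remain a regular sequence of length $\ell$ inside $I_{d+k}$.

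The iteration runs at most $\ell$ times, so enlarging $D$ by a factor depending only on $w,\ell,n$ covers all stages. If the process ever reaches uncovered bottom length $0$, then the two-variable restriction is forced to look like a principal ideal. This contradicts, via the same comparison with Lemma \ref{ginBound}, the presence of two independent members of the regular sequence. Making the claim $\fbl_I(d+k)<\fbl_I(d)$ rigorous, and keeping all constants uniform over the at most $\ell$ restarts, is where I expect the real work to be.
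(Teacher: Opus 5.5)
Your plan follows the paper's proof essentially step for step:
\begin{itemize}
\item the contradiction hypothesis $\fbl_I(d)\le\ell-2$;
\item the lower bound $\ell\binom{n-2+k}{n-2}$ from $\gin(J)\subseteq\gin(I)$ and Lemma~\ref{ginBound};
\item a Bezrukov-weight upper bound, valid when no lex-low minimal generator involves $x_3,\dots,x_n$;
\item otherwise Lemma~\ref{MonsterLemma}, a strict drop of $\fbl$, restart, and uniform constants.
\end{itemize}
Your termination at $\fbl=0$ is also fine. The paper instead observes that $\gin(I)$ then contains no power of $x_2$.

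Your upper bound $(\ell-1)\binom{n-2+k+C}{n-2}$ is weaker than the paper's $\binom{n-2+k+r_1-1}{n-2}+\ell_1\binom{n-2+k}{n-2}$, where only the block generated by $x_1^{d-r_1+1}$ is shifted. It still suffices, because the proof of Lemma~\ref{binomInequality} works for any real $p>1$.

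There is one slip in the bookkeeping. The difference in the ``in particular'' statements is $d-(d+1-\bl_I(d))=\bl_I(d)-1$, not $d-\bl_I(d)$. The inequality still follows, since $\bl_I(d)-1\ge\fbl_I(d)$: $x_1^d$ lies in the bottom but is never counted as uncovered.

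The genuine gap is the drop step. Lemma~\ref{MonsterLemma} must be applied at the \emph{first} bad degree $d+p$, so that its hypotheses hold with $t=p-1$. Even then it only gives a minimal generator in $M_{d+p,n-3,<r_1}$ divisible by $x_3$, possibly by $x_3^j$ with $j\ge2$. Such a generator covers no bottom monomial of degree $d+p$. So your mechanism (``the new $x_3$-generator covers a previously uncovered bottom monomial'') works only when $j=1$.

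The paper handles this by contradiction, as follows.
\begin{itemize}
\item Assume $\fbl_I(d+s)=\ell_1$ for all $1\le s\le C_1$. Then the bottom grows by exactly $\ell_1+1$ per degree.
\item This pins down, for each $i\in\{1,\dots,p+1\}$, the only monomial that can be such a generator: $x_1^{d-r_1-(p-i+1)\ell_1}x_2^{r_1+(p-i+1)\ell_1+p-i}x_3^i$. Strong stability forces one of them to be a minimal generator; let $j$ be the least such $i$.
\item Suppose $j\ge2$. Use the Fløystad--Stillman ideal $\squash_{n-3}(I_{d+p})$ of $I_{d+p}$, not of $I_d$; its pieces in degrees $d+p-j$ and $d+p-j+1$ come from the colons by $x_3^{j}$ and $x_3^{j-1}$. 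It then has at most $\ell_1-1$ minimal generators in degree $d+p-j+1$.
\item Corollary~\ref{GeneralGreen2Vars} then gives $\fbl_I(d+p-j+2)\le\ell_1-1$, contradicting constancy. Hence $j=1$, and only then does the covering argument give $\fbl_I(d+p)<\ell_1$.
\end{itemize}
Without this reduction to $j=1$, the drop is not established, and hence neither is the iteration.
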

\begin{proof}
	The first equivalence follows from the definitions.
	The second equivalence follows from Lemma \ref{Conca}.
	We prove $\fbl_I(\alpha(I)) \geq \ell-1$.
	
	We assume that $\ell \geq 2$ because the case $\ell=1$ follows from the definition of $\fbl_I$.
	Set $d=\alpha(I)$.
	Also, we assume that $x_1^{d-1}x_3\in \gin(I)$,
	since otherwise the claim follows from the strongly stable property of $\gin(I)$ if we set $D= \ell$.
	Without loss of generality we assume that $I$ is in general coordinates.
	Assume to the contrary that $\fbl_I(d) < \ell-1$.
	We will show that $d$ is bounded above by some integer.
	For an integer $q\geq 1$ we will inductively define integers $d_1,\dots, d_q$, $\ell_1,\dots, \ell_q$, $r_1,\dots ,r_q$, $C_1,\dots, C_{q}$, $D_1,\dots, D_q$, and $s_1,\dots, s_{q-1}$.
	
	Let $d_1=d$, $\ell_1 = \fbl_I(d)$ and $r_1=\bl_I(d) - \fbl_I(d)\geq 2$.
	Let $M=\im(I)$ and $M'$ be the ideal generated by $x_1^{d-r_1+1}$ and the minimal generators of $M$.
	By definition of $\ell_1$ and $r_1$ we have
	\begin{align*}
		\dim I_d = \dim M_d \leq \dim M_d' \leq \Bez_n^{r_1-1}(x_1^{d-r_1+1}) + \ell_1=\binom{n-1+r_1-1}{n-1} + \ell_1.
	\end{align*}
	Let
	\begin{align*}
		C_1 = \ceil{\frac{r_1-1}{2^{\frac{1}{n-2}}-1}} > \frac{r_1-1}{2^{\frac{1}{n-2}}-1}-1.
	\end{align*}
	By Lemma \ref{binomInequality} (with $v=n-2+c$, $C=r_1-1$, $p=2$ and $x=n-2$) for all $c\geq C_1$ we have $\binom{n-2+c+r_1-1}{n-2} < 2\binom{n-2+c}{n-2}$.
	By Corollary \ref{WLPAsymptotic} there exists an integer $D_{C_1,n}' = nC_1+1$ such that if $d\geq D_{C_1,n}'$ the WLP holds for the complete intersection contained in $I$ in degrees $\leq d+C_1$. 
	Define $D_1 = \max \{r_1+\ell_1+C_1\ell_1-1, D_{C_1,n}'\}$.
	We will now consider what happens when $d=d_1\geq D_1$.
	The Fløystad--Stillman Theorem \ref{Fløystad--Stillman} implies that $\squash_{n-3}(I_d)$ is an ideal and that $\im(\squash_{n-3}(I_d))$ is strongly stable.
	Thus, $\im(\squash_{n-3}(I_d))$ has at most $\ell_1=\fbl_I(d)\leq \ell-2$ minimal monomial generators in degree $d+1$ because $x_1^{d-1}x_3\in \im(I)$ and $\fbl_I(d) \leq \ell-2$.
	Hence, by Corollary \ref{GeneralGreen2Vars}, $\im(\squash_{n-3}(I_d))$ has at most $\ell_1$ new generators in all degrees after $d$.
	
	We will now consider two cases,
	in one case we will give an upper bound on $\dim I_{d+C_1}$ and obtain a contradiction,
	and in the other case we will continue to inductively define $\ell_2$, $r_2$, $C_2$ and $D_2$.
	
	{\it Case 1:} Assume that all monomial minimal generators of $M$ that are at most $x_1^{d-r_1}x_2^{r_1+k}$ in the lexicographic order in degree $d+k$ for any $k\in \{1,2,\dots, C_1\}$ are not divisible by $x_3,x_4,\dots, x_n$.
	Then for $k\in \{0,1,\dots, C_1\}$ we have
	\begin{align*}
		\dim I_{d+k}|_{x_n} 
		&= \dim M_{d+k}|_{x_n}\\ 
		&\leq \dim M_{d+k}'|_{x_n}\\
		&=  \sum_{\substack{\text{monomial }m\in M'\\ m \text{ minimal generator}\\ \deg(m) \leq d+k}} \Bez_{n-1}^{d+k-\deg(m)}(m) \tag{By Corollary \ref{WeightsLemmaAdvancedIdeal}}\\
		&= \Bez_{n-1}^{k+r_1-1}(x_1^{d-r_1+1})+\sum_{i=0}^{k} \sum_{\substack{\text{monomial }m\in M'\\ m \text{ minimal generator}\\ \deg(m) = d+k-i}} \Bez_{n-1}^i(m)\\
		&\leq \binom{n-2+k+r_1-1}{n-2} + \sum_{i=0}^k \ell_1\binom{n-3+i}{n-3}\\
		&= \binom{n-2+k+r_1-1}{n-2} + \ell_1 \binom{n-2+k}{n-2}.\\
		&\leq \binom{n-2+k+r_1-1}{n-2} + (\ell -2)\binom{n-2+k}{n-2}.
	\end{align*}
	Also, by Lemma \ref{ginBound} and the fact that generic initial ideals preserve inclusions,
	for every $k\in \{0,1,\dots, C_1\}$ we have
	\begin{align*}
		\dim I_{d+k}|_{x_n} 
		= \dim M_{d+k}|_{x_n}
		\geq \ell \binom{n-2+k}{n-2}. 
	\end{align*}
	However, now we get a contradiction because
	\begin{align*}
		0 &\geq \ell \binom{n-2+C_1}{n-2} - \left(\binom{n-2+C_1+r_1-1}{n-2} + (\ell -2)\binom{n-2+C_1}{n-2}\right)\\ 
		&= 2\binom{n-2+C_1}{n-2} - \binom{n-2+C_1+r_1-1}{n-2}\\
		&>0.
	\end{align*}
	
	{\it Case 2:} Assume that some minimal monomial generator of $M$ has degree $d+k$ for some $k\in \{1,2,\dots, C_1\}$,
	is at most $x_1^{d-r_1}x_2^{r_1+k}$ in the lexicographic order,
	and is divisible by one of $x_3,x_4,\dots, x_n$.
	We will show that there exists $s\in \{1,2,\dots, C_1\}$ such that $\fbl_I(d+s) \leq \ell_1-1 \leq \ell - 3$.
	Assume to the contrary that for all $s\in \{1,2,\dots, C_1\}$ we have $\fbl_I(d+s) = \ell_1$.
	Let $p\in \{1,2,\dots, C_1\}$ be the smallest integer such that some minimal monomial generator of $M$ is at most $x_1^{d-r_1}x_2^{r_1+p}$ in the lexicographic order,
	is in degree $d+p$ for some $p\in \{1,2,\dots, C_1\}$,
	and is divisible by one of $x_3,x_4,\dots, x_n$.
	If $n\geq 4$ then $M$ satisfies the conditions of Lemma \ref{MonsterLemma} with $r=r_1$ and $t=p-1$.
	Hence, if $n\geq 4$, $M_{d+p, n-3,<r_1}$ has a minimal monomial generator that is divisible by $x_3$.
	If $n=3$ then by the assumptions we have that $M_{d+p, n-3,<r_1}$ has a minimal monomial generator that is divisible by $x_3$.
	We will now construct a minimal monomial generator of $M$ in degree $d+p$ that implies $\fbl_I(d+s) < \ell_1$, and hence obtain a contradiction. 
	
	We have that $x_1^{d-r_1-\ell_1+1}x_2^{r_1+\ell_1-1}\in M_d$.
	Hence, $x_1^{d-r_1-\ell_1+1}x_2^{r_1+\ell_1-1+1}\in M_{d+1}$,
	and since $\fbl_I(d+1) = \ell_1$ we have $x_1^{d-r_1-2\ell_1+1}x_2^{r_1+2\ell_1-1+1}\in M_{d+1}$.
	Continuing like this up to degree $p$,
	we have $x_1^{d-r_1-(p+1)\ell_1+1}x_2^{r_1+(p+1)\ell_1-1+p}\in M_{d+p}$.
	Thus, 
	\begin{align*}
		x_1^{d-r_1-(p+1)\ell_1+1+\ell_1}x_2^{r_1+(p+1)\ell_1-1+p-\ell_1-1}x_3 = x_1^{d-r_1-(p-1+1)\ell_1+1}x_2^{r_1+(p-1+1)\ell_1-1+p-1}x_3 \in M_{d+p}
	\end{align*}
	is not a minimal generator of $M$,
	and if 
	\begin{align*}
		x_1^{d-r_1-(p-1+1)\ell_1+1-1}x_2^{r_1+(p-1+1)\ell_1-1+p-1+1}x_3 = x_1^{d-r_1-(p-1+1)\ell_1}x_2^{r_1+(p-1+1)\ell_1+p-1}x_3\in M_{d+p}
	\end{align*}
	then $x_1^{d-r_1-(p-1+1)\ell_1}x_2^{r_1+(p-1+1)\ell_1+p-1}x_3$ is a minimal generator of $M$.
	Continuing on like this, 
	for all $i\in \{1,\dots, p+1\}$,
	$x_1^{d-r_1-(p-i+1)\ell_1+1}x_2^{r_1+(p-i+1)\ell_1-1+p-i}x_3^i \in M_{d+p}$ is not a minimal generator of $M$,
	and if $x_1^{d-r_1-(p-i+1)\ell_1}x_2^{r_1+(p-i+1)\ell_1+p-i}x_3^i\in M_{d+p}$ then $x_1^{d-r_1-(p-i+1)\ell_1}x_2^{r_1+(p-i+1)\ell_1+p-i}x_3^i$ is a minimal generator of $M$. 
	Since $M_{d+p, n-3,<r_1}$ has a minimal monomial generator that is divisible by $x_3$ and $M$ is strongly stable,
	there exists $i\in \{1,2,\dots, p+1\}$ such that $x_1^{d-r_1-(p-i+1)\ell_1}x_2^{r_1+(p-i+1)\ell_1+p-i}x_3^{i}$ is a minimal monomial generator of $M$.
	Let $j\in \{1,2,\dots, p+1\}$ be the minimal such $i$.
	We show that $j=1$.
	Assume to the contrary that $j\geq 2$.
	Then the Fløystad--Stillman Theorem \ref{Fløystad--Stillman} implies that $\squash_{n-3}(I_{d+p})$ is an ideal and that $\im(\squash_{n-3}(I_{d+p}))$ is strongly stable.
	Thus, $\im(\squash_{n-3}(I_{d+p}))$ has at most $\ell_1-1$ minimal monomial generators in degree $d+p-(j-1)$.
	Hence, by Corollary \ref{GeneralGreen2Vars} we have that $\fbl_I(d+p-(j-2)) \leq \ell_1 - 1$.
	However, this produces a contradiction because we assumed that $\fbl_I(d+p-(j-2)) = \ell_1$.
	So, $j=1$.
	But now, this gives that $\fbl_I(d+p) < \ell_1$, a contradiction.
	Therefore, there exists $s\in \{1,2,\dots, C_1\}$ such that $\fbl_I(d+s) \leq \ell_1-1 \leq \ell - 3$.
	Let $s_1$ be the smallest such $s$ with this property.
	
	We now define
	\begin{align*}
		d_2 &= d_1+s_1\\
		\ell_2 &= \fbl_I(d_2)\\
		r_2 &= \bl_I(d_2) - \fbl_I(d_2)\\
		C_2 &= \ceil{\frac{r_2-1}{3^{\frac{1}{n-2}}-1}} > \frac{r_2-1}{3^{\frac{1}{n-2}}-1}-1\\
	\end{align*}
	By Corollary \ref{WLPAsymptotic} there exists an integer $D_{C_1+C_2,n}' = n(C_1+C_2)+1$ such that if $d\geq D_{C_1+C_2,n}'$ the WLP holds in degrees $\leq d+C_1+C_2$ for the complete intersection contained in $I$.
	Define $D_2 =\max\{r_2+\ell_2+C_2\ell_2, D_{C_1+C_2,n}'\}$.
	We can now repeat the same two cases as above,
	and either obtain a contradiction as in Case 1,
	or produce an $s_2$ as in Case 2 and continue to define $d_3$, $\ell_3$, $r_3$, $C_3$ and $D_3$.
	The key observation is that at each step $i\geq 2$ we have $\ell_{i-1} > \ell_{i}\geq 0$.
	So, this procedure must stop after some $q$ steps.
	If the procedure stops with $\ell_q >0$ then we have obtained a contradiction in case 1.
	Otherwise we get $\ell_q = 0$, 
	and by construction we have $D_q \geq r_q+\ell_q+C_q\ell_q = r_q = \bl_I(d_q)$.
	Thus, the Fløystad--Stillman Theorem \ref{Fløystad--Stillman} and Corollary \ref{GeneralGreen2Vars} imply that $\gin(I)$ never contains a power of $x_2$. 
	Therefore, $I$ does not have a regular sequence of length at least $2$,
	a contradiction.
	
	So, we must have $\fbl_I(d) \geq \ell-1$ if for all $i\in \{1,\dots, q\}$ we assume $d_i\geq D_i$.
	Note that for all $i\in \{1,\dots, q\}$ by definition of $d_i$ we have $d_i=d+ \sum_{j=1}^{i-1} s_{j}$.
	Thus, we must have
	\begin{align*}
		d < D^\star = \max \left\lbrace D_i - \sum_{j=1}^{i-1} s_{j} \bigm | i\in \{1,\dots, q\} \right\rbrace.
	\end{align*}
	So we have proved that $d$ is bounded above by $D^\star$,
	whenever $\fbl_I(d) < \ell -1$.
	The integer $D^\star$ was constructed by using some properties of the ideal $I$.
	Our claim requires an integer $D$ that only depends on $w$, $\ell$, and $n$.
	If we make small modifications we can produce an integer $D$ that does not depend on $I$.
	
	Let
	\begin{align*}
		\ell_1^\ast = \ell -2, r^\ast_1 = w, C^\ast_1 = \ceil{\frac{r^\ast_1-1}{2^{\frac{1}{n-2}}-1}}, D_{C_1^\ast,n}' = nC_1^\ast +1.
	\end{align*}
	Then set $D^\ast_1 = \max \{r^\ast_1+\ell^\ast_1+C^\ast_1\ell^\ast_1-1, D_{C_1^\ast,n}'\}$.
	By construction we have $\ell_1^\ast \geq \ell_1$, $r_1^\ast \geq r_1$, $C_1^\ast \geq C_1$, $D_{C_1^\ast}'\geq D_{C_1}'$, and $D_1^\ast \geq D_1$.
	Then for all $i\in \{2,3,\dots, \ell -1\}$ we inductively define
	\begin{align*}
		\ell_i^\ast &= \ell^\ast_{i-1} -1 \\
		r^\ast_i &= r^\ast_{i-1}+\ell^\ast_{i-1}+C^\ast_{i-1}\ell^\ast_{i-1}\\
		C^\ast_i &= \ceil{\frac{r^\ast_i-1}{(i+1)^{\frac{1}{n-2}}-1}}\\ 
		D_{C_1^\ast+\cdots +C^\ast_i,n}' &= n(C_1^\ast+\cdots +C^\ast_i) +1.
	\end{align*}
	We also set $D^\ast_i = \max \{r^\ast_i+\ell^\ast_i+C^\ast_i\ell^\ast_i, D_{C_1^\ast+\cdots +C^\ast_i,n}'\}$.
	Let $D^\ast = \max_{1\leq i \leq \ell-1} D_i^\ast$.
	By construction we have $D^\star \leq D^\ast$.
	So we have proved that $d$ is bounded above by $D^\ast$,
	whenever $\fbl_I(d) < \ell -1$,
	and $D^\ast$ only depends on $w$, $\ell$ and $n$.
	
	Therefore if $d\geq D = D^\ast$ we have $\fbl_I(d) \geq \ell-1$.
\end{proof}

If we repeatedly apply Theorem \ref{advancedLefschetzBase} then we can get a lower bound for $\fbl_I(\alpha(I)+k)$ for $k\geq 0$.

\begin{cor}\label{advancedLefschetzBaseMultipleDegreesForEquigen}
	Let $t\geq 0$, $\ell \geq 1$,
	and $I\subseteq R$ be an equigenerated ideal with at most $w$ minimal generators, 
	and suppose that $I_{\alpha(I)}$ contains a regular sequence of length $\ell$.
	There exists an integer $D = D(w,\ell, t,n)\in \N$,
	such that if $\alpha(I) \geq D$ then for all $k\in \{0,1,\dots, t\}$ we have $\fbl_I(\alpha(I)+k) \geq \ell-1$.
\end{cor}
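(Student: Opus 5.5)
The plan is to reduce each degree $\alpha(I)+k$ to the equigenerated case of Theorem \ref{advancedLefschetzBase}, applied to an auxiliary ideal. Fix $k\in\{0,1,\dots,t\}$ and let $J^{(k)}$ be the ideal generated by $I_{\alpha(I)+k}$. Then $J^{(k)}$ is equigenerated in degree $\alpha(I)+k$, and since $I$ is generated in degree $\alpha(I)$ we have $J^{(k)}_{j}=I_j$ for all $j\geq \alpha(I)+k$.

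\textbf{Step 1: $\gin(J^{(k)})$ and $\gin(I)$ agree from degree $\alpha(I)+k$ on.} A general change of coordinates for $I$ is also general for $J^{(k)}$ (intersect the two Zariski open sets), and the initial ideal in degree $j$ depends only on the vector space $I_j$. Hence $\gin(J^{(k)})_{\alpha(I)+k}=\gin(I)_{\alpha(I)+k}$. Since $\fbl$ in degree $\alpha(I)+k$ only reads off membership of monomials of degree $\alpha(I)+k$ in the generic initial ideal, this gives $\fbl_{J^{(k)}}(\alpha(I)+k)=\fbl_I(\alpha(I)+k)$.

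\textbf{Step 2: check the hypotheses of Theorem \ref{advancedLefschetzBase} for $J^{(k)}$.} Let $f_1,\dots,f_\ell\in I_{\alpha(I)}$ be the regular sequence. Then $x_1^kf_1,\dots,x_1^kf_\ell$ lie in $J^{(k)}_{\alpha(I)+k}$. They need not form a regular sequence, since $x_1$ might not be regular on $R/(f_1,\dots,f_{i-1})$. So instead I will use $g^kf_1,\dots,g^kf_\ell$ for a general linear form $g$. Alternatively, since $\ell\le n$, I can use $f_1^{a}\cdots$; the cleanest choice is to note that a general element of $(f_1,\dots,f_\ell)_{\alpha(I)+k}$ gives a regular sequence of length $\ell$ by prime avoidance. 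The ideal $(f_1,\dots,f_\ell)$ has height $\ell$, so a generic $\ell$-tuple of elements from its degree-$(\alpha(I)+k)$ component, which generates an ideal with the same radical up to height, is a regular sequence. This is standard: successively choose elements avoiding the associated primes of the previous partial ideal, which all have height $<\ell$ and so do not contain $(f_1,\dots,f_\ell)_{\alpha(I)+k}$. The number of minimal generators of $J^{(k)}$ is $\dim I_{\alpha(I)+k}\le w\binom{n-1+k}{n-1}\le w\binom{n-1+t}{n-1}=:w'$.

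\textbf{Step 3: apply the theorem and assemble the bound.} Take $D=\max_{0\le k\le t}D(w',\ell,n)$ from Theorem \ref{advancedLefschetzBase}, which is simply $D(w',\ell,n)$ since $w'$ is already uniform in $k$. If $\alpha(I)\ge D$ then $\alpha(J^{(k)})=\alpha(I)+k\ge D$ for every $k$. The theorem gives $\fbl_{J^{(k)}}(\alpha(I)+k)\ge \ell-1$, and Step 1 transfers this to $\fbl_I(\alpha(I)+k)\ge\ell-1$.

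The main obstacle is Step 2: producing a regular sequence of length $\ell$ inside $J^{(k)}$ in degree $\alpha(J^{(k)})$. The prime avoidance argument over an infinite field handles it, since avoiding finitely many primes within a vector space only requires that the space is not contained in any of them. Step 1 also needs care, namely the genericity claim that one coordinate change computes both generic initial ideals, but that is routine.
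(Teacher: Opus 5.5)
Your proposal is correct and follows the paper's proof. You pass to the ideal generated by $I_{\alpha(I)+k}$, bound its number of generators by $w\binom{n-1+k}{n-1}$, apply Theorem \ref{advancedLefschetzBase}, and transfer the bound back using the fact that the two generic initial ideals agree in degree $\alpha(I)+k$; your uniform bound $w\binom{n-1+t}{n-1}$ in place of the paper's $\max_k\{D_k-k\}$ is a harmless simplification. Your prime-avoidance argument that $I_{\alpha(I)+k}$ contains a regular sequence of length $\ell$ supplies a step the paper only asserts, but drop the interim candidates $x_1^kf_i$ and $g^kf_i$: for $k\geq 1$ and $\ell\geq 2$ they share the common factor $x_1^k$ (resp.\ $g^k$), so they are never regular sequences.
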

\begin{proof}
	Note that for all $k\in \{0,1,\dots, t\}$,
	$I_{\alpha(I)+k}$ contains a regular sequence of length $\ell$ because $I_{\alpha(I)}$ does.
	Also, for all $k\in \{0,1,\dots, t\}$ we have $\dim I_{\alpha(I)+k} \leq w\binom{n-1+k}{n-1}$.
	For any $k\in \{0,1,\dots, t\}$ define $J(\alpha(I)+k)$ to be the ideal generated by $I_{\alpha(I)+k}$.
	Then by Theorem \ref{advancedLefschetzBase},
	for all $k\in \{0,1,\dots, t\}$ there exists an integer $D_k=D_k(w\binom{n-1+k}{n-1}, \ell, n)$,
	such that if $\alpha(I)+k\geq D_k$ we have $\fbl_{J(\alpha(I)+k)}(\alpha(I)+k) \geq \ell -1$.
	Let $D=\max \left\lbrace D_k - k\bigm | k\in \{0,\dots, t\} \right\rbrace$.
	Therefore, the claim holds.
\end{proof}

The reader might find it useful to review the definition of type in Section \ref{MainResultsSection}.

\begin{cor}\label{advancedLefschetzBaseMultipleDegrees}
	Consider integers $t\geq 0$ and $\ell \geq 1$,
	and suppose that $I\subseteq R$ is an ideal of type $v_{\ell,t}\in \N^{\ell+t+1}$.
	There exists an integer $D = D(v_{\ell,t})\in \N$,
	such that if $\alpha(I) \geq D$ then for all $k\in \{0,1,\dots, t\}$ we have $\fbl_I(\alpha(I)+k) \geq \gamma_I (\alpha(I)+k)-1$.
\end{cor}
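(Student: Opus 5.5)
Fix $k\in\{0,1,\dots,t\}$ and write $d=\alpha(I)$, $\gamma=\gamma_I(d+k)$. If $\gamma\le 1$ there is nothing to prove, since $\fbl_I\ge 0$. Otherwise the type $v_{\ell,t}$ gives a regular sequence $f_1,\dots,f_\gamma\in I$ with $\deg f_i = d+a_i\le d+k$. We reduce to Corollary \ref{advancedLefschetzBaseMultipleDegreesForEquigen} by passing to an equigenerated ideal. Let $J=J(d')$ be the ideal generated by $I_{d'}$ for a suitable degree $d'$.

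\textbf{Choice of degree.} Take $d'=d+k$ itself. Then $J_{d+k}=I_{d+k}$. The products $x_1^{d+k-\deg f_i}f_i$ are not regular in general, so we do not use them. Instead we use the standard fact that if $I$ contains a regular sequence $f_1,\dots,f_\gamma$ of degrees $\le d+k$, then in characteristic $0$ the space $I_{d+k}$ contains a regular sequence of length $\gamma$. Concretely, general linear combinations $g_i=\sum_j h_{ij}f_j$ with general forms $h_{ij}$ of the right degrees give $g_1,\dots,g_\gamma\in I_{d+k}$ forming a regular sequence, because $(f_1,\dots,f_\gamma)$ has height $\gamma$ and a prime-avoidance argument works over an infinite field. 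So $J$ is equigenerated in degree $d+k$ and $J_{d+k}$ contains a regular sequence of length $\gamma$.

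\textbf{Bounding generators and transferring back.} The number of minimal generators of $J$ is
\[
\dim I_{d+k}\le \sum_{j=0}^{k} b_j\binom{n-1+k-j}{n-1}=:w_k,
\]
which depends only on $v_{\ell,t}$ and $n$. Theorem \ref{advancedLefschetzBase}, applied to $J$ with parameters $(w_k,\gamma,n)$, gives $D_k$ such that $d+k\ge D_k$ implies $\fbl_J(d+k)\ge\gamma-1$. Since $\gin(J)_{d+k}=\gin(I)_{d+k}$ (same vector space in degree $d+k$, and $\gin$ in a degree depends only on that graded piece), and $\fbl$ in degree $d+k$ only reads $\gin(\cdot)_{d+k}$, we get $\fbl_I(d+k)=\fbl_J(d+k)$. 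To make $D$ independent of $I$, let $\gamma$ range over $\{2,\dots,\ell\}$ and set
\[
D=\max\{D(w_k,\gamma,n)-k : 0\le k\le t,\ 2\le \gamma\le \ell\}.
\]

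\textbf{Main obstacle.} The step I expect to be hardest is justifying that $I_{d+k}$ itself contains a regular sequence of length $\gamma$ coming from sequence elements of possibly smaller degrees. A cleaner route is to replace $f_i$ by $L^{d+k-\deg f_i}f_i$ for a general linear form $L$. This fails when $\gamma=n$, since $L$ is then a zero divisor modulo $(f_1,\dots,f_{n-1})$ only in trivial ways, so I would instead use the inductive general-combination argument: choose $g_1=f_1\cdot(\text{general form})$, and at each step choose $g_i$ as a general element of $(f_1,\dots,f_i)_{d+k}$, which avoids the finitely many associated primes of $(g_1,\dots,g_{i-1})$ since these have height $i-1<i$.
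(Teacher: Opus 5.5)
Your proposal is correct and is essentially the paper's argument. The paper proves this ``similarly to'' Corollary \ref{advancedLefschetzBaseMultipleDegreesForEquigen}: it applies Theorem \ref{advancedLefschetzBase} to the ideal generated by $I_{\alpha(I)+k}$, which has at most $\MB(k)$ generators and the same $\gin$ in degree $\alpha(I)+k$, and takes $D=\max_k(D_k-k)$. Your general-combination/prime-avoidance argument correctly supplies the step the paper leaves implicit, namely that $I_{\alpha(I)+k}$ contains a regular sequence of length $\gamma_I(\alpha(I)+k)$.

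Your aside about $L^{d+k-\deg f_i}f_i$ is off. With a single $L$ it fails as soon as two of the $f_i$ need padding, since both then lie in the height-one prime $(L)$; with distinct general linear forms $L_i$ it works for every $\gamma$. Nothing in your argument depends on it.
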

\begin{proof}
	Similar to the proof of Corollary \ref{advancedLefschetzBaseMultipleDegreesForEquigen}.
\end{proof}

Restricting the results on $\fbl$ to complete intersections gives us uniqueness of the $\gin$ in a specified degree range.

\begin{thm}\label{CIGinStructure}
	Consider integers $t\geq 0$ and $\ell\geq 1$,
	and suppose $I\subseteq R$ is a complete intersection of minimal type $v_{\ell, t}\in \N^{\ell+t+1}$.
	There exists an integer $D = D(v_{\ell, t})\in \N$ such that if $\alpha(I)\geq D$ then:
	\begin{enumerate}
		\item The set of minimal monomial generators of $\gin(I)$ in degree $\alpha(I)$ is
		\begin{align*}
			\left\lbrace x_1^{\alpha(I)}, x_1^{\alpha(I)-1}x_2 ,\dots,  x_1^{\alpha(I)-\gamma_I (\alpha(I))+1}x_2^{\gamma_I (\alpha(I))-1} \right\rbrace ,
		\end{align*}
		\item For $k\in \{1,2,\dots, t\}$,  the set of minimal monomial generators of $\gin(I)$ in degree $\alpha(I)+k$ is
		\begin{align*}
			\left\lbrace x_1^{\alpha(I)+k-q_{k-1}-1}x_2^{q_{k-1}+1}, x_1^{\alpha(I)+k-q_{k-1}-2}x_2^{q_{k-1}+2} ,\dots,  x_1^{\alpha(I)+k-q_k+1}x_2^{q_k-1} \right\rbrace ,
		\end{align*}
		where for all $j\in \{0,1,\dots , t\}$ we define $q_j =\sum_{i=0}^j \gamma_I(\alpha(I)+i)$.
	\end{enumerate}
\end{thm}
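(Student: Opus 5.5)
We would prove Theorem \ref{CIGinStructure} by squeezing $\gin(I)$ between two bounds: a lower bound on the bottom row coming from Corollary \ref{advancedLefschetzBaseMultipleDegrees}, and an upper bound coming from Corollary \ref{WeightsLemmaAdvancedIdeal} together with the fact that $I$ and $\gin(I)$ have the same Hilbert function. Since $I$ is a complete intersection of minimal type $v$, its minimal generators are exactly $f_1,\dots,f_\ell$. Hence $b_k=\gamma_I(\alpha(I)+k)-\gamma_I(\alpha(I)+k-1)$, and the Hilbert function of $I$ in degrees $\le \alpha(I)+t$ equals that of $(x_1^{e_1},\dots,x_\ell^{e_\ell})$. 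Set $d=\alpha(I)$. For $d$ large relative to $t$, there are no Koszul syzygies in degrees $\le d+t$ (they start in degree $\ge 2d$). Therefore
\begin{align*}
\dim I_{d+k}=\sum_{j=0}^{k} b_j\binom{n-1+k-j}{n-1}.
\end{align*}

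\textbf{Step 1 (lower bound).} Take $D$ at least as large as the constant in Corollary \ref{advancedLefschetzBaseMultipleDegrees}. Then $\fbl_I(d+k)\ge \gamma_I(d+k)-1$ for all $k\le t$, so there are at least $\gamma_I(d+k)-1$ uncovered monomials $x_1^{d+k-j}x_2^{j}$ with $j\ge1$. Add $x_1^{d+k}$, which lies in $\gin(I)$. Uncovered monomials are automatically minimal generators whenever they are not multiples of a lower-degree $x_1^ax_2^b$; the point is that each uncovered monomial is not in the shadow of $\gin(I)_{d+k-1}$ beyond the bottom row's own propagation. So we count, degree by degree, how many new monomials of the form $x_1^{d+k-j}x_2^j$ appear. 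By induction on $k$, degree $d+k-1$ contributes exactly $q_{k-1}$ bottom monomials to degree $d+k$ via multiplication by $x_2$. Being uncovered forces at least $\gamma_I(d+k)$ further bottom monomials in degree $d+k$ beyond those, and these are minimal generators of the listed shape. The main care here is the indexing: the uncovered count measures new bottom monomials up to one, and the extra $+1$ comes from the leftmost one.

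\textbf{Step 2 (upper bound and equality).} By Corollary \ref{WeightsLemmaAdvancedIdeal}, $\dim\gin(I)_{d+k}$ is a sum of Bezrukov weights over the minimal generators. A generator in $K[x_1,x_2]$ of degree $d+j$ has weight $\Bez_n^{k-j}=\binom{n-2+k-j}{n-2}$. The listed generators number $\gamma_I(d)$ in degree $d$ and $\gamma_I(d+j)$ in degree $d+j$, so they contribute
\begin{align*}
\sum_{j=0}^{k}\gamma_I(d+j)\binom{n-2+k-j}{n-2}.
\end{align*}
Writing $\gamma_I(d+j)=\sum_{i\le j}b_i$ and using the hockey-stick identity, this equals $\sum_i b_i\binom{n-1+k-i}{n-1}=\dim I_{d+k}$. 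Since all weights are positive, any additional minimal generator in degree $\le d+t$ would make $\dim\gin(I)_{d+k}$ exceed $\dim I_{d+k}$. Hence the generators found in Step 1 are all the minimal generators, and there are exactly that many in each degree, which gives both items of the statement.

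The main obstacle is Step 1: turning ``$\ge\gamma-1$ uncovered monomials'' into exactly the consecutive run of bottom generators starting right after the $x_2$-shift of the previous degree. We would argue by strong stability: uncovered monomials must lie in the bottom row. For the contiguity of the run, Step 2's counting already suffices, since a sum of distinct positive weights equal to the target forces the minimal configuration, and strong stability of the bottom row makes it an initial lex segment in $x_1,x_2$.
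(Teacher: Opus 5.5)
Your squeeze is the paper's argument. Corollary \ref{advancedLefschetzBaseMultipleDegrees} supplies bottom-row minimal generators, Corollary \ref{WeightsLemmaAdvancedIdeal} turns them into a lower bound for $\dim\gin(I)_{d+k}$, and the complete-intersection Hilbert function (no Koszul relations below degree $2d$) caps it. The problem is your bookkeeping of the pure powers of $x_1$, and as written it makes your conclusion contradict item (2).

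In Step 1, your ``extra $+1$ from the leftmost one'' counts $x_1^{d+k}$ as a new minimal generator. For $k\ge 1$, however, $x_1^{d+k}=x_1\cdot x_1^{d+k-1}$ is not one. So $\fbl_I(d+k)\ge\gamma_I(d+k)-1$ gives only $\gamma_I(d+k)-1$ new bottom generators. This matches the statement's list, which has $(q_k-1)-(q_{k-1}+1)+1=\gamma_I(d+k)-1$ elements; only in degree $d$ does $x_1^d$ add one.

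The same phantom generator reappears in Step 2. You give each degree $d+j$ exactly $\gamma_I(d+j)$ generators of weight $\binom{n-2+k-j}{n-2}$. The true configuration has $\gamma_I(d+j)-1$ generators with $\max=2$ in each degree $d+j$ ($0\le j\le k$), plus the single generator $x_1^d$ of weight $\Bez_n^{k}(x_1^d)=\binom{n-1+k}{n-1}$. Because $\binom{n-1+k}{n-1}=\sum_{j=0}^{k}\binom{n-2+k-j}{n-2}$, your displayed total happens to be right. But your final claim of ``exactly that many in each degree'' is off by one from item (2). The inequality you should write, which is the paper's (phrased there via $\theta_i,\delta_i$), is
\[
\dim\gin(I)_{d+k}\ \ge\ \binom{n-1+k}{n-1}+\sum_{j=0}^{k}\bigl(\gamma_I(d+j)-1\bigr)\binom{n-2+k-j}{n-2}
=\sum_{i=0}^{k}b_i\binom{n-1+k-i}{n-1}=\dim I_{d+k}.
\]
Since all weights are positive, equality then rules out any further generators.

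Also replace your circular justification that uncovered monomials are minimal generators with the actual one-line argument. Suppose $x_1^{a}x_2^{j}$, with $j\ge 1$ and degree $d+k$, $k\ge1$, were divisible by a monomial of $\gin(I)_{d+k-1}$. That monomial is $x_1^{a-1}x_2^{j}$ or $x_1^{a}x_2^{j-1}$, and strong stability puts $x_1^{a}x_2^{j-1}$ in $\gin(I)$ in either case. Then $x_1^{a}x_2^{j-1}x_3\in\gin(I)$, so the monomial is covered.

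By induction, the bottom row in degree $d+k$ already contains the $x_2$-exponents $0,\dots,q_{k-1}$ as non-generators. Once equality fixes the number of new generators at $\gamma_I(d+k)-1$, they must therefore be exactly the exponents $q_{k-1}+1,\dots,q_k-1$. With these corrections your proof coincides with the paper's.
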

\begin{proof}
	By Corollary \ref{advancedLefschetzBaseMultipleDegrees},
	there exists an integer $D$,
	such that if $\alpha(I) \geq D$ then for all $k\in \{0,1,\dots, t\}$ we have $\fbl_I(\alpha(I)+k) \geq \gamma_I (\alpha(I)+k)-1$.
	Suppose that $\alpha(I)\geq D$.
	
	We prove the claim by induction on $k$.
	Suppose that $k=0$.
	Since $\gin(I)$ is strongly stable and $\fbl_I(\alpha(I)) \geq \gamma_I (\alpha(I))-1$,
	we must have that the set of minimal monomial generators of $\gin(I)$ in degree $\alpha(I)$ is
	\begin{align*}
		\left\lbrace x_1^{\alpha(I)}, x_1^{\alpha(I)-1}x_2 ,\dots,  x_1^{\alpha(I)-\gamma_I (\alpha(I))+1}x_2^{\gamma_I (\alpha(I))-1} \right\rbrace .
	\end{align*}
	So, suppose that $k\geq 1$ and that the claim holds for all $k'<k$.
	We have $\fbl_I(\alpha(I)+k) \geq \gamma_I (\alpha(I)+k)-1$.
	So, $\gin(I)$ must have at least $\gamma_I (\alpha(I)+k)-1$ minimal monomial generators that are not divisible by $x_3,x_4,\dots, x_n$.
	We will show that it can't have more.
	
	Suppose that a regular sequence of length $\ell$ in $I$ given by $v_{\ell,t}$ has degrees $2\leq e_1\leq e_2\leq \cdots \leq e_\ell$.
	Define $\theta_1(e_1,\dots, e_\ell) = e_1$ and $\delta_1(e_1,\dots, e_\ell) = \gamma (\theta_1(e_1,\dots, e_\ell),e_1,\dots, e_\ell)-1$.
	Suppose that $w\geq 2$ and that $\theta_1(e_1,\dots, e_\ell),\dots, \theta_{w-1}(e_1,\dots, e_\ell)$ and $\delta_1(e_1,\dots, e_\ell),\dots, \delta_{w-1}(e_1,\dots, e_\ell)$ have been defined.
	If there exists an integer $d$ such that $\gamma (d,e_1,\dots, e_\ell) > \gamma (\theta_{w-1}(e_1,\dots, e_\ell),e_1,\dots, e_\ell)$,
	then define $\theta_w(e_1,\dots, e_\ell)$ to be the smallest integer that is larger than $\theta_{w-1}(e_1,\dots, e_\ell)$ such that 
	\begin{align*}
		\gamma (\theta_w(e_1,\dots, e_\ell),e_1,\dots, e_\ell) > \gamma (\theta_{w-1}(e_1,\dots, e_\ell),e_1,\dots, e_\ell),
	\end{align*}
	and we define
	\begin{align*}
		\delta_w (e_1,\dots, e_\ell)=\gamma (\theta_w(e_1,\dots, e_\ell),e_1,\dots, e_\ell) - \gamma (\theta_{w-1}(e_1,\dots, e_\ell),e_1,\dots, e_\ell).
	\end{align*}
	This defines integers $\theta_1(e_1,\dots, e_\ell),\dots, \theta_{\eta(e_1,\dots, e_\ell)}(e_1,\dots, e_\ell)$ for some integer $\eta(e_1,\dots, e_\ell)$ such that $\eta(e_1,\dots, e_\ell)\leq \ell$.
	
	Let
	\begin{align*}
		\eta &= \eta(e_1,\dots, e_\ell),\\
		\theta_i &= \theta_i(e_1,\dots, e_\ell) \text{ for } i\in \{1,\dots, \eta\}, \\
		\delta_j &= \delta_j(e_1,\dots, e_\ell) \text{ for } j\in \{1,\dots, \eta\}. \\
	\end{align*}
	By Corollary \ref{WeightsLemmaAdvancedIdeal} we have
	\begin{align*}
		\dim \gin(I)_{\alpha(I)+k} 
		&=\sum_{\substack{\text{monomial }m\in \gin(I)\\ m \text{ minimal generator}\\ \deg(m) \leq \alpha(I)+k}} \Bez^{\alpha(I)+k-\deg(m)}(m)\\
		&= \Bez^{\alpha(I)+k-\deg(x_1^{\alpha(I)})}(x_1^{\alpha(I)}) + \sum_{\substack{\text{monomial }m\in \gin(I)\setminus\{x^{\alpha(I)}\}\\ m \text{ minimal generator}\\ \deg(m) \leq \alpha(I)+k}} \Bez^{\alpha(I)+k-\deg(m)}(m)\\
		&\geq \Bez^{\alpha(I)+k-\deg(x_1^{\alpha(I)})}(x_1^{\alpha(I)}) +  \sum_{i=1}^\eta \sum_{j=\theta_i}^{\alpha(I)+k} \delta_i \binom{n-2+\alpha(I)+k-j}{n-2}\\
		&= \binom{n-1+k}{n-1} +  \sum_{i=1}^\eta \sum_{j=\theta_i}^{\alpha(I)+k} \delta_i \binom{n-2+\alpha(I)+k-j}{n-2}.
	\end{align*}
	Since $I$ is a complete intersection we must have
	\begin{align*}
		\dim \gin(I)_{\alpha(I)+k} \leq \binom{n-1+k}{n-1} +  \sum_{i=1}^\eta \sum_{j=\theta_i}^{\alpha(I)+k} \delta_i \binom{n-2+\alpha(I)+k-j}{n-2}.
	\end{align*}
	Thus, $\gin(I)$ has exactly $\gamma (\alpha(I)+k,e_1,\dots, e_\ell)-1$ minimal monomial generators that are not divisible by $x_3,x_4,\dots, x_n$.
	By the inductive hypothesis we know that $x_1^{\alpha(I)+k-1-q_{k-1}+1}x_2^{q_{k-1}-1} \in \gin(I)$.
	Hence, $x_1^{\alpha(I)+k-1-q_{k-1}+1}x_2^{q_{k-1}} \in \gin(I)$.
	Therefore, the set of minimal monomial generators of $\gin(I)$ in degree $\alpha(I)+k$ is
	\begin{align*}
		\left\lbrace x_1^{\alpha(I)+k-q_{k-1}-1}x_2^{q_{k-1}+1}, x_1^{\alpha(I)+k-q_{k-1}-2}x_2^{q_{k-1}+2} ,\dots,  x_1^{\alpha(I)+k-q_k+1}x_2^{q_k-1} \right\rbrace ,
	\end{align*}
	since $q_{k}-1 -(q_{k-1}+1)+1 = \gamma(\alpha(I)+k, e_1,\dots, e_\ell) -1 = \fbl_{I}(\alpha(I)+k)$.
\end{proof}

For a type $v = (a_1,\dots, a_\ell, b_0,\dots, b_t)\in \N^{\ell+t+1}$ we define the {\it max bound function of $v$} to be the function $\MB_{v}:\{0,1,\dots, t\} \rightarrow \N$,
such that for all $k\in \{0,1,\dots, t\}$ we have 
\begin{align*}
	\MB_{v}(k) = \sum_{i=0}^{k} b_i\binom{n-1+k-i}{n-1}.
\end{align*}
We will often just write $\MB$ when the type is clear from context. 
If $I$ is an ideal of type $v$ then $\dim I_{\alpha(I)+k} \leq \MB(k)$ for all $k\in \{0,1,\dots, t\}$.
We are now ready for the final result in this section.

\begin{thm}\label{Monster2}
	Let $t\geq 0$, $\ell \geq 1$, $v_{\ell,t}\in \N^{\ell+t+1}$, $w\in \N$,
	and suppose that $\MB(t) \leq w$.
	Let $I\subseteq R$ be an ideal of type $v_{\ell, t}\in \N^{\ell + t+1}$.
	There exists an integer $D = D(v_{\ell,t},w)\in \N$,
	such that if $\alpha(I) \geq D$ then for all $k\in \{0,1,\dots, t\}$, $r\in \{3,4,\dots, n\}$, and $e = (e_3,e_4,\dots, e_r)\in \N^{r-2}$ with $E=e_3+\cdots +e_r \leq k$,
	we have $\fbl_I^{(r,e)}(\alpha(I)+k) \geq \gamma_I (\alpha(I)+k-E)-1$.
	
	In particular,
	there exists an integer $D = D(v_{\ell,t})\in \N$,
	such that if $\alpha(I) \geq D$ then for all $k\in \{0,1,\dots, t\}$, $r\in \{3,4,\dots, n\}$, and $e = (e_3,e_4,\dots, e_r)\in \N^{r-2}$ with $E=e_3+\cdots +e_r \leq k$,
	we have $\fbl_I^{(r,e)}(\alpha(I)+k) \geq \gamma_I (\alpha(I)+k-E)-1$.
\end{thm}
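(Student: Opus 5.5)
We reduce to the case $r=3$, $e=0$ already handled by Corollary \ref{advancedLefschetzBaseMultipleDegrees}, using the Fløystad--Stillman spaces of Section \ref{FSSection} to push the monomials with fixed exponents $x_3^{e_3}\cdots x_r^{e_r}$ down to the "bottom". Fix $k$, $r$, $e$ with $E\le k$, and set $d=\alpha(I)+k$. Since $\gin(I)$ is strongly stable, the monomials $x_1^{d-j-E}x_2^jx_3^{e_3}\cdots x_r^{e_r}$ in $\gin(I)_d$ are governed by the colon/restriction structure. Write $x^e=x_3^{e_3}\cdots x_r^{e_r}$. Using the revlex facts $\im(V:x_n)=\im(V):x_n$ and $\im(V|_{x_n})=\im(V)|_{x_n}$, iterated over the variables $x_n,\dots,x_{r+1}$ (restriction) and then colon by $x_r^{e_r}$, and so on down to $x_3^{e_3}$, the set of these monomials divided by $x^e$ is exactly the degree $d-E$ bottom part of the generic initial ideal of $J=(I_d:x^e)$, after restricting to $K[x_1,x_2,x_3]$-type variables. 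The uncovered condition for $(r,e)$ (testing $x_1^{d-j-E}x_2^{j-1}x^e x_r$) translates, by strong stability (since $x_3\cdot x^e$ is Borel-above $x_r\cdot x^e$), into a covering condition that is at least as restrictive as the $(3,0)$-uncovered condition for $J$. This gives $\fbl_I^{(r,e)}(d)\ge \fbl_{J}(d-E)$ for a suitable ideal $J$ in degree $d-E$.

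Next we need $J$ to satisfy the hypotheses of Corollary \ref{advancedLefschetzBaseMultipleDegreesForEquigen} (or Theorem \ref{advancedLefschetzBase}) in degree $d-E$. The point is that after a general change of coordinates, colon by a monomial in the later variables of a regular sequence behaves like colon by a general form. So if $f_1,\dots,f_\gamma\in I$ is the regular sequence of degrees $\le d-E$ with $\gamma=\gamma_I(d-E)$, then multiplying the $f_i$ by degree-$(d-E-\deg f_i)$ forms gives elements of $I_{d-E}$, and $x^e f_i\in I_d$. Hence $(I_d : x^e)_{d-E}\supseteq I_{d-E}$ contains a regular sequence of length $\gamma$. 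Also $\dim (I_d:x^e)_{d-E}\le \binom{n-1+d-E}{n-1}$ is not bounded, which is a problem. Instead we use the ideal generated by $I_{d-E}$ itself and observe that $\gin(I)_{d-E}\cdot x^e\subseteq \gin(I)_d$. Covered monomials in degree $d$ lie above covered ones, by Lemma \ref{AddedShadowEquiv}-type reasoning on the Bezrukov weights: a monomial $x_1^{a}x_2^{b}$ that is uncovered in degree $d-E$ for the ideal generated by $I_{d-E}$ produces $x_1^ax_2^bx^e$. We must check that this is still $(r,e)$-uncovered, i.e. $x_1^{a}x_2^{b-1}x^ex_r\notin\gin(I)_d$. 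The main obstacle is precisely this: new minimal generators of $\gin(I)$ in degrees between $d-E$ and $d$ could cover it.

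To handle the obstacle, we follow Case 2 of the proof of Theorem \ref{advancedLefschetzBase}. Suppose a new generator divisible by some $x_i$, $i\ge 3$, appears below the bottom segment. By Lemma \ref{MonsterLemma}, this forces a minimal generator divisible by $x_3$. Combined with Fløystad--Stillman (Theorem \ref{Fløystad--Stillman}) and Corollary \ref{GeneralGreen2Vars} applied to $\squash_{n-3}(I_{d'})$, this produces a degree $d'\in[\alpha(I),\alpha(I)+t]$ with $\fbl_I(d')<\gamma_I(d')-1$, contradicting Corollary \ref{advancedLefschetzBaseMultipleDegrees}. The dimension bound $\dim I_{\alpha(I)+k}\le \MB(k)\le w$ supplies the uniform generator count needed to invoke Corollary \ref{advancedLefschetzBaseMultipleDegreesForEquigen} for each ideal generated by $I_{\alpha(I)+k'}$, $k'\le t$. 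Taking $D(v,w)$ as the maximum over the finitely many $k,k'$ of the resulting constants (shifted by $k'$) gives the first statement. The "in particular" follows by choosing $w=\MB(t)$, which depends only on $v$.
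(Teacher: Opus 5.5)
Your argument has a genuine gap at the step you flag as the main obstacle. Your plan is an injection: send each uncovered monomial $m$ of degree $d-E$ to $x^e m$, and rule out covering of $x^e m$ by new minimal generators of $\gin(I)$ in degrees between $d-E$ and $d$. You claim such a generator would, through Lemma \ref{MonsterLemma} and Case 2 of Theorem \ref{advancedLefschetzBase}, force $\fbl_I(d')<\gamma_I(d')-1$. This implication is false, because covering genuinely occurs for ideals of type $v$.

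Take $n=3$, $f_1,f_2$ general of degree $\alpha$, $u$ general of degree $\alpha-1$, and $I=(f_1,f_2)+u\,(x_1,x_2,x_3)^2$, which has type $(0,0,2,6)$. After a general coordinate change $g$, the ideal $g(I)$ contains $g(u)x_3^2$, whose initial term is $x_1^{\alpha-1}x_3^2$. So $x_1^{\alpha-1}x_3^2$ is a new minimal generator of $\gin(I)$ in degree $\alpha+1$. It is divisible by $x_3$ and lies below the bottom segment, which is exactly the kind of generator your third paragraph declares impossible. It covers $x_3\cdot x_1^{\alpha-1}x_2$, the image of the only uncovered monomial of $\gin(I)_\alpha=\{x_1^\alpha,x_1^{\alpha-1}x_2\}$.

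Yet nothing is contradicted. Corollary \ref{advancedLefschetzBaseMultipleDegrees} holds; for general choices $\fbl_I(\alpha)=1$ and $\fbl_I(\alpha+1)=2$. The required $(3,(1))$-uncovered monomials are $x_1^{\alpha-2}x_2^2x_3$ and $x_1^{\alpha-3}x_2^3x_3$, further along the slice.

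The transplant of Case 2 fails for two reasons:
\begin{enumerate}
\item It relies on $I$ being equigenerated, so that Fl{\o}ystad--Stillman and Corollary \ref{GeneralGreen2Vars} keep $\fbl_I$ from increasing after degree $d$. That fails once $I$ has generators in degrees $\alpha(I)+1,\dots,\alpha(I)+t$; in the example $\fbl_I$ rises from $1$ to $2$.
\item Even in the equigenerated case it only yields $\fbl_I(d+s)\le\fbl_I(d)-1$. This contradicts nothing, since $\fbl_I(d)$ is only bounded below by $\gamma_I(d)-1$. In Theorem \ref{advancedLefschetzBase} the contradiction came from iterating down to Case 1, starting from the assumption $\fbl_I(d)<\ell-1$.
\end{enumerate}
Separately, your first-paragraph inequality $\fbl_I^{(r,e)}(d)\ge\fbl_J(d-E)$ is unproved. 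Revlex initial ideals commute with colon in general only for the last variable.

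What is missing is a counting argument that lets the $(r,e)$-uncovered monomials sit anywhere in the slice. The paper does this as follows.
\begin{itemize}
\item It packages the $(r,e')$-slice and the $(r,e)$-slice of $\gin(I)_{\alpha(I)+k}$, with $e'=(e_3,\dots,e_r+1)$, into a two-variable ideal $J(k,r,e)$ obtained from $\squash_{n-r}(I_{\alpha(I)+k})$.
\item The new generators of $J(k,r,e)$ in degree $\alpha(I)+k-E$ are governed by $\fbl_I^{(r,e)}(\alpha(I)+k)$. So Corollary \ref{GeneralGreen2Vars} caps its growth at $\fbl_I^{(r,e)}(\alpha(I)+k)+1$ per degree.
\item From below, its dimension is compared with that of $J(k-E,r,0)$. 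By Corollary \ref{advancedLefschetzBaseMultipleDegreesForEquigen}, applied to the ideals generated by $I_{\alpha(I)+k}$ over $t''=w+t$ further degrees, this grows by at least $\gamma_I(\alpha(I)+k-E)$ per degree.
\item The hypothesis $\MB(t)\le w$ is then used to bound the initial discrepancy $c_{k,r,e}\le w$ between the two ideals, not merely as a generator count. Hence a deficit of at least one per degree over $t''$ degrees is impossible.
\end{itemize}
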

\begin{proof}
	Without loss of generality,
	suppose that $I$ is in general coordinates and $\alpha(I) \geq 1$.
	Let $k\in \{0,1,\dots, t\}$, 
	$r\geq 3$, 
	and $e = (e_3,e_4,\dots, e_r)\in \N^{r-2}$ such that $E=e_3+\cdots +e_r \leq k+1$.
	Define $I_{k,r,e}$ to be the vector space spanned by all polynomials that belong to a reduced basis of $I_{\alpha(I)+k}$,
	and whose initial monomials are of the form $x_1^{\alpha(I)+k-j-E}x_2^jx_3^{e_3}x_4^{e_4}\cdots x_r^{e_r}$ with  $j\in \{0,1,2,\dots, \alpha(I)+k-E\}$.
	For each $f\in I_{k,r,e}$ we can write $f=f_1+f_2$,
	such that all the monomials that make up $f_1$ have the form $x_1^{\alpha(I)+k-j-E}x_2^jx_3^{e_3}x_4^{e_4}\cdots x_r^{e_r}$ with  $j\in \{0,1,2,\dots, \alpha(I)+k-E\}$.
	Form the set $S_{k,r,e}$ consisting of the polynomials $f_1/(x_3^{e_3}x_4^{e_4}\cdots x_r^{e_r}) \in K[x_1,x_2]$, 
	where $f\in I_{k,r,e}$ and we write $f=f_1+f_2$ as before.
	For $E\leq k$ set $e' = (e_3,e_4,\dots, e_r+1)$.
	Then for $E\leq k$ we consider $J(k,r,e) \subseteq K[x_1,x_2]$,
	which is the direct sum of $S_{k,r,e'}$ and the ideal generated by $S_{k,r,e}$.
	By the Fløystad--Stillman Theorem \ref{Fløystad--Stillman} we have that $\squash_{n-r}(I_{\alpha(I)+k})$ is an ideal and $\im (\squash_{n-r}(I_{\alpha(I)+k}))$ is strongly stable.
	Thus $J(k,r,e)$ is an ideal.
	Furthermore, $\im (J(k,r,e)_{\alpha(I)+k-E-1})$ and $\im (J(k,r,e)_{\alpha(I)+k-E})$ are strongly stable.
	Note that $k$, $r$ and $e$ were arbitrary.
	So we have defined a finite collection of ideals using these three parameters.
	For any such $k,r,e$ with $E\leq k$ we also define
	\begin{align*}
		b_{k,r,e} &= \dim (J(k,r,e) )_{\alpha(I)+k-E-1},\\
		c_{k,r,e} &= b_{k,r,e} - b_{k-E,r,0},\\
		t_{k,r,e} &= c_{k,r,e}+k.
	\end{align*}
	Also let
	\begin{align*}
		t' &= \max \{t_{k,r,e}\bigm | k\in \{0,1,\dots, t\}, r\geq 3, e = (e_3,e_4,\dots, e_r)\in \N^{r-2} \text{ with } E=e_3+\cdots +e_r \leq k\},\\
		t'' &= w+t.
	\end{align*}
	Then $t' \leq t''$ because for all $k\in \{0,1,\dots, t\}$ we have $\dim I_{\alpha(I)+k} \leq \MB(k) \leq \MB(t) \leq w$.
	Let $A(k)$ be the ideal generated by $I_{\alpha(I)+k}$ for all $k\in \{0,1,\dots, t\}$.
	For all $k\in \{0,1,\dots, t\}$ we have that $A(k)$ is equigenerated by at most $\MB(k)$ minimal generators.
	By Corollary \ref{advancedLefschetzBaseMultipleDegreesForEquigen},
	for all $k\in \{0,1,\dots, t\}$,
	there exists an integer $D_k$,
	such that if $\alpha(A(k)) \geq D_k$ then for all $k'\in \{0,1,\dots, t''\}$ we have $\fbl_{A(k)}^{(3,0)} (\alpha(A(k)) +k') = \fbl_{A(k)}(\alpha(A(k))+k') \geq \gamma_I(\alpha(I)+k)-1$.
	Let $D=\max \{D_k \bigm | k\in \{0,1,\dots, t\}$.
	Suppose that $\alpha(I)\geq D$.
	Assume to the contrary that there exist $k\in \{0,1,\dots, t\}$, $r\geq 3$, and $e = (e_3,e_4,\dots, e_r)\in \N^{r-2}$ such that $E=e_3+\cdots +e_r \leq k$,
	for which we have $\fbl_I^{(r,e)}(\alpha(I)+k) < \gamma_I (\alpha(I)+k-E)-1$.
	Then we must have $k\geq 1$.
	
	By Corollary \ref{GeneralGreen2Vars} we have
	\begin{align*}
		\dim (J(k,r,e))_{\alpha(I)+t''} &= \dim (J(k,r,e))_{(\alpha(I)+k-E-1)+(t''-k+E+1)}\\
		&\leq (t''-k+E+1)(\fbl^{(r,e)}_I(\alpha(I)+k)+1) + \dim (J(k,r,e))_{\alpha(I)+k-E-1}\\
		&\leq (t''-k+E+1)(\gamma_I (\alpha(I)+k-E)-1) + \dim (J(k,r,e))_{\alpha(I)+k-E-1}.
	\end{align*}
	Also,
	we know that for all $k'\in \{0,1,\dots, t''\}$ we have $\fbl_{A(k)}(\alpha(A(k))+k') \geq \gamma_I(\alpha(I)+k)-1$,
	which forces
	\begin{align*}
		\dim (J(k,r,e))_{\alpha(I)+t''} &\geq \dim (J(k-E,r,0))_{\alpha(I)+t''}\\
		&\geq (t''-k+E+1)(\fbl_{A(k-E)}(\alpha(I)+k-E)+1) + \dim (J(k-E,r,0))_{\alpha(I)+k-E-1}\\
		&\geq (t''-k+E+1)\gamma_I (\alpha(I)+k-E) + \dim (J(k-E,r,0))_{\alpha(I)+k-E-1}.\\
	\end{align*}
	Thus,
	\begin{align*}
		0&\leq \dim (J(k,r,e))_{\alpha(I)+k-E-1} - \dim (J(k-E,r,0))_{\alpha(I)+k-E-1} - (t''-k+E+1)\\
		&= b_{k,r,e} - b_{k-E,r,0} -t''+k-E-1\\
		&= c_{k,r,e} -t''+k-E-1\\
		&\leq c_{k,r,e} -t_{k,r,e}+k-E-1\\
		&=c_{k,r,e} -c_{k,r,e}-k+k-E-1\\
		&\leq-1.
	\end{align*}
	Therefore, we have a contradiction and the claim holds.
	In order to construct an integer that only depends on $v_{\ell, t}$ we can take $w = \MB(t)$.
\end{proof}

\section{Lex Plus Powers Transformable Ideals}\label{TransformSection}
The results of Section \ref{ginStructure} identify,
in each degree $\alpha(I)+k$,
distinguished forced monomials inside of $\gin(I)_{\alpha(I)+k}$.
We remove these forced monomials and their shadows from $\gin(I)_{\alpha(I)+k}$ to obtain sets $\mathfrak{L}_{I,k}\subseteq \gin(I)_{\alpha(I)+k}$.  
The goal of this section is to compress the sets $\mathfrak{L}_{I,k}$ simultaneously until they become strongly stable,
while preserving both their distribution with respect to $\max(m)$ and their growth from one degree to the next.
These two preserved quantities are exactly what will be needed in Section \ref{EGHResults}.

\begin{figure}[htbp]
	\centering
	\includegraphics[width=0.42\textwidth]{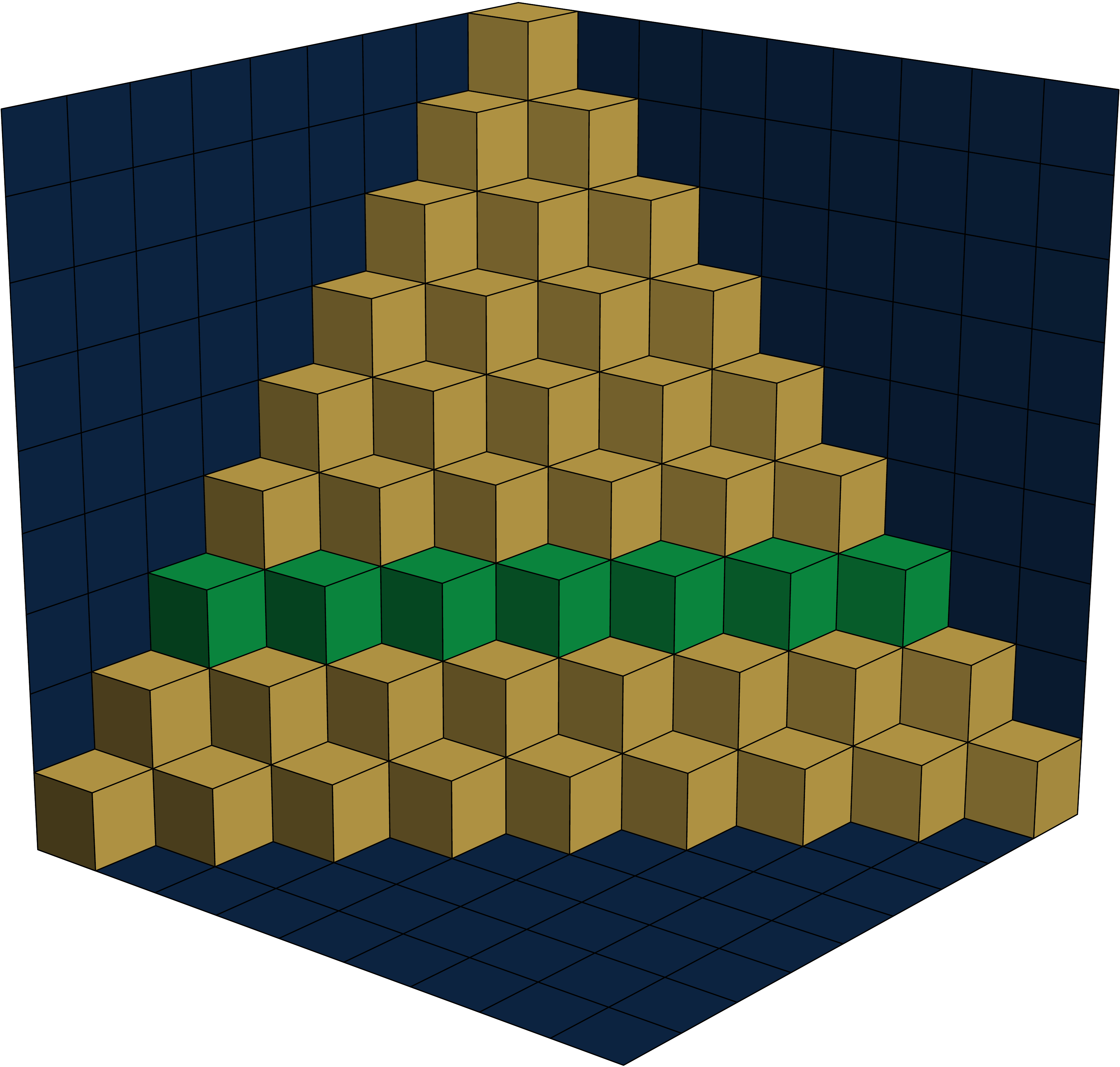}
	\caption{An example of $\mathfrak{M}_{3,8}(1,2, \mathfrak{f}_{1,2})$ inside of $\mathfrak{M}_{3,8}$.}
	\label{fig:line-example}
\end{figure}

For $n,k\in \N$ let $[n] = \{1,\dots, n\}$ and for $S\subseteq [n]$ define $S^C = [n]\setminus S$. 
By $\mathfrak{M}_{n,k}$ we denote the set of monomials of degree $k$ in $R$,
and by $\mathfrak{M}=\mathfrak{M}_n$ denote the set of all monomials in $R$.
For all $w\in [n]$ define $\mathfrak{M}_{n,k,w}$ be the set of monomials in $\mathfrak{M}_{n,k}$ that are not divisible by $x_{w+1}, x_{w+2},\dots, x_n$,
where $\mathfrak{M}_{n,k,n} = \mathfrak{M}_{n,k}$.

We will now cut up $\mathfrak{M}_{n,k}$ into different parts.
See Figure \ref{fig:line-example} for one of these parts.
Take $i,j\in \{1,\dots, n\}$ with $i<j$, 
and for each $u\in \{i,j\}^C$ consider $e_u\in \N$ such that $\sum_{u\in \{i,j\}^C} e_u \leq k$.
Define $\mathfrak{f}_{i,j}$ on $\{i,j\}^C$ by $\mathfrak{f}_{i,j}(u)=e_u$.
Then define $\mathfrak{M}_{n,k}(i,j, \mathfrak{f}_{i,j})$ to be the set of all monomials $m\in\mathfrak{M}_{n,k}$,
such that for $m'=\prod_{u\in \{i,j\}^C} x_u^{e_u}$ we have that $m/m'$ is not divisible by $x_u$ for all $u\in \{i,j\}^C$.
Note that if we fix $i$ and $j$ and go over all possible $\mathfrak{f}_{i,j}$, 
then the $\mathfrak{M}_{n,k}(i,j, \mathfrak{f}_{i,j})$ partition $\mathfrak{M}_{n,k}$.
We say that this is the {\it partitioning} of $\mathfrak{M}_{n,k}$ with respect to $i$ and $j$.

In the results to follow,
we will often be working with fixed $n,k,i,j, \mathfrak{f}_{i,j}$.
It is cumbersome and distracting to keep track of these parameters in our notation.
Also, we want to be able to say when the strongly stable property of a set is preserved under certain operations.
For these reasons we define some simplified and extra notation.
Let $m\in \mathfrak{M}_{n,k}(i,j, \mathfrak{f}_{i,j})$ and define the {\it line} of $m$ to be $\Line(m) = \mathfrak{M}_{n,k}(i,j, \mathfrak{f}_{i,j})$.
Note that two different monomials can have the same line,
but each monomial belongs to a unique line because the sets $\mathfrak{M}_{n,k}(i,j, \mathfrak{f}_{i,j})$ partition $\mathfrak{M}_{n,k}$.

Suppose that $m\in \mathfrak{M}_{n,k}$ and $p,q\in [n]$ with $p<q$ and $x_q|m$.
We define the {\it Borel move of $m$} by $\mathfrak{b}_{p,q}(m)=x_pm/x_q$.
For $A\subseteq \mathfrak{M}_{n,k}$ we define $\mathfrak{b}_{p,q}(A)$ to be the set of monomials of the form $\mathfrak{b}_{p,q}(m)$,
where $m\in A$ and $x_q|m$.
Lemma \ref{lineStructure} tells us how lines change after a Borel move.

\begin{lem}\label{lineStructure}
	Let $n,k\in \N$ and suppose that $i,j,p,q\in [n]$ with $i<j$, $p<q$, and $\mathfrak{b} = \mathfrak{b}_{p,q}$.
	Consider the partitioning of $\mathfrak{M}_{n,k}$ with respect to $i$ and $j$. 
	Pick $m^\ast\in \mathfrak{M}_{n,k}$ with $x_q|m^\ast$.
	\begin{enumerate}
		\item If $p\not\in \{i,j\}$ and $q\not\in \{i,j\}$ then the restriction of $\mathfrak{b}$ to $\Line(m^\ast)$ is a bijection onto $\Line(\mathfrak{b}(m^\ast))$.
		In particular, $|\Line(m^\ast)| = |\Line(\mathfrak{b}(m^\ast))|$.
		\item Suppose that $p\not\in \{i,j\}$ and $q\in \{i,j\}$.
		\begin{enumerate}
			\item Suppose $q=i$. 
			Let $m_s$ be the smallest monomial in the lex order in $\Line(m^\ast)$, hence $x_i\not | m_s$. 
			Then the restriction of $\mathfrak{b}$ to $\Line(m^\ast)\setminus\{m_s\}$ is a bijection onto $\Line(\mathfrak{b}(m^\ast))$.
			\item Suppose $q=j$. 
			Let $m_l$ be the largest monomial in the lex order in $\Line(m^\ast)$, hence $x_j\not| m_l$. 
			Then the restriction of $\mathfrak{b}$ to $\Line(m^\ast)\setminus\{m_l\}$ is a bijection onto $\Line(\mathfrak{b}(m^\ast))$.
		\end{enumerate}
		In particular, we have $|\Line(m^\ast)| = 1 + |\Line(\mathfrak{b}(m^\ast))|$.
		\item Suppose that $p\in \{i,j\}$ and $q\not\in \{i,j\}$.
		\begin{enumerate}
			\item Suppose $p=i$. 
			Let $m_s$ be the smallest monomial in the lex order in $\Line(\mathfrak{b}(m^\ast))$, hence $x_i\not | m_s$. 
			Then the restriction of $\mathfrak{b}$ to $\Line(m^\ast)$ is a bijection onto $\Line(\mathfrak{b}(m^\ast))\setminus \{m_s\}$.
			\item Suppose $p=j$. 
			Let $m_l$ be the largest monomial in the lex order in $\Line(\mathfrak{b}(m)^\ast)$, hence $x_j\not| m_l$. 
			Then the restriction of $\mathfrak{b}$ to $\Line(m^\ast)$ is a bijection onto $\Line(\mathfrak{b}(m^\ast))\setminus\{m_l\}$.
		\end{enumerate}
		In particular, we have $1 + |\Line(m^\ast)| = |\Line(\mathfrak{b}(m^\ast))|$.
		\item If $p\in \{i,j\}$ and $q\in \{i,j\}$ then $\Line(m^\ast) = \Line(\mathfrak{b}(m^\ast))$.
	\end{enumerate}
\end{lem}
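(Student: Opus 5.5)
The plan is to make lines completely explicit and then read off each case directly. Write $m^\ast = x_i^{a}x_j^{b}\prod_{u\in\{i,j\}^C}x_u^{e_u}$. Then $\Line(m^\ast)=\mathfrak{M}_{n,k}(i,j,\mathfrak{f}_{i,j})$ with $\mathfrak{f}_{i,j}(u)=e_u$ consists exactly of the monomials $x_i^{c}x_j^{s-c}m'$ with $m'=\prod_{u\in\{i,j\}^C}x_u^{e_u}$, $s=k-\sum_u e_u$ and $c\in\{0,1,\dots,s\}$. So a line is parametrized by $c$, and it has $s+1$ elements. Its lex-largest element is $x_i^{s}m'$ (not divisible by $x_j$), and its lex-smallest is $x_j^{s}m'$ (not divisible by $x_i$), since $i<j$. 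Every claim then reduces to tracking how $\mathfrak{b}=\mathfrak{b}_{p,q}$ changes the pair $(s,\mathfrak{f}_{i,j})$ and the parameter $c$. Injectivity of $\mathfrak{b}$ on its domain is automatic, because $\mathfrak{b}$ is inverted by $m\mapsto x_qm/x_p$.

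Case (1), $p,q\notin\{i,j\}$. The move replaces $m'$ by $x_pm'/x_q$, leaves $s$ unchanged, and fixes $c$. Every element of $\Line(m^\ast)$ is divisible by $x_q$, since $e_q\ge 1$, so $c\mapsto c$ gives a bijection onto $\Line(\mathfrak{b}(m^\ast))$.

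Case (2), $p\notin\{i,j\}$ and $q\in\{i,j\}$. Now $e_p$ increases by one and $s$ drops to $s-1$. The elements of the line divisible by $x_q$ are exactly those other than the one with $c=0$ when $q=i$; that excluded element is the lex-smallest $m_s$. When $q=j$ they are those other than $c=s$, the lex-largest $m_l$. On the remaining $s$ elements, $\mathfrak{b}$ sends $c\mapsto c-1$ (for $q=i$) or $c\mapsto c$ (for $q=j$), and this is a bijection onto the $s$ elements of the new line.

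Case (3), $p\in\{i,j\}$ and $q\notin\{i,j\}$. This is the inverse situation: $e_q$ decreases and $s$ increases to $s+1$. All elements of $\Line(m^\ast)$ are divisible by $x_q$. The image misses exactly the element of the new line with $c=0$ when $p=i$, which is $m_s$, and the element with $c=s+1$ when $p=j$, which is $m_l$. Alternatively, case (3) follows from case (2) applied to the inverse move.

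Case (4), $p=i$ and $q=j$. Here $\mathfrak{f}_{i,j}$ is unchanged, so the lines coincide.

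I expect no real obstacle. The only care needed is matching "smallest/largest in lex" to the endpoints $c=0$ and $c=s$ correctly. One also has to note that in case (3)(b) the notation $\Line(\mathfrak{b}(m)^\ast)$ means $\Line(\mathfrak{b}(m^\ast))$.
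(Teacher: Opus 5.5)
Your proof is correct and follows the paper's route: the paper also compares $\sum_{u\in\{i,j\}^C}\mathfrak{f}(u)$ for the two lines (equal, one larger, or one smaller) and notes that multiplying or dividing by $x_i$ or $x_j$ gives the bijection. You make this explicit with the parametrization $x_i^{c}x_j^{s-c}m'$, which also justifies the identification of the lex-smallest and lex-largest endpoints. Your remark that case (3) is case (2) for the inverse move is slightly loose, since $x_qm/x_p$ is not a Borel move, but your parametrized argument never uses $p<q$, so nothing breaks.
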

\begin{proof}
	Suppose that $m^\ast\in \mathfrak{M}_{n,k}(i,j, \mathfrak{f}_{i,j})$ and $\mathfrak{b}(m^\ast) \in \mathfrak{M}_{n,k}(i,j, \mathfrak{g}_{i,j})$.
	Set $\mathfrak{f} = \mathfrak{f}_{i,j}$ and $\mathfrak{g} = \mathfrak{g}_{i,j}$.
	
	If $p\not\in \{i,j\}$ and $q\not\in \{i,j\}$ then $\sum_{\{i,j\}^C} \mathfrak{f}(u) = \sum_{\{i,j\}^C} \mathfrak{g}(u)$,
	hence the restriction of $\mathfrak{b}$ to $\Line(m^\ast)$ is a bijection onto $\Line(\mathfrak{b}(m^\ast))$.
	So $|\Line(m^\ast)| = |\Line(\mathfrak{b}(m^\ast))|$.
	
	If $p\not\in \{i,j\}$ and $q\in \{i,j\}$ then $\sum_{\{i,j\}^C} \mathfrak{f}(u) = 1 + \sum_{\{i,j\}^C} \mathfrak{g}(u)$,
	thus $|\Line(m^\ast)| = 1 + |\Line(\mathfrak{b}(m^\ast))|$.
	For the remaining details,
	note that division by $x_i$ or $x_j$ produces a bijection.
	If $p\in \{i,j\}$ and $q\not\in \{i,j\}$ then $1 + \sum_{\{i,j\}^C} \mathfrak{f}(u) = \sum_{\{i,j\}^C} \mathfrak{g}(u)$,
	and the other details follow for the same reason.
	
	Finally, we are left with the case $p\in \{i,j\}$ and $q\in \{i,j\}$.
	Then $p=i$ and $q=j$.
	Therefore, $\Line(m^\ast) = \Line(\mathfrak{b}(m^\ast))$.
\end{proof}

For a set $A\subseteq \mathfrak{M}_{n,k}(i,j, \mathfrak{f}_{i,j})$ define the {\it compression} of $A$, 
$\comp_{i,j, \mathfrak{f}_{i,j}}(A)$, 
to be the last $|A|$ monomials in the lexicographic order in $\mathfrak{M}_{n,k}(i,j, \mathfrak{f}_{i,j})$.
Then for $A\subseteq \mathfrak{M}_{n,k}$ and $i,j\in [n]$ define the {\it compression} of $A$ with respect to $i$ and $j$ to be the disjoint union
\begin{align*}
	\comp_{i,j}(A) = \bigcup_{\mathfrak{f}_{i,j}} \comp_{i,j, \mathfrak{f}_{i,j}}(A\cap \mathfrak{M}_{n,k}(i,j, \mathfrak{f}_{i,j})).
\end{align*}

\begin{figure}[htbp]
	\centering
	
	\begin{subfigure}[t]{0.48\textwidth}
		\centering
		\includegraphics[width=\linewidth]{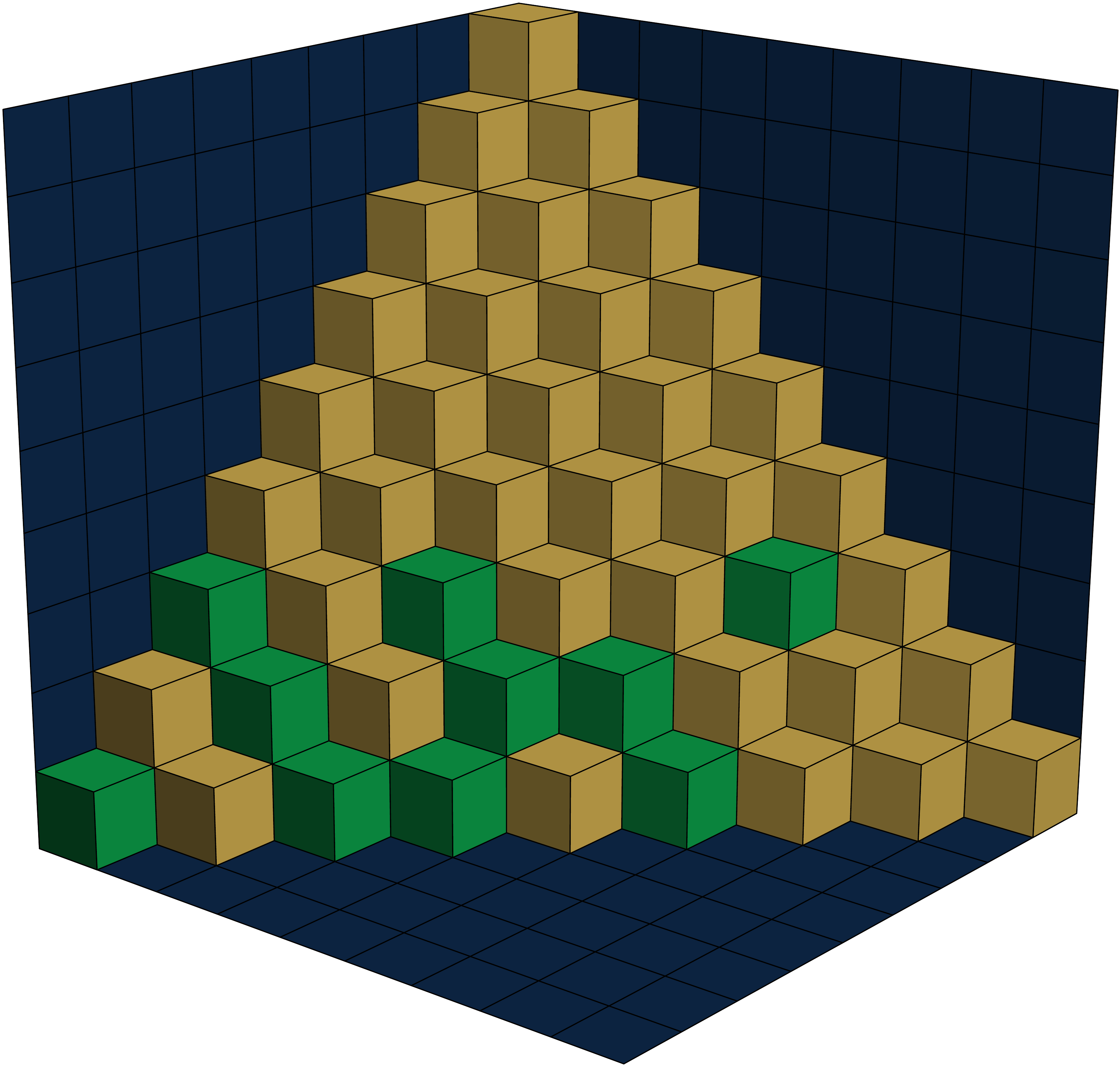}
		\caption{A set \(A\) in degree \(8\).}
		\label{fig:compression-set}
	\end{subfigure}
	\hfill
	\begin{subfigure}[t]{0.48\textwidth}
		\centering
		\includegraphics[width=\linewidth]{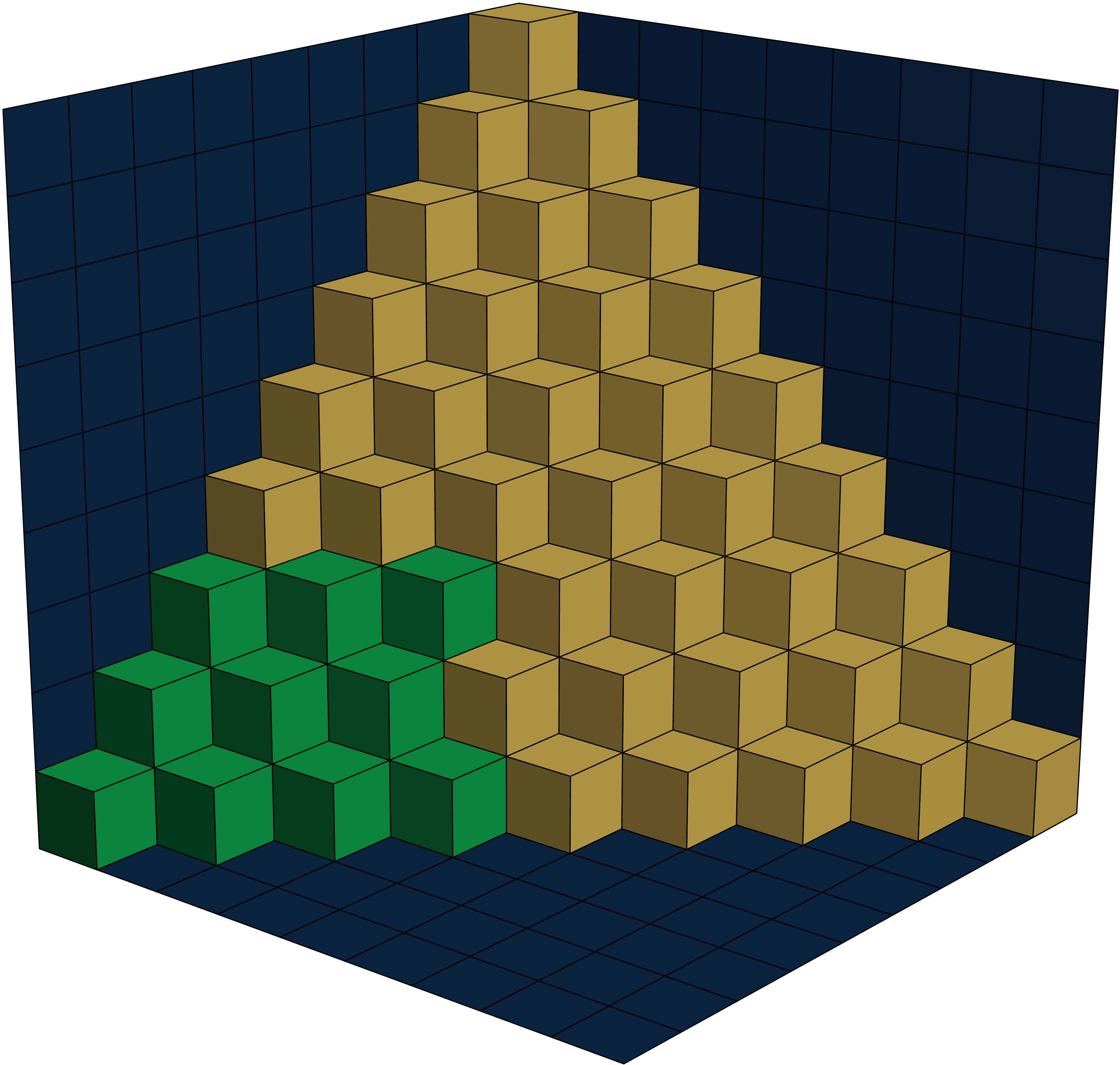}
		\caption{The compression \(\comp_{1,2}(A)\).}
		\label{fig:compression-comp}
	\end{subfigure}
	
	\caption{An illustration of the compression operation in degree \(8\) when $n=3$.}
	\label{fig:compression-example}
\end{figure}

See Figure \ref{fig:compression-example} for an example of compression.
From the definition we get that $|A| = |\comp_{i,j}(A)|$,
and if $A\subseteq B\subseteq \mathfrak{M}_{n,k}$ then $\comp_{i,j}(A)\subseteq \comp_{i,j}(B)$.
If $\comp_{i,j}(A) = A$ then we say that $A$ is $(i,j)$ compressed.
We will omit the indices when they are clear from context and just write $\comp$.

\begin{cor}\label{BorelComp}
	Let $A\subseteq \mathfrak{M}_{n,k}$, $m^\ast \in \mathscr{M}_{n,k}$ and $i,j,p,q\in [n]$ with $i<j$, $p<q$.
	Consider the partitioning of $\mathfrak{M}_{n,k}$ with respect to $i$ and $j$.
	\begin{enumerate}
		\item If $p\not\in \{i,j\}$ and $q\not\in \{i,j\}$, and $\mathfrak{b}_{p,q}(A\cap \Line(m^\ast)) \subseteq A$,
		then $\mathfrak{b}_{p,q}(\comp(A)\cap \Line(m^\ast)) \subseteq \comp(A)$.
		\item Suppose that $p\not\in \{i,j\}$ and $q\in \{i,j\}$.
		\begin{enumerate}
			\item Suppose $q=i$.
			If $\mathfrak{b}_{p,i}(A\cap \Line(m^\ast)) \subseteq A$ and $\mathfrak{b}_{p,j}(A\cap \Line(m^\ast)) \subseteq A$,
			then $\mathfrak{b}_{p,i}(\comp(A)\cap \Line(m^\ast)) \subseteq \comp(A)$ and $\mathfrak{b}_{p,j}(\comp(A)\cap \Line(m^\ast)) \subseteq \comp(A)$.
			\item Suppose $q=j$. 
			If $\mathfrak{b}_{p,q}(A\cap \Line(m^\ast)) \subseteq A$,
			then $\mathfrak{b}_{p,q}(\comp(A)\cap \Line(m^\ast)) \subseteq \comp(A)$.
		\end{enumerate}
		\item Suppose that $p\in \{i,j\}$ and $q\not\in \{i,j\}$.
		\begin{enumerate}
			\item Suppose $p=i$. 
			If $\mathfrak{b}_{p,q}(A\cap \Line(m^\ast)) \subseteq A$,
			then $\mathfrak{b}_{p,q}(\comp(A)\cap \Line(m^\ast)) \subseteq \comp(A)$.
			\item Suppose $p=j$. 
			If $\mathfrak{b}_{i,q}(A\cap \Line(m^\ast)) \subseteq A$ and $\mathfrak{b}_{j,q}(A\cap \Line(m^\ast)) \subseteq A$,
			then $\mathfrak{b}_{i,q}(\comp(A)\cap \Line(m^\ast)) \subseteq \comp(A)$ and $\mathfrak{b}_{j,q}(\comp(A)\cap \Line(m^\ast)) \subseteq \comp(A)$.
		\end{enumerate}
		\item If $p\in \{i,j\}$ and $q\in \{i,j\}$, and $\mathfrak{b}_{p,q}(A\cap \Line(m^\ast)) \subseteq A$,
		then $\mathfrak{b}_{p,q}(\comp(A)\cap \Line(m^\ast)) \subseteq \comp(A)$.
	\end{enumerate}
\end{cor}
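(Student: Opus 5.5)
The proof works line by line. Fix $i<j$ and a line $L=\Line(m^\ast)$. Its elements are $x_i^{a}x_j^{c-a}m'$ for $a=0,\dots,c$, where $m'$ is the fixed part coming from $\mathfrak{f}_{i,j}$, and they are linearly ordered in the lex order by the exponent $a$. By definition, $\comp(A)\cap L$ is a lex-extremal segment of $L$ of size $s=|A\cap L|$. The idea is to compare $s$ with $s'=|A\cap \Line(\mathfrak{b}_{p,q}(m^\ast))|$ and show that the image of the compressed segment is a segment of the target line of size at most $s'$. Since compressed sets in a line are nested segments, that image then lies in $\comp(A)$. I will use Lemma \ref{lineStructure} throughout.

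\textbf{Cases (1) and (4).} In case (1), $\mathfrak{b}_{p,q}$ leaves the exponents of $x_i,x_j$ unchanged. By Lemma \ref{lineStructure}(1) it is an order-preserving bijection $L\to L'$, where $L'=\Line(\mathfrak{b}_{p,q}(m^\ast))$. Injectivity and the hypothesis $\mathfrak{b}_{p,q}(A\cap L)\subseteq A$ give $s'\ge s$. The image of the extremal segment of size $s$ is the extremal segment of $L'$ of size $s$, and this lies inside $\comp(A)\cap L'$.

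In case (4), $L=L'$ and $\mathfrak{b}_{i,j}$ moves each element one step along the line, from exponent $a$ to $a+1$. The hypothesis says $A\cap L$ is closed under this step, so it is a final segment in that direction. I must check that the convention ``last $|A|$ monomials in the lexicographic order'' selects exactly the segment that is closed under $\mathfrak{b}_{i,j}$, that is, the $x_i$-heavy end. This is the convention that makes compressed Borel sets stay Borel, as the figure suggests. With that convention $\comp(A)\cap L$ is itself closed and case (4) is immediate.

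\textbf{Cases (2) and (3).} Here the target line has one element more or one element fewer. Lemma \ref{lineStructure}(2),(3) says that $\mathfrak{b}_{p,q}$ is an order-preserving bijection after deleting the appropriate endpoint $m_s$ or $m_l$. In (2b) and (3a) a single move suffices. The compressed segment either avoids the deleted endpoint, or it fills the whole line. Either way its image is the extremal segment of size $\min(s,|L'|)$. I then need $s'\ge \min(s,|L'|)$, and this follows from injectivity of $\mathfrak{b}_{p,q}$ on $(A\cap L)$ minus that endpoint, once I verify that the endpoint lies at the correct end of the segment.

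\textbf{Main obstacle: cases (2a) and (3b).} These are the reason the statement uses two hypotheses. Take (2a) with $q=i$. One move alone gives only $s'\ge s-1$ when $A\cap L$ contains $m_s$, the element not divisible by $x_i$. To recover $s'\ge \min(s,|L'|)$, I would combine the images $\mathfrak{b}_{p,i}\bigl((A\cap L)\setminus\{m_s\}\bigr)$ and $\mathfrak{b}_{p,j}(A\cap L)$, both of which lie in $A\cap L'$. Then I split into cases:
\begin{itemize}
\item If $m_s\notin A$, the first image already has size $s$.
\item If $m_l\notin A$, the $\mathfrak{b}_{p,j}$ image is injective and has size $s$.
\item If both endpoints lie in $A$ but $A\cap L\neq L$, compare exponents. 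The image of $\mathfrak{b}_{p,j}$ contains the element of $L'$ with $x_i$-exponent $0$, while $\mathfrak{b}_{p,i}$ lowers the exponent by one. A direct count of the exponent sets $\{a-1\}$ and $\{a\}$ over the occupied positions then shows that the union has at least $\min(s,|L'|)$ elements.
\end{itemize}
Having $s'\ge\min(s,|L'|)$ gives both closure conclusions for $\comp(A)$, because each image of the compressed segment is an extremal segment of $L'$ of size at most $\min(s,|L'|)$. Case (3b) is the mirror image, with the roles of $L$ and $L'$ and of $m_s,m_l$ exchanged. The exponent bookkeeping in this last subcase is the step I expect to need the most care.
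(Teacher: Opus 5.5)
Your route is the one the paper intends; its proof is only ``follows from Lemma \ref{lineStructure} and the definitions.'' Cases (1), (3a) and (4) are fine as you wrote them. Case (2a) also works: if $0,c\in S$ and $S\neq\{0,\dots,c\}$, the least element $a$ of $S$ above the first gap has $a-1\notin S$, so the two images together have $s$ elements. However, the uniform template you use for (2b) and (3b) is false in both. That template says the image of the compressed segment is an extremal segment of $L'$ of size at most $\min(s,|L'|)$, so $s'\ge\min(s,|L'|)$ suffices. In these two cases $\mathfrak{b}$ preserves the $x_i$-exponent $a$ while $|L'|=|L|\mp1$, so the tops of $L$ and $L'$ are not aligned. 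Write the line as $x_i^ax_j^{c-a}m'$ with $0\le a\le c$; then $\comp(A)\cap L$ is $\{c-s+1,\dots,c\}$ in exponent coordinates.

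\textbf{Case (2b).} The deleted endpoint $m_l$ (the element with $a=c$) is the \emph{top} of $L$, so every nonempty compressed segment contains it. Your dichotomy ``avoids the endpoint or fills the line'' therefore fails. The image is $\{c-s+1,\dots,c-1\}$, which is the top $s-1$ elements of $L'$ (whose top exponent is $c-1$). What is needed is $s'\ge s-1$, and that is exactly what injectivity on $(A\cap L)\setminus\{m_l\}$ gives. The inequality $s'\ge\min(s,|L'|)$ that you aim for is false. Take $(i,j)=(2,3)$, $(p,q)=(1,3)$, $L=\{x_2^2,x_2x_3,x_3^2\}$, $A\cap L=\{x_2^2\}$ (the hypothesis is vacuous since $x_3\nmid x_2^2$) and $A\cap L'=\emptyset$.

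\textbf{Case (3b).} This is not the mirror of (2a). Both $\mathfrak{b}_{i,q}$ ($a\mapsto a+1$) and $\mathfrak{b}_{j,q}$ ($a\mapsto a$) are injective on all of $L$, so either hypothesis alone gives $s'\ge s$, and there is no count deficit. This settles the $\mathfrak{b}_{i,q}$ conclusion, whose image is the top $s$ of $L'$. The real problem is alignment. $L'$ has exponents $0,\dots,c+1$, and $\mathfrak{b}_{j,q}$ sends $\{c-s+1,\dots,c\}$ to the $s$ elements just below the top of $L'$, which is not an extremal segment. Containment therefore requires $s'\ge s+1$, and this is where the second hypothesis is used. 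Let $S$ be the exponent set of $A\cap L$. Then $A\cap L'\supseteq S\cup(S+1)$, and $\max S+1\notin S$, so $s'\ge s+1$ whenever $s\ge1$.

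To see that $s'\ge s=\min(s,|L'|)$ is not enough, take $(i,j)=(1,2)$, $(p,q)=(2,3)$ and $A\cap L=\{x_1^2x_3\}$ in degree $3$. If $s'=1$, then $\comp(A)\cap L'=\{x_1^3\}$, which misses $\mathfrak{b}_{2,3}(x_1^2x_3)=x_1^2x_2$.

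With (2b) and (3b) redone as above, your argument proves the statement.
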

\begin{proof}
	Follows from Lemma \ref{lineStructure} and the definitions of $\comp$ and $\mathfrak{b}$.
\end{proof}

Suppose $n\geq 3$ and that $I\subseteq K[x_1,\dots, x_n]$ is an ideal of type $v_{\ell,t} \in \N^{\ell +t+1}$.
We say that $I$ is \textit{lex plus powers transformable with respect to $v_{\ell,t}$} ($\LPPT(v_{\ell, t})$),
if for all $k\in \{0,1,\dots, t\}$, $r\geq 3$, and $e = (e_3,e_4,\dots, e_r)\in \N^{r-2}$ such that $E=e_3+\cdots +e_r \leq k$,
we have $\fbl_I^{(r,e)}(\alpha(I)+k) \geq \gamma_I (\alpha(I)+k-E)-1$.
Let $f_1,\dots, f_\ell$ be the regular sequence given by the type $v_{\ell, t}$ with degrees $\deg(f_1),\dots, \deg(f_\ell)$.
For all $k\in \{0,1,\dots, t\}$,
if $\alpha(I)+k \geq \deg(f_1)$ then we define $\Gamma_I(\alpha(I)+k)$ to be the set of the smallest $\gamma_I (\alpha(I)+k)-1$ monomials with respect to the lexicographic order that are inside $\gin(I)_{\alpha(I)+k}$,
otherwise $\Gamma_I(\alpha(I)+k) = \emptyset$.
Note that the definitions of lexicographic order, $\LPPT(v_{\ell, t})$, and $\Gamma_I(\alpha(I)+k)$ imply that $\Gamma_I(\alpha(I)+k)$ only contains monomials that are divisible by $x_2$,
and not divisible by $x_3,x_4,\dots, x_n$.
Furthermore, $\Gamma_I(\alpha(I)+k)$ does not contain $x_1^{\alpha(I)+k}$ by the definition of uncovered bottom length.

If $I$ is of type $v_{\ell,t}$ and is $\LPPT(v_{\ell, t})$,
then for all  $k\in \{0,1,\dots, t\}$ we define
\begin{align*}
	\mathfrak{P}_{I,k} &= \bigcup_{i=0}^k \uSdwB^{i}(\Gamma_I(\alpha(I)+k-i)),\\
	\mathfrak{L}_{I,k} &= \gin(I)_{\alpha(I)+k} \setminus \mathfrak{P}_{I,k}.
\end{align*}

\begin{figure}[htbp]
	\centering
	
	\begin{subfigure}[t]{0.315\textwidth}
		\centering
		\includegraphics[width=\linewidth]{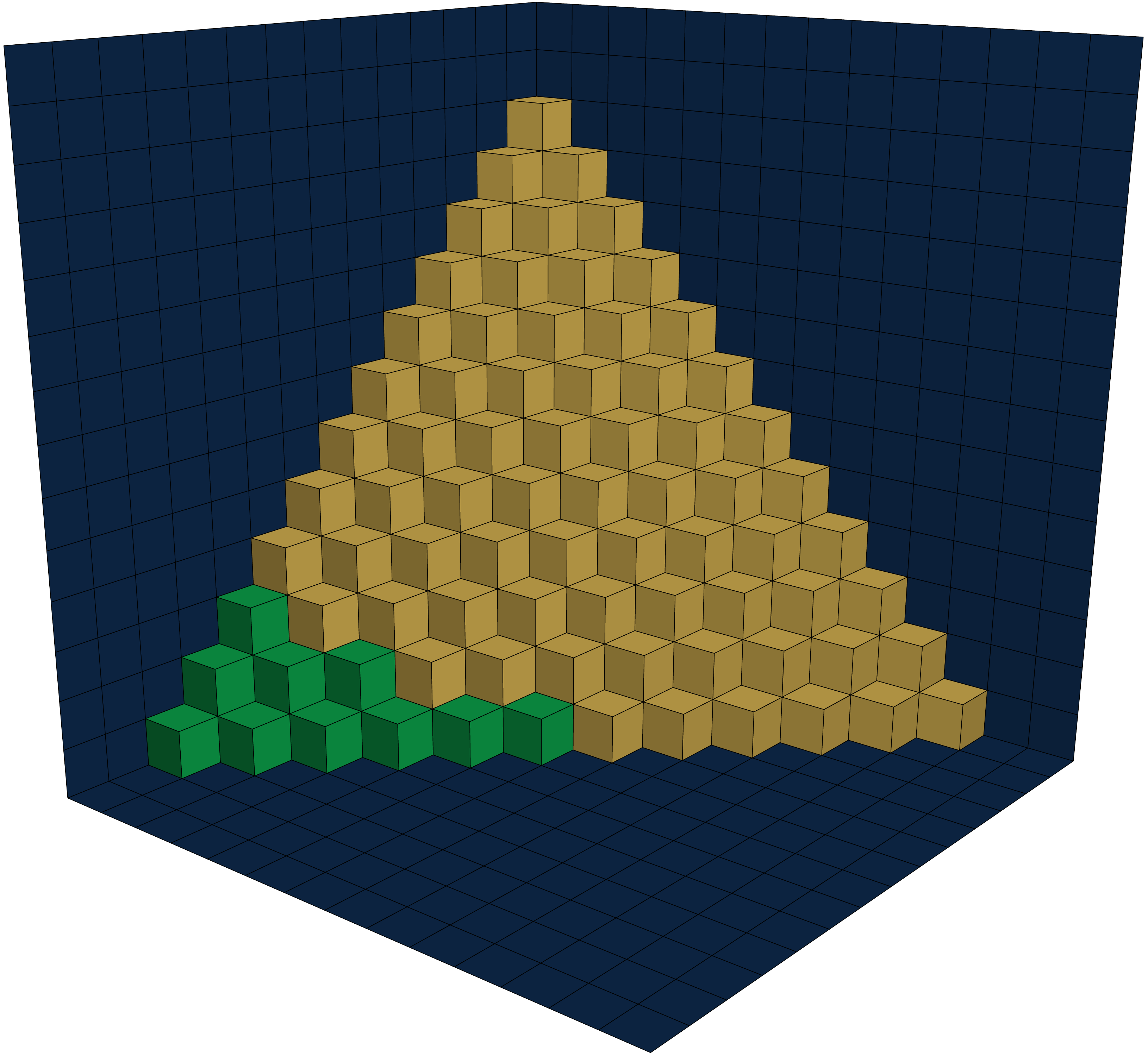}
		\caption{$\gin(I)_{\alpha(I)}$}
		\label{fig:b1}
	\end{subfigure}
	\hfill
	\begin{subfigure}[t]{0.315\textwidth}
		\centering
		\includegraphics[width=\linewidth]{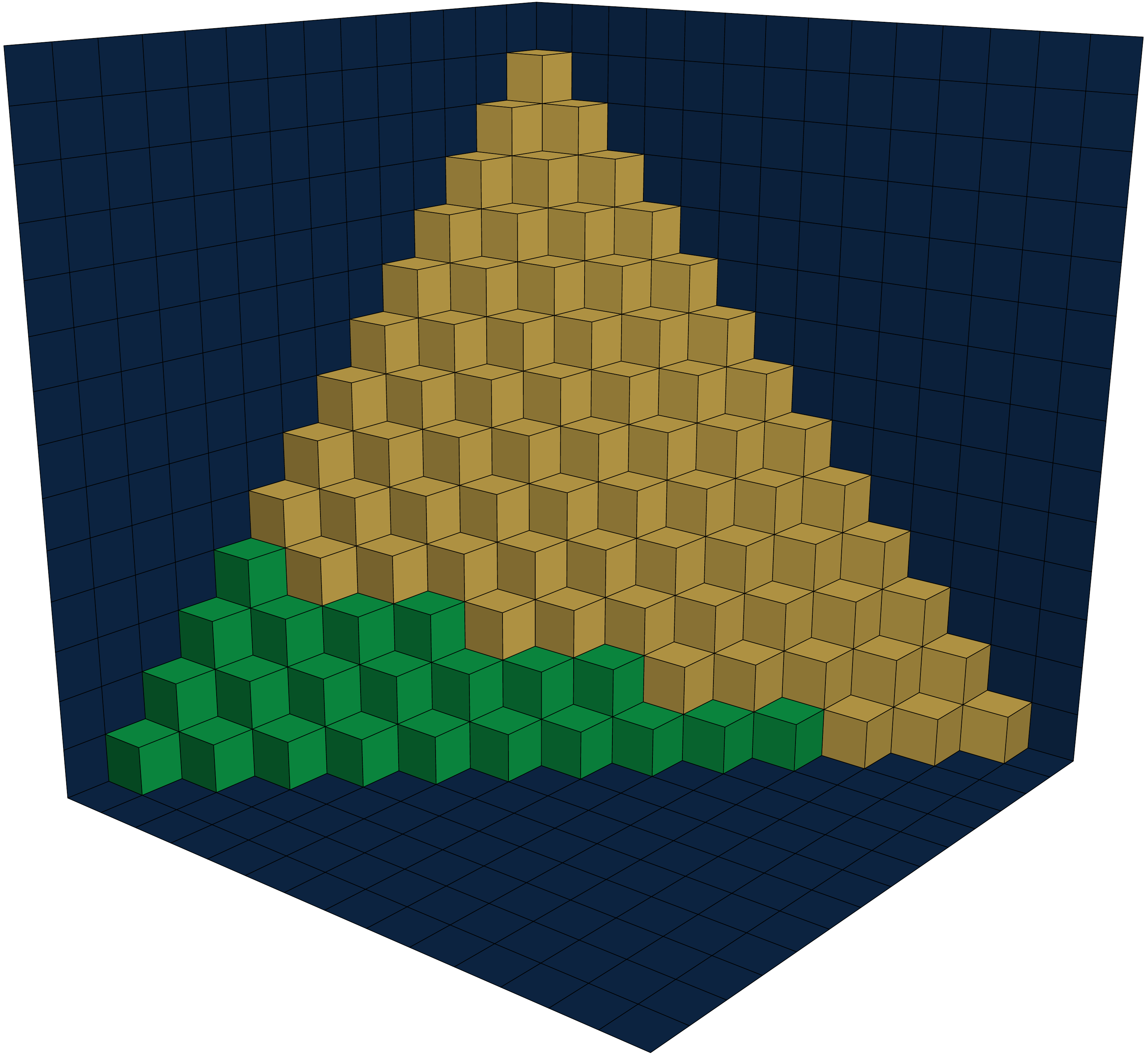}
		\caption{$\gin(I)_{\alpha(I)+1}$}
		\label{fig:b2}
	\end{subfigure}
	\hfill
	\begin{subfigure}[t]{0.315\textwidth}
		\centering
		\includegraphics[width=\linewidth]{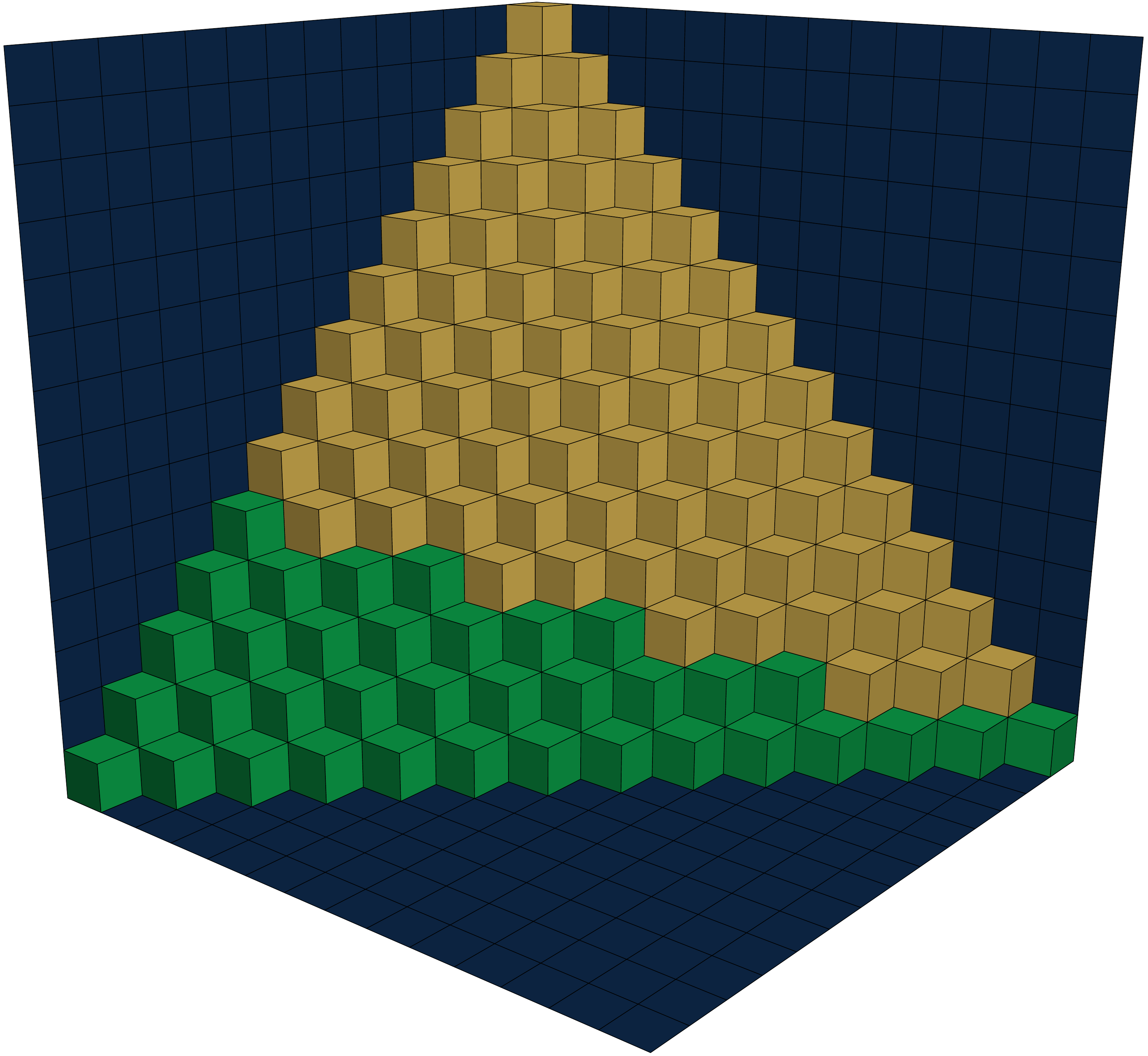}
		\caption{$\gin(I)_{\alpha(I)+2}$}
		\label{fig:b3}
	\end{subfigure}
	
	\medskip
	
	\begin{subfigure}[t]{0.315\textwidth}
		\centering
		\includegraphics[width=\linewidth]{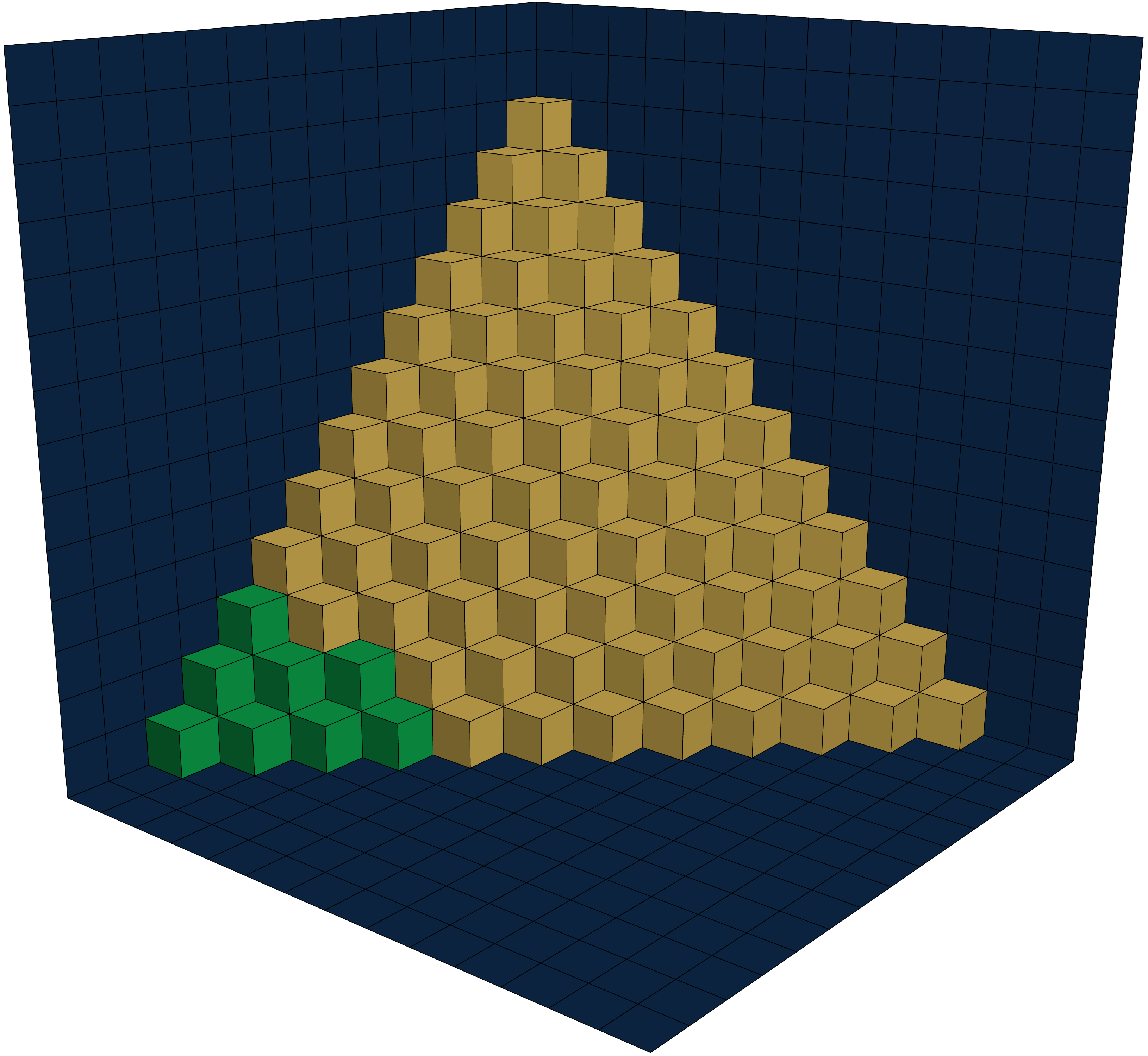}
		\caption{$\mathfrak{L}_{I,0}$}
		\label{fig:b1-rem}
	\end{subfigure}
	\hfill
	\begin{subfigure}[t]{0.315\textwidth}
		\centering
		\includegraphics[width=\linewidth]{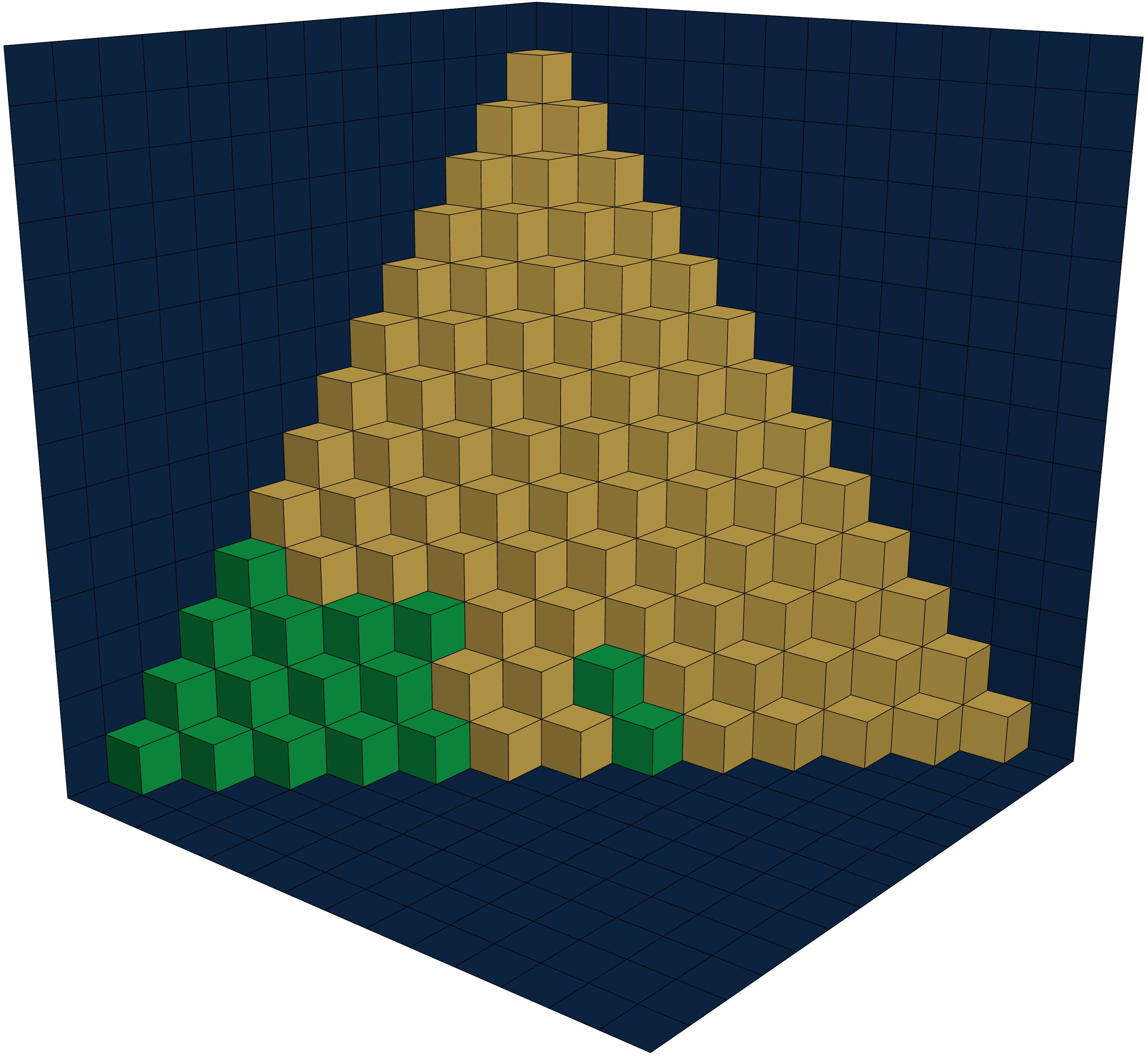}
		\caption{$\mathfrak{L}_{I,1}$}
		\label{fig:b2-rem}
	\end{subfigure}
	\hfill
	\begin{subfigure}[t]{0.315\textwidth}
		\centering
		\includegraphics[width=\linewidth]{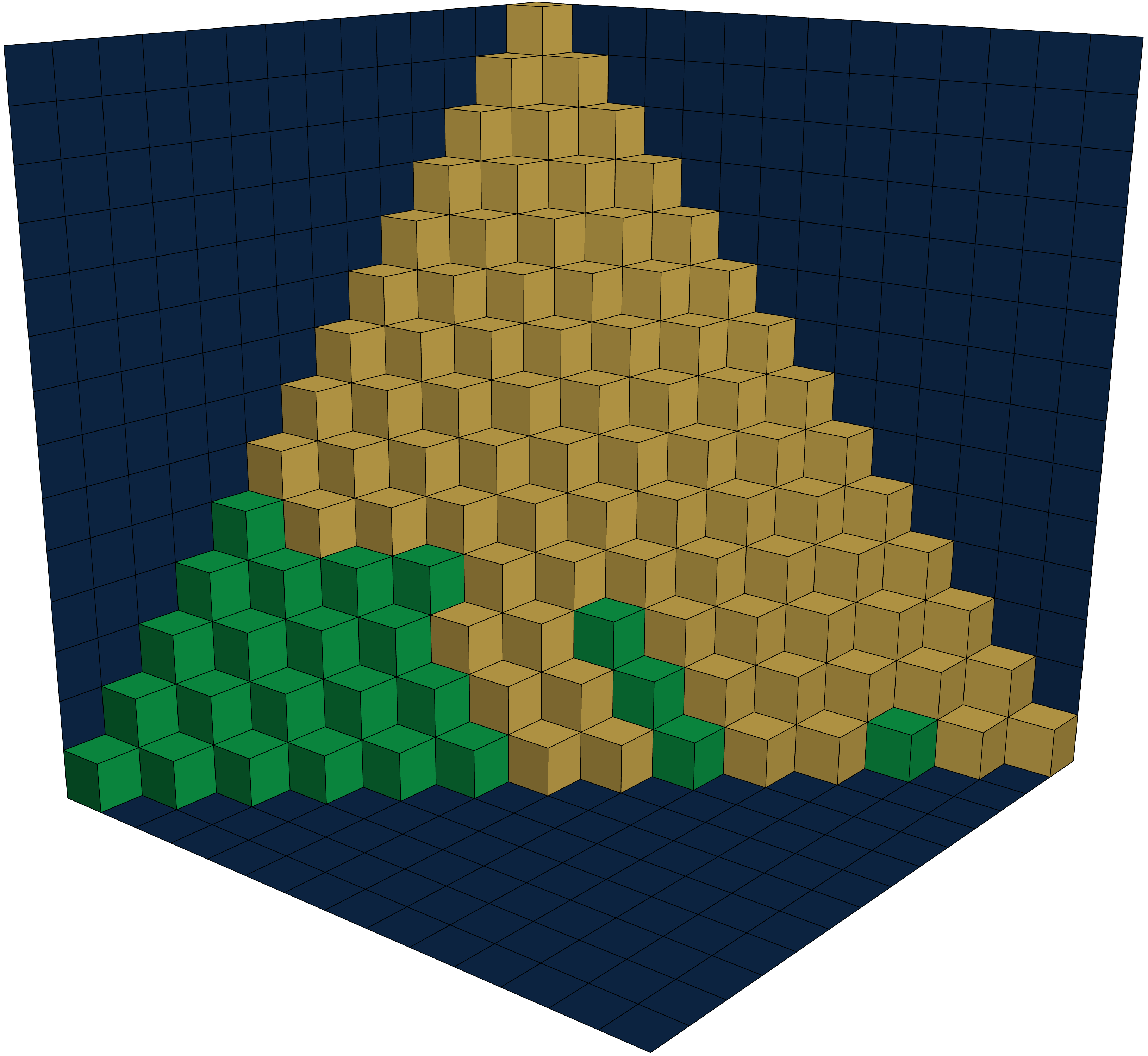}
		\caption{$\mathfrak{L}_{I,2}$}
		\label{fig:b3-rem}
	\end{subfigure}
	
	\medskip
	
	\begin{subfigure}[t]{0.315\textwidth}
		\centering
		\includegraphics[width=\linewidth]{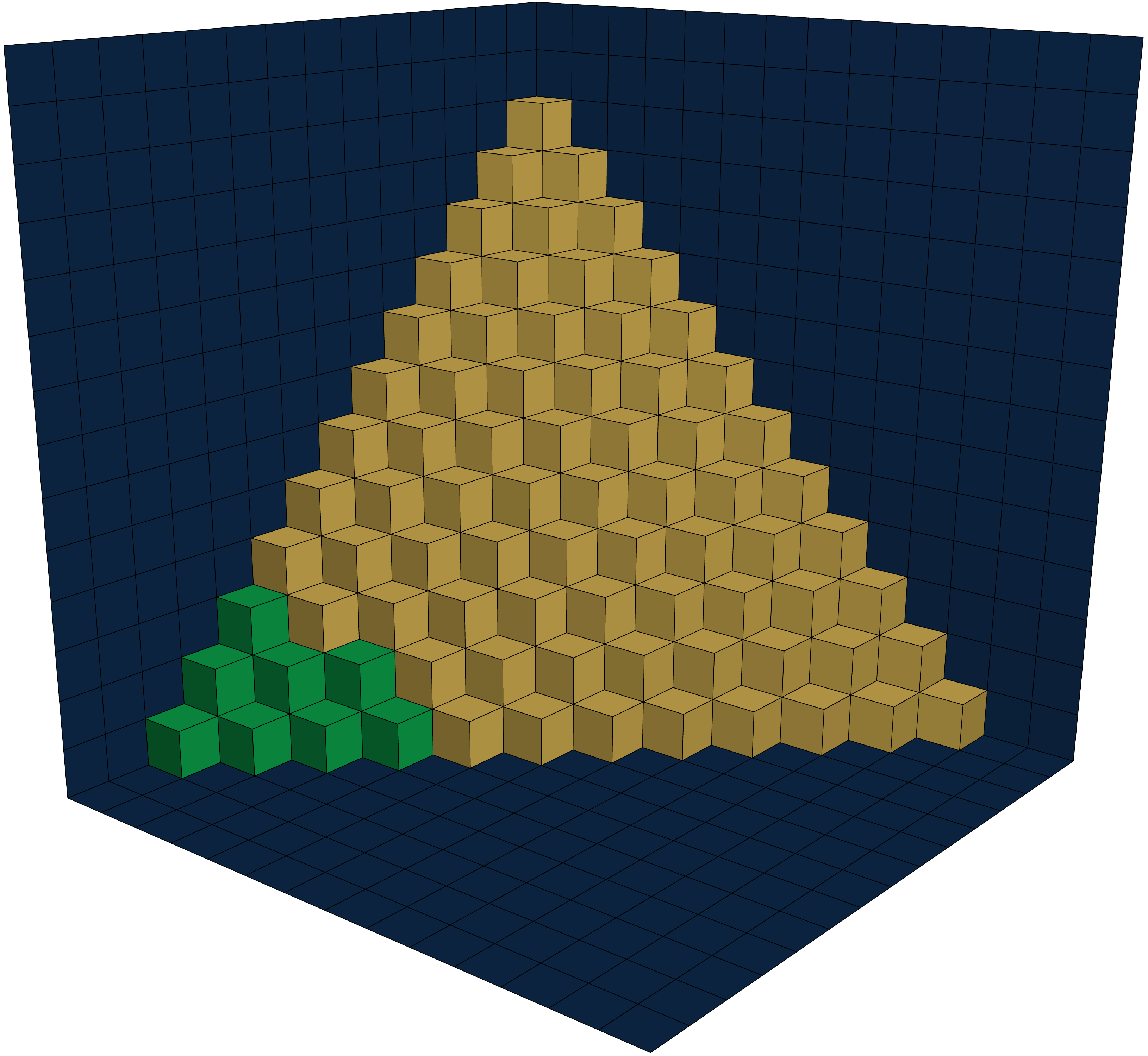}
		\caption{$\comp_{1,2}(\mathfrak{L}_{I,0})$}
		\label{fig:b1-rem-comp}
	\end{subfigure}
	\hfill
	\begin{subfigure}[t]{0.315\textwidth}
		\centering
		\includegraphics[width=\linewidth]{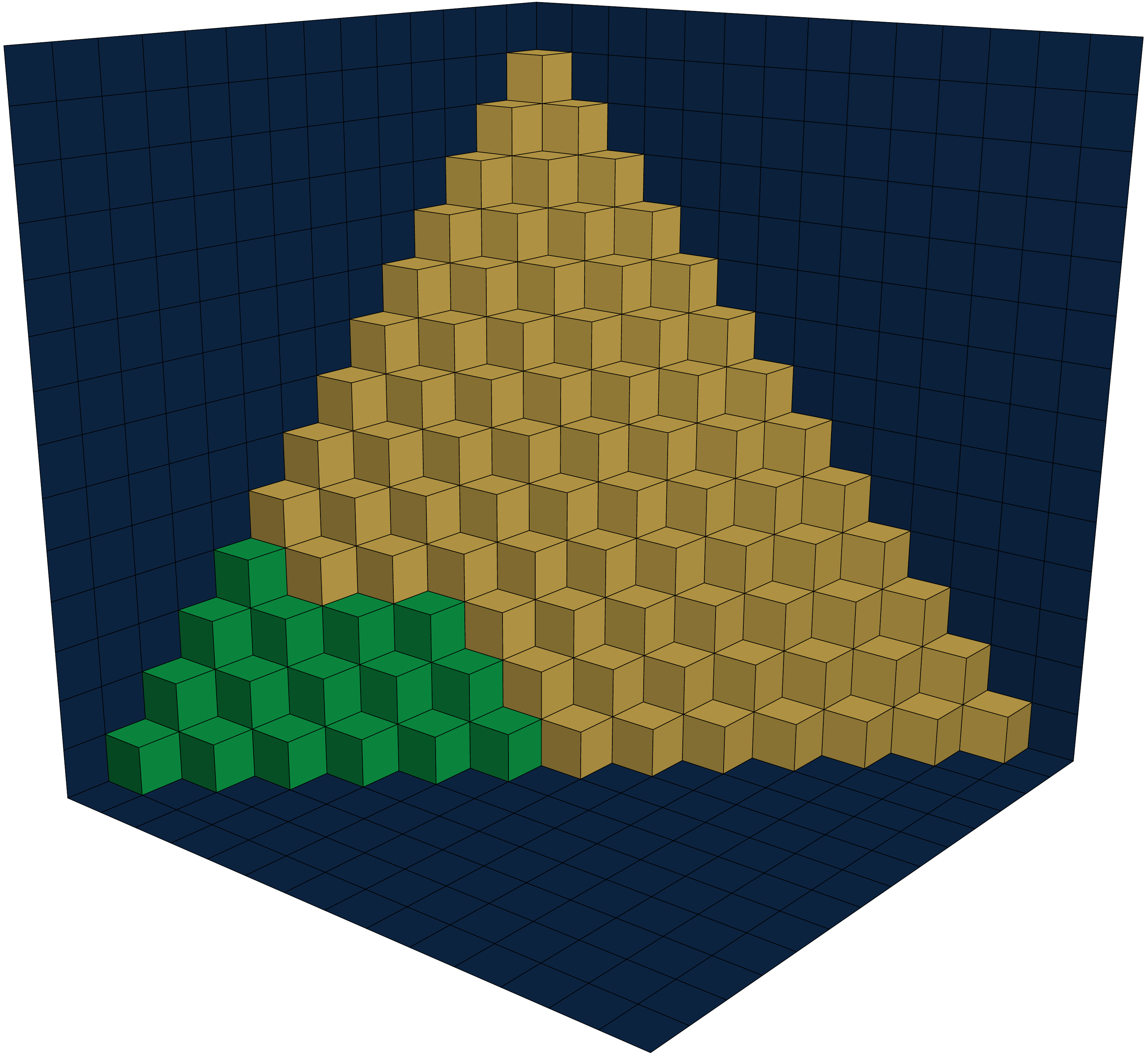}
		\caption{$\comp_{1,2}(\mathfrak{L}_{I,1})$}
		\label{fig:b2-rem-comp}
	\end{subfigure}
	\hfill
	\begin{subfigure}[t]{0.315\textwidth}
		\centering
		\includegraphics[width=\linewidth]{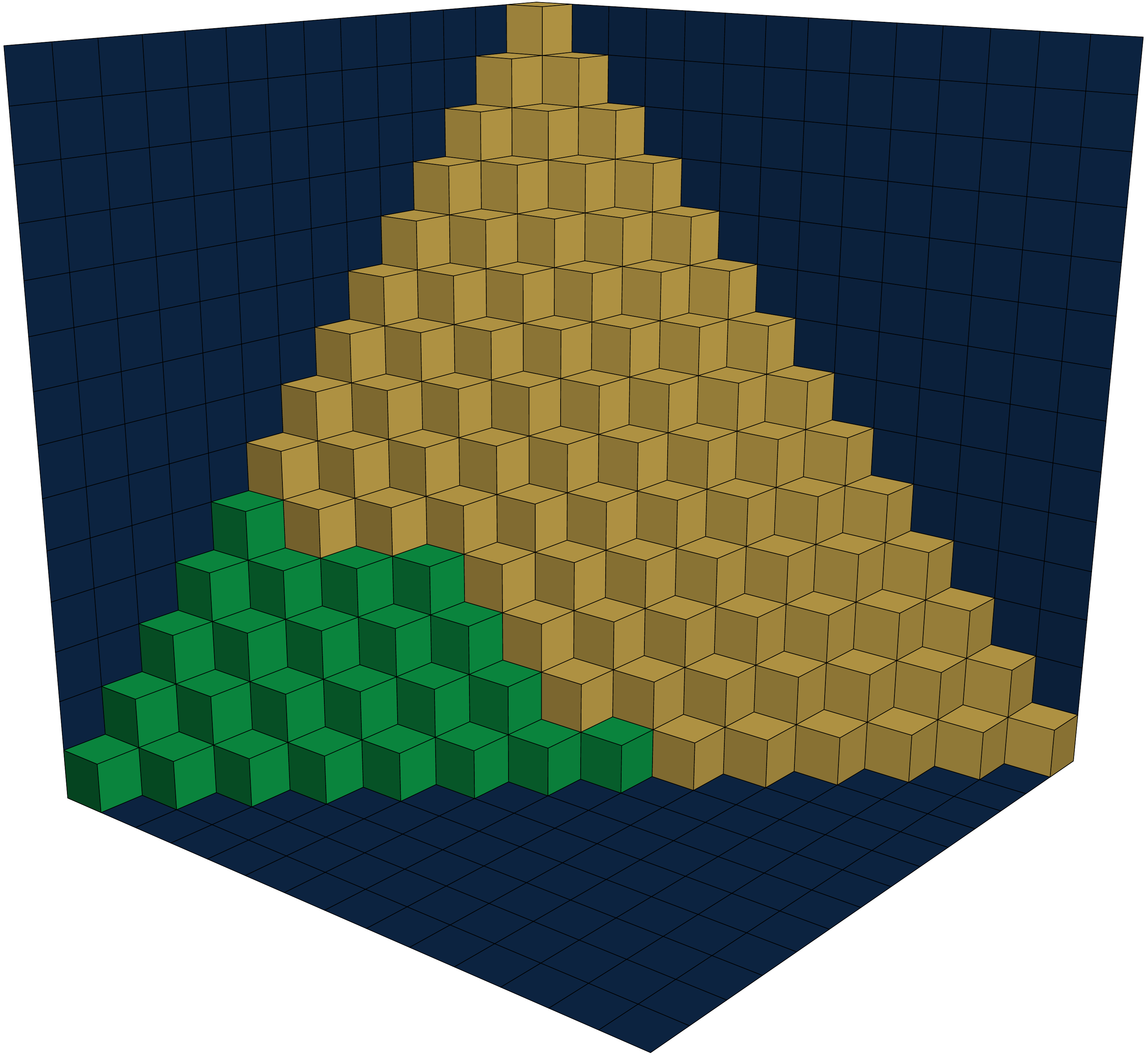}
		\caption{$\comp_{1,2}(\mathfrak{L}_{I,2})$}
		\label{fig:b3-rem-comp}
	\end{subfigure}
	
	\caption{An example of Theorem \ref{LexSegment} when $n=3$, with an ideal $I$ containing a regular sequence of length $3$ in degree $\alpha(I)$ that is recorded by the type.}
	\label{fig:removal-and-compression}
\end{figure}

Figure \ref{fig:removal-and-compression} gives an example of Theorem \ref{LexSegment}.

\begin{thm}\label{LexSegment}
	Suppose that $I\subseteq R$ is an ideal of type $v_{\ell,t} \in \N^{\ell +t+1}$ that is $\LPPT(v_{\ell, t})$ with $t\geq 0$.
	Then for all $k\in \{0,1,\dots, t\}$ there exists a set $\mathfrak{A}_k\subseteq \mathfrak{M}_{n,\alpha(I)+k}$ that is strongly stable,
	and there exists a bijection $\sigma: \mathfrak{L}_{I,k} \rightarrow \mathfrak{A}_k$ such that for all $m\in \mathfrak{L}_{I,k}$ we have $\max(m) = \max(\sigma(m))$.
	Furthermore, for all $k\in \{0,1,\dots, t-1\}$ we have $\uSdw(\mathfrak{A}_{k}) \subseteq \mathfrak{A}_{k+1}$,
	and for all $h\in \{1,2,3,\dots, n\}$  we have
	\begin{align*}
		\left\lbrace m\in \mathfrak{L}_{I,k+1}\setminus \uSdwB(\mathfrak{L}_{I,k}) \bigm | \max(m) = h\right\rbrace| 
		&= |\left\lbrace m\in \mathfrak{A}_{k+1}\setminus \uSdwB(\mathfrak{A}_{k}) \bigm | \max(m)=h \right\rbrace|\\
		&= |\left\lbrace m\in \mathfrak{A}_{k+1}\setminus \uSdw(\mathfrak{A}_{k}) \bigm | \max(m)=h \right\rbrace|.
	\end{align*}
\end{thm}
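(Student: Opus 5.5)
We construct the sets $\mathfrak{A}_k$ by applying one sequence of compressions $\comp_{i,j}$, simultaneously in every degree, to the family $(\mathfrak{L}_{I,0},\dots,\mathfrak{L}_{I,t})$. At each step we check that three quantities are unchanged: cardinalities within each $\max$-class, the containments $\uSdw(\cdot_k)\subseteq \cdot_{k+1}$ (once they hold), and the counts of ``new'' monomials per $\max$-class. We first compress with respect to $(1,2)$, as in Figure \ref{fig:removal-and-compression}, and then iterate over the pairs $(i,j)$ with $i<j$ until the family is stable under all of them. The process terminates because each nontrivial compression strictly increases $\sum_m \operatorname{rank}_{\mathrm{lex}}(m)$ in some degree. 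The resulting sets are $(i,j)$-compressed for every pair, and Corollary \ref{BorelComp} applied line by line shows that such a set is strongly stable. The bijection $\sigma$ is the composite of the line-wise bijections from a set to its compression.

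\textbf{Compressions preserve $\max$.} For $j<n$ a line $\mathfrak{M}_{n,k}(i,j,\mathfrak f_{i,j})$ has constant $\max$ unless $\mathfrak f$ vanishes on $\{j+1,\dots,n\}$. In that case $\max(m)$ equals $j$ or $i$ according to whether $x_j$ divides $m$, and the line splits into the monomials with $\max=j$ and a single monomial $x_i^a\prod x_u^{e_u}$ with $\max(m)\le i$. That single monomial is the lex-largest element of the line, so compressing towards the lex end could move it. To avoid this, I would check that in $\mathfrak{L}_{I,k}$ this extremal monomial is present whenever the line is nonempty. This is where strong stability of $\gin(I)$ enters, and the $\LPPT$ hypothesis guarantees that removing $\mathfrak{P}_{I,k}$ never deletes such an extremal point outside the $x_1,x_2$ bottom line. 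For $(i,j)=(1,2)$ the removed set lies on the bottom line $x_1^ax_2^b$, and there the condition $\fbl^{(r,e)}\ge\gamma-1$ says precisely that the removed monomials are the lex-last ones. With this, $\max$-class cardinalities are preserved.

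\textbf{Shadow compatibility.} We need $\uSdw(\comp(A_k))\subseteq\comp(A_{k+1})$ whenever $\uSdwB(A_k)\subseteq A_{k+1}$. For the initial family, $\uSdwB(\mathfrak L_{I,k})\subseteq \mathfrak L_{I,k+1}$ holds because $\mathfrak{P}_{I,k+1}\supseteq\uSdwB(\mathfrak P_{I,k})$ and $\gin(I)$ is an ideal. Multiplication by $x_u$ with $u\notin\{i,j\}$ maps lines to lines of the same length and commutes with compression, so those variables are handled directly. Multiplication by $x_i$ or $x_j$ sends a line of degree $k$ into a line one longer, and lex-final segments are carried into lex-final segments. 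This is the standard Clements--Lindström/Macaulay line argument (compare Lemma \ref{lineStructure}).

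\textbf{Counting identity.} Once the containments hold, the identity for new monomials follows by subtracting $\max$-class counts. Write $N_h(X)$ for the number of monomials $m\in X$ with $\max(m)=h$. Then $N_h\bigl(\mathfrak L_{I,k+1}\setminus\uSdwB(\mathfrak L_{I,k})\bigr)=N_h(\mathfrak L_{I,k+1})-N_h\bigl(\uSdwB(\mathfrak L_{I,k})\bigr)$. By Lemma \ref{BorelShadowUnique}, $\uSdwB$ is injective on pairs $(x_u,g)$ with $u\ge\max(g)$, so $N_h\bigl(\uSdwB(X)\bigr)=\sum_{g'\le h}N_{g'}(X)$ depends only on the $\max$-distribution of $X$. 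The same formula applies to $\mathfrak A_k$, and since $\mathfrak A_k$ is strongly stable, Lemma \ref{BorelShadowEqualsShadow} gives $\uSdwB(\mathfrak A_k)=\uSdw(\mathfrak A_k)$.

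The main obstacle is the first step, keeping $\max$ invariant under compressions with $j<n$. I would try to avoid general pairs altogether: use only the pairs $(h,h+1)$, which act inside the region $\max\le h+1$ of each class in a controlled way, and argue inductively on $h$ that each class $\{\max=h\}$ can be made ``strongly stable relative to smaller classes''.
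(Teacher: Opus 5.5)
The step you flag as the main obstacle is a genuine gap, and it is exactly where the $\LPPT$ hypothesis must be used, so as written the proposal does not prove the theorem.

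\textbf{Preserving $\max$.} A compression $\comp_{i,j}$ preserves the $\max$-distribution only under a specific condition. On every line $\mathfrak{M}_{n,k}(i,j,\mathfrak{f}_{i,j})$ with $\mathfrak{f}_{i,j}$ vanishing above $j$, the lex-largest monomial must lie in the set whenever the set meets the line. This must hold for every intermediate family of your free iteration, not only for $\mathfrak{L}_{I,k}$, and you give no invariant that the compressions preserve.

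Your description of $\mathfrak{L}_{I,k}$ is also inaccurate. The lex-largest points of the mixed-$\max$ $(1,3)$- and $(2,3)$-lines lie on the bottom line itself, which your sentence about points outside the bottom line does not cover. Moreover, on the bottom line the removed monomials need not be the lex-last ones. If $\fbl_I(\alpha(I)+k)>\gamma_I(\alpha(I)+k)-1$, the surplus uncovered generators are lex-below $x_2\Gamma_I(\alpha(I)+k-1)\subseteq\mathfrak{P}_{I,k}$. So $\mathfrak{L}_{I,k}$ need not even be closed under $\mathfrak{b}_{1,2}$. The $(1,2)$-step preserves $\max$ only because $x_1^{\alpha(I)+k}\in\mathfrak{L}_{I,k}$.

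\textbf{The missing ingredient.} What you lack is the paper's property (2): after $\comp_{1,2}$, each $A_{k,1}$ is closed under $\mathfrak{b}_{p,q}$ applied to monomials with $\max(m)=q\ge 3$. The paper proves this by the line count $|A_{k,0}\cap\Line(\mathfrak{b}_{p,q}(m^\ast))|\ge |A_{k,0}\cap\Line(m^\ast)|+1$ for $p\in\{1,2\}$. That count comes from describing $\Line\cap\mathfrak{P}_{I,k}$ as a union of sets $m_i\Gamma_I(\alpha(I)+k-i)$, together with $\fbl_I^{(q,e)}(\alpha(I)+k-j)\ge\gamma_I(\alpha(I)+k-j)-1$. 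The paper then stages the remaining compressions. At stage $u$ it uses only pairs inside $[u+2]$, and only after the part in $x_1,\dots,x_{u+2}$ is already strongly stable, so no compression ever acts on a line where $\max$ varies. Your closing suggestion of adjacent pairs with induction on $h$ is close in spirit to this staging, but staging alone does not supply the closure property.

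\textbf{The shadow step.} Here there is a second, related problem. $\uSdwB(A_k)\subseteq A_{k+1}$ does not imply $\uSdw(\comp(A_k))\subseteq\comp(A_{k+1})$. For $n=3$, the sets $A_k=\{x_3^2\}$ and $A_{k+1}=\{x_3^3\}$ are $(1,2)$-compressed and give a counterexample. The line argument you invoke needs both the $x_i$- and $x_j$-multiples in $A_{k+1}$, i.e.\ full shadow containment, whereas $\mathfrak{L}_{I,k}$ only has Borel-shadow containment. That Borel-shadow containment holds for a different reason than the one you give. It comes from the uniqueness of the decomposition $m=x_{\max(m)}\cdot(m/x_{\max(m)})$ and from $\Gamma_I(\alpha(I)+k+1)$ consisting of new generators, not from $\mathfrak{P}_{I,k+1}\supseteq\uSdwB(\mathfrak{P}_{I,k})$.

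\textbf{How your route can be repaired.} Your architecture works once property (2) is in hand.
\begin{enumerate}
\item Closure under the moves $\mathfrak{b}_{p,\max(m)}$ is preserved by every $\comp_{i,j}$; check the cases of Lemma \ref{lineStructure}.
\item This closure forces the lex-largest point into every line on which $\max$ varies, so every compression preserves $\max$.
\item Combined with Borel-shadow containment, which survives $\comp_{1,2}$, it yields full shadow containment, and every compression preserves that.
\end{enumerate}
With this, your derivation of the final identity by subtracting $\max$-class counts is correct, and shorter than the paper's tracking of properties (5) and (7). But establishing the closure property after $\comp_{1,2}$ is precisely the $\LPPT$ computation, and that is what your proposal lacks.
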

\begin{proof}
	Let $f_1,\dots, f_\ell$ be a regular sequence given by the type $v_{\ell, t}$ with $\deg(f_1)\leq \cdots \leq \deg(f_{\ell})$.
	If $k\in \{0,1,\dots, t\}$ and $\alpha(I)+k \leq \deg(f_1)$,
	then $\mathfrak{A}_k = \mathfrak{L}_{I,k}$ has all the properties required by the claim.
	Since we are going to use a sequence of compression operations below,
	and compression operations do not change strongly stable sets,
	without loss of generality we can assume $\deg(f_1) = \alpha(I)$.
	 
	For all $k\in \{0,1,\dots, t\}$ define $A_{k,0} = \mathfrak{L}_{I,k}$.
	For all $k\in \{0,1,\dots, t\}$ we will show that there exist sets $A_{k,1},A_{k,2},\dots, A_{k,n-2}$ such that for all $u\in \{1,2,\dots , n-2\}$ the following properties hold.
	\begin{enumerate}
		\item There exists a bijection $\sigma_{u}: A_{k,u-1}\rightarrow A_{k,u}$ such that $\max(m) = \max(\sigma_u(m))$.
		\item For $p,q\in [n]$ with $p<q$ and $u+2<q$,
		if $m\in A_{k,u}$ such that $x_q|m$ and $\max(m) = q$,
		then $\mathfrak{b}_{p,q}(m)\in A_{k,u}$.
		\item The set $A_{k,u} \cap \mathfrak{M}_{n,\alpha(I)+k,u+2}$ is strongly stable.
		\item For $k\in \{0,1,\dots, t-1\}$ we have $\uSdw_{\{1,2,\dots, u+2\}}(A_{k,u}\cap \mathfrak{M}_{n,\alpha(I)+k,u+2}) \subseteq A_{k+1,u}\cap \mathfrak{M}_{n,\alpha(I)+k+1,u+2}$.
		\item For $k\in \{0,1,\dots, t-1\}$ and all $h\in \{1,2,\dots, u+2\}$ we have
		\begin{align*}
			\left\lbrace m\in A_{k+1,u-1}\setminus \uSdwB(A_{k,u-1}) \bigm | \max(m) = h\right\rbrace| = |\left\lbrace m\in A_{k+1,u}\setminus \uSdw(A_{k,u}) \bigm | \max(m)=h \right\rbrace|.
		\end{align*}
		\item For $k\in \{0,1,\dots, t-1\}$ and all $s\in \{u+3,u+4,\dots, n\}$ we have $
		\uSdw_{\{s\}}(A_{k,u}\cap \mathfrak{M}_{n,\alpha(I)+k,s}) \subseteq A_{k+1,u}\cap \mathfrak{M}_{n,\alpha(I)+k+1,s}$.
		\item For $k\in \{0,1,\dots, t-1\}$ and $s\in \{u+3,u+4,\dots, n\}$ we have
		\begin{align*}
			|\{m\in A_{k+1,u-1}\setminus \uSdw_{\{s\}}(A_{k,u-1}) \bigm | \max(m)=s\}| = |\{m\in A_{k+1,u}\setminus \uSdw_{\{s\}}(A_{k,u}) \bigm | \max(m)=s\}|.
		\end{align*}
	\end{enumerate}
	We prove the claim by induction on $u$.
	So, we first handle the base case $u=1$.

	Suppose that $k\in \{0,1,\dots, t\}$.
	We will first consider the $(1,2)$ partitioning of $\mathfrak{M}_{n,\alpha(I)+k}$,
	because we are first going to use $\comp_{1,2}$.
	Let $A_{k,1} = \comp_{1,2}(A_{k,0})$.
	Note that $|A_{k,0}| = |A_{k,1}|$ by definition of $\comp_{1,2}$.
	Also there is a bijection $\sigma_1: A_{k,0}\rightarrow A_{k,1}$ such that $\max(m) = \max(\sigma_1(m))$,
	since $\mathfrak{P}_{I,k}$ does not contain powers of $x_1$ and we are using $\comp_{1,2}$.
	We will now show that $A_{k,1}$ partially satisfies the second property.
	
	Since $\gin(I)$ is strongly stable,
	if $p,q\in \{3,4,\dots, n\}$ with $p<q$ and $m\in \gin(I)_{\alpha(I)+k}$ such that $x_q|m$,
	then $\mathfrak{b}_{p,q}(m)\in \gin (I)$.
	For all $k'\in \{0,1,\dots, t\}$,
	$\Gamma_I(\alpha(I)+k')$ only contains monomials that are divisible by $x_2$,
	and not divisible by $x_3,x_4,\dots, x_n$.
	Hence, 
	if $p,q\in \{3,4,\dots, n\}$ with $p<q$ and $m\in A_{k,0}$ such that $x_q|m$,
	then we have $\mathfrak{b}_{p,q}(m)\in A_{k,0}$ by the definition of $\mathfrak{L}_{I,k}$.
	Thus, 
	by Corollary \ref{BorelComp}, 
	for all $p,q\in \{3,4,\dots, n\}$ with $p<q$ and $m\in A_{k,1}$ such that $x_q|m$,
	we have that $\mathfrak{b}_{p,q}(m)\in A_{k,1}$.
	Also by Corollary \ref{BorelComp},
	for all $m\in A_{k,1}$ such that $x_2|m$ we have $\mathfrak{b}_{1,2}(m)\in A_{k,1}$.
	
	Next,
	we suppose that $p\in \{1,2\}$ and $q\in \{3,4,\dots, n\}$,
	as this is the only case left to consider. 
	We can't get the same type of statement as the other cases,
	but we can prove the statement required by the second property.
	We show that if $m\in A_{k,1}$ such that $x_q|m$ and $\max(m)=q$,
	then we have $\mathfrak{b}_{p,q}(m)\in A_{k,1}$.
	Let $m^\ast \in \mathfrak{M}_{n,\alpha(I)+k}\cap A_{k,1}$ such that $x_q|m^\ast$ and $\max(m^\ast) = q$.
	We will look at $\Line(m^\ast)$ and $\Line(\mathfrak{b}_{p,q}(m^\ast))$.
	By Lemma \ref{lineStructure} and since $\gin(I)$ is strongly stable,
	$|\gin(I)\cap \Line(m^\ast)| + 1\leq |\gin(I)\cap \Line(\mathfrak{b}_{p,q}(m^\ast))|$.
	Thus,
	if $\gin(I)\cap \Line(m^\ast) = A_{k,0}\cap \Line(m^\ast)$ and $\gin(I)\cap \Line(\mathfrak{b}_{p,q}(m^\ast)) = A_{k,0}\cap \Line(\mathfrak{b_{p,q}}(m^\ast))$,
	then for all $m\in A_{k,1}\cap \Line(m^\ast)$ such that $x_q|m$,
	we have $\mathfrak{b}_{p,q}(m)\in A_{k,1}\cap \Line(\mathfrak{b}_{p,q}(m^\ast))$.
	
	Hence,
	we assume $\gin(I)\cap \Line(m^\ast) \neq A_{k,0}\cap \Line(m^\ast)$ or $\gin(I)\cap \Line(\mathfrak{b}_{p,q}(m^\ast)) \neq A_{k,0}\cap \Line(\mathfrak{b_{p,q}}(m^\ast))$.
	Then $\Line(\mathfrak{b}_{p,q}(m^\ast))\cap \mathfrak{P}_{I,k} \neq \emptyset$.
	So,
	by the definition of $\mathfrak{P}_{I,k}$,
	there exist $j\in \{0,1,\dots, k\}$ and monomials $m_j,m_{j+1}\dots, m_k$ with $\deg(m_i) = i$ and $x_1\not | m_i$ for all $i\in \{j,j+1,\dots, k\}$, 
	such that
	\begin{align*}
		\bigcup_{i=j}^k \{m_im\bigm | m\in \Gamma_I(\alpha(I)+k-i) \} = \Line(\mathfrak{b}_{p,q}(m^\ast))\cap \mathfrak{P}_{I,k}.
	\end{align*}
	Thus,
	\begin{align*}
		\bigcup_{i=j+1}^{k} \{m_imx_q/x_2\bigm | m\in \Gamma_I(\alpha(I)+k-i) \} = \Line(m^\ast)\cap \mathfrak{P}_{I,k}.
	\end{align*}
	Since the above unions are disjoint we have
	\begin{align*}
		|\Line(\mathfrak{b}_{p,q}(m^\ast))\cap \mathfrak{P}_{I,k}| &= \sum_{i=j}^k |\Gamma_I(\alpha(I)+k-i)| = \sum_{i=j}^k (\gamma_I(\alpha(I)+k-i)-1),\\
		|\Line(m^\ast)\cap \mathfrak{P}_{I,k}| &= \sum_{i=j+1}^k |\Gamma_I(\alpha(I)+k-i)| = \sum_{i=j+1}^k (\gamma_I(\alpha(I)+k-i) -1).
	\end{align*}
	We have $\Line(\mathfrak{b}_{p,q}(m^\ast)) = \mathfrak{M}_{n,\alpha(I)+k}(1,2, \mathfrak{f}_{1,2})$,
	for some $\mathfrak{f}_{1,2}$.
	Since $\max(m^\ast) = q$,
	for all $i\in \{q+1,q+2,\dots, n\}$ we have $\mathfrak{f}_{1,2}(i) = 0$.
	Set $m^\star = x_{3}^{\mathfrak{f}_{1,2}(3)}\cdots x_q^{\mathfrak{f}_{1,2}(q)}$ and $e=(\mathfrak{f}_{1,2}(3),\mathfrak{f}_{1,2}(4),\dots, \mathfrak{f}_{1,2}(q))$.
	We have that $x_2\not | m_j$,
	since otherwise we would contradict the minimality of $j$.
	Thus, $m_j= m^\star$.
	Since $\gin(I)$ is strongly stable and $I$ is $\LPPT(v_{\ell, t})$ we have 
	\begin{align*}
		|\gin(I)\cap \Line(\mathfrak{b}_{p,q}(m^\ast))| &\geq |\gin(I)\cap \Line(m^\ast)| + 1 + \fbl_I^{(q,e)}(\alpha(I)+k-j)\\
		&\geq |\gin(I)\cap \Line(m^\ast)| + \gamma_I(\alpha(I)+k-j).
	\end{align*}
	Hence,
	\begin{align*}
		&|A_{k,0}\cap \Line(\mathfrak{b}_{p,q}(m^\ast))| - |A_{k,0}\cap \Line(m^\ast)| \\
		&= |\gin(I)\cap \Line(\mathfrak{b}_{p,q}(m^\ast))|-|\mathfrak{P}_{I,k}\cap \Line(\mathfrak{b}_{p,q}(m^\ast))| -(|\gin(I)\cap \Line(m^\ast)|-|\mathfrak{P}_{I,k}\cap \Line(m^\ast)|)\\
		&= |\gin(I)\cap \Line(\mathfrak{b}_{p,q}(m^\ast))| - |\gin(I)\cap \Line(m^\ast)| -|\mathfrak{P}_{I,k}\cap \Line(\mathfrak{b}_{p,q}(m^\ast))| +|\mathfrak{P}_{I,k}\cap \Line(m^\ast)|\\
		&\geq \gamma_I(\alpha(I)+k-j) -\sum_{i=j}^k (\gamma_I(\alpha(I)+k-i)-1) + \sum_{i=j+1}^k (\gamma_I(\alpha(I)+k-i) -1)\\
		&= \gamma_I(\alpha(I)+k-j) - (\gamma_I(\alpha(I)+k-j) - 1)\\
		&=1.
	\end{align*}
	Hence, $|A_{k,0}\cap \Line(\mathfrak{b}_{p,q}(m^\ast))|  \geq |A_{k,0}\cap \Line(m^\ast)| +1$,
	and if $m\in A_{k,1}\cap \Line(m^\ast)$ such that $x_q|m$,
	then we have $\mathfrak{b}_{p,q}(m)\in A_{k,1}$.
	Since $m^\ast$ was arbitrary,
	if $m\in A_{k,1}$ such that $x_q|m$ and $\max(m)=q$,
	then we have $\mathfrak{b}_{p,q}(m)\in A_{k,1}$.
	Thus we can conclude that $A_{k,1}\cap \mathfrak{M}_{n,\alpha(I)+k,3}$ is strongly stable.
	
	Next, we prove the shadow properties.
	Suppose $k\in \{0,1,\dots, t-1\}$.
	We have $x_1^{\alpha(I)+k}\in A_{k,0}$ and $x_1^{\alpha(I)+k}x_1, x_1^{\alpha(I)+k}x_2\in A_{k+1,0}$ by definition of $\mathfrak{P}_{I,k}$ and $\mathfrak{P}_{I,k+1}$.
	For any $m\in A_{k,0}\cap \mathfrak{M}_{n,\alpha(I)+k,2}$ we have that $mx_2\in A_{k+1,0}\cap \mathfrak{M}_{n,\alpha(I)+k+1,2}$,
	since $\gin(I)$ is an ideal and by definition of $\mathfrak{L}_{I,k}$ and $\mathfrak{L}_{I,k+1}$.
	Thus, $\uSdw_{\{1,2\}}(A_{k,1}\cap \mathfrak{M}_{n,\alpha(I)+k,2}) \subseteq A_{k+1,1}\cap \mathfrak{M}_{n,\alpha(I)+k+1,2}$ and
	\begin{align*}
		\left\lbrace m\in A_{k+1,0}\setminus \uSdwB(A_{k,0}) \bigm | \max(m) = 2\right\rbrace| = |\left\lbrace m\in A_{k+1,1}\setminus \uSdwB(A_{k,1}) \bigm | \max(m)=2 \right\rbrace|.
	\end{align*}
	Suppose that $s\in \{3,4,\dots, n\}$ and we show that $\uSdw_{\{s\}}(A_{k,1}\cap \mathfrak{M}_{n,\alpha(I)+k,s}) \subseteq A_{k+1,1}\cap \mathfrak{M}_{n,\alpha(I)+k+1,s}$.
	Well, multiplication by $x_s$ produces an injection from $\mathfrak{M}_{n,\alpha(I)+k,s}$ to $\mathfrak{M}_{n,\alpha(I)+k+1,s}$.
	Since $\gin(I)$ is an ideal and by definition of $\mathfrak{L}_{I,k}$ and $\mathfrak{L}_{I,k+1}$, 
	multiplication by $x_s$ induces an injection from $A_{k,0}\cap \mathfrak{M}_{n,\alpha(I)+k,s}$ to $A_{k+1,0}\cap \mathfrak{M}_{n,\alpha(I)+k+1,s}$.
	Let the image of this injection be $A_{k,0}'\cap \mathfrak{M}_{n,\alpha(I)+k+1,s}$.
	Hence, $A_{k,0}'\cap \mathfrak{M}_{n,\alpha(I)+k+1,s}\subseteq A_{k+1,0}\cap \mathfrak{M}_{n,\alpha(I)+k+1,s}$.
	Thus,
	\begin{align*}
		\uSdw_{\{s\}}(A_{k,1}\cap \mathfrak{M}_{n,\alpha(I)+k,s}) &= \comp_{1,2}(A_{k,0}'\cap \mathfrak{M}_{n,\alpha(I)+k+1,s})\\
		&\subseteq \comp_{1,2}(A_{k+1,0}\cap \mathfrak{M}_{n,\alpha(I)+k+1,s})\\ 
		&= A_{k+1,1}\cap \mathfrak{M}_{n,\alpha(I)+k+1,s},
	\end{align*}
	because $s\geq 3>2$ and $\comp_{1,2}$ preserves subsets.
	In particular, $\uSdw_{\{1,2,3\}}(A_{k,1}\cap \mathfrak{M}_{n,\alpha(I)+k,3}) \subseteq A_{k+1,1}\cap \mathfrak{M}_{n,\alpha(I)+k+1,3}$,
	because we are dealing with strongly stable sets and shadows are the same as Borel shadows in this case.
	Also,
	\begin{align*}
		|\{m\in A_{k+1,0}\setminus \uSdw_{\{s\}}(A_{k,0}) \bigm | \max(m)=s\}| = |\{m\in A_{k+1,1}\setminus \uSdw_{\{s\}}(A_{k,1}) \bigm | \max(m)=s\}|.
	\end{align*}
	Therefore, we can conclude that for all $h\in \{1,2,3\}$ we have
	\begin{align*}
		\left\lbrace m\in A_{k+1,0}\setminus \uSdwB(A_{k,0}) \bigm | \max(m) = h\right\rbrace| = |\left\lbrace m\in A_{k+1,1}\setminus \uSdwB(A_{k,1}) \bigm | \max(m)=h \right\rbrace|.
	\end{align*}

	Thus, we have proved the inductive claim for $u=1$.
	So, suppose that $u\geq 1$ and that the claim holds for all $u'\leq u$.
	We prove that the claim holds for $u+1 \leq n-2$.
	For each $k\in \{0,1,\dots, t\}$ we will do the same sequence of compressions to construct $A_{k,u+1}$.
	For all $k\in \{0,1,\dots, t\}$ let $B_{k,u,0} = A_{k,u-1}$ and $B_{k,u,1} = A_{k,u}$.
	We will now show that for each $k\in \{0,1,\dots, t\}$,
	there exists a sequence of sets $B_{k,u,1},B_{k,u,2},\dots,$ that preserves the $7$ properties that $A_{k,u}$ satisfies.
	More formally, the following properties hold.
	\begin{enumerate}
		\item For $w\in \{1,2,\dots \}$ there exists a bijection $\tau_{k,w}: B_{k,u,w-1}\rightarrow B_{k,u,w}$ such that $\max(m) = \max(\tau_{k,w}(m))$.
		\item For $w\in \{1,2,\dots \}$, for $p,q\in [n]$ with $p<q$ and $u+2<q$,
		if $m\in B_{k,u,w}$ such that $x_q|m$ and $\max(m) = q$,
		then $\mathfrak{b}_{p,q}(m)\in B_{k,u,w}$.
		\item For $w\in \{1,2,\dots \}$,
		the set $B_{k,u,w}\cap \mathfrak{M}_{n,\alpha(I)+k,u+2}$ is strongly stable.
		\item For $k\in \{0,1,\dots, t-1\}$ and $w\in \{1,2,\dots \}$ we have $\uSdw_{\{1,2,\dots, u+2\}}(B_{k,u,w}\cap \mathfrak{M}_{n,\alpha(I)+k,u+2}) \subseteq B_{k+1,u,w}\cap \mathfrak{M}_{n,\alpha(I)+k+1,u+2}$.
		\item For $k\in \{0,1,\dots, t-1\}$, $w\in \{1,2,\dots \}$ and $h\in \{1,2,\dots, u+2\}$ we have
		\begin{align*}
			\left\lbrace m\in B_{k+1,u,w-1}\setminus \uSdwB(B_{k,u,w-1}) \bigm | \max(m) = h\right\rbrace| \\
			= |\left\lbrace m\in B_{k+1,u,w}\setminus \uSdw(B_{k,u,w}) \bigm | \max(m)=h \right\rbrace|.
		\end{align*}
		\item For $k\in \{0,1,\dots, t-1\}$, $w\in \{1,2,\dots \}$, and $s\in \{u+3,u+4,\dots, n\}$ we have $\uSdw_{\{s\}}(B_{k,u,w}\cap \mathfrak{M}_{n,\alpha(I)+k,s}) \subseteq B_{k+1,u,w}\cap \mathfrak{M}_{n,\alpha(I)+k+1,s}$.
		\item For $k\in \{0,1,\dots, t-1\}$, $w\in \{1,2,\dots \}$, and $s\in \{u+3,u+4,\dots, n\}$ we have
		\begin{align*}
			|\{m\in B_{k+1,u,w-1}\setminus \uSdw_{\{s\}}(B_{k,u, w-1}) \bigm | \max(m)=s\}| \\
			= |\{m\in B_{k+1,u,w}\setminus \uSdw_{\{s\}}(B_{k,u,w}) \bigm | \max(m)=s\}|.
		\end{align*}
	\end{enumerate}
	We prove the claim by induction on $w\in \{1,2,\dots \}$.
	The base case $w=1$ follows from the definition of $B_{k,u,0}$ and $B_{k,u,1}$.
	So, suppose that $w\geq 1$ and the claim holds for all $w'\leq w$.
	We prove the claim for $w+1$.
	If for all $k\in \{0,1,\dots, t\}$ we have that $B_{k,u,w}\cap \mathfrak{M}_{n,\alpha(I)+k,u+3}$ is strongly stable,
	then for all $k\in \{0,1,\dots, t\}$ we define $B_{k,u,w+1} = B_{k,u,w}$.
	So, suppose that there exists $f\in \{0,1,\dots, t\}$ such that $B_{f,u,w}\cap \mathfrak{M}_{n,\alpha(I)+f,u+3}$ is not strongly stable.
	Then there exist $i,j\in [u+2]$ with $i<j$ such that $\comp_{i,j}(B_{f,u,w}) \neq B_{f,u,w}$,
	since for $p,q\in [n]$ with $p<q$ and $u+2<q$,
	if $m\in B_{f,u,w}$ such that $x_q|m$ and $\max(m) = q$,
	then $\mathfrak{b}_{p,q}(m)\in B_{f,u,w}$.
	For all $k\in \{0,1,\dots, t\}$ we define $B_{k,u,w+1} = \comp_{i,j}(B_{k,u,w})$.
	
	We need to show that $B_{k,u,w+1}$ satisfies the $7$ desired properties.
	The first property is satisfied because $B_{k,u,w}\cap \mathfrak{M}_{n,\alpha(I)+k,u+2}$ is strongly stable and $i<j<u+3$.
	The second property is satisfied by Corollary \ref{BorelComp} because it is satisfied for $B_{k,u,w}$.
	The third property is satisfied because $B_{k,u,w}\cap \mathfrak{M}_{n,\alpha(I)+k,u+2}$ is strongly stable and $i<j<u+3$.
	The fourth property is satisfied because $B_{k,u,w}\cap \mathfrak{M}_{n,\alpha(I)+k,u+2}$ and $B_{k+1,u,w}\cap \mathfrak{M}_{n,\alpha(I)+k,u+2}$ are strongly stable and $i<j<u+3$.
	The fifth property is satisfied by reasoning similar to that used for the fourth.
	The sixth and seventh properties are satisfied by an argument similar to the one used in the base case $u=1$.
	Thus, we have completed the induction on $w$,
	and there exist $t+1$ infinite sequences such that each term of the sequences satisfies the $7$ properties described above.
	
	We will now show that each of these sequences is eventually constant.
	If there exists $c\in \N$ such that for all $k\in \{0,1,\dots, t\}$ we have $B_{k,u,c} = B_{k,u,c+1}$, 
	then $B_{k,u,c} = B_{k, u,c+i}$ for all $i\in \N$,
	by the definition of the sequences.
	So, we just need to show that such a situation occurs.
	To see this,
	note that every time when two consecutive entries in the sequence are not the same,
	we perform a compression.
	Each time we compress,
	loosely speaking,
	we move monomials later in the lexicographic order.
	Since the set of monomials under consideration is finite,
	we must eventually reach a point where the compression function is the identity function. 
	Let $c\in \N$ be the index where this occurs.
	Then for all $k\in \{0,1,\dots, t\}$ we have that $B_{k,u,c}\cap \mathfrak{M}_{n,\alpha(I)+k,u+3}$ is strongly stable,
	because otherwise some sequence would not be constant after $c$.
	
	We define $A_{k,u+1} = B_{k,u,c}$.
	Of course, we need to check that $A_{k,u+1}$ satisfies the $7$ desired properties.
	Properties $1,2,3,6,7$ are satisfied because $A_{k,u+1} = B_{k,u,c}$.
	Properties $4$ and $5$ follow from an argument similar to the one used in the base case.
	Hence, we have completed the induction on $u$.
	
	Finally, for all $k\in \{0,1,\dots, t\}$ we define $\mathfrak{A}_{k} = A_{k,n-2}$.
	Our desired function $\sigma$ is given by the composition of the $\sigma_u$.
	The strongly stable property for $\mathfrak{A}_{k}$ follows from the third property of $A_{k,n-2}$ since $u=n-2$.
	The last two shadow properties follow from properties $4$ and $5$ with $u=n-2$. 
\end{proof}

\begin{lem}\label{PowersSegment}
	Let $I\subseteq R$ be an ideal of type $v_{\ell,t} \in \N^{\ell +t+1}$ that is $\LPPT(v_{\ell, t})$,
	and let $e_1\leq e_2\leq \cdots \leq e_\ell$ be the degrees of a regular sequence given by $v_{\ell, t}$.
	Let $P$ be the ideal generated by $x_2^{e_2},\dots, x_\ell^{e_\ell}$.
	Then for all $k\in \{0,1,\dots, t\}$ we have $|\mathfrak{P}_{I,k}| = |P_{\alpha(I)+k}\cap\mathfrak{M}|$.
	Furthermore, for all $k\in \{1,2,\dots, t\}$ we have $|\uSdw_{\{2,3,\dots,n\}}(P_{\alpha(I)+k-1}\cap\mathfrak{M})| = |\uSdwB(\mathfrak{P}_{I,k-1}\cap\mathfrak{M})|$,
	and if $k$ is at least the first entry of $v$ then $|P_{\alpha(I)+k}\cap\mathfrak{M}| = \gamma_I(\alpha(I)+k)-1 + |\uSdw_{\{2,3,\dots,n\}}(P_{\alpha(I)+k-1}\cap\mathfrak{M})|$.
\end{lem}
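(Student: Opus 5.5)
The plan is to prove all three assertions together by induction on $k$, reducing everything to one recursion that both sides satisfy. On the powers side, note that $P$ is generated by pure powers $x_j^{e_j}$ with $j\geq 2$. So a monomial $m\in P_{\alpha(I)+k}$ fails to lie in $\uSdw_{\{2,3,\dots,n\}}(P_{\alpha(I)+k-1}\cap\mathfrak{M})$ exactly when $m/x_i\notin P$ for every $i\geq 2$ with $x_i\mid m$. This happens precisely when $m=x_1^{a}x_j^{e_j}$ for some $j\in\{2,\dots,\ell\}$ with $e_j\leq \alpha(I)+k$. There are exactly $\gamma_I(\alpha(I)+k)-1$ such monomials, because $\gamma$ counts $e_1$ as well and $e_1\leq\alpha(I)+k$ under the hypothesis on $k$. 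This gives the last displayed identity directly. When $\alpha(I)+k<e_1$, both $P_{\alpha(I)+k}$ and $\mathfrak{P}_{I,k}$ are empty, so that range is trivial.

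On the gin side, I would first use Lemma \ref{BorelShadowStructure} to rewrite $\uSdwB^{i}(\Gamma_I(d))$ as $\uSdwB(\uSdwB^{i-1}(\Gamma_I(d)))$. This yields the decomposition
\begin{align*}
\mathfrak{P}_{I,k} = \Gamma_I(\alpha(I)+k)\cup \uSdwB(\mathfrak{P}_{I,k-1}).
\end{align*}
Every element of each $\Gamma_I(d)$ has $\max=2$, since it is divisible by $x_2$ and not by $x_3,\dots,x_n$. Hence the Borel shadow multiplies these elements only by monomials $q$ in $x_2,\dots,x_n$. Lemma \ref{BorelShadowUnique}, applied to monomials, which are their own initial terms, shows that all products $q\cdot g$ are pairwise distinct. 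This includes the case where $g$ ranges over different $\Gamma_I(\alpha(I)+k-i)$, because two such products of equal degree with the same $q$-part would force equal $g$. Consequently
\begin{align*}
|\uSdwB(\mathfrak{P}_{I,k-1})| = \sum_{i=1}^{k}(\gamma_I(\alpha(I)+k-i)-1)\binom{n-2+i}{n-2}.
\end{align*}
The same count holds for $\uSdw_{\{2,\dots,n\}}(P_{\alpha(I)+k-1}\cap\mathfrak{M})$, by the induction hypothesis and the recursion on the powers side, unrolled. This proves the middle assertion.

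For the first assertion it then remains to show that the union above is disjoint, that is, $\Gamma_I(\alpha(I)+k)\cap\uSdwB(\mathfrak{P}_{I,k-1})=\emptyset$. The only possible overlaps are in $K[x_1,x_2]$, namely among the monomials $x_2^{i}\Gamma_I(\alpha(I)+k-i)$. These are the $x_2$-heaviest monomials of $\gin(I)$ coming from earlier degrees. I expect this disjointness to be the main obstacle, since it is not formal from the definitions as written.

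My first attempt would be to use the $\LPPT(v_{\ell,t})$ hypothesis with $(r,e)=(3,(i))$. It guarantees $\gamma_I-1$ uncovered monomials on the line $x_3^{i}\cdot K[x_1,x_2]$ in degree $\alpha(I)+k$. Strong stability then transports these, via $(2,3)$-Borel moves, to fresh monomials of $\gin(I)_{\alpha(I)+k}$ of the form $x_1^{a}x_2^{b}$. I would argue that these fresh monomials sit beyond the $x_2$-multiples of the earlier $\Gamma$'s in the lex order, so that the $\gamma_I(\alpha(I)+k)-1$ lex-smallest monomials are new. If that ordering argument fails, the fallback is to interpret $\Gamma_I(\alpha(I)+k)$ as the lex-smallest monomials not already in $\uSdwB(\mathfrak{P}_{I,k-1})$. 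In that case the existence of at least $\gamma_I(\alpha(I)+k)-1$ such monomials is exactly what the $\fbl^{(3,0)}$ bound provides.

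With disjointness in hand, $|\mathfrak{P}_{I,k}|=\gamma_I(\alpha(I)+k)-1+|\uSdwB(\mathfrak{P}_{I,k-1})|$. This matches the recursion for $|P_{\alpha(I)+k}\cap\mathfrak{M}|$ term by term, which closes the induction. The base case is $k$ with $\alpha(I)+k=e_1$: there $\mathfrak{P}_{I,k}=\Gamma_I$ and $P_{\alpha(I)+k}$ consists of the pure powers, both of size $\gamma_I-1$.
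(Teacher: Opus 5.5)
Your skeleton is the paper's: induction on $k$, the recursion $\mathfrak{P}_{I,k}=\Gamma_I(\alpha(I)+k)\cup\uSdwB(\mathfrak{P}_{I,k-1})$, and a term-by-term match with the recursion for $P$. Your description of $P_{\alpha(I)+k}\setminus\uSdw_{\{2,\dots,n\}}(P_{\alpha(I)+k-1})$ as exactly the monomials $x_1^{a}x_j^{e_j}$ with $2\le j\le\gamma_I(\alpha(I)+k)$ is correct, and cleaner than the paper's decomposition.

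The gap is in the step that carries the content. Write $d=\alpha(I)+k$. You need that $\Gamma_I(d)$ meets none of the sets $x_2^{j}\Gamma_I(d-j)$ for $j\ge 1$, and you need it twice.

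\begin{itemize}
\item First, for the disjointness of the union, which you flag yourself.
\item Second, for your formula for $|\uSdwB(\mathfrak{P}_{I,k-1})|$. Lemma~\ref{BorelShadowUnique} only compares products $qg$ with $\deg q$ fixed and $g$ taken from one equidegree set. So it does not exclude a coincidence such as $x_2^{2}g=x_2g'$ with $g\in\Gamma_I(d-2)$ and $g'=x_2g\in\Gamma_I(d-1)$. Your ``same $q$-part'' remark does not exclude it either.
\end{itemize}

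In that sum, also write $|\Gamma_I(\cdot)|$ rather than $\gamma_I(\cdot)-1$, since the latter equals $-1$ below $e_1$.

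The route you sketch does not close the gap. The $(3,(i))$ bound is about $\gamma_I(d-i)$, not $\gamma_I(d)$. Moreover, $(2,3)$-moves carry $x_3^{i}\Gamma_I(d-i)$ onto $x_2^{i}\Gamma_I(d-i)$, which are precisely the old monomials you need to avoid. The fallback of redefining $\Gamma_I$ changes the lemma rather than proving it.

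The missing observation uses only $e=0$: an uncovered monomial $x_1^{d-j}x_2^{j}$ is a minimal generator of $\gin(I)$.
\begin{itemize}
\item Suppose it equals $x_1m$ or $x_2m$ with $m\in\gin(I)_{d-1}$. Then $x_1^{d-j}x_2^{j-1}\in\gin(I)$, after a $(1,2)$-move in the first case.
\item Hence $x_1^{d-j}x_2^{j-1}x_3\in\gin(I)$, so the monomial would be covered.
\end{itemize}

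Now look along the bottom row. Both the members of $\gin(I)_d$ and the covered monomials form initial segments in $j$, by $(1,2)$-moves. So the uncovered ones are exactly the lex-smallest bottom-row members. Read $\Gamma_I(d)$ as the $\gamma_I(d)-1$ lex-smallest monomials of $\gin(I)_d\cap K[x_1,x_2]$, as the paper's remark after the definition requires. Then $\fbl_I(d)\ge\gamma_I(d)-1$ puts $\Gamma_I(d)$ among the uncovered monomials, so $\Gamma_I(d)$ consists of minimal generators.

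Both needs now follow.
\begin{itemize}
\item Since $\uSdwB(\mathfrak{P}_{I,k-1})\subseteq\uSdw(\gin(I)_{d-1})$, the union is disjoint.
\item Suppose $qg=q'g'$ with $g,g'$ from $\Gamma$'s of different degrees. Cancelling the common $x_3,\dots,x_n$-part gives $x_2^{s}g=x_2^{s'}g'$. This makes the higher-degree element a proper multiple of the lower one, contradicting minimality. That is the distinctness your count needs.
\end{itemize}

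The paper simply writes the union as disjoint, so you have identified the right delicate point. But as written, your proposal does not prove it.
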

\begin{proof}
	Since $I$ is $\LPPT(v_{\ell,t})$, 
	for all $k\in \{0,1,\dots, t\}$ we have the disjoint union
	\begin{align*}
		P_{\alpha(I)+k}\cap\mathfrak{M} &= \bigcup_{i=2}^{\gamma_I(\alpha(I)+k)} \{mx_i^{e_i}\bigm | m\in \mathfrak{M}_{n,\alpha(I)+k-e_i}\}\\
		&= \bigcup_{i=2}^{\gamma_I(\alpha(I)+k)} \left( \{mx_i^{e_i}\bigm | m\in \mathfrak{M}_{n,\alpha(I)+k-e_i} \mbox{ and } m\neq x_1^{\alpha(I)+k-e_i}\} \cup \{x_1^{\alpha(I)+k-e_i}x_i^{e_i}\}\right).
	\end{align*}
	From this we get $|\uSdw_{\{2,3,\dots,n\}}(P_{\alpha(I)+k-1}\cap\mathfrak{M})| = |\uSdwB(\mathfrak{P}_{I,k-1})|$.
	Also, if $k$ is at least the first entry of $v$ then $|P_{\alpha(I)+k}\cap\mathfrak{M}| = \gamma_I(\alpha(I)+k)-1 + |\uSdw_{\{2,3,\dots,n\}}(P_{\alpha(I)+k-1}\cap\mathfrak{M})|$,
	by the definition of $\uSdwB$ and since $\Gamma_I(\alpha(I)+k)$ only contains monomials that are not powers of $x_1$ and are not divisible by $x_3,x_4,\dots, x_n$.
	
	We prove that for all $k\in \{0,1,\dots, t\}$ we have $|\mathfrak{P}_{I,k}| = |P_{\alpha(I)+k}\cap\mathfrak{M}|$,
	by induction on $k$.
	The base cases where $k$ is at most the first entry of $v$ follows from the definition of $\LPPT(v_{\ell,t})$ and $\mathfrak{P}_{I,k}$.
	So suppose that $k$ is greater than the first entry of $v$ and that the claim holds for all $k'<k$.
	Well,
	\begin{align*}
		\mathfrak{P}_{I,k-1} &= \bigcup_{i=0}^{k-1} \uSdwB^{i}(\Gamma_I(\alpha(I)+k-1-i)).
	\end{align*}
	Hence,
	\begin{align*}
		\uSdwB(\mathfrak{P}_{I,k-1}) &= \uSdwB \left( \bigcup_{i=0}^{k-1} \uSdwB^{i}(\Gamma_I(\alpha(I)+k-1-i)) \right)\\
		&= \bigcup_{i=0}^{k-1} \uSdwB(\uSdw_{\{2,3,\dots, n\}}^{i}(\Gamma_I(\alpha(I)+k-1-i)))\\
		&= \bigcup_{i=0}^{k-1} \uSdwB^{i+1}(\Gamma_I(\alpha(I)+k - (i+1)))\\
		&= \bigcup_{i=1}^{k} \uSdwB^{i}(\Gamma_I(\alpha(I)+k - i)).
	\end{align*}
	Thus, $\mathfrak{P}_{I,k} = \Gamma_I(\alpha(I)+k) \cup \uSdwB(\mathfrak{P}_{I,k-1})$.
	So, $|\mathfrak{P}_{I,k}| = |\Gamma_I(\alpha(I)+k)| + |\uSdwB(\mathfrak{P}_{I,k-1})|$.
	
	We have $|P_{\alpha(I)+k}\cap\mathfrak{M}| = \gamma_I(\alpha(I)+k)-1 + |\uSdw_{\{2,3,\dots,n\}}(P_{\alpha(I)+k-1}\cap\mathfrak{M})|$,
	and $|\uSdw_{\{2,3,\dots,n\}}(P_{\alpha(I)+k-1}\cap\mathfrak{M})| = |\uSdwB(\mathfrak{P}_{I,k-1})|$.
	Therefore, the claim follows because $|\Gamma_I(\alpha(I)+k)| = \gamma_I(\alpha(I)+k)-1$.
\end{proof}

\section{Applications to Hilbert Functions and Graded Betti Numbers}\label{EGHResults}

\begin{thm}[Macaulay \cite{Macaulay}]\label{Macaulay}
	For any ideal, there exists a lex ideal with the same Hilbert function.
\end{thm}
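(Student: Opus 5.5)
We reduce to strongly stable monomial sets and then prove that lex segments have the smallest shadow. Since $\gin(I)$ has the same Hilbert function as $I$ and is strongly stable in characteristic $0$, we may replace $I$ by $M=\gin(I)$. For each $d$ let $L_d$ be the lex segment in $\mathfrak{M}_{n,d}$ of size $\dim M_d$, and set $L=\bigoplus_d \Span(L_d)$. By construction $L$ has the Hilbert function of $I$. Everything then reduces to showing $L$ is an ideal, i.e. $\uSdw(L_d)\subseteq L_{d+1}$ for every $d$. The shadow of a lex segment is again a lex segment (immediate from the definition of lex order), so it is enough to prove the cardinality inequality
\begin{align*}
	|\uSdw(L_d)| \leq |\uSdw(M_d)| \leq \dim M_{d+1}.
\end{align*}
The second inequality holds because $M$ is an ideal. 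The first says that a lex segment has the smallest shadow among strongly stable sets of the same size.

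The shadow inequality turns into a counting statement via Lemma \ref{WeightsLemma}. For a strongly stable set $S$ of degree-$d$ monomials it gives $|\uSdw(S)|=\sum_{m\in S}\Bez(m)=\sum_{m\in S}(n-\max(m)+1)$. So we must show that, among strongly stable sets of a fixed size, the lex segment has the largest number of elements with large $\max(m)$, in the majorization sense. I would prove this by induction on $n$, with the case $n=2$ trivial since every strongly stable set is then a lex segment.

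For the inductive step, split $S$ by the power of $x_n$: $S=\bigcup_a x_n^a S^{(a)}$, where $S^{(a)}\subseteq K[x_1,\dots,x_{n-1}]_{d-a}$. Strong stability of $S$ gives $\uSdw(S^{(a+1)})\subseteq S^{(a)}$ computed in $n-1$ variables. Replace each slice $S^{(a)}$ by the $(n-1)$-variable lex segment of the same size. By the inductive hypothesis these replacements still satisfy the nesting condition, so the new set is still strongly stable. The count $\sum_m \Bez(m)$ depends only on the slice sizes, since $\max(m)=n$ exactly when $a\geq 1$, together with the $(n-1)$-variable shadow counts. Therefore this replacement does not increase $|\uSdw(S)|$.

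The main obstacle is the last stage: passing from a set whose $x_n$-slices are lex segments to the genuine $n$-variable lex segment without increasing the shadow. Here I would use the $(i,j)$-compressions $\comp_{i,j}$ of Section \ref{TransformSection} with Corollary \ref{BorelComp}. I would alternate slice-lexification with $\comp_{1,n}$-type compressions, each of which moves monomials later in lex order. Finiteness forces termination at a set that is compressed in every pair and in every slice. I would then check directly that such a set is a lex segment or differs from one by a single boundary exchange, and verify in that exceptional configuration, by the same Bezrukov count, that the shadow is not smaller than that of the lex segment. If this case analysis becomes unwieldy, the fallback is Green's route: deduce the inequality from the hyperplane restriction bound $\dim(R/(I,x_n))_d\leq\dim(R/(L,x_n))_d$, proved by the same slice induction.
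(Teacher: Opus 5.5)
\textbf{There is a genuine gap at the final stage.} Your reduction to $M=\gin(I)$, the slice-lexification step, and the identity $|\uSdw(S)|=|S|+|\uSdw_{\{1,\dots,n-1\}}(S^{(0)})|$ obtained from Lemma \ref{WeightsLemma} are all correct. (The paper only cites Macaulay's theorem, so there is no proof there to compare with.)

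The problem is the compression stage. Every $\comp_{i,j}$ fixes every strongly stable set, because each $(i,j)$-line meets such a set in a lex-initial piece; the paper notes this. So alternating $\comp_{1,n}$ with slice-lexification stops after the first slice-lexification and makes no further progress. Worse, the structural claim you plan to verify is false. For $n=3$, the set $S=\mathfrak{M}_{2,4}=\{x_1^4,x_1^3x_2,x_1^2x_2^2,x_1x_2^3,x_2^4\}$ is strongly stable. It is fixed by every $\comp_{i,j}$ and by lexifying the fibres of each variable. Yet it differs from the lex segment $\{x_1^4,x_1^3x_2,x_1^3x_3,x_1^2x_2^2,x_1^2x_2x_3\}$ in two monomials, and for $\mathfrak{M}_{2,d}$ the number of differences grows with $d$. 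No one-exchange case analysis can finish the argument. Your fallback via Green's restriction theorem is asserted, not argued.

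\textbf{How to close the gap.} The missing ingredient is exactly the restriction inequality $|L^{(0)}|\le|S^{(0)}|$ for the lex segment $L$ with $|L|=|S|$, and it follows from your own setup.
\begin{itemize}
\item After slice-lexification, $S^{(0)}=\{u_1>\dots>u_s\}$ consists of the first $s$ monomials of $\mathfrak{M}_{n-1,d}$. If $s=|\mathfrak{M}_{n-1,d}|$ there is nothing to prove.
\item Take $x_n^au\in S$ with $a\ge 1$. Borel moves give $ux_{n-1}^a\in S^{(0)}$, so $ux_{n-1}^a\ge u_s$.
\item Suppose $x_n^au<u_{s+1}$ in lex order. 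Then $u_{s+1}$ lies strictly between $x_n^au$ and $ux_{n-1}^a$. These two agree in the first $n-2$ exponents, so $u_{s+1}$ does too. Since $u_{s+1}$ is $x_n$-free of degree $d$, this forces $u_{s+1}=ux_{n-1}^a$, a contradiction.
\item Hence $S$ lies in the lex segment of all monomials $>u_{s+1}$, whose $x_n$-free part is exactly $S^{(0)}$. Therefore $L^{(0)}\subseteq S^{(0)}$.
\end{itemize}
Your identity, applied to both $L$ and $S$, then gives $|\uSdw(L)|=|S|+|\uSdw_{\{1,\dots,n-1\}}(L^{(0)})|\le |S|+|\uSdw_{\{1,\dots,n-1\}}(S^{(0)})|=|\uSdw(S)|$. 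This completes the induction on $n$ with no compressions beyond slice-lexification. This restriction inequality is the ``Green's Theorem'' step the paper invokes in the proof of Lemma \ref{OptimalSameMax}.
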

 
\begin{thm}\label{EGHTheorem}
	Let $t\geq 0$, $\ell \geq 1$,
	and suppose that $I\subseteq R$ is an ideal of type $v_{\ell, t}\in \N^{\ell+ t+1}$.
	There exists an integer $D = D(v_{\ell,t})\in \N$,
	such that if $\alpha(I) \geq D$ then there is a lex plus powers ideal that has the same Hilbert function as $I$ in degrees $\leq \alpha(I)+t$.
	Furthermore, the lex ideal $L$ is generated from all the lex segments of size $|\mathfrak{L}_{I,k}|$ for all $k\in \{0,1,\dots, t\}$,
	and we take $P$ to be the same ideal as in Lemma \ref{PowersSegment}.
	Then our lex plus powers ideal is $L+P$.
	Also, if $\alpha(I)\geq D$,
	then $I$ is $\LPPT(v_{\ell,t})$.
\end{thm}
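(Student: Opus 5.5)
The plan is to take $D=D(v_{\ell,t})$ from Theorem \ref{Monster2}. With this choice, $\alpha(I)\geq D$ gives exactly the defining inequalities of $\LPPT(v_{\ell,t})$, which settles the last sentence of the statement. We then work with $\gin(I)$ in place of $I$; this is harmless because they have the same Hilbert function.

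For each $k\in\{0,\dots,t\}$ we split $\gin(I)_{\alpha(I)+k}=\mathfrak{L}_{I,k}\sqcup\mathfrak{P}_{I,k}$. Lemma \ref{PowersSegment} gives $|\mathfrak{P}_{I,k}|=|P_{\alpha(I)+k}\cap\mathfrak{M}|$. Hence it suffices to define $L$ as the ideal generated by the lex segments $\mathcal{S}_k$ of size $|\mathfrak{L}_{I,k}|$ in degree $\alpha(I)+k$, and then prove two things for $k\leq t$:
\begin{align*}
L_{\alpha(I)+k}\cap\mathfrak{M}=\mathcal{S}_k \quad\text{and}\quad \mathcal{S}_k\cap P_{\alpha(I)+k}=\emptyset .
\end{align*}
Together these give $\dim(L+P)_{\alpha(I)+k}=|\mathcal{S}_k|+|P_{\alpha(I)+k}\cap\mathfrak{M}|=\dim\gin(I)_{\alpha(I)+k}$. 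In degrees below $\alpha(I)$ both ideals vanish, since $e_1\geq\alpha(I)$.

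The disjointness holds because $P$ is generated by powers of $x_2,\dots,x_\ell$ of degree $\geq\alpha(I)$. A lex segment can contain a multiple of $x_i^{e_i}$ with $i\geq2$ only if it is almost all of $\mathfrak{M}_{n,\alpha(I)+k}$. We rule this out using the count $|\mathfrak{L}_{I,k}|\leq\MB(k)$, which is fixed by $v$, against the size of the complement of $P$; enlarging $D$ if necessary makes the latter much larger.

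The identity $L_{\alpha(I)+k}\cap\mathfrak{M}=\mathcal{S}_k$ needs two facts:
\begin{align*}
\uSdw(\mathcal{S}_k)\subseteq\mathcal{S}_{k+1} \quad\text{and}\quad |\mathcal{S}_{k+1}|\geq|\uSdw(\mathcal{S}_k)| .
\end{align*}
Since the shadow of a lex segment is a lex segment, the first follows from the second. For the second we use Theorem \ref{LexSegment}. It produces strongly stable sets $\mathfrak{A}_k$ with $|\mathfrak{A}_k|=|\mathfrak{L}_{I,k}|$ and $\uSdw(\mathfrak{A}_k)\subseteq\mathfrak{A}_{k+1}$. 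Macaulay's theorem (Theorem \ref{Macaulay}) then gives
\begin{align*}
|\uSdw(\mathcal{S}_k)|\leq|\uSdw(\mathfrak{A}_k)|\leq|\mathfrak{A}_{k+1}|=|\mathcal{S}_{k+1}| ,
\end{align*}
where the first inequality uses that lex segments minimize shadows among sets of the same size in one degree.

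The step I expect to need the most care is not the inequality above but the compatibility between $L$ and $P$. Degreewise disjointness is not the same as $\dim(L+P)_{\alpha(I)+k}=|\mathcal{S}_k|+|P_{\alpha(I)+k}|$; what is really needed is that no monomial of $\mathcal{S}_k$ lies in $P$. I would argue this directly: every monomial in a short lex segment of degree $\alpha(I)+k$ has $x_1$-exponent at least $\alpha(I)+k-\MB(t)-n$ or so. For $\alpha(I)$ large this exceeds $k$, so the monomial cannot be divisible by any $x_i^{e_i}$ with $i\geq2$, because that would force its $x_1$-exponent to be $\leq \alpha(I)+k-e_i\leq k$. This only increases $D$ by an amount depending on $v$. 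It also has to be checked that $x_1^{e_1}$ is accounted for. Since $e_1=\alpha(I)+a_1$ and $x_1^{\alpha(I)}\in\mathcal{S}_0$, we have $x_1^{e_1}\in L$, so $L+P$ is genuinely lex plus the powers $x_1^{e_1},\dots,x_\ell^{e_\ell}$.
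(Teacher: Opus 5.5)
Your proposal is correct and follows essentially the same route as the paper. Both arguments take $D$ from Theorem \ref{Monster2} to get the $\LPPT$ property, use the strongly stable sets $\mathfrak{A}_k$ of Theorem \ref{LexSegment} with Macaulay's Theorem \ref{Macaulay} to obtain $\uSdw(\mathcal{S}_k)\subseteq\mathcal{S}_{k+1}$, and match $|\mathfrak{P}_{I,k}|$ with $P$ via Lemma \ref{PowersSegment}. Your explicit checks fill in steps the paper uses silently when it writes $|L_{\alpha(I)+k}\cap\mathfrak{M}|+|P_{\alpha(I)+k}\cap\mathfrak{M}|=|(L_{\alpha(I)+k}\cup P_{\alpha(I)+k})\cap\mathfrak{M}|$: the short lex segments avoid $P$ by the $x_1$-exponent bound (enlarging $D$ so that $\alpha(I)\geq\MB(t)$ suffices), and $x_1^{e_1}\in L$.
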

\begin{proof}
	By Theorem \ref{Monster2},
	there exists an integer $D = D(v_{\ell,t})\in \N$,
	such that if $\alpha(I) \geq D$ then $I$ is $\LPPT(v_{\ell, t})$.
	By Theorem \ref{LexSegment},
	for all $k\in \{0,1,\dots, t\}$ there exists a set $\mathfrak{A}_k\subseteq \mathfrak{M}_{n,\alpha(I)+k}$ that is strongly stable,
	and there exists a bijection $\sigma: \mathfrak{L}_{I,k} \rightarrow \mathfrak{A}_k$ such that for all $m\in \mathfrak{L}_{I,k}$ we have $\max(m) = \max(\sigma(m))$,
	where $k\in \{0,1,\dots, t\}$.
	Then for all $k\in \{0,1,\dots, t-1\}$ we have
	\begin{align*}
		|\uSdw(L_{\alpha(I)+k}\cap\mathfrak{M})| &= \sum_{m\in L_{\alpha(I)+k}} \Bez(m) \tag{By Lemma \ref{WeightsLemma}}\\
		&\leq \sum_{m\in \mathfrak{A}_{I,k}} \Bez(m) \tag{By Macaulay's Theorem \ref{Macaulay}}\\ 
		&= \sum_{m\in \mathfrak{L}_{I,k}} \Bez(m) \tag{By definition of $\sigma$ and $\Bez$}\\ 
		&= |\uSdwB(\mathfrak{L}_{I,k})| \tag{By definition of \Bez}\\ 
		&\leq |\mathfrak{A}_{I,k+1}| \tag{By Theorem \ref{LexSegment}}\\
		&= |L_{\alpha(I)+k+1}\cap\mathfrak{M}| \tag{By definition of $L$}.
	\end{align*}
	Hence $\uSdw(L_{\alpha(I)+k}) \subseteq L_{\alpha(I)+k+1}$ since the shadow of a lex segment is a lex segment, by Macaulay's Theorem \ref{Macaulay}.
	By Lemma \ref{PowersSegment}, for all $k\in \{0,1,\dots, t\}$ we have $|P_{\alpha(I)+k}\cap\mathfrak{M}| = |\mathfrak{P}_{I,k}|$.
	Let $V = \oplus_{k=0}^t L_{\alpha(I)+k}+ P_{\alpha(I)+k}$.
	Thus for all $k\in \{0,1,\dots, t\}$ we have
	\begin{align*}
		\dim I_{\alpha(I)+k} &= |\mathfrak{L}_{I,k}| + |\mathfrak{P}_{I,k}|\\ 
		&=|L_{\alpha(I)+k}\cap\mathfrak{M}|+|P_{\alpha(I)+k}\cap\mathfrak{M}|\\
		&=|(L_{\alpha(I)+k}\cup P_{\alpha(I)+k})\cap\mathfrak{M}|\\ 
		&= \dim V_{\alpha(I)+k}.
	\end{align*}
	Let $J$ be the ideal generated by $V$.
	Since for all $k\in \{0,1,\dots, t-1\}$ we have $\uSdw(L_{\alpha(I)+k}) \subseteq L_{\alpha(I)+k+1}$ and $\uSdw(P_{\alpha(I)+k}) \subseteq P_{\alpha(I)+k+1}$,
	we must have that $\dim J_{\alpha(I)+k} = \dim V_{\alpha(I)+k} = \dim I_{\alpha(I)+k}$ for all $k\in \{0,1,\dots, t\}$.
	Therefore, the proof is complete because $J$ is a lex plus powers ideal.
\end{proof}

\begin{rem}
	Note that we didn't use the full power of Theorem \ref{LexSegment} in the proof of Theorem \ref{EGHTheorem}.
	We could have used the $\uSdw(\mathfrak{A}_k) \subseteq \mathfrak{A}_{k+1}$ property together with the  Clements--Lindström Theorem \cite{ClementsLindstrom} to give an alternative proof.
	Due to this,
	for the cases that Theorem \ref{EGHTheorem} covers,
	the proof of Theorem \ref{EGHTheorem} gives a completely new proof of the Clements--Lindström Theorem.
\end{rem}

We now turn our attention to a result of Gotzmann,
which gives strong consequences when Macaulay's bound is sharp.

\begin{thm}[Gotzmann's Persistence Theorem \cite{Gotzmann}]\label{Gotzmann}
	Let $I\subseteq R$ be an ideal that is generated in degrees $\leq q$.
	If Macaulay's bound is sharp in degree $q$, then it is sharp in degree $q+k$ for all $k\geq 0$.
\end{thm}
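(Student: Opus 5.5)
Since Theorem \ref{Gotzmann} is classical and only cited here, I would reprove it with the tools of this paper: pass to $\gin(I)$ and reduce to a combinatorial statement about strongly stable sets.

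\emph{Step 1: reduce to a strongly stable ideal.} Let $M=\gin(I)$, which is strongly stable since $\operatorname{char}K=0$, and let $L$ be the lex ideal from Theorem \ref{Macaulay}. Macaulay's Theorem gives
\begin{align*}
\dim I_{q+1}=\dim M_{q+1}\geq \dim R_1M_q\geq \dim R_1L_q .
\end{align*}
Sharpness in degree $q$ therefore forces both inequalities to be equalities. First, $M$ has no minimal generators in degree $q+1$. Second, the degree-$q$ monomials of $M$ form a strongly stable set with minimal shadow. The hypothesis that $I$ is generated in degrees $\leq q$, together with Green's Crystallization Principle (the case $q=0$ of Corollary \ref{GeneralGreengGin}), gives $M_{q+k}=R_kM_q$ for all $k\geq 0$. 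This reduces the theorem to the following combinatorial claim: if $S$ is a strongly stable set of degree-$q$ monomials with $|\uSdw(S)|=|\uSdw(\mathrm{lex})|$, where $\mathrm{lex}$ is the lex segment of size $|S|$, then $\uSdw(S)$ (which is again strongly stable) also has minimal shadow. Induction on $k$ then finishes, using that the shadow of a lex segment is a lex segment.

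\emph{Step 2: encode shadows by the $\max$-distribution.} By Lemma \ref{WeightsLemma}, $|\uSdw^k(S)|=\sum_{m\in S}\Bez^k(m)$, and $\Bez^k(m)$ depends only on $\max(m)$. So if $c_h(S)=|\{m\in S : \max(m)=h\}|$, every iterated shadow size is a fixed linear function of the vector $(c_1,\dots,c_n)$. The same holds for the lex segment. It therefore suffices to show that, among strongly stable sets of a given size, minimality of $\sum_h c_h\binom{n-h+1}{1}$ forces minimality of $\sum_h c_h\binom{n-h+k}{k}$ for every $k$. The natural route is to show that minimal shadow forces $c(S)=c(\mathrm{lex})$, or at least forces the same values of these linear functionals.

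\emph{Step 3 (main obstacle): induction on $n$ via restriction to $x_n=0$.} I expect the hard step to be showing that minimal shadow pins down the $\max$-distribution. I would split $S=S'\sqcup S''$, where $S'$ consists of the monomials not divisible by $x_n$ (a strongly stable set in $n-1$ variables) and $S''=x_n\cdot T$ with $T$ strongly stable of degree $q-1$. Then
\begin{align*}
|\uSdw(S)|=|\uSdw_{\{1,\dots,n-1\}}(S')|+|T'|,
\end{align*}
where $T'$ is the set of degree-$q$ monomials $x_nm$ lying in $\uSdw(S)$. I would compare this term by term with the lex segment, using Macaulay's Theorem in $n-1$ variables for the first summand. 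For the second summand I would use Green's hyperplane restriction bound, via Lemma \ref{Conca}, which says that restricting to $x_n=0$ is controlled by lex. Equality in the total would then force equality in both pieces, so that $S'$ is a Gotzmann set in $n-1$ variables and $T$ is a Gotzmann set in degree $q-1$. The induction hypothesis on $n$ (and on the degree) then shows that both pieces persist. Finally I would reassemble, using $\uSdw(S)'=\uSdw_{\{1,\dots,n-1\}}(S')$ and the identity $\uSdw(S)''=x_n\,\uSdw(S)$ coming from $x_n$-multiplication. If the equality-splitting fails to be tight, the fallback is Gotzmann's original argument through the Gotzmann representation of the Hilbert polynomial. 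Since the statement is classical, I would cite it at that point rather than reprove it.
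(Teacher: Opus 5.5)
The paper gives no proof of Theorem \ref{Gotzmann}; it only cites \cite{Gotzmann}, so your proposal is an independent reproof.

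\emph{Steps 1 and 2 are sound.} Sharpness plus the $q=0$ case of Corollary \ref{GeneralGreengGin} gives $M_{q+k}=R_kM_q$. By Lemma \ref{WeightsLemma}, everything then reduces to one claim. Let $S$ be a strongly stable set of degree-$q$ monomials and $L$ the lex segment of size $|S|$. If $|\uSdw(S)|=|\uSdw(L)|$, then $S$ has the same $\max$-distribution as $L$. That is exactly the paper's Lemma \ref{OptimalSameMax}.

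\emph{The gap is in Step 3.} Write $N=\{1,\dots,n-1\}$. For strongly stable $S$, the $x_n$-divisible part of $\uSdw(S)$ is exactly $x_nS$: if $x_is=x_nm$ with $i<n$, then $m=x_is/x_n\in S$. Hence $|\uSdw(S)|=|\uSdw_N(S')|+|S|$, and likewise $|\uSdw(L)|=|\uSdw_N(L')|+|L|$. Your second summand is therefore always $|S|$ and carries no information. All the content lies in comparing $|\uSdw_N(S')|$ with $|\uSdw_N(L')|$. Macaulay's Theorem in $n-1$ variables does not do this, because $|S'|$ and $|L'|$ are not known to be equal.

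The missing step is to show that equality forces $|S'|=|L'|$.
\begin{itemize}
\item Green's restriction theorem gives $|S'|\geq|L'|$: apply Theorem \ref{GreenHyper} to the ideal generated by $S$, which is its own $\gin$, and transport to $x_n$ by Lemma \ref{Conca}. This is where Green belongs, not on the $x_n$-divisible part.
\item Then $|\uSdw_N(S')|\geq|\uSdw_N(\mathrm{lex}_{n-1}(|S'|))|\geq|\uSdw_N(L')|$.
\item The last inequality is strict if $|S'|>|L'|$, because each added monomial contributes weight $n-\max(m)\geq 1$.
\end{itemize}
Only after this can you call $S'$ a Gotzmann set in $n-1$ variables.

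Two further steps would fail as written.
\begin{itemize}
\item $T$ need not be Gotzmann. Take $n=3$, $q=3$, and $S=\{x_1^3,x_1^2x_2,x_1x_2^2,x_2^3,x_1^2x_3,x_1x_2x_3,x_2^2x_3\}$. It is strongly stable, and $|\uSdw(S)|=12$, which equals the shadow size of the lex segment of size $7$. Yet $T=\{x_1^2,x_1x_2,x_2^2\}$ has a shadow of size $7$, whereas the lex segment of size $3$ has a shadow of size $6$.
\item The reassembly identity $\uSdw(S)''=x_n\uSdw(S)$ is off by a degree. The correct identity is $\uSdw(S)''=x_nS$, which is exactly why $T$ never has to persist.
\end{itemize}

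With these corrections the argument closes. We have $(\uSdw^j(S))'=\uSdw_N^j(S')$ and $|\uSdw^{j+1}(S)|=|\uSdw_N^{j+1}(S')|+|\uSdw^j(S)|$, with the same identities for $L$. Induction on $n$, applied to $S'$ versus $L'$, then gives $|\uSdw^k(S)|=|\uSdw^k(L)|$ for all $k$. Equivalently, Lemma \ref{OptimalSameMax} gives equal $\max$-distributions and Lemma \ref{WeightsLemma} finishes immediately. Falling back on Gotzmann's original argument is what the paper does, but that is a citation, not a proof.
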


\begin{thm}\label{EGHGotzmann}
	Let $t\geq 0$, $\ell \geq 1$,
	and suppose that $I\subseteq R$ is an ideal of minimal type $v_{\ell, t}\in \N^{\ell+ t+1}$.
	There exists an integer $D = D(v_{\ell,t})\in \N$,
	such that if $\alpha(I) \geq D$ then there exists a lex plus powers ideal $J$ with the same Hilbert function as $I$ in degrees $\leq \alpha(I)+t$.
	
	Furthermore,
	if $I$ is generated in degrees $\leq q$ with respect to $v_{\ell, t}$, $q\leq \alpha(I)+t-1$, and
	\begin{align*}
		\gamma_I(q+1)-\gamma_I(q)+\dim \uSdw(J_q) = \dim J_{q+1},
	\end{align*}
	then for all $i\in \{0,1,\dots, \alpha(I)+t-q-1\}$ we have 
	\begin{align*}
		\gamma_I(q+i+1)-\gamma_I(q+i)+\dim \uSdw(J_{q+i}) = \dim J_{q+i+1}.
	\end{align*}
	Furthermore, for all $i\in \{0,1,\dots, \alpha(I)+t-q-1\}$,
	if $q+i\leq \alpha(I)-1$ then there are exactly $\gamma_I(q+i+1)$ minimal generators in degree $q+i+1$ of $\gin(I)$,
	otherwise there are exactly $\gamma_I(q+i+1)-1$ minimal generators in degree $q+i+1$ of $\gin(I)$,
	and they are not divisible by $x_3,x_4,\dots, x_n$.
\end{thm}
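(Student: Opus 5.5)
The first assertion follows directly from Theorem \ref{EGHTheorem}. For $\alpha(I)\geq D$ the ideal $I$ is $\LPPT(v_{\ell,t})$, and $J=L+P$, where $L$ is generated by the lex segments of sizes $|\mathfrak{L}_{I,k}|$ and $P$ is as in Lemma \ref{PowersSegment}.

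For persistence I would not work with $J$ directly. Instead I would transport the sharpness hypothesis to $\gin(I)$ and split it into its two pieces, $\mathfrak{L}_{I,k}$ and $\mathfrak{P}_{I,k}$. Write $d=\alpha(I)+k$ with $q=d$. Using $\dim J_{q+1}=\dim I_{q+1}=|\mathfrak{L}_{I,k+1}|+|\mathfrak{P}_{I,k+1}|$, Lemma \ref{PowersSegment} gives $|\mathfrak{P}_{I,k+1}|=\gamma_I(q+1)-1+|\uSdwB(\mathfrak{P}_{I,k})|$ in the range where $\gamma_I$ is positive. The chain of inequalities in the proof of Theorem \ref{EGHTheorem} gives $|\uSdw(L_{q})|\leq |\uSdwB(\mathfrak{L}_{I,k})|\leq|\mathfrak{L}_{I,k+1}|$. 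Sharpness then forces equality everywhere, up to a defect of exactly one. This defect is where minimality of the type enters: the term $\gamma_I(q+1)-\gamma_I(q)$ must be matched against the $x_1$-power and $\Gamma_I$ bookkeeping.

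In degree $q+1$, equality means two things. First, $\mathfrak{L}_{I,k+1}=\uSdwB(\mathfrak{L}_{I,k})$, so by Lemma \ref{BorelShadowEqualsShadow} and the Bezrukov count (Lemma \ref{WeightsLemma}, Corollary \ref{WeightsLemmaAdvancedIdeal}) no minimal generator of $\gin(I)$ in degree $q+1$ lies in the $\mathfrak{L}$ part. Second, $\mathfrak{A}_k$ has the same shadow size as a lex segment, i.e. it is Gotzmann. The only new minimal generators of $\gin(I)$ in degree $q+1$ are therefore those in $\Gamma_I(q+1)$, which are monomials in $x_1,x_2$ only. There is also one extra generator when $q+1\leq\alpha(I)$, coming from the new $x_1$-power column, which explains the count $\gamma_I(q+1)$ versus $\gamma_I(q+1)-1$. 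This settles the generator statement in degree $q+1$.

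To propagate to degree $q+2$, I would use that $I$ has no minimal generators in degrees $>q$ outside $(f_1,\dots,f_\ell)$. Let $I'=(I_{\leq q})$. Then $I'+(f_i:\deg f_i>q)=I$, and the contribution of the new $f_i$ to $\gin(I)$ is exactly $\Gamma_I$ (Theorem \ref{Monster2}, with minimality used to ensure $\gamma_I$ counts the true regular-sequence lengths). The key step is to apply Theorem \ref{GeneralGreen} in the form of Corollary \ref{GeneralGreenNumeric}. Having just shown that the degree-$(q+1)$ minimal generators of $\gin(I)$ are exactly the bottom ones (expanders coming only from $\Gamma_I$), this gives
\[
\dim I_{q+2}\leq \dim R_1\gin(I)_{q+1}+(\gamma_I(q+2)-\gamma_I(q+1)),
\]
with the bottom-generator correction handled by Corollary \ref{GeneralGreen2Vars}. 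Combining this with the EGH lower bound $\dim J_{q+2}\geq \gamma_I(q+2)-\gamma_I(q+1)+\dim\uSdw(J_{q+1})$ and with Gotzmann's Persistence Theorem \ref{Gotzmann} applied to the lex part $L$ (since $L_{q+1}=\uSdw(L_q)$ is Gotzmann) yields equality in degree $q+2$. Induction on $i$ then finishes the proof, up to $\alpha(I)+t-1$.

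The main obstacle is the exact-accounting step: showing that sharpness for $J$ forces $\mathfrak{L}_{I,k+1}=\uSdwB(\mathfrak{L}_{I,k})$ and forces $\mathfrak{P}$ to grow by exactly $\Gamma_I$. The inequality chain in Theorem \ref{EGHTheorem} mixes the compressed sets $\mathfrak{A}_k$ with the lex segments, so one needs Theorem \ref{LexSegment}'s max-preserving bijection to conclude that equality of sizes implies equality of sets. Minimality of $v$ is needed to rule out an additional regular-sequence element hiding inside $\mathfrak{L}$, as in Gasharov's counterexample. I would handle this by taking $D$ large enough that Theorem \ref{CIGinStructure}-type counting applies to the subideal generated by $f_1,\dots,f_\ell$, and then comparing Bezrukov weights.
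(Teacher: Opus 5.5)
Your first assertion and your degree-$(q+1)$ analysis match the paper's base case. Sharpness in degree $q$ is equivalent to $|\mathfrak{L}_{I,k+1}|=|\uSdwB(\mathfrak{L}_{I,k})|=|\uSdw(L_q)|$, i.e.\ to $\gin(I)_{q+1}=\uSdw(\gin(I)_q)\sqcup\Gamma_I(q+1)$. There is no ``defect of one'' here. The $-1$'s cancel exactly because $|\uSdw(P_q)|=|\uSdwB(\mathfrak{P}_{I,k})|+\gamma_I(q)-1$, the extra monomials being $x_1^{q+1-e_i}x_i^{e_i}$ for $2\le i\le\gamma_I(q)$.

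\textbf{The gap.} That same bookkeeping breaks your propagation step, because the displayed bound $\dim I_{q+2}\le\dim R_1\gin(I)_{q+1}+\gamma_I(q+2)-\gamma_I(q+1)$ is false. It says $\gin(I)$ has at most $\gamma_I(q+2)-\gamma_I(q+1)$ minimal generators in degree $q+2$. Whenever $\gamma_I(q+1)\ge 2$, this contradicts the $\LPPT$ bound $\fbl_I(q+2)\ge\gamma_I(q+2)-1$ (uncovered bottom monomials never lie in $\uSdw(\gin(I)_{q+1})$). It also contradicts the very generator count you are proving. For example, take three general forms of degree $d\gg0$ in $K[x_1,x_2,x_3]$ and $q=d$. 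Then $\gamma_I\equiv 3$ from degree $d$ on and $\dim I_{d+2}=18$, but $\dim R_1\gin(I)_{d+1}=16$.

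What you dropped is the persistence of the $\gamma_I(q+1)-1$ bottom generators of degree $q+1$. Only $|A|$ of them come from the $f_i$ of degree $q+1$, so it is not true that ``the contribution of the new $f_i$ is exactly $\Gamma_I$''. The other $|E|$ are expanders of $W=(I_q)$. Each of these generators spawns $n$ elements of degree $q+2$ in the spanning set of Theorem~\ref{GeneralGreen}, but carries Bezrukov weight only $n-1$ in $\dim R_1\gin(I)_{q+1}$. Corollary GeneralGreen2Vars is not what supplies this term.

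\textbf{The repair.} The correct estimate is the paper's inductive step. Let $C$ be the $f_i$ of degree $q+2$, and use $|E|+|A|=\gamma_I(q+1)-1$ from the previous degree. Then
\[
\dim I_{q+2}\le|\uSdw(E)|+|\uSdwB^2(B)|+n|A|+|C|\le\dim R_1\gin(I)_{q+1}+(|E|+|A|)+|C|=\dim R_1\gin(I)_{q+1}+\gamma_I(q+2)-1.
\]
The paper closes this against the $\LPPT$ lower bound directly inside $\gin(I)$. That gives $\gin(I)_{q+2}=\uSdw(\gin(I)_{q+1})\sqcup\Gamma_I(q+2)$ and the generator count.

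If you want to close against the EGH lower bound for $J$ instead, you must first convert. One has $\dim R_1J_{q+1}=|\uSdw(L_{q+1})|+|\uSdwB(\mathfrak{P}_{I,k+1})|+\gamma_I(q+1)-1$, so you need $|\uSdw(L_{q+1})|=|\uSdwB(\mathfrak{L}_{I,k+1})|$. This follows from three ingredients:
\begin{itemize}
\item $\mathfrak{L}_{I,k+1}=\uSdwB(\mathfrak{L}_{I,k})$;
\item Bezrukov counting;
\item Gotzmann persistence (or Lemma~\ref{OptimalSameMax}) applied to the strongly stable set $\mathfrak{A}_k$, which your base case shows is Gotzmann.
\end{itemize}
Applying Gotzmann to $L$ itself, as you propose, is vacuous. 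The paper's concluding ``Therefore'' also uses this equality tacitly. The detour through Theorem~\ref{CIGinStructure} for $(f_1,\dots,f_\ell)$ is unnecessary.
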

\begin{proof}
	By Theorem \ref{EGHTheorem} there exists an integer $D = D(v_{\ell,t})\in \N$,
	such that if $\alpha(I) \geq D$ then there exists a lex plus powers ideal $J=L+P$ with the same Hilbert function as $I$,
	where $L$ and $P$ are defined as in Theorem \ref{EGHTheorem}.
	Also, $I$ is $\LPPT(v_{\ell,t})$.
	The claim clearly holds when $q\leq \alpha(I)-1$,
	so we suppose that $q\geq \alpha(I)$.
	We prove the claim by induction on $i$.
	First, we handle the base case $i=0$.
	From the assumptions we have $\gamma_I(q+1)-\gamma_I(q)+\dim \uSdw(J_q) = \dim J_{q+1}$,
	and we just need to prove that there are exactly $\gamma_I(q+1)-1$ minimal generators in degree $q+1$ of $\gin(I)$,
	and that they are not divisible by $x_3,x_4,\dots, x_n$.
	By Theorem \ref{EGHTheorem},
	$|L_{\alpha(I)+k}\cap\mathfrak{M}| = |\mathfrak{L}_{I,k}|$,
	for all $k\in \{0,1\dots, t\}$.
	By Lemma \ref{PowersSegment},
	for all $k\in \{0,1,\dots, t\}$ we have $|\mathfrak{P}_{I,k}| = |P_{\alpha(I)+k}\cap\mathfrak{M}|$.
	Also, for all $k\in \{1,2,\dots, t\}$ we have $|P_{\alpha(I)+k}\cap\mathfrak{M}| = \gamma_I(\alpha(I)+k)-1 + |\uSdw_{\{2,3,\dots,n\}}(P_{\alpha(I)+k-1}\cap\mathfrak{M})|$ and $|\uSdw_{\{2,3,\dots,n\}}(P_{\alpha(I)+k-1}\cap\mathfrak{M})| = |\uSdwB(\mathfrak{P}_{I,k-1})|$.
	Thus,
	we have that $\uSdw(\gin(I_q)) = \gin(I_{q+1})\setminus \Gamma_I(q+1)$,
	since $\gamma_I(q+1)-\gamma_I(q)+\dim \uSdw(J_q) = \dim J_{q+1}$.
	Hence, $\gin(I)$ has exactly $\gamma_I(q+1)-1$ minimal generators in degree $q+1$,
	and they are not divisible by $x_3,x_4,\dots, x_n$.
	
	So suppose that $i\geq 1$ and that the claim holds for all $i'<i$.
	Let $W$ be the ideal generated by $I_{q+i-1}$,
	and let $B$ be a reduced basis for $W_{q+i-1}$.
	Consider a Borel shadow plus expanders basis $E\cup \uSdwB(B)$ for $W_{q+i}$.
	Then $|E| \leq \gamma_I(q+i)-1$ by the inductive hypothesis.
	By Theorem \ref{GeneralGreen},
	for all $k\geq 1$ we have that $\uSdw^{k-1}(E)\cup \uSdwB^k(B)$ is a spanning set for $W_{q+i+k-1}$.
	In particular, $\uSdw(E)\cup \uSdwB^2(B)$ is a spanning set for $W_{q+i+1}$.
	
	Let $f_1,\dots f_\ell$ be a regular sequence given by the type $v_{\ell,t}$ condition.
	Since $I$ is generated in degrees $\leq q$ with respect to $v_{\ell, t}$,
	there are sets $A=\{f_{r_1},\dots, f_{r_s}\}$ and $C=\{f_{c_1},\dots, f_{c_u}\}$ with $r_1\leq \cdots \leq r_s\leq c_1\cdots \leq c_u$ and $s,u\geq 0$,
	such that $W_{q+i}+ \Span(A) = I_{q+i}$ and $W_{q+i+1}+ \uSdw(\Span(A)) + \Span(C) = I_{q+i+1}$.
	Note that we could have $A=C=\emptyset$ if none of the polynomials $f_1,\dots, f_\ell$ have degree $q+i$ or $q+i+1$.
	Also, $|E| = \gamma_I(q+i)-1-s$,
	and the polynomials in $A$ contribute $s$ minimal generators of degree $q+i$ of $\gin(I_{q+i})$,
	and these minimal generators are not divisible by $x_3,x_4,\dots, x_n$.
	Thus,
	\begin{align*}
		\dim I_{q+i+1} &= \dim (W_{q+i+1}+ \uSdw(\Span(A)) + \Span(C))\\
		&\leq |\uSdw(E)| + |\uSdwB^2(B)| + n|A| + |C|\\
		&\leq \dim(\uSdwB(\gin(I_{q+i}))) + |E|+|A|+|C|\\
		&= \dim(\uSdwB(\gin(I_{q+i}))) + \gamma_I(q+i)-1-s + s + u\\
		&= \dim(\uSdwB(\gin(I_{q+i}))) + \gamma_I(q+i+1)-1.
	\end{align*}
	Since $I$ is $\LPPT(v_{\ell, t})$ we must have
	\begin{align*}
		\dim I_{q+i+1} \geq \dim(\uSdwB(\gin(I_{q+i}))) + \gamma_I(q+i+1)-1.
	\end{align*}
	Hence,
	\begin{align*}
		\dim I_{q+i+1} = \dim(\uSdwB(\gin(I_{q+i}))) + \gamma_I(q+i+1)-1.
	\end{align*}
	Therefore, $\gamma_I(q+i+1)-\gamma_I(q+i)+\dim \uSdw(J_{q+i}) = \dim J_{q+i+1}$,
	and there are exactly $\gamma_I(q+i+1)-1$ minimal generators in degree $q+i+1$ of $\gin(I)$ which are not divisible by $x_3,x_4,\dots, x_n$.
\end{proof}

Next,
we turn to another classical result,
Green's Hyperplane Restriction Theorem \ref{GreenHyper},
which we state as Theorem \ref{GreenHyper}.

\begin{thm}[Green \cite{GreenHyper}]\label{GreenHyper}
	Let $I\subseteq R$ be an ideal and $L$ be the lex ideal with the same Hilbert function.
	If $q\in \{n,n-1,\dots, 3\}$ and $f_n,f_{n-1},\dots, f_q$ are general linear forms then for all $k\in \N$ we have
	\begin{align*}
		\dim \left(R/(L,f_n,\dots, f_q)\right)_k \geq \dim \left(R/(I,f_n,\dots, f_q)\right)_k.
	\end{align*}
\end{thm}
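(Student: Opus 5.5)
The plan is to reduce, through generic initial ideals, to a purely combinatorial statement about strongly stable sets of monomials, prove that statement for a single linear form, and then iterate. First, by Lemma \ref{Conca}, for general linear forms $f_n,\dots,f_q$ and every $k$ we have $\dim (R/(I,f_n,\dots,f_q))_k=\dim (R/(\gin(I),x_n,\dots,x_q))_k$. This is the number of degree-$k$ monomials in $K[x_1,\dots,x_{q-1}]$ lying outside $\gin(I)$. Since $\gin(L)=L$ for a lex ideal $L$, and $I$, $\gin(I)$, $L$ all share a Hilbert function, we may replace $I$ by the strongly stable ideal $M=\gin(I)$ (characteristic $0$). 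The claim then becomes the following: among strongly stable monomial ideals with a given Hilbert function, the lex ideal has the fewest degree-$k$ monomials avoiding $x_q,\dots,x_n$.

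\textbf{Step 1: the case of one linear form ($q=n$).} For a strongly stable $M$ the degree-$k$ part splits as
\[
M_k=\bigl(M_k\cap \mathfrak{M}_{n,k,n-1}\bigr)\ \sqcup\ x_n\,(M:x_n)_{k-1}.
\]
So it suffices to show $\dim (L:x_n)_{k-1}\ge \dim (M:x_n)_{k-1}$ whenever $\dim L_k=\dim M_k$. I would argue this by induction on $n$ and $k$, in Green's original style. Use the exact sequence
\[
0\to (R/(M:x_n))(-1)\xrightarrow{\ \cdot x_n\ }R/M\to R/(M,x_n)\to 0 .
\]
Apply Macaulay's bound (Theorem \ref{Macaulay}) in $n-1$ variables to $R/(M,x_n)$, comparing degree $k$ with degree $k-1$, and use $(M:x_n)_{k-1}\supseteq M_{k-1}$. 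For the lex ideal every inequality in this chain is an equality. Concretely, $L|_{x_n}$ is again lex, and $(L:x_n)_{k-1}$ is the largest possible colon for a given size of $L_k$. The goal of the comparison is the binomial inequality $\dim(R/(M,x_n))_k\le \bigl(\dim(R/M)_k\bigr)_{\langle k\rangle}$, where $c_{\langle k\rangle}$ denotes Green's operator: write $c=\binom{a_k}{k}+\binom{a_{k-1}}{k-1}+\cdots$ in its $k$-binomial expansion and set $c_{\langle k\rangle}=\binom{a_k-1}{k}+\binom{a_{k-1}-1}{k-1}+\cdots$. The equality $\dim(R/(L,x_n))_k=(\dim(R/L)_k)_{\langle k\rangle}$ is the standard count for lex segments. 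An alternative route to the inequality would be to compress $M_k$ via the operators $\comp_{i,j}$ with $i<j<n$ from Section \ref{TransformSection}, which preserve $\max$ and hence the number of monomials divisible by $x_n$, and then to handle the $x_n$-direction separately.

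\textbf{Step 2: iteration.} Set $I'=(I,f_n)/(f_n)\subseteq K[x_1,\dots,x_{n-1}]$ and let $L'$ be its lex ideal. By Step 1, $\dim I'_k\ge \dim (L|_{x_n})_k$ for all $k$. The ideal $L|_{x_n}$ is lex in $n-1$ variables, and lex segments in a fixed degree are nested, so $L|_{x_n}\subseteq L'$ degreewise. Applying the induction hypothesis (one fewer variable, one fewer form) to $I'$ and $L'$ gives $\dim (K[x_1,\dots,x_{n-1}]/(I',f_{n-1},\dots,f_q))_k\le \dim(\cdots/(L',x_{n-1},\dots,x_q))_k$. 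Since $L|_{x_n}\subseteq L'$ and both are lex, this is at most $\dim(\cdots/(L|_{x_n},x_{n-1},\dots,x_q))_k=\dim(R/(L,x_n,\dots,x_q))_k$. Here we use again that restricting a lex ideal to $x_j=0$ yields a lex ideal, together with Lemma \ref{Conca} to pass between general forms and variables.

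The main obstacle is Step 1, namely the inequality $\dim(R/(M,x_n))_k\le (\dim(R/M)_k)_{\langle k\rangle}$ for an arbitrary strongly stable $M$. I would first try the double induction on $(n,k)$ with the exact sequence above, splitting $M_k$ according to the exponent of $x_1$ as in the classical proofs of Macaulay's theorem. I would fall back on the compression argument if the binomial bookkeeping becomes unmanageable.
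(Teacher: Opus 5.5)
Your reduction through Lemma \ref{Conca} and your Step 2 are sound, and they coincide with what the paper relies on. The paper does not prove the one-form case at all; it cites Green for it. It then remarks that several forms follow by a simple induction because $L|_{x_n}$ is again lex, which is exactly your nesting argument $L|_{x_n}\subseteq L'$. You can even skip checking that the images of $f_{n-1},\dots,f_q$ in $R/(f_n)$ are general with respect to $I'$. Instead, iterate the monomial statement directly on the strongly stable set $\gin(I)_k\cap K[x_1,\dots,x_{n-1}]$.

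The gap is Step 1, which carries the whole content of Green's theorem, and the chain you sketch there cannot work.
\begin{itemize}
\item The target inequality $\dim(R/(M,x_n))_k\le(\dim(R/M)_k)_{\langle k\rangle}$ concerns only the single strongly stable set $M_k$. Your two ingredients, Macaulay's bound for $R/(M,x_n)$ from degree $k-1$ to $k$ and the inclusion $(M:x_n)_{k-1}\supseteq M_{k-1}$, only bring in $M_{k-1}$, which $M_k$ does not control. If $M$ is generated in degree $k$, then $M_{k-1}=0$ and both ingredients are vacuous.
\item Your claim that every step is an equality for $L$ fails as soon as $L$ has minimal generators in degree $k$. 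For instance, with $n=3$, $L=(x_1^2)$ and $k=2$, one has $\dim(R/(L,x_3))_2=2<3=\bigl(\dim(R/(L,x_3))_1\bigr)^{\langle 1\rangle}$.
\item Saying that $(L:x_n)_{k-1}$ is the largest possible colon is just a restatement of what must be proved.
\end{itemize}

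A working induction on $k$ stays inside degree $k$. Set $c=\dim(R/M)_k$, $a=\dim(R/(M,x_n))_k$ and $B''=(M:x_n)_{k-1}$; then $B''$ is strongly stable of codimension $c-a$ in $R_{k-1}$. Strong stability gives $x_i\bigl(B''\cap K[x_1,\dots,x_{n-1}]\bigr)\subseteq M_k$ for all $i<n$. Macaulay's theorem in $n-1$ variables, together with the induction hypothesis applied to $B''$, then yields $a\le\bigl((c-a)_{\langle k-1\rangle}\bigr)^{\langle k-1\rangle}$. This is still not tight for lex: in the example above, $c=5$, $a=2$, and the right side is $3$. So you also need a numerical lemma showing that this inequality forces $a\le c_{\langle k\rangle}$. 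The right side is non-increasing in $a$, and a check on the binomial expansion of $c$ shows it is at most $c_{\langle k\rangle}$ at $a=c_{\langle k\rangle}+1$.

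Your compression fallback is also incomplete. Compressions $\comp_{i,j}$ with $j<n$ never change exponents of $x_n$, which is exactly where the difficulty lies. The simplest repair is to do what the paper does and cite Green for the case $q=n$.
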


\begin{rem}
	Gasharov proved that Green's Hyperplane Restriction Theorem holds for fields with positive characteristic \cite{Gasharov}.
\end{rem}

We will generalize this result by replacing the lex ideal with the lex plus powers ideal.
In order to prove a generalization of Green's Hyperplane Restriction Theorem,
we need the following lemma.

\begin{lem}\label{OptimalSameMax}
	Suppose that $L,A\subseteq \mathfrak{M}_{n,k}$ such that $|L|=|A|$, $L$ is a lex segment, and $A$ is strongly stable.
	If $|\uSdw(A)| = |\uSdw(L)|$ then there exists a bijection $\sigma: L \rightarrow A$ such that $\max(m) = \max(\sigma(m))$.
\end{lem}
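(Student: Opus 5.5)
The plan is to work with the $\max$-distribution vectors of the two sets. For a set $S$ of monomials of degree $k$, put $a_h(S) = |\{m\in S \mid \max(m)=h\}|$ for $h\in [n]$. A bijection $\sigma$ with $\max(m)=\max(\sigma(m))$ exists if and only if $a_h(L)=a_h(A)$ for every $h$. Both $L$ and $A$ are strongly stable, since a lex segment is strongly stable. By Lemma \ref{WeightsLemma},
\begin{align*}
|\uSdw(S)| = \sum_{h=1}^n a_h(S)(n-h+1)
\end{align*}
for $S\in\{L,A\}$. The hypothesis $|\uSdw(A)|=|\uSdw(L)|$ together with $|A|=|L|$ therefore gives two linear relations among the $a_h$. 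These alone do not determine the whole vector, so a more refined comparison is needed.

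The refinement is to compare tail sums. For $h\in[n]$ set $T_h(S) = \sum_{j\ge h} a_j(S)$, the number of monomials of $S$ divisible by some $x_j$ with $j\ge h$. Equivalently, $T_h(S) = |S| - |S\cap \mathfrak{M}_{n,k,h-1}|$. The shadow count becomes
\begin{align*}
|\uSdw(S)| = \sum_{h=1}^n T_h(S),
\end{align*}
because $\sum_h a_h (n-h+1) = \sum_h \sum_{j\le h}\ldots$ rearranges to a sum of tails, with $T_1 = |S|$.

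The key claim is that $T_h(L)\le T_h(A)$ for every $h$. Equivalently, $|L\cap \mathfrak{M}_{n,k,h-1}| \ge |A\cap\mathfrak{M}_{n,k,h-1}|$: among strongly stable (or arbitrary) $k$-sets of a fixed size, the lex segment has the most monomials in the variables $x_1,\dots,x_{h-1}$ alone. To prove it, note that $L\cap\mathfrak{M}_{n,k,h-1}$ is an initial lex segment of $\mathfrak{M}_{n,k,h-1}$. In the lex order, every monomial of $\mathfrak{M}_{n,k,h-1}$ that is larger than the last element of $L\cap \mathfrak{M}_{n,k,h-1}$ lies in $L$. So either $L \supseteq \mathfrak{M}_{n,k,h-1}$, and the inequality is trivial, or $L$ consists of the lex-largest $|L|$ monomials overall. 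In the second case, a direct argument is needed: for a strongly stable $A$, if $A\cap\mathfrak{M}_{n,k,h-1}$ had more elements than $L\cap \mathfrak{M}_{n,k,h-1}$, strong stability would force $A$ to contain enough monomials before them in lex to exceed $|L|$. A cleaner route, which I would try first, is to apply Macaulay/Clements–Lindström style restriction: setting $x_h=\cdots=x_n=0$ is compatible with lex. Green's hyperplane restriction (Theorem \ref{GreenHyper}) in its monomial form for strongly stable sets gives exactly that the lex set maximizes the restriction count. I expect this inequality to be the main obstacle, since it needs to be stated carefully for sets rather than ideals.

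Given the claim, the conclusion follows quickly. We have $\sum_h T_h(L) = |\uSdw(L)| = |\uSdw(A)| = \sum_h T_h(A)$ with termwise $T_h(L)\le T_h(A)$, so $T_h(L)=T_h(A)$ for all $h$. Then $a_h = T_h - T_{h+1}$ agree for $L$ and $A$, and we match monomials of equal $\max$ arbitrarily to obtain $\sigma$.
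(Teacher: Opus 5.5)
Your overall strategy (write the shadow size as a sum of level counts, show each count is extremal for the lex segment, and let equality of the sums force equality termwise) is sound. However, both concrete ingredients are stated incorrectly, and as written the final deduction rests on two false statements.

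First, the rearrangement is wrong. One has $\sum_{h=1}^n T_h(S)=\sum_{j} j\,a_j(S)$, whereas a monomial with $\max(m)=j$ has weight $n-j+1$ in Lemma \ref{WeightsLemma}. Since $n-\max(m)+1=|\{h : \max(m)\le h\le n\}|$, the correct identity is $|\uSdw(S)|=\sum_{h=1}^n |S\cap \mathfrak{M}_{n,k,h}|$. This is a sum of \emph{head} counts, equivalently $(n+1)|S|-\sum_h T_h(S)$.

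Second, your key claim points the wrong way. Take $n=3$, $k=2$, $L=\{x_1^2,x_1x_2,x_1x_3\}$ and $A=\{x_1^2,x_1x_2,x_2^2\}$. Then $|L\cap\mathfrak{M}_{3,2,2}|=2<3=|A\cap\mathfrak{M}_{3,2,2}|$, i.e.\ $T_3(L)=1>0=T_3(A)$. Green's Theorem \ref{GreenHyper} says the lex ideal \emph{maximizes} $\dim(R/(L,f_n,\dots,f_{h+1}))_k$, so the lex segment has the \emph{fewest} monomials in $x_1,\dots,x_h$. This is precisely the inequality $|A'|\ge|L'|$ that the paper's proof uses. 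So the ``direct argument'' you sketch is aimed at a false statement. The same example also refutes your shadow identity: $\sum_h T_h(A)=3+2+0=5$, while $|\uSdw(A)|=7$.

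The two errors cancel, so the repair is immediate. For every $h$ one has $|L\cap\mathfrak{M}_{n,k,h}|\le|A\cap\mathfrak{M}_{n,k,h}|$. This is Green's theorem with $q=h+1$ applied to the ideal generated by $A$, as the paper does for $h=n-1$; the cases $h=1$ and $h=n$ are equalities. Hence
\begin{align*}
|\uSdw(A)|-|\uSdw(L)|=\sum_{h=1}^n\bigl(|A\cap\mathfrak{M}_{n,k,h}|-|L\cap\mathfrak{M}_{n,k,h}|\bigr)
\end{align*}
is a sum of nonnegative terms. The hypothesis forces each to vanish, so the $\max$-distributions of $A$ and $L$ coincide.

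Repaired this way, your route genuinely differs from the paper's. The paper inducts on $n$: it uses Green only for the last variable to get $|A'|\ge|L'|$. It then excludes strict inequality by applying Macaulay's theorem to the $|L'|$ lex-largest monomials of $A'$, together with the weight bound $n-\max(m)+1\ge 2$ for monomials not divisible by $x_n$, and finally recurses on $(A',L')$. The corrected version of your argument handles all levels at once from a single identity and needs no Macaulay step. The price is using Green's theorem for several general linear forms simultaneously, or iterating the one-form version using that restricting a lex segment gives a lex segment and that lex segments of increasing size are nested.
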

\begin{proof}
	During this proof,
	$\Bez = \Bez_n$ and $\uSdw = \uSdw_{\{ 1,2,\dots, n \}}$.
	We prove the claim by induction on $n\geq 2$.
	If $n=2$ then $A=L$ and the claim holds.
	So suppose that $n\geq 3$ and the claim holds for all $n'<n$.
	Let $A'$ and $L'$ be all the monomials in $A$ and $L$ respectively,
	that are not divisible by $x_n$.
	We show that $|A'| = |L'|$.
	Assume to the contrary that $|A'| \neq |L'|$.
	
	By Green's Theorem $|A'|\geq |L'|$.
	So, $|A'| > |L'|$.
	Let $A''$ be the last $|L'|$ monomials in the lex order in $A'$.
	Then $A''$ is strongly stable.
	By Macaulay's Theorem $|\uSdw(L')| \leq |\uSdw(A'')| <  |\uSdw(A')|$.
	By Lemma \ref{WeightsLemma} we have
	\begin{align*}
		\sum_{m\in A'} \Bez(m) - \sum_{m\in L'} \Bez(m)
		&= \sum_{m\in A'\setminus A''} \Bez(m) + \sum_{m\in A''} \Bez(m) - \sum_{m\in L'} \Bez(m)\\
		&= \sum_{m\in A'\setminus A''} \Bez(m) + |\uSdw(A'')| - |\uSdw(L')|\\
		&\geq \sum_{m\in A'\setminus A''}  \Bez(m).
	\end{align*}
	Thus
	\begin{align*}
		|\uSdw(A)| - |\uSdw(L)| 
		&= \sum_{m\in A} \Bez(m) -  \sum_{m\in L} \Bez(m) \tag{By Lemma \ref{WeightsLemma}}\\
		&= \sum_{m\in A\setminus A'} \Bez(m) + \sum_{m\in A'} \Bez(m) - \sum_{m\in L\setminus L'} \Bez(m) - \sum_{m\in L'} \Bez(m)\\
		&\geq \sum_{m\in A\setminus A'} \Bez(m) - \sum_{m\in L\setminus L'} \Bez(m) + \sum_{m\in A'\setminus A''}  \Bez(m)\\
		&= \sum_{i=1}^{|A\setminus A'|} 1 - \sum_{i=1}^{|L\setminus L'|} 1 + \sum_{m\in A'\setminus A''}  \Bez(m) \tag{By definition of $\Bez$}\\
		&= - \sum_{i=1}^{|L\setminus L'|-|A\setminus A'|} 1 + \sum_{m\in A'\setminus A''}  \Bez(m) \tag{Since $|L\setminus L'|>|A\setminus A'|$}\\
		&\geq - \sum_{i=1}^{|L\setminus L'|-|A\setminus A'|} 1 + \sum_{m\in A'\setminus A''}  2 \tag{By definition of $\Bez$}\\
		&= \sum_{m\in A'\setminus A''}  1 \tag{Since $|L\setminus L'|-|A\setminus A'| = |A'\setminus A''|$}\\
		&>0. \tag{Since $|A'|>|L'|$}
	\end{align*}
	Hence, $|\uSdw(A)| > |\uSdw(L)|$, a contradiction.
	So, $|A'|=|L'|$,
	and from here we must also have that $|A\setminus A'| = |L\setminus L'|$.
	Let $N=\{1,2,\dots, n-1\}$.
	If $|\uSdw_N(A')| = |\uSdw_N(L')|$ then we can use the inductive hypothesis.
	We must have that $|\uSdw_N(A')| = |\uSdw_N(L')|$ is true from the definition of $\Bez$ and Lemma \ref{WeightsLemma},
	since $|A\setminus A'| = |L\setminus L'|$.
	Therefore, claim now follows from the inductive hypothesis.
\end{proof}


\begin{thm}\label{EGHGreenHyper}
	Let $t\geq 0$, $\ell \geq 1$,
	and suppose that $I\subseteq R$ is an ideal of type $v_{\ell, t}\in \N^{\ell+t+1}$.
	There exists an integer $D = D(v_{\ell,t})\in \N$,
	such that if $\alpha(I) \geq D$ then there exists a lex plus powers ideal $J$ with the same Hilbert function as $I$ in degrees $\leq \alpha(I)+t$.
	
	Furthermore, if $q\in \{n,n-1,\dots, 3\}$ and $f_n,f_{n-1},\dots, f_q$ are general linear forms then for all $k\in \{0,1,\dots, t-1\}$ we have
	\begin{align*}
		\dim \left(R/(J,f_n,\dots, f_q)\right)_{\alpha(I)+k} \geq \dim \left(R/(I,f_n,\dots, f_q)\right)_{\alpha(I)+k}.
	\end{align*}
\end{thm}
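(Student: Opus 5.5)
The first part is Theorem \ref{EGHTheorem}. Take $D=D(v_{\ell,t})$ from that theorem, so that $I$ is $\LPPT(v_{\ell,t})$ and $J=L+P$, with $L$ generated by lex segments of sizes $|\mathfrak{L}_{I,k}|$ and $P=(x_2^{e_2},\dots,x_\ell^{e_\ell})$ as in Lemma \ref{PowersSegment}.

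For the inequality, I would apply Lemma \ref{Conca} on both sides. With $H=(x_q,\dots,x_n)$ and $p=n-q+1$ general linear forms, this reduces the claim to comparing the number of monomials of degree $\alpha(I)+k$ that lie outside $\gin(I)+H$ with the number outside $\gin(J)+H$. The first count is $\binom{q-2+\alpha(I)+k}{q-2}$ minus the number of monomials of $\gin(I)_{\alpha(I)+k}$ with $\max(m)\le q-1$. So it suffices to show
\[
|\{m\in\gin(I)_{\alpha(I)+k} : \max(m)\le q-1\}| \;\ge\; |\{m\in\gin(J)_{\alpha(I)+k} : \max(m)\le q-1\}|.
\]

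For the left side I would split $\gin(I)_{\alpha(I)+k}=\mathfrak{L}_{I,k}\sqcup\mathfrak{P}_{I,k}$. By Theorem \ref{LexSegment} there is a $\max$-preserving bijection $\mathfrak{L}_{I,k}\to\mathfrak{A}_k$ with $\mathfrak{A}_k$ strongly stable. The $\mathfrak{P}_{I,k}$ part is an explicit union of Borel shadows of the sets $\Gamma_I$, which consist of monomials in $x_1,x_2$. The proof of Lemma \ref{PowersSegment} matches it, with equal $\max$-distribution, to the monomials of $P_{\alpha(I)+k}$. Concretely, $x_1^{a}x_2^{b}\cdot m'$ with $\min(m')\ge 2$ corresponds to $x_i^{e_i}$ times a monomial, and the $\max$ is governed by the $x_{\ge3}$ part in the same way on both sides. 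Then:

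(i) The strongly stable set $\mathfrak{A}_k$ has at least as many elements with $\max\le q-1$ as the lex segment $L_{\alpha(I)+k}$ of the same size. This is the restriction form of Green's theorem applied to strongly stable sets, i.e.\ Theorem \ref{GreenHyper} applied to the strongly stable ideal generated by the $\mathfrak{A}$'s, where $\uSdw(\mathfrak{A}_k)\subseteq\mathfrak{A}_{k+1}$ makes it an ideal in the relevant range.

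(ii) $\mathfrak{P}_{I,k}$ and $P_{\alpha(I)+k}$ have the same number of elements with $\max\le q-1$.

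It remains to relate $J$ to $\gin(J)$. Since $L+P$ is a monomial ideal, $\gin(L+P)$ restricted modulo $H$ agrees in Hilbert function with $(L+P)$ modulo general forms. Working directly with the monomial ideal, note that $L$ is lex and $P$ consists of pure powers $x_2^{e_2},\dots,x_\ell^{e_\ell}$. I would compute $\dim(R/(J,f_n,\dots,f_q))$ via the generic change of coordinates and compare it with the count from $L$ and $P$ separately.

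The main obstacle I expect is the interaction of $L$ and $P$: $L\cap P\ne\emptyset$ in general, and the pure powers involving $x_i$ with $i\ge q$ are not invariant under killing general forms. I would first try to use the fact that, for $\alpha(I)$ large and $k<t$, the lex segment $L_{\alpha(I)+k}$ lies entirely inside $x_1\cdot R$ and misses $P$. This holds because $|\mathfrak{L}_{I,k}|\le\MB(t)$ is bounded while $\alpha(I)$ is large, so $L\cap P=\emptyset$ in these degrees. Then $\dim(R/(J,\text{forms}))\ge\dim(R/(L+P', \text{forms}))$, where $P'$ is obtained by placing the powers on $x_2,\dots$, and the count splits additively. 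If that fails, I would instead compare against $\gin(J)$ directly, using Conca's lemma and the same $\max$-distribution argument.
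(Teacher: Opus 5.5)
\textbf{There is a genuine gap.} Your step (i) is correct, and it is exactly how the paper finishes. Step (ii), however, is false. More seriously, the proposal never gets control of $\gin(J)$, which is what the right-hand side involves after Lemma \ref{Conca}.

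\emph{Why (ii) fails.} Lemma \ref{PowersSegment} only gives $|\mathfrak{P}_{I,k}|=|P_{\alpha(I)+k}\cap\mathfrak{M}|$; the correspondence is not $\max$-preserving. Every element of $\Gamma_I(\cdot)$ lies in $K[x_1,x_2]$, whereas $x_i^{e_i}$ has $\max=i$. Take $n=3$ and $I$ a complete intersection of three forms of large degree $d$. Theorem \ref{CIGinStructure} gives $\mathfrak{P}_{I,0}=\{x_1^{d-1}x_2,\,x_1^{d-2}x_2^2\}$, which has two elements with $\max\le 2$. But $P_d\cap\mathfrak{M}=\{x_2^d,x_3^d\}$ has only one.

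\emph{Why counting monomials of $L+P$ is the wrong target.} In the same example $J=(x_1^d,x_2^d,x_3^d)$, which is not strongly stable. $J_d$ has two monomials with $\max\le 2$, but $\gin(J)_d=\{x_1^d,x_1^{d-1}x_2,x_1^{d-2}x_2^2\}$ has three. So the quotient by a general linear form is strictly smaller than the monomial count of $J_d$ predicts.

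\emph{Your fallback does not fill this.} The suggestion that the count splits additively after moving the powers, or that ``the same $\max$-distribution argument'' applies to $\gin(J)$, is precisely the missing content. Under a general projection the $x_i^{e_i}$ become powers of general linear forms interacting with the image of $L$. Nothing in the proposal computes this.

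\textbf{The missing ingredient, as the paper supplies it.}
\begin{enumerate}
\item The LPP ideal $J$ has some type $w$ with the same first $\ell$ entries as $v$ and generator counts bounded by $\MB_v$. Only finitely many such $w$ occur, so Theorem \ref{Monster2} gives a uniform $D$ for which $J$ is itself $\LPPT(w)$.
\item Hence $\gin(J)_{\alpha(I)+k}=\mathfrak{L}_{J,k}\sqcup\mathfrak{P}_{J,k}$. The sets $\mathfrak{P}_{J,k}$ and $\mathfrak{P}_{I,k}$ have the same $\max$-distribution, because both are Borel shadows of sets in $K[x_1,x_2]$ whose sizes depend only on the values of $\gamma$. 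This replaces your (ii).
\item For the lex parts, apply Theorem \ref{LexSegment} to $J$ to get a strongly stable $\mathfrak{A}_{J,k}$. The fact that $J=L+P$ is used to force $|\uSdw(\mathfrak{A}_{J,k})|=|\uSdw(L_{\alpha(I)+k})|$.
\item Lemma \ref{OptimalSameMax} then yields a $\max$-preserving bijection $\mathfrak{A}_{J,k}\to L_{\alpha(I)+k}$.
\end{enumerate}
Only after these steps does your step (i) — Green's theorem comparing $\mathfrak{A}_{I,k}$ with the lex segment — close the argument.
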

\begin{proof}
	By Theorem \ref{EGHTheorem} there exists an integer $D_1 = D_1(v_{\ell,t})\in \N$,
	such that if $\alpha(I) \geq D_1$ then there exists a lex plus powers ideal $J=L+P$ with the same Hilbert function as $I$,
	where $L$ and $P$ are defined as in Theorem \ref{EGHTheorem}.
	Also, $I$ is $\LPPT(v_{\ell,t})$.
	The ideal $J$ is of some type $w_{\ell,t}\in \N^{\ell+t+1}$,
	where the first $\ell$ corresponding entries of $w_{\ell,t}$ and $v_{\ell, t}$ are equal.
	We have that $v=v_{\ell,t} = (a_1,\dots, a_\ell, b_0,\dots, b_t)$, and let
	\begin{align*}
		V = \{(a_1,\dots, a_\ell, c_0,\dots, c_t) \bigm | c_k \leq \MB_v(k)\}.
	\end{align*}
	Then $w_{\ell, t}\in V$ because for all $k\in \{0,1,\dots, t\}$ we have $\dim I_{\alpha(I)+k} \leq \MB_v(k)$.
	For every $u\in V$,
	by Theorem \ref{Monster2},
	there exists an integer $D_u = D_u(v_{\ell,t}) \in \N$,
	such that for any ideal $Q$ of type $u$ with $\alpha(Q) \geq D_u$,
	we have that $Q$ is $\LPPT(u)$.
	Let $D_2 = D_2(v_{\ell, t}) = \max_{u\in V} D_u$.
	Hence if $\alpha(J) \geq D_2$ then $J$ is $\LPPT(w_{\ell,t})$.
	Set $D= \max\{D_1,D_2\}$ and assume $\alpha(I) = \alpha(J) \geq D$.
	
	By Lemma \ref{Conca}, for all $k\in \{0,1,\dots, t-1\}$ we have to prove
	\begin{align*}
		\dim \left(R/(\gin(J),x_n,\dots, x_q)\right)_{\alpha(I)+k} \geq \dim \left(R/(\gin(I),x_n,\dots, x_q)\right)_{\alpha(I)+k}.
	\end{align*}
	By Theorem \ref{LexSegment},
	for all $k\in \{0,1,\dots, t\}$ there exists a set $\mathfrak{A}_{I,k}\subseteq \mathfrak{M}_{n,\alpha(I)+k}$ that is strongly stable,
	and there exists a bijection $\sigma_I: \mathfrak{L}_{I,k} \rightarrow \mathfrak{A}_{I,k}$ such that for all $m\in \mathfrak{L}_{I,k}$ we have $\max(m) = \max(\sigma_I(m))$.
	Also, by Theorem \ref{LexSegment},
	for all $k\in \{0,1,\dots, t\}$ there exists a set $\mathfrak{A}_{J,k}\subseteq \mathfrak{M}_{n,\alpha(I)+k}$ that is strongly stable,
	and there exists a bijection $\sigma_J: \mathfrak{L}_{J,k} \rightarrow \mathfrak{A}_{J,k}$ such that for all $m\in \mathfrak{L}_{J,k}$ we have $\max(m) = \max(\sigma_J(m))$.
	For all $k\in \{0,1,\dots, t\}$ we have $|L_{\alpha(I)+k}\cap\mathfrak{M}| = |\mathfrak{L}_{I,k}| = |\mathfrak{A}_{I,k}|$ by definition of $L$ and the bijection $\sigma_I$.
	By Lemma \ref{PowersSegment} for all $k\in \{0,1\dots, t\}$ we have $|P_{\alpha(I)+k}\cap\mathfrak{M}| = |\mathfrak{P}_{I,k}|$.
	Hence, we must have $|\mathfrak{A}_{J,k}| = |\mathfrak{L}_{J,k}| = |L_{\alpha(I)+k}\cap\mathfrak{M}|$ for all $k\in \{0,1,\dots, t\}$.
	
	Next, we show that $|\uSdw(\mathfrak{A}_{J,k})| = |\uSdw(L_{\alpha(I)+k}\cap\mathfrak{M})|$ for all $k\in \{0,1,\dots, t-1\}$.
	If $k$ is less than the first entry of $v$ then this clearly holds.
	Assume to the contrary $|\uSdw(\mathfrak{A}_{J,k})| > |\uSdw(L_{\alpha(I)+k}\cap\mathfrak{M})|$ for some $k\in \{0,1,\dots, t-1\}$ that is at least the first entry in $v$.
	Then we get a contradiction,
	\begin{align*}
		\dim \uSdw (J_{\alpha(I)+k}) &= \gamma_I(\alpha(I)+k) -1 + \dim \uSdw(\gin(J_{\alpha(I)+k}))\\
		&= \gamma_I(\alpha(I)+k) -1+|\uSdwB(\mathfrak{L}_{J,k})| + |\uSdwB(\mathfrak{P}_{J,k})|\\
		&= |\uSdw(\mathfrak{A}_{J,k})| + |\uSdwB(\mathfrak{P}_{J,k})|+\gamma_I(\alpha(I)+k) -1\\
		&= |\uSdw(\mathfrak{A}_{J,k})| + |\uSdw(P_{\alpha(I)+k}\cap\mathfrak{M})|\\
		&> |\uSdw(L_{\alpha(I)+k}\cap\mathfrak{M})| + |\uSdw(P_{\alpha(I)+k}\cap\mathfrak{M})|\\
		&= \dim \uSdw (J_{\alpha(I)+k}).
	\end{align*}
	Thus, $|\uSdw(\mathfrak{A}_{J,k})| = |\uSdw(L_{\alpha(I)+k}\cap\mathfrak{M})|$ for all $k\in \{0,1,\dots, t-1\}$.
	Hence, 
	by Lemma \ref{OptimalSameMax},
	for all $k\in \{0,1,\dots, t-1\}$ there exists a bijection $\sigma_k: \mathfrak{A}_{J,k} \rightarrow L_{\alpha(I)+k}\cap\mathfrak{M}$ such that for all $m\in \mathfrak{A}_{J,k}$ we have $\max(m)= \max(\sigma_k(m))$.
	
	Since $w_{\ell,t}$ and $v_{\ell, t}$ agree on the first $\ell$ entries,
	for all $k\in \{0,1,\dots, t-1\}$ there exists a bijection $\tau_k: \mathfrak{P}_{J,k} \rightarrow \mathfrak{P}_{I,k}$ such that for all $m\in \mathfrak{P}_{J,k}$ we have $\max(m)= \max(\tau_k(m))$.
	So, in order to prove our claim,
	we just need to show that $\mathfrak{A}_{J,k}$ has no more monomials that are not divisible by $x_q,x_{q+1},\dots, x_n$ than $\mathfrak{A}_{I,k}$.
	This follows from the bijection $\sigma_k$ and Green's Hyperplane Restriction Theorem \ref{GreenHyper}.
	Therefore, the claim is proved.
\end{proof}

Our final application in this section is to graded Betti numbers.

\begin{thm}[Bigatti--Hulett--Pardue]\label{BettiLex}
	Suppose that $I\subseteq R$ is an ideal,
	and let $L$ be the lex ideal with the same Hilbert function.
	Then we have an inequality of Betti numbers $b_{p,p+q}(I) \leq b_{p,p+q}(L)$.
\end{thm}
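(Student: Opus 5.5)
The plan is to reduce to strongly stable monomial ideals through the generic initial ideal, write the graded Betti numbers of such ideals in closed form, and then compare each term with the lex ideal using hyperplane restriction.

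\textbf{Step 1 (pass to $\gin(I)$).} By upper semicontinuity of Betti numbers under Gröbner degeneration we have $b_{p,p+q}(I)\leq b_{p,p+q}(\gin(I))$. The ideal $\gin(I)$ has the same Hilbert function as $I$, and it is strongly stable because $\operatorname{char}(K)=0$. The lex ideal $L$ attached to $I$ is also the lex ideal attached to $\gin(I)$. It therefore suffices to prove $b_{p,p+q}(M)\leq b_{p,p+q}(L)$ for every strongly stable monomial ideal $M$ with the Hilbert function of $L$.

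\textbf{Step 2 (a formula for $b_{p,p+q}(M)$).} I would use the Eliahou--Kervaire resolution. For a strongly stable ideal it gives
\begin{align*}
b_{p,p+j}(M)=\sum_{\substack{m \text{ minimal generator of } M\\ \deg(m)=j}}\binom{\max(m)-1}{p}.
\end{align*}
For $j\in\N$ and $h\in[n]$, let $\mu_{h}(M_j)$ be the number of monomials $m\in M_j$ with $\max(m)=h$, and let $\mu_{\leq h}(M_j)$ be the number with $\max(m)\leq h$.

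A monomial $m\in M_j$ with $\max(m)=h$ fails to be a minimal generator exactly when $m/x_h\in M_{j-1}$. This uses strong stability and the Borel-shadow description of Lemma~\ref{BorelShadowEqualsShadow}, as in Lemma~\ref{AddedShadowEquiv}. So the number of minimal generators of degree $j$ with $\max=h$ is $\mu_h(M_j)-\mu_{\leq h}(M_{j-1})$.

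Substituting this and applying Abel summation with $\binom{h-1}{p}-\binom{h}{p}=-\binom{h-1}{p-1}$ gives
\begin{align*}
b_{p,p+j}(M)=\binom{n-1}{p}\dim M_j-\sum_{h=1}^{n-1}\binom{h-1}{p-1}\mu_{\leq h}(M_j)-\sum_{h=1}^{n}\binom{h-1}{p}\mu_{\leq h}(M_{j-1}).
\end{align*}
The first term depends only on the Hilbert function. Every remaining term appears with a nonpositive coefficient.

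\textbf{Step 3 (comparison with lex).} Fix $h$. The set $\{m\in M_j: \max(m)\leq h\}$ spans $(R/(x_{h+1},\dots,x_n))_j\cap M_j$, i.e.\ the image of $M_j$ modulo $x_n,\dots,x_{h+1}$. By Lemma~\ref{Conca} together with Green's Hyperplane Restriction Theorem~\ref{GreenHyper}, applied with the $n-h$ linear forms, $\dim (R/(M,x_n,\dots,x_{h+1}))_j\leq \dim(R/(L,x_n,\dots,x_{h+1}))_j$. Equivalently, $\mu_{\leq h}(M_j)\geq \mu_{\leq h}(L_j)$ for all $h$ and $j$.

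For $h\leq 2$, where Theorem~\ref{GreenHyper} is stated only down to $q=3$, the inequality is elementary. For $h=1$ both sides equal $1$ or $0$, according to whether $M_j\neq 0$. For $h=2$, the set $\{m\in M_j:\max(m)\leq 2\}$ is an initial lex segment in $x_1,x_2$ of the form $x_1^j,\dots$, and lex minimizes its length given $\dim M_j$; this can also be obtained by the same induction.

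Plugging these inequalities into the formula of Step 2 yields $b_{p,p+j}(M)\leq b_{p,p+j}(L)$, since $L$ is itself strongly stable and the formula applies to it.

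The main obstacle I expect is Step 2. One must carefully verify the identification of non-minimal monomials with Borel shadows and carry out the summation by parts, including the boundary terms at $h=n$ and $p=0$. The comparison in Step 3 is then a direct consequence of Green's theorem.
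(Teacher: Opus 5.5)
The paper gives no proof of this statement. It quotes the result from Bigatti, Hulett and Pardue and uses it as a black box at the end of the proof of Theorem~\ref{BettiEGH}.

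Your argument is correct. It is essentially Hulett's characteristic-zero proof: pass to $\gin(I)$, rewrite the Eliahou--Kervaire formula as $\binom{n-1}{p}\dim M_j$ minus nonnegative combinations of the restriction counts $\mu_{\leq h}(M_j)$ and $\mu_{\leq h}(M_{j-1})$, and bound those counts by Green's Hyperplane Restriction Theorem. Both points you flagged check out:
\begin{itemize}
\item The non-minimal monomials of $M_j$ with $\max(m)=h$ are exactly $x_h\cdot\{u\in M_{j-1}:\max(u)\leq h\}$.
\item The summation by parts is correct, including $p=0$, where it recovers $\dim M_j-\dim R_1M_{j-1}$ via Lemma~\ref{WeightsLemma}.
\end{itemize}

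There are three small corrections.
\begin{enumerate}
\item Theorem~\ref{GreenHyper} with $q=3$ kills $f_n,\dots,f_3$ and leaves two variables, so it already covers $h=2$. Only $h=1$ needs your trivial separate argument. Your stand-alone justification for $h=2$ ("lex minimizes its length") merely restates the claim, so rely on $q=3$ instead.
\item To replace general linear forms by $x_n,\dots,x_{h+1}$ via Lemma~\ref{Conca}, you need $\gin(L)=L$. You should also either apply the lemma to $I$ itself, so that $\gin(I)=M$ appears, or note that $\gin(M)=M$ for Borel-fixed $M$.
\item The argument works only in characteristic zero. In characteristic $p$, $\gin$ need not be strongly stable and Eliahou--Kervaire does not apply, which is why Pardue needed different methods. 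This fits the paper's standing hypothesis but is narrower than the cited theorem.
\end{enumerate}

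As for what your route buys, it makes the result self-contained using tools the paper already invokes elsewhere: Eliahou--Kervaire, Lemma~\ref{Conca}, and Theorem~\ref{GreenHyper}. Your Step~2 identity also has a further payoff. It shows that the graded Betti numbers of a strongly stable ideal depend only on $\dim M_j$, $\dim M_{j-1}$, and the hyperplane-restriction data $\mu_{\leq h}$, and decrease monotonically in the latter.

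Consequently, the same computation could be fed the inequality of Theorem~\ref{EGHGreenHyper} in place of Theorem~\ref{GreenHyper}, translated to $\gin(I)$ versus $\gin(J)$ via Lemma~\ref{Conca}. This would yield the Betti inequality of Theorem~\ref{BettiEGH} directly in degrees $\leq\alpha(I)+t-1$, without Lemma~\ref{BettiLemma} or the final appeal to Bigatti--Hulett--Pardue.
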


\begin{lem}\label{BettiLemma}
	Suppose that $M_1,M_2\subseteq K[x_1,\dots, x_n]$ are strongly stable sets of monomials of degree $d$ and $d+1$,
	such that $\uSdw(M_1)\subseteq M_2$.
	Let $L_1$ and $L_2$ be the lex segments of size $|M_1|$ and $|M_2|$ in degrees $d$ and $d+1$ respectively.
	If $|\uSdw(M_1)| = |\uSdw(L_1)|$ and $|\uSdw(M_2)| = |\uSdw(L_2)|$ then for all $h\in \{1,2,\dots, n\}$ we have
	\begin{align*}
		|\{m\in M_2\setminus \uSdw(M_1) \bigm | \max(m) = h\}| = |\{m\in L_2\setminus \uSdw(L_1) \bigm | \max(m) = h\}|.
	\end{align*}
\end{lem}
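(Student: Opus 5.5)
The plan is to reduce everything to the distribution of $\max$ over $M_1$, $M_2$, $L_1$, $L_2$, and to show that for strongly stable sets the number of shadow monomials with a given $\max$ is determined by that distribution.

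First, apply Lemma \ref{OptimalSameMax} twice. Its hypotheses hold for the pair $(L_1,M_1)$ because $|\uSdw(M_1)|=|\uSdw(L_1)|$, and for the pair $(L_2,M_2)$ because $|\uSdw(M_2)|=|\uSdw(L_2)|$. This gives bijections $L_1\to M_1$ and $L_2\to M_2$ that preserve $\max$. Hence for every $h\in\{1,\dots,n\}$ we have
\begin{align*}
|\{m\in M_i \bigm | \max(m)=h\}| = |\{m\in L_i \bigm | \max(m)=h\}| \quad \text{for } i=1,2.
\end{align*}

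Second, count shadow monomials by $\max$. Let $S$ be a strongly stable set of monomials of degree $d$; lex segments are strongly stable, so this covers both $M_1$ and $L_1$. I claim that
\begin{align*}
\{m\in\uSdw(S) \bigm | \max(m)=h\} = \{x_hg \bigm | g\in S,\ \max(g)\le h\},
\end{align*}
and that $g\mapsto x_hg$ is injective on the right-hand side. The inclusion $\supseteq$ holds because $\max(x_hg)=h$ whenever $\max(g)\le h$. For $\subseteq$, write $m=x_ig$ with $g\in S$ and $\max(m)=h$. If $i=h$ we are done. If $i<h$, then $x_h\mid g$, and the Borel move $g'=x_ig/x_h$ lies in $S$ and satisfies $m=x_hg'$ with $\max(g')\le h$; this is the argument of Lemma \ref{BorelShadowEqualsShadow}. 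Injectivity is clear, since $g=m/x_h$ is determined by $m$. Consequently
\begin{align*}
|\{m\in\uSdw(S)\bigm|\max(m)=h\}|=\sum_{j\le h}|\{g\in S\bigm|\max(g)=j\}|.
\end{align*}
By the first step this quantity is the same for $S=M_1$ and $S=L_1$.

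Third, pass to the differences. By hypothesis $\uSdw(M_1)\subseteq M_2$. We also need $\uSdw(L_1)\subseteq L_2$. By Macaulay's Theorem \ref{Macaulay}, $\uSdw(L_1)$ is a lex segment, and
\begin{align*}
|\uSdw(L_1)|=|\uSdw(M_1)|\le|M_2|=|L_2|,
\end{align*}
so it is an initial segment of the lex segment $L_2$. For each $h$ we can therefore subtract: the count of monomials with $\max = h$ in $M_2\setminus\uSdw(M_1)$ is the count in $M_2$ minus the count in $\uSdw(M_1)$, and likewise for $L_2\setminus\uSdw(L_1)$. By the first two steps these agree, which proves the lemma.

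The only delicate point I expect is the second step, namely that the $\max$-graded count of the shadow depends only on the $\max$-distribution of the set. This is exactly where strong stability is needed, and it is handled by Lemma \ref{BorelShadowEqualsShadow}. I note that the hypothesis $|\uSdw(M_2)|=|\uSdw(L_2)|$ is used only to obtain the $\max$-preserving bijection between $M_2$ and $L_2$.
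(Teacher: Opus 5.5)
Your proof is correct and follows the paper's route. You apply Lemma \ref{OptimalSameMax} twice, transfer the $\max$-distribution from $M_1$ and $L_1$ to their shadows via the unique decomposition $m=x_hg$ with $\max(g)\le h$, and then subtract; the paper packages the middle step as an explicit bijection $\tau(x_im^\ast)=x_i\sigma_1(m^\ast)$ rather than as a count. Your explicit check that $\uSdw(L_1)\subseteq L_2$, via Macaulay's Theorem and $|\uSdw(L_1)|=|\uSdw(M_1)|\le|M_2|=|L_2|$, fills in a step the paper leaves implicit.
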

\begin{proof}
	By Lemma \ref{OptimalSameMax} there exist functions $\sigma_1: L_1 \rightarrow M_1$ and $\sigma_2: L_2 \rightarrow M_2$,
	such that for all $m\in L_1$ we have $\max(m) = \max(\sigma_1(m))$,
	and for all $m\in L_2$ we have $\max(m) = \max(\sigma_2(m))$.
	By Lemma \ref{BorelShadowUnique} for all $m\in \uSdw(L_1)$ there exist unique $i\in \{1,\dots, n\}$ and $m^\ast \in L_1$ such that $m=x_im^\ast$.
	So, we define the function $\tau: \uSdw(L_1) \rightarrow \uSdw(M_1)$ such that for all $m\in \uSdw(L_1)$ with unique decomposition $m=x_im^\ast$ we have $\tau(m) = x_i\sigma_1(m^\ast)$.
	Clearly, $\tau$ is a bijection and for all $m\in \uSdw(L_1)$ we have $\max(m) = \max(\tau(m))$.
	Therefore, the claim follows by the definitions of $\tau$ and $\sigma_2$.
\end{proof}

\begin{thm}\label{BettiEGH}
	Let $t\geq 0$, $\ell \geq 1$,
	and suppose that $I\subseteq R$ is an ideal of type $v_{\ell, t}\in \N^{\ell+t+1}$.
	There exists an integer $D = D(v_{\ell,t})\in \N$,
	such that if $\alpha(I) \geq D$ then there exists a lex plus powers ideal $J$ with the same Hilbert function as $I$ in degrees $\leq \alpha(I)+t$.
	Furthermore, we have the inequality of Betti numbers $b_{p,p+q}(I) \leq b_{p,p+q}(\gin(J))$ for $q\leq \alpha(I)+t-1$.
\end{thm}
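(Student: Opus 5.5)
We plan to reduce the Betti number inequality to a comparison of strongly stable ideals via the Eliahou--Kervaire formula. First, by Theorem \ref{EGHTheorem} and the argument at the start of the proof of Theorem \ref{EGHGreenHyper}, we choose $D$ large enough that three things hold: $J=L+P$ exists, $I$ is $\LPPT(v_{\ell,t})$, and $J$ (of some type $w_{\ell,t}$ in the finite set $V$) is also $\LPPT(w_{\ell,t})$. Since $b_{p,p+q}(I)\le b_{p,p+q}(\gin(I))$ by upper semicontinuity, it suffices to prove $b_{p,p+q}(\gin(I))\le b_{p,p+q}(\gin(J))$ for $q\le\alpha(I)+t-1$. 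Both ideals are strongly stable, so the Eliahou--Kervaire formula gives
\[
b_{p,p+q}(M)=\sum_{\substack{m \text{ min. gen. of } M\\ \deg m=q}}\binom{\max(m)-1}{p}.
\]
Thus the whole problem reduces to showing that, for every $h$ and every degree $\alpha(I)+k$ with $k\le t-1$, the number $N_h(\gin(I),\alpha(I)+k)$ of minimal generators with $\max=h$ is at most the corresponding count for $\gin(J)$. We will in fact try to prove the stronger dominance statement $\sum_{h'\ge h}N_{h'}(\gin(I))\le\sum_{h'\ge h}N_{h'}(\gin(J))$, since $\binom{h-1}{p}$ is increasing in $h$.

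Second, we split the minimal generators in each degree into a ``powers'' part and a ``lex'' part. The monomials in $\mathfrak{P}_{I,k}$ are built from $\Gamma_I$ via Borel shadows. As in Lemma \ref{PowersSegment}, the only new generators they contribute are the $\gamma_I(\alpha(I)+k)-1$ monomials of $\Gamma_I(\alpha(I)+k)$, all with $\max=2$, together with possibly $x_1^{\alpha(I)+k}$-type contributions. The same holds for $J$, with identical counts, because $w$ and $v$ agree on the first $\ell$ entries (this is the bijection $\tau_k$ from the proof of Theorem \ref{EGHGreenHyper}). The lex part of $\gin(I)$ contributes $|\{m\in\mathfrak{L}_{I,k+1}\setminus\uSdwB(\mathfrak{L}_{I,k}):\max(m)=h\}|$ new generators of each $\max$ value. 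By Theorem \ref{LexSegment} this equals the analogous count for the strongly stable family $\mathfrak{A}_{I,k}\subseteq\mathfrak{A}_{I,k+1}$. For $J$ we get the analogous count for $\mathfrak{A}_{J,k}$.

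Third, we compare the two lex parts. In the proof of Theorem \ref{EGHGreenHyper} it was shown that $|\mathfrak{A}_{J,k}|=|L_{\alpha(I)+k}|$ and $|\uSdw(\mathfrak{A}_{J,k})|=|\uSdw(L_{\alpha(I)+k})|$. Hence Lemma \ref{BettiLemma} applies to $\mathfrak{A}_{J,k}\subseteq\mathfrak{A}_{J,k+1}$: the new-generator counts by $\max$ for $\gin(J)$ coincide with those of the lex ideal $L$ in these degrees. It therefore remains to compare the generator counts of the strongly stable sets $\mathfrak{A}_{I,k}$ with those of the lex segments of the same sizes. We will do this by the standard Bigatti--Hulett argument in degree-by-degree form. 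Using Lemma \ref{WeightsLemma}, we have $|\uSdw(\mathfrak{A})|=\sum_{m\in\mathfrak{A}}\Bez(m)$. Combining this with Green's hyperplane restriction (applied to the variables $x_h,\dots,x_n$ as in Lemma \ref{OptimalSameMax}), the lex segment has at most as many elements with $\max<h$ as any strongly stable set of the same size. Counting the shadow restricted to monomials with $\max\ge h$ then gives the dominance inequality for new generators.

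We expect this last step to be the main obstacle. The difficulty is that the sets $\mathfrak{A}_{I,k}$ are only strongly stable degreewise with $\uSdw(\mathfrak{A}_{I,k})\subseteq\mathfrak{A}_{I,k+1}$. The cleanest route is to note that $\bigoplus_k\mathfrak{A}_{I,k}$ spans a strongly stable monomial ideal in degrees $\alpha(I),\dots,\alpha(I)+t$, then invoke Theorem \ref{BettiLex} (or directly the Bigatti--Hulett inequality on generator counts by $\max$) truncated to this range. After that we add back the identical powers contributions and apply Eliahou--Kervaire.
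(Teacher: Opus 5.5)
Your proposal is correct and follows the paper's proof essentially step for step. The shared steps are the same choice of $D$ (so that both $I$ and $J$ are LPPT), the reduction to $\gin(I)$ via semicontinuity and Eliahou--Kervaire, the $\mathfrak{P}/\mathfrak{L}$ split with identical powers contributions, Theorem \ref{LexSegment} together with Lemma \ref{BettiLemma} to identify the lex part of $\gin(J)$ with $L$, and finally Bigatti--Hulett--Pardue applied to the strongly stable ideal generated by the $\mathfrak{A}_{I,k}$. One small correction: the pointwise inequality $N_h(\gin(I))\le N_h(\gin(J))$ you first announce is not needed and can fail for a strongly stable ideal versus the lex ideal (compare $(x_1^2,x_1x_2,x_2^2)$ with $(x_1^2,x_1x_2,x_1x_3)$), whereas the tail-sum dominance you then adopt is weaker than the pointwise statement, not stronger, and it is the version that actually holds.
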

\begin{proof}
	By Theorem \ref{EGHTheorem} there exists an integer $D_1 = D_1(v_{\ell,t})\in \N$,
	such that if $\alpha(I) \geq D_1$ then there exists a lex plus powers ideal $J=L+P$ with the same Hilbert function as $I$,
	where $L$ and $P$ are defined as in Theorem \ref{EGHTheorem}.
	Also, $I$ is $\LPPT(v_{\ell,t})$.
	The ideal $J$ is of some type $w_{\ell,t}\in \N^{\ell+t+1}$,
	where the first $\ell$ corresponding entries of $w_{\ell,t}$ and $v_{\ell, t}$ are equal.
	By an argument similar to the one in the proof of Theorem \ref{EGHGreenHyper},
	there exists an integer $D_2 = D_2(v_{\ell,t})\in \N$ such that if $\alpha(J) \geq D_2$ then $J$ is $\LPPT(w_{\ell,t})$.
	Let $D= \max\{D_1,D_2\}$ and assume $\alpha(I) = \alpha(J) \geq D$.
	
	Betti numbers can only increase when taking an initial ideal.
	Hence, we just need to show that $b_{p,p+q}(\gin(I)) \leq b_{p,p+q}(\gin(J))$.
	By a result of Eliahou and Kervaire \cite{EliahouKervaire} we have
	\begin{align*}
		b_{p,p+q}(\gin(I))  = \sum_{\substack{ m\in\gin(I) \\ m \text{ is a minimal generator } \\ \deg(m)=q}} \binom{\max(m)-1}{p},\\
		 b_{p,p+q}(\gin(J)) = \sum_{\substack{ m\in\gin(J) \\ m \text{ is a minimal generator } \\ \deg(m)=q}} \binom{\max(m)-1}{p}.
	\end{align*}
	Thus, we need to show
	\begin{align*}
		\sum_{\substack{ m\in\gin(I) \\ m \text{ is a minimal generator } \\ \deg(m)=q}} \binom{\max(m)-1}{p} \leq  \sum_{\substack{ m\in\gin(J) \\ m \text{ is a minimal generator } \\ \deg(m)=q}} \binom{\max(m)-1}{p}.
	\end{align*}
	Well, for $k\in \{0,1,\dots, t\}$ the set of monomials of degree $\alpha(I)+k$ in $\gin(I)$ is equal to the disjoint union $\mathfrak{P}_{I,k}\cup \mathfrak{L}_{I,k}$.
	Also, for $k\in \{0,1,\dots, t\}$ the set of monomials of degree $\alpha(I)+k$ in $\gin(J)$ is equal to the disjoint union $\mathfrak{P}_{J,k}\cup \mathfrak{L}_{J,k}$.
	Since $v_{\ell, t}$ and $w_{\ell, t}$ agree in the first $\ell $ corresponding entries, we have
	\begin{align*}
		\sum_{\substack{m\in \mathfrak{P}_{I,q-\alpha(I)} \subseteq \gin(I) \\ m \text{ is a minimal generator }\\ \deg(m)=q}} \binom{\max(m)-1}{p} =  \sum_{\substack{m\in\mathfrak{P}_{J,q-\alpha(I)}\subseteq \gin(J) \\ m \text{ is a minimal generator }\\ \deg(m)=q}} \binom{\max(m)-1}{p}.
	\end{align*}
	Hence, we need to show that
	\begin{align*}
		\sum_{\substack{ m\in \mathfrak{L}_{I,q-\alpha(I)}\subseteq \gin(I)\\ m \text{ is a minimal generator  }\\ \deg(m)=q}} \binom{\max(m)-1}{p} \leq  \sum_{\substack{m\in\mathfrak{L}_{J,q-\alpha(I)}\subseteq \gin(J)\\ m \text{ is a minimal generator }\\ \deg(m)=q}} \binom{\max(m)-1}{p}.
	\end{align*}
	By Theorem \ref{LexSegment}, 
	for all $k\in \{0,1,\dots, t\}$,
	there exists a set $\mathfrak{A}_{I,k}\subseteq \mathfrak{M}_{n,\alpha(I)+k}$ that is strongly stable,
	and there exists a bijection $\sigma_I: \mathfrak{L}_{I,k} \rightarrow \mathfrak{A}_{I,k}$ such that for all $m\in \mathfrak{L}_{I,k}$ we have $\max(m) = \max(\sigma_I(m))$.
	Furthermore, for all $k\in \{0,1,\dots, t-1\}$ we have $\uSdw(\mathfrak{A}_{I,k}) \subseteq \mathfrak{A}_{I,k+1}$,
	and for all $h\in \{1,2,3,\dots, n\}$  we have
	\begin{align*}
		\left\lbrace m\in \mathfrak{L}_{I,k+1}\setminus \uSdwB(\mathfrak{L}_{I,k}) \bigm | \max(m) = h\right\rbrace| 
		&= |\left\lbrace m\in \mathfrak{A}_{I,k+1}\setminus \uSdw(\mathfrak{A}_{I,k}) \bigm | \max(m)=h \right\rbrace|.
	\end{align*}
	Also, by Theorem \ref{LexSegment} there exist $\mathfrak{A}_{J,k}$ and $\sigma_J:\mathfrak{L}_{J,k} \rightarrow \mathfrak{A}_{J,k}$,
	with the same properties.
	By a similar argument to the proof of Theorem \ref{EGHGreenHyper} for all $k\in \{0,1,\dots, t-1\}$ we have
	\begin{align*}
		|\mathfrak{A}_{J,k}| &= |\mathfrak{L}_{J,k}| = |L_{\alpha(I)+k}\cap\mathfrak{M}| = |\mathfrak{L}_{I,k}| = |\mathfrak{A}_{I,k}|,\\
		|\uSdw(\mathfrak{A}_{J,k})| &= |\uSdw(L_{\alpha(I)+k})|.
	\end{align*}
	Hence, by Lemma \ref{BettiLemma},
	for all $k\in \{0,1,\dots, t-2\}$ and $h\in \{1,2,\dots, n\}$ we have 
	\begin{align*}
		\left\lbrace m\in L_{\alpha(I)+k+1}\cap\mathfrak{M}\setminus \uSdw(L_{\alpha(I)+k}) \bigm | \max(m) = h\right\rbrace| 
		= |\left\lbrace m\in \mathfrak{A}_{J,k+1}\setminus \uSdw(\mathfrak{A}_{J,k}) \bigm | \max(m)=h \right\rbrace|.
	\end{align*}
	Thus, by the above and Lemma \ref{OptimalSameMax} we have
	\begin{align*}
		\sum_{\substack{m\in\mathfrak{L}_{J,q-\alpha(I)}\subseteq \gin(J)\\ m \text{ is a minimal generator }\\ \deg(m)=q}} \binom{\max(m)-1}{p} =
		\sum_{\substack{m\in L_{q}\cap\mathfrak{M}\\ m \text{ is a minimal generator }\\ \deg(m)=q}} \binom{\max(m)-1}{p}.
	\end{align*}
	Let $A$ be the ideal generated by $\bigcup_{i=0}^t \mathfrak{A}_{I,i}$.
	Then we have
	\begin{align*}
		\sum_{\substack{ m\in \mathfrak{L}_{I,q-\alpha(I)}\subseteq \gin(I)\\ m \text{ is a minimal generator  }\\ \deg(m)=q}} \binom{\max(m)-1}{p} 
		= \sum_{\substack{ m\in  A\cap\mathfrak{M}\\ m \text{ is a minimal generator  }\\ \deg(m)=q}} \binom{\max(m)-1}{p}, 
	\end{align*}
	since 
	for all $h\in \{1,2,3,\dots, n\}$  we have
	\begin{align*}
		\left\lbrace m\in \mathfrak{L}_{I,k+1}\setminus \uSdwB(\mathfrak{L}_{I,k}) \bigm | \max(m) = h\right\rbrace| 
		&= |\left\lbrace m\in \mathfrak{A}_{I,k+1}\setminus \uSdw(\mathfrak{A}_{I,k}) \bigm | \max(m)=h \right\rbrace|.
	\end{align*}
	Hence, we need to show that
	\begin{align*}
		\sum_{\substack{ m\in  A\cap\mathfrak{M}\\ m \text{ is a minimal generator  }\\ \deg(m)=q}} \binom{\max(m)-1}{p}
		&\leq \sum_{\substack{m\in L_{q}\cap\mathfrak{M}\\ m \text{ is a minimal generator }\\ \deg(m)=q}} \binom{\max(m)-1}{p}.
	\end{align*}
	Therefore, the claim now follows by the result of Eliahou and Kervaire \cite{EliahouKervaire} and the Bigatti--Hulett--Pardue Theorem \ref{BettiLex}.
\end{proof}


\section{Translating from Ideals to Shadows}\label{TranslationSection}

The reader might find it helpful to recall the definitions in Section \ref{CombinatoricsMini}.
We develop a dictionary between algebra and combinatorics that shows how Theorem \ref{BFGG} follows from Theorem \ref{IntroEGHGotzmann}.
If we consider a monomial $x_1^{a_1}\cdots x_n^{a_n}\in R$ then we can identify it with $(a_1,\dots, a_n)\in \N^n$.
The monomials in $R$ form a poset under division and this partial order corresponds to the one for $\N^n$.
An {\it upset} is a set in a poset such that whenever an element is in our set,
any element larger than it is in our set.
If we have a monomial ideal then the monomials in it form an upset due to closure under multiplication of the variables.
Thus, any statement about Hilbert functions of monomial ideals is also a statement about the sizes of levels of upsets in $\N^n$.

Suppose that $\ell\leq n$ and we have $2\leq e_1\leq \cdots \leq e_\ell$.
Let $P$ be the ideal generated by $x_1^{e_1},\dots, x_\ell^{e_\ell}$ and let $Q$ be the corresponding upset.
Then $\N^n \setminus Q = \{0,1,\dots e_1-1\}\times \cdots \{0,1,\dots, e_\ell -1\} \times \N^{n-\ell}$.
Set $e= (e_1,\dots, e_\ell)$ and $G_e = \N^n \setminus Q$.
Note that the construction of $G_e$ corresponds to constructing the quotient $R/P$.
So if we have a monomial ideal $M$ and we make a statement about the Hilbert function of $M+P$,
this corresponds to a statement about the sizes of levels of upsets in $G_e$.
The lexicographic order on the monomials of $R$ translates to the lexicographic order on $\N^n$.
If we have a lex ideal $L$ and we consider the LPP ideal $L+P$,
then we get a corresponding upset in $G_e$,
which we call a {\it lex} upset.
Similarly, if we have a lex segment of degree $d$ in $R$,
this corresponds to a lex segment of level $d$ in $\N^n$,
and then to a lex segment of level $d$ in $G_e$.

The Clements--Lindström Theorem says that for every upset $U\subseteq G_e$,
there exists a lex upset $L\subseteq G_e$ such that the corresponding levels of $U$ and $L$ have the same size.
The Kruskal--Katona Theorem says the same, but for the case $G_e=H_n$.
These theorems are often not stated like this in the combinatorics literature, 
but stated by using shadows,
like we did in Section \ref{CombinatoricsMini}.

Combining the translation we have done in this section and Theorem \ref{IntroEGHGotzmann} we get Theorem \ref{BFGG}.


\section{Applications to Lefschetz Properties}\label{LefschetzApplicationsSection}

In this section we apply the techniques to conclude something about the WLP.
Lemma \ref{SWLPNewGenerators} and Corollary \ref{SWLPequiv} allow us to use the structure of the generic initial ideal from previous sections.

\begin{lem}\label{SWLPNewGenerators}
	Let $I\subseteq R$ be a strongly stable monomial ideal and let $H = (x_n,x_{n-1},\dots, x_{n-i+1})$ for some $i\in \{0,1,\dots, n-2\}$.
	Suppose that there is a monomial in $K[x_1,\dots, x_{n-i-1}]\setminus I_{d+1}$ of degree $d+1$ that is not divisible by $x_{n-i}$.
	Then $x_{n-i}$ is a WLE on $R/(I+H)$ in degree $d$ iff all minimal monomial generators that belong to $K[x_1,\dots, x_{n-i}]$ in degree $d+1$ are not divisible by $x_{n-i}$.
\end{lem}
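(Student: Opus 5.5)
Set $A = R/(I+H)$ and $R' = K[x_1,\dots,x_{n-i}]$. Since $H$ is generated by the last $i$ variables, $A \cong R'/I'$, where $I' = (I+H)/H$ is the monomial ideal of $R'$ spanned by the monomials of $I$ not divisible by $x_{n},\dots,x_{n-i+1}$. The ideal $I'$ is again strongly stable. Moreover, the minimal monomial generators of $I'$ in degree $d+1$ are exactly the minimal monomial generators of $I$ in degree $d+1$ that lie in $R'$: a monomial of $R'$ is divisible only by monomials of $R'$, so being minimal in $I$ is the same as being minimal in $I'$. In $A$ the variable $x_{n-i}$ plays the role that $x_n$ plays in Lemma \ref{WLPNewGenerators}, now in $n-i$ variables. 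The plan is to prove both directions of the equivalence directly, working with monomial bases of $A_d$ and $A_{d+1}$.

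For the forward direction we argue exactly as in Lemma \ref{WLPNewGenerators}. Suppose $x_{n-i}$ is a WLE on $A$ in degree $d$ and some minimal generator $m\in R'$ of degree $d+1$ is divisible by $x_{n-i}$. Then $m/x_{n-i}\notin I$, because $m$ is a minimal generator. It is also a monomial of $R'$, so it is a nonzero element of $A_d$ that is killed by $x_{n-i}$; hence multiplication is not injective. By hypothesis there is a monomial of degree $d+1$ in $K[x_1,\dots,x_{n-i-1}]$ not in $I$. It is nonzero in $A_{d+1}$ and not divisible by $x_{n-i}$, so it is not in the image; hence multiplication is not surjective. This contradicts the WLE assumption.

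For the reverse direction we show that multiplication by $x_{n-i}$ from $A_d$ to $A_{d+1}$ is injective. Since $I'$ is a monomial ideal, the map sends the standard monomial basis injectively into monomials, so injectivity means: if $u\in R'$ has degree $d$, $u\notin I$, then $x_{n-i}u\notin I$. Suppose instead that $x_{n-i}u\in I$. Then $x_{n-i}u\in I'_{d+1}$ but $x_{n-i}u\notin R'_1 I'_d$, because any monomial of $I'_d$ dividing $x_{n-i}u$ with quotient $x_j$ gives, via a Borel move, an element of $I'_d$ dividing... This is the point where care is needed, and it is the main obstacle. The claim we want is that for a strongly stable $I'$, if $x_{n-i}u\in I'$ and $u\notin I'$, then some minimal generator of degree $d+1$ divisible by $x_{n-i}$ exists.

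To settle the obstacle, suppose $x_{n-i}u = x_j w$ with $w\in I'_d$. If $j = n-i$ then $w = u\in I'$, which is a contradiction. So $j<n-i$ and $x_j$ divides $u$, and we can write $w = x_{n-i}u/x_j$. Then $u = \mathfrak{b}_{j,n-i}(w)$ is obtained from $w$ by a Borel move, so by strong stability $u\in I'$, again a contradiction. Hence $x_{n-i}u$ is not in $R'_1I'_d$, so it is a minimal generator of degree $d+1$ divisible by $x_{n-i}$, contradicting the hypothesis. Therefore multiplication by $x_{n-i}$ is injective, and $x_{n-i}$ is a WLE in degree $d$.

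Note that the reverse direction does not use the hypothesis about the non-divisible monomial; that hypothesis is needed only in the forward direction.
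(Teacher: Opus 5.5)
Your proof is correct and follows the paper's argument. The forward direction is the same non-injective/non-surjective contradiction. For the reverse direction, the paper compresses the justification into ``because $I$ is strongly stable,'' and you supply it explicitly: $x_{n-i}u = x_jw$ with $j<n-i$ forces $u=\mathfrak{b}_{j,n-i}(w)\in I'$. You also make the reduction $R/(I+H)\cong K[x_1,\dots,x_{n-i}]/I'$ explicit. One thing to fix: delete the unfinished sentence in your reverse-direction paragraph, since the next paragraph replaces it.
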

\begin{proof}
	We prove the forward direction first.
	Assume to the contrary that there is some monomial $m\in I_{d+1}$ that is a minimal generator in degree $d+1$ and is divisible by $x_{n-i}$.
	Then multiplication by $x_{n-i}$ is not injective because $m/x_{n-i}\not\in I_d$ and $m\in I_d$.
	However, multiplication by $x_{n-i}$ is not surjective because there is some monomial not in $I_{d+1}$ that is not divisible by $x_{n-i}$.
	Thus, we have a contradiction with the assumption that $x_{n-i}$ is a weak Lefschetz element on $R/(I+H)$.
	Hence, all minimal monomial generators in degree $d+1$ are not divisible by $x_{n-i}$.
	
	Now for the backwards direction.
	Assume that all minimal monomial generators in degree $d+1$ are not divisible by $x_{n-i}$.
	Then multiplication by $x_{n-i}$ from degree $d$ to $d+1$ is injective because $I$ is strongly stable.
	Therefore, $x_{n-i}$ is a WLE on $R/(I+H)$ in degree $d$.
\end{proof}

\begin{cor}\label{SWLPequiv}
	Suppose $I\subseteq R$ is an ideal and let $g_1,\dots, g_n$ be general linear forms.
	Then for all $i\in \{0,1,\dots, n-2\}$ we have that $g_{n-i}$ is a WLE on $R/(I+(g_n,g_{n-1},\dots, g_{n-i+1}))$ iff $x_{n-i}$ is a WLE on $R/(\gin(I) +(x_n,x_{n-1},\dots,x_{n-i+1}))$.
\end{cor}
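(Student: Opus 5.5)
The plan is to reduce the statement to the single-form criteria already established, applied iteratively to successive general hyperplane sections. Fix $i\in\{0,1,\dots,n-2\}$ and set $A=R/(I+(g_n,\dots,g_{n-i+1}))$. Since $g_n,\dots,g_{n-i+1}$ are general, $A$ can be identified, after a general change of coordinates, with a quotient of a polynomial ring in $n-i$ variables. In that ring $g_{n-i}$ is again a general linear form. So Proposition \ref{WLPequiv} applies to $A$ and $g_{n-i}$. It says that $g_{n-i}$ is a WLE on $A$ in degree $d$ iff
\[
\dim (A/g_{n-i}A)_{d+1}\leq \max\{0,\dim A_{d+1}-\dim A_d\}.
\]
This converts the Lefschetz condition, degree by degree, into a purely numerical condition on two Hilbert functions. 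The first is the Hilbert function of $R/(I+(g_n,\dots,g_{n-i+1}))$; the second is that of $R/(I+(g_n,\dots,g_{n-i}))$.

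Next I will use Lemma \ref{Conca} twice: once with $p=i$ general linear forms and once with $p=i+1$. It gives, for every degree $d$,
\[
\dim (R/(I+(g_n,\dots,g_{n-i+1})))_d=\dim (R/(\gin(I)+(x_n,\dots,x_{n-i+1})))_d,
\]
\[
\dim (R/(I+(g_n,\dots,g_{n-i})))_d=\dim (R/(\gin(I)+(x_n,\dots,x_{n-i})))_d.
\]
Hence the numerical inequality above for $g_{n-i}$ on $A$ coincides with the same inequality for $x_{n-i}$ on $B=R/(\gin(I)+(x_n,\dots,x_{n-i+1}))$.

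It then remains to check that this inequality characterizes $x_{n-i}$ being a WLE on $B$. Proposition \ref{WLPequiv} is stated for a general form, so I cannot quote it directly here. Instead I will argue directly for the monomial ideal. $B$ is the quotient of $K[x_1,\dots,x_{n-i}]$ by $\gin(I)|_{x_n,\dots,x_{n-i+1}}$, which is strongly stable because restriction preserves strong stability. For a monomial quotient, multiplication by $x_{n-i}$ from $B_d$ to $B_{d+1}$ has cokernel $(B/x_{n-i}B)_{d+1}$. Therefore the map is surjective iff $\dim(B/x_{n-i}B)_{d+1}=0$, and it is injective iff $\dim(B/x_{n-i}B)_{d+1}=\dim B_{d+1}-\dim B_d$. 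In general the cokernel dimension is at least $\dim B_{d+1}-\dim B_d$, so these two cases are exactly the content of the inequality.

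The main obstacle is the injectivity direction. A priori the cokernel can have the right dimension while the map fails to be injective, because the kernel and the cokernel could both be nonzero. The plan is to use strong stability to show that the kernel and cokernel cannot both be nonzero; this is the content of Lemma \ref{SWLPNewGenerators}.

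Suppose the kernel is nonzero. Then there is a monomial $u\notin\gin(I)$ with $x_{n-i}u\in\gin(I)$, and hence a minimal generator divisible by $x_{n-i}$. If in addition the cokernel is nonzero, then some monomial in $x_1,\dots,x_{n-i-1}$ of degree $d+1$ lies outside $\gin(I)$. Strong stability then prevents any degree-$(d+1)$ generator from involving $x_{n-i}$, since Borel moves would push such a generator onto that monomial. This contradicts the existence of the kernel element.

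Combining the two equivalences proves the equivalence degree by degree. It then holds for all $d$, which gives the statement of Corollary \ref{SWLPequiv}. The case $i=0$ recovers Corollary \ref{Wiebe}.
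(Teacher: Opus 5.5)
Your skeleton is the paper's proof. Lemma~\ref{Conca} with $p=i$ and $p=i+1$ transfers the Hilbert functions of $R/(I+(g_n,\dots,g_{n-i+1}))$ and $R/(I+(g_n,\dots,g_{n-i}))$ to those of $B$ and $B/x_{n-i}B$, and the numerical criterion of Proposition~\ref{WLPequiv} finishes. You are right to be cautious that Proposition~\ref{WLPequiv} is stated for a general form, but the criterion is pure linear algebra and holds for any linear form. Indeed, $\dim(B/x_{n-i}B)_{d+1}$ is the dimension of the cokernel of $\times x_{n-i}\colon B_d\to B_{d+1}$, and rank--nullity gives $\dim\operatorname{coker}=\dim B_{d+1}-\dim B_d+\dim\ker$. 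So your own sentence ``it is injective iff $\dim(B/x_{n-i}B)_{d+1}=\dim B_{d+1}-\dim B_d$'' already completes the proof. The ``main obstacle'' does not exist: the cokernel cannot have that dimension while the kernel is nonzero.

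The paragraph you wrote to overcome it is false and must be deleted. Strong stability does not prevent the kernel and cokernel of $\times x_{n-i}$ from both being nonzero. Take the strongly stable ideal $M=(x_1^2,x_1x_2,x_1x_3)\subseteq K[x_1,x_2,x_3]$ and $d=1$: multiplication by $x_3$ kills $x_1$ and misses $x_2^2$.

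\begin{itemize}
\item Your Borel-move step fails because moves from a generator such as $x_1x_3$ only reach $x_1x_2$ and $x_1^2$. They do not reach an arbitrary monomial like $x_2^2$ outside $M$.
\item Lemma~\ref{SWLPNewGenerators} does not say what you attribute to it. When the cokernel is nonzero, it states that injectivity holds \emph{iff} no degree-$(d+1)$ minimal generator involves $x_{n-i}$. It does not assert that such generators are absent.
\item The claim cannot be rescued by using that the ideal is a generic initial ideal. Combined with the rest of your argument, it would give the WLP for every $R/I$ in characteristic $0$. That contradicts $A=K[x,y,z]/(x^3,y^3,z^3,xyz)$. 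There, $(x+y+z)(x^2+y^2+z^2-xy-yz-zx)=x^3+y^3+z^3-3xyz$ shows, after rescaling the variables, that a general linear form is not injective from $A_2$ to $A_3$, both of dimension $6$.
\end{itemize}
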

\begin{proof}
	Let $J = I+(g_n,g_{n-1},\dots, g_{n-i+1})$ and $H=(x_n,x_{n-1},\dots, x_{n-i+1})$.
	By Lemma \ref{Conca} we have $\dim(R/(J, g_{n-i}))_{k} = \dim(R/(\gin(I)+H,x_{n-i}))_{k}$ for all $k\in \N$.
	Therefore, the claim follows by Proposition \ref{WLPequiv}.
\end{proof}

Theorem \ref{WLPCITheorem} states that if we fix the difference between the degrees of a complete intersection,
then that complete intersection satisfies the WLP,
and it satisfies the WLP even after we include general linear forms with it,
as long as we make the minimal generating degree large enough.

\begin{thm}\label{WLPCITheorem}
	Let $t\geq 0$, $\ell\geq 1$,
	and suppose $I\subseteq R$ is a complete intersection of minimal type $v_{\ell, t}\in \N^{\ell+t+1}$.
	Let $g_3,g_4,\dots, g_n$ be general linear forms.
	There exists an integer $D = D(v_{\ell, t})\in \N$ such that if $\alpha(I) \geq D$,
	then for all $i\in \{0,1,\dots, n-3\}$ and all $k\in \{0,1,\dots, t\}$ we have that $g_{n-i}$ is a WLE in degree $\alpha(I)-1+k$ on $R/(I+(g_n,g_{n-1},\dots, g_{n-i+1}))$. 
\end{thm}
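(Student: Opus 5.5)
By Corollary \ref{SWLPequiv}, it suffices to prove the following monomial statement. For each $i\in\{0,1,\dots,n-3\}$ and each $k\in\{0,1,\dots,t\}$, the variable $x_{n-i}$ must be a WLE in degree $\alpha(I)-1+k$ on $R/(\gin(I)+H_i)$, where $H_i=(x_n,\dots,x_{n-i+1})$. I would take $D$ to be the integer from Theorem \ref{CIGinStructure}, enlarged if necessary so that Theorem \ref{Monster2} applies as well. Then for $\alpha(I)\ge D$ the minimal monomial generators of $\gin(I)$ in degrees $\alpha(I),\dots,\alpha(I)+t$ are listed explicitly by Theorem \ref{CIGinStructure}. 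All of them lie in $K[x_1,x_2]$. In degree $\alpha(I)+k$ they form a consecutive string $x_1^{a}x_2^{b}$ of length $\gamma_I(\alpha(I)+k)-1$, or $\gamma_I(\alpha(I))$ when $k=0$. Since $I$ is generated in degrees $\ge\alpha(I)$, there are no generators in degree $<\alpha(I)$, so degree $\alpha(I)-1$ is harmless.

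The key step is to apply Lemma \ref{SWLPNewGenerators} to the strongly stable ideal $\gin(I)$ with $d=\alpha(I)-1+k$. Every minimal generator of degree $d+1$ lies in $K[x_1,x_2]$, and $n-i\ge 3$, so none of them is divisible by $x_{n-i}$. The backward direction of Lemma \ref{SWLPNewGenerators} then gives that multiplication by $x_{n-i}$ from degree $d$ to $d+1$ is injective on $R/(\gin(I)+H_i)$, provided the lemma's side hypothesis holds. That hypothesis asks for a monomial of degree $d+1$ in $K[x_1,\dots,x_{n-i-1}]$ that is not in $\gin(I)$.

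I would verify the side hypothesis by noting that $n-i-1\ge 2$, so the monomial $x_2^{d+1}$ is available. The degree-$(d+1)$ part of $\gin(I)\cap K[x_1,x_2]$ is, by the explicit description, the lex-initial segment $x_1^{d+1},\dots,x_1^{d+1-q_k+1}x_2^{q_k-1}$ (with $q_k$ as in Theorem \ref{CIGinStructure}) together with its shadows. Its size is at most about $\sum_{j\le k}\gamma_I(\alpha(I)+j)\le \ell(t+1)$, which is far less than $d+2$ once $D>\ell(t+1)$. Hence $x_2^{d+1}\notin\gin(I)$. If the side hypothesis fails in some degenerate case, then the quotient in degree $d+1$ vanishes after killing $x_{n-i}$, multiplication is surjective, and the WLE condition holds trivially.

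The main obstacle is justifying injectivity cleanly. Lemma \ref{SWLPNewGenerators} passes from ``no generator of degree $d+1$ divisible by $x_{n-i}$'' to injectivity using only strong stability. The step to check is that for $m\notin\gin(I)+H_i$ of degree $d$ with $x_{n-i}m\in\gin(I)$, some minimal generator dividing $x_{n-i}m$ must involve $x_{n-i}$. That is immediate, because otherwise the generator would divide $m$. So the argument reduces to bookkeeping. Taking $D$ as the maximum of the bound from Theorem \ref{CIGinStructure} and $\ell(t+1)+1$ finishes the proof.
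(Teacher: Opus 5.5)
Your proposal is correct and follows the paper's own proof: you reduce to $\gin(I)$ via Corollary \ref{SWLPequiv}, use Theorem \ref{CIGinStructure} to see that every minimal generator of degree $\le \alpha(I)+t$ lies in $K[x_1,x_2]$, and conclude injectivity of multiplication by $x_{n-i}$ as in the backward direction of Lemma \ref{SWLPNewGenerators}. Your verification that $x_2^{d+1}\notin\gin(I)$ is extra, since that side hypothesis is used only for the forward direction of the lemma, but it is harmless and correct.
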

\begin{proof}
	By Theorem \ref{CIGinStructure} there exists an integer $D'$ such that if $\alpha(I)\geq D'$,
	then all the minimal monomial generators in degrees $\leq \alpha(I)+t$ are not divisible by $x_3,x_4,\dots, x_n$.
	Let $D=D'+1$.
	By Corollary \ref{SWLPequiv} we need to check that $x_{n-i}$ is a WLE in degree $\alpha(I)-1+k$ on $R/(\gin(I) +(x_n,x_{n-1},\dots,x_{n-i+1}))$.
	By Lemma \ref{SWLPNewGenerators} we have that $x_{n-i}$ is a WLE in degree $\alpha(I)-1+k$ on $R/(\gin(I) +(x_n,x_{n-1},\dots,x_{n-i+1}))$ iff all minimal monomial generators in degree $\alpha(I)+k$ are not divisible by $x_{n-i}$.
	Therefore, the claim holds by the existence of $D$.
\end{proof}

Theorem \ref{WLPCIPersistence} generalizes Theorem \ref{CIGinStructure},
from the case of a complete intersection,
to the case when the EGH Bound is sharp.

\begin{thm}\label{WLPCIPersistence}
	Let $t\geq 0$, $\ell \geq 1$,
	and suppose that $I\subseteq R$ is an ideal of minimal type $v_{\ell, t}\in \N^{\ell + t+1}$.
	There exists an integer $D = D(v_{\ell,t})\in \N$,
	such that if $\alpha(I) \geq D$ then there exists a lex plus powers ideal $J$ with the same Hilbert function as $I$ in degrees $\leq \alpha(I)+t$.
	
	Furthermore, if $I$ is generated in degrees $\leq q$ with respect to $v_{\ell, t}$,
	\begin{align*}
		\gamma_I(q+1)-\gamma_I(q)+\dim \uSdw(J_q) = \dim J_{q+1},
	\end{align*}
	and let $g_3,g_4,\dots, g_n$ be general linear forms,
	then for all $i\in \{0,1,\dots, n-3\}$ and all $w\in \{0,1,\dots, \alpha(I)+t-q-1\}$ we have that $g_{n-i}$ is a WLE in degree $q+w$ on $R/(I+(g_n,g_{n-1},\dots, g_{n-i+1}))$.
\end{thm}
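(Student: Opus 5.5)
The plan is to combine the persistence statement of Theorem \ref{EGHGotzmann} with the Lefschetz machinery of Section \ref{LefschetzApplicationsSection}, exactly as in the proof of Theorem \ref{WLPCITheorem}. First take $D=D(v_{\ell,t})$ to be at least the integer of Theorem \ref{EGHGotzmann}. For $\alpha(I)\geq D$ this provides the LPP ideal $J$ and shows that $I$ is $\LPPT(v_{\ell,t})$. It also gives the following: for every $w\in\{0,1,\dots,\alpha(I)+t-q-1\}$, the minimal monomial generators of $\gin(I)$ in degree $q+w+1$ are exactly $\gamma_I(q+w+1)-1$ (or $\gamma_I(q+w+1)$ when $q+w\leq\alpha(I)-1$) monomials, none of which is divisible by $x_3,x_4,\dots,x_n$. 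The main task is therefore to deduce the WLP from this generator information.

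Fix $i\in\{0,1,\dots,n-3\}$ and $w$ in the stated range. By Corollary \ref{SWLPequiv}, it suffices to show that $x_{n-i}$ is a WLE in degree $q+w$ on $R/(\gin(I)+(x_n,\dots,x_{n-i+1}))$. We use the backward direction of Lemma \ref{SWLPNewGenerators}. That direction needs only that the minimal monomial generators of $\gin(I)$ in degree $q+w+1$ lying in $K[x_1,\dots,x_{n-i}]$ are not divisible by $x_{n-i}$; its proof uses only strong stability and injectivity, so the hypothesis about a surviving monomial is irrelevant here. Since $n-i\geq 3$, the previous paragraph gives exactly this. Hence multiplication by $x_{n-i}$ is injective from degree $q+w$ to $q+w+1$ on the quotient.

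Stated concretely: if $m$ is a monomial of degree $q+w$ in $K[x_1,\dots,x_{n-i}]$ with $x_{n-i}m\in\gin(I)$, then some minimal generator $g$ divides $x_{n-i}m$. If $\deg g\leq q+w$, we may choose $g$ dividing $m$ after a Borel move, using $\max(g)\leq n-i$. If $\deg g=q+w+1$, then $g=x_{n-i}m$ is not divisible by $x_{n-i}$, which is a contradiction. The case $\deg g\le q+w$ with $x_{n-i}\mid g$ is handled by replacing $g$ by $\mathfrak{b}_{p,n-i}(g)$, which lies in $\gin(I)$ and divides $m$ up to an exchange; this is the usual argument that a strongly stable ideal's quotient has $x_{\max}$ acting injectively except at new generators divisible by it.

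The main obstacle I expect is the range boundary and the case $q<\alpha(I)$. When $q+w+1\leq\alpha(I)$, $\gin(I)$ has no generators below $\alpha(I)$, so injectivity in those degrees is trivial. At degree $\alpha(I)$, the generators are those from Theorem \ref{EGHGotzmann} (the $x_1^{a}x_2^{b}$ segment, using the $\LPPT$ property with $\fbl_I\geq\gamma_I-1$). I must also check that the statement "not divisible by $x_3,\dots,x_n$" in Theorem \ref{EGHGotzmann} covers all minimal generators in the relevant degrees and not only those in $\Gamma_I$. This needs the equality $\uSdw(\gin(I)_{q+w})=\gin(I)_{q+w+1}\setminus\Gamma_I(q+w+1)$ established in its proof, which I would cite explicitly.
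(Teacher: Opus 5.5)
Your proposal is correct and follows the paper's proof: Theorem \ref{EGHGotzmann} shows that the minimal generators of $\gin(I)$ in degrees $q+w+1$ are not divisible by $x_3,\dots,x_n$, Corollary \ref{SWLPequiv} moves the question to $R/(\gin(I)+(x_n,\dots,x_{n-i+1}))$, and the backward (injectivity) direction of Lemma \ref{SWLPNewGenerators} finishes. Your worry about only the generators in $\Gamma_I$ being covered is unnecessary, since the statement of Theorem \ref{EGHGotzmann} itself asserts that \emph{all} minimal generators in those degrees avoid $x_3,\dots,x_n$; also, in your concrete check the contradiction is that $g=x_{n-i}m$ \emph{is} divisible by $x_{n-i}$, not that it isn't.
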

\begin{proof}
	By Theorem \ref{EGHGotzmann} there exists an integer $D = D(v_{\ell,t})\in \N$,
	such that if $\alpha(I) \geq D$ then there exists a lex plus powers ideal $J$ with the same Hilbert function as $I$.
	Furthermore,
	the minimal generators in degree $q+w+1$ of $\gin(I)$ are not divisible by $x_3,x_4,\dots, x_n$.
	By Corollary \ref{SWLPequiv} we need to check that $x_{n-i}$ is a WLE in degree $q+w$ on $R/(\gin(I) +(x_n,x_{n-1},\dots,x_{n-i+1}))$.
	By Lemma \ref{SWLPNewGenerators} we have that $x_{n-i}$ is a WLE in degree $q+w$ on $R/(\gin(I) +(x_n,x_{n-1},\dots,x_{n-i+1}))$ iff all minimal monomial generators in degree $q+w+1$ are not divisible by $x_{n-i}$.
	Therefore, the claim holds.
\end{proof}

\begin{rem}
	Theorem \ref{WLPCITheorem} is a direct corollary of Theorem \ref{WLPCIPersistence},
	since the EGH Bound is sharp for each complete intersection in every degree.
	However, the proof of Theorem \ref{WLPCIPersistence} produces a larger $D$ compared to the proof of Theorem \ref{WLPCITheorem}.
	We have not optimized either of these proofs to get the best possible $D$ in this paper.
\end{rem}

\section{A Few Conjectures}\label{ConjecturesSection}
One can make many conjectures by trying to relax the asymptotic conditions in any of the results of this paper.
Instead of stating all of these,
we state conjectures that we think will yield the most progress on the topics discussed,
and potentially others.

\begin{con}\label{ginUnique}
	Let $I$ and $J$ be ideals generated by regular sequences $f_1,\dots, f_\ell$ and $g_1,\dots, g_\ell$ respectively.
	If for all $i\in \{1,\dots, \ell\}$ we have $\deg(f_i) = \deg(g_i)$ ,
	then $\gin(I) = \gin(J)$.
\end{con}

Conjecture \ref{ginUnique} implies the strong Lefschetz property for complete intersections,
because Stanley's Theorem \cite{Stanley} shows that monomial complete intersections satisfy the strong Lefschetz property.
Conjecture \ref{ginRevlex} implies Conjecture \ref{ginUnique}.

\begin{con}\label{ginRevlex}
	If $I$ is a complete intersection then $\gin(I)$ is an almost revlex ideal.
\end{con}


\section{Acknowledgments}\label{Thanks}
This paper was written while the author was supported by a Kenna Postdoctoral Fellowship at the University of Notre Dame.
The author had many conversations about the topics in this paper with Juan Migliore for almost $3$ years.
During $2$ of these $3$ years we had in-person meetings almost every week.
The paper would not have been possible without his mathematical expertise and moral support.

Michael Stillman made the author aware of \cite{FloystadStillman} during a visit at the Fields Institute for the Apprenticeship Program in Commutative Algebra.
This turned out to be a very useful tool for the results in the paper.
Funding for this trip was provided by the University of Notre Dame and the Fields Institute.

Alexandra Seceleanu read an early draft and provided many helpful comments that significantly improved the presentation of the paper.
The author had many discussion with her about the topics in the paper.
These discussion allowed the author to fix gaps in some proofs.

The author had discussions about generic initial ideals Ritvik Ramkumar.
These discussions helped the author understand the literature on generic initial ideals better.

Emanuela Marangone made the author aware of \cite{HarimaWachi, PalezzatoTorielli}.
The author was not aware that $k$-Lefschetz properties were studied in the literature before that.

Eric Riedl read an earlier draft of this paper and provided many comments for the abstract and introduction.
This significantly improved the presentation of the paper.


\bibliographystyle{acm}
\bibliography{EGH-Lefschetz-Gins}
	
\end{document}